\newtheorem{theorem}             {Theorem}  [section]
\newtheorem{definition} [theorem] {Definition}
\newtheorem{lemma}      [theorem]{Lemma}
\newtheorem{corollary}  [theorem]{Corollary}
\newtheorem{proposition}[theorem]{Proposition}
\newtheorem{remark} [theorem] {Remark}
\newtheorem*{conjecture*}{Conjecture}
\numberwithin{equation}{section} \everymath{\displaystyle}
\newcommand{\Cont}{{\rm C}}
\newcommand{\Sch}{\mathcal{S}}
\newcommand{\sgn}{{\rm sgn}}
\newcommand{\intL}{{\rm L}}
\newcommand{\BesselK}{\mathcal{K}}
\newcommand{\Hol}{{\rm H}}
\newcommand{\Nr}{{\rm Nr}}
\newcommand{\Tr}{{\rm Tr}}
\newcommand{\HyG}{ {}_2F_1 }
\newcommand{\GenHyG}[5]{ {}_{#1}F_{#2} \left( \begin{matrix} #3 \\ #4 \end{matrix} ; #5 \right) }
\newcommand{\HyGI}{ {}_2\mathrm{I}_1 }
\newcommand{\GenHyGI}[5]{ {}_{#1}\mathrm{I}_{#2} \left( \begin{matrix} #3 \\ #4 \end{matrix} ; #5 \right) }
\newcommand{\chf}{\kappa}
\DeclareMathOperator{\arccosh}{arccosh}
\newcommand{\gp}[1]{\mathbf{#1}}
\newcommand{\GL}{{\rm GL}}
\newcommand{\PGL}{{\rm PGL}}
\newcommand{\SL}{{\rm SL}}
\newcommand{\SO}{{\rm SO}}
\newcommand{\id}{\mathbbm{1}}
\newcommand{\ag}[1]{\mathbb{#1}}
\newcommand{\A}{\mathbb{A}}
\newcommand{\C}{\mathbb{C}}
\newcommand{\Q}{\mathbb{Q}}
\newcommand{\R}{\mathbb{R}}
\newcommand{\Z}{\mathbb{Z}}
\newcommand{\Mat}{{\rm M}}
\newcommand{\ud}{\mathrm{d}}
\newcommand{\F}{\mathrm{F}}
\newcommand{\vo}{\mathfrak{o}}
\newcommand{\vp}{\mathfrak{p}}
\newcommand{\Dis}{{\rm D}}
\newcommand{\gCl}{{\rm Cl}}
\newcommand{\idl}[1]{\mathfrak{#1}}
\newcommand{\ProjP}{{\rm P}}
\newcommand{\norm}[1][\cdot]{\lvert #1 \rvert}
\newcommand{\extnorm}[1]{\left\lvert #1 \right\rvert}
\newcommand{\Norm}[1][\cdot]{\lVert #1 \rVert}
\newcommand{\extNorm}[1]{\left\lVert #1 \right\rVert}
\newcommand{\Pairing}[2]{\langle #1, #2 \rangle}
\newcommand{\OFour}{\mathfrak{F}}
\newcommand{\Mellin}[2][]{\mathfrak{M}_{#1} \left[ #2 \right]}
\newcommand{\rpL}{{\rm L}}
\newcommand{\rpR}{{\rm R}}
\newcommand{\Res}{{\rm Res}}
\newcommand{\Bas}{\mathcal{B}}
\newcommand{\Cond}{\mathbf{C}}
\newcommand{\cond}{\mathfrak{c}}
\newcommand{\fin}{{\rm fin}}
\newcommand{\eis}{{\rm E}}
\newcommand{\Reis}{\mathcal{E}}
\newcommand{\reg}{{\rm reg}}
\newcommand{\Zeta}{\mathrm{Z}}
\newcommand{\Minv}{\mathfrak{m}}
\newcommand{\Tinv}{\mathfrak{t}}
\newcommand{\Vol}{{\rm Vol}}
\newcommand{\Rmnum}[1]{\expandafter\@slowromancap\romannumeral #1@}
\title{On Weyl's Subconvex Bound for Cube-Free Hecke characters: Totally Real Case}
\author{Olga Balkanova}
\address{Steklov Mathematical Institute, 8 Gubkina street, 119991, Moscow, Russia}
\email{balkanova@mi-ras.ru}
\author{Dmitry Frolenkov}
\address{Steklov Mathematical Institute, 8 Gubkina street, 119991, Moscow, Russia}
\email{frolenkov@mi-ras.ru}
\author{Han Wu}
\address{School of Mathematical Sciences, University of Scinece and Technology of China, 96 Jinzhai Road, 230026, Hefei, P. R. China}
\email{wuhan1121@yahoo.com}
\begin{document}

\begin{abstract}
	We prove a Weyl-type subconvex bound for cube-free level Hecke characters over totally real number fields. Our proof relies on an explicit inversion to Motohashi's formula. Schwartz functions of various kinds and the invariance of the relevant Motohashi's distributions discovered in a previous paper play central roles.
\end{abstract}

	\maketitle
	
	\tableofcontents

\section{Introduction}

	\subsection{Weyl-type Subconvexity via Motohashi's Formula}
	
	Being approximations to the Lindel\"of hypothesis, the subconvex bounds of automorphic $L$-functions play an important role in the analytic number theory. In the classical case of (character twists of) $\PGL_2$, they have applications to problems like the bounds of the Fourier coefficients of half-integral weight modular forms \cite[Corollary 1]{BH10} which in turn has applications to some equidistribution problems \cite{Du88}, and the prime geodesic theorems \cite{SY13,BCCFL18,BBCL20,CWZ21}.
	
	The first subconvex bound is due to Hardy-Littlewood for the Riemann zeta function
\begin{equation} \label{WeylSubRZ}
	\zeta \left( \tfrac{1}{2}+it \right) \ll_{\epsilon} (1+\norm[t])^{\frac{1}{6}+\epsilon},
\end{equation}
whose proof uses Weyl's differencing method for estimating exponential sums. In general, for an automorphic (cuspidal) representation $\pi$ of $\GL_n(\A_{\Q})$ of \emph{analytic conductor} $\Cond(\pi)$, the (hybrid) Weyl-type subconvex bound is of a bound for its $L$-function $L(s,\pi)$ of the form
\begin{equation} \label{WeylSubGen}
	L\left( \tfrac{1}{2},\pi \right) \ll_{\epsilon} \Cond(\pi)^{\frac{1}{6}+\epsilon}.
\end{equation}
	The Weyl bound is known only in a few cases, notably for quadratic twists of certain self-dual $\GL_2$ automorphic forms, see \cite{CI00,Iv01,Yo17,PY18}. A recent breakthrough of Petrow-Young \cite{PY19_CF, PY19_All} generalizes Conrey-Iwaniec's work \cite{CI00}, first to certain self-dual $\GL_2$ automorphic forms of twists type by cube-free Dirichlet characters, then without the cube-free restriction. In particular, (\ref{WeylSubGen}) is established for $\pi=\chi$ being any Dirichlet character. The central main result of this paper is the following generalization of Petrow-Young's cube-free result to totally real number fields, which also improves the third author's previous work \cite{Wu19_S} from the Burgess quality ($3/16$) to the Weyl's ($1/6$) in the cube-free case.
\begin{theorem} \label{WeylBd}
	Let $\chi$ be a Hecke character of a totally real number field $\F$ with cube-free level. Then we have for any $\epsilon > 0$
	$$ L \left( \tfrac{1}{2},\chi \right) \ll_{\F, \epsilon} \Cond(\chi)^{\frac{1}{6}+\epsilon}. $$
	When $[\F:\Q]$ is absolutely bounded, the implied constant depends polynomially on the discriminant $D_{\F}$.
\end{theorem}

	All such Weyl bounds are intimately related to a beautiful formula discovered by Motohashi \cite{Mo93} relating the fourth moment of the Riemann zeta function and the cubic moment of the $L$-functions associated with the automorphic representations for $\SL_2(\Z)$, where the transform formula from the weight functions on the fourth moment side to the cubic moment side is explicitly given. In \cite{CI00} Conrey-Iwaniec established a Motohashi-type formula in the inverse direction, bounding a certain cubic moment of some short family of automorphic $L$-functions of $\GL_2$ by the fourth moment of the corresponding dual family of Dirichlet $L$-functions. Since the cubic moment for $L$-functions of Eisenstein series is the sixth moment of some Dirichlet $L$-functions, their result implies the Weyl-type subconvex bounds in the level aspect for Dirichlet $L$-functions of quadratic characters. The above-mentioned Petrow-Young's work \cite{PY19_CF,PY19_All} are refinements of Conrey-Iwaniec's method. In such methods the estimation is prioritized, so that the relevant Motohashi-type formula is only partly established with specific test functions. In particular, no explicit transform of weight functions in the inverse direction of Motohashi's formula has been obtained.

	The third author obtained a version of Motohashi's formula \cite[Theorem 1.5]{Wu22}, which we rewrite in the special case of trivial (central) characters as follows. Let $\F$ be a number field with adele ring $\A$. Let $S_{\F}$ be the set of places of $\F$, and $S_{\infty}$ be the subset of archimedean places.

\begin{theorem} \label{DMTF}
	Let $S_{\infty} \subset S \subset S_{\F}$ be any finite subset containing divisors of the different ideal of $\F/\Q$. At every place $v \in S$ there is a pair of weight functions $h_v$ and $\widetilde{h}_v$ with the auxiliary \emph{normalized} ones at finite places $\vp < \infty$
\begin{equation} \label{eq: NWt}
	H_{\vp}(\pi_{\vp}) := h_{\vp}(\pi_{\vp}) \frac{L(1, \pi_{\vp} \times \pi_{\vp})}{L(1/2, \pi_{\vp})^3}, \quad \widetilde{H}_{\vp}(\chi_{\vp}) := \frac{\widetilde{h}_{\vp}(\chi_{\vp})}{\extnorm{L(1/2, \chi_{\vp})}^4}, 
\end{equation}
	so that the following equation holds (where $\pi$ runs through cuspidal automorphic forms of $\PGL_2(\A)$)
\begin{multline} \label{MTF}
	\sum_{\pi} \frac{L(1/2, \pi)^3}{2 \Lambda_{\F}(2) L(1,\pi,\mathrm{Ad})} \cdot \prod_{v \mid \infty} h_v(\pi_v) \cdot \prod_{\vp \in S} H_{\vp}(\pi_{\vp}) + \\ 
	\sum_{\chi \in \widehat{\R_+ \F^{\times} \backslash \A^{\times}}} \int_{-\infty}^{\infty} \frac{\extnorm{L(1/2+i\tau, \chi)}^6}{2\Lambda_{\F}(2) \extnorm{L(1+2i\tau,\chi^2)}^2} \cdot \prod_{v \mid \infty} h_v(\pi(\chi_v,i\tau)) \cdot \prod_{\vp \in S} H_{\vp}(\pi(\chi_{\vp},i\tau)) \frac{\ud \tau}{2\pi} + \\
	\frac{1}{\zeta_{\F}^*} \Res_{s = \frac{1}{2}} \frac{\zeta_{\F}(1/2+s)^3\zeta_{\F}(1/2-s)^3}{2\Lambda_{\F}(2) \zeta_{\F}(1+2s)\zeta_{\F}(1-2s)} \cdot \prod_{v \mid \infty} h_v(\pi(\id,s)) \cdot \prod_{\vp \in S} H_{\vp}(\pi(\id,s)) \\
	= \frac{1}{\zeta_{\F}^*} \sum_{\chi \in \widehat{\R_+ \F^{\times} \backslash \A^{\times}}} \int_{-\infty}^{\infty} \extnorm{L(1/2+i\tau, \chi)}^4 \cdot \prod_{v \mid \infty} \widetilde{h}_v(\chi_v \norm_v^{i\tau}) \cdot \prod_{\vp \in S} \widetilde{H}_{\vp}(\chi_{\vp} \norm_{\vp}^{i\tau}) \frac{\ud \tau}{2\pi} + \\
	\frac{1}{\zeta_{\F}^*} \sum_{\pm} \pm \Res_{s_1 = \pm \frac{1}{2}} \zeta_{\F}(1/2+s_1)^2 \zeta_{\F}(1/2-s_1)^2 \cdot \prod_{v \mid \infty} \widetilde{h}_v(\norm_v^{s_1}) \cdot \prod_{\vp \in S} \widetilde{H}_{\vp}(\norm_{\vp}^{s_1}),
\end{multline}
	where we have used the abbreviation $\pi(\chi_v,s) := \pi(\chi_v \norm_v^s, \chi_v^{-1} \norm_v^{-s})$.
\end{theorem}
\begin{remark}
	We only mention that the weight functions $h_v(\cdot)$ and $\widetilde{h}_v(\cdot)$ are certain tempered distributions on $\Psi \in \Sch(\Mat_2(\F_v))$, identified with the relevant notation in \cite{Wu22} as: 
	$$ h_v(\pi_v) = h_v(\pi_v)(\Psi) = M_3(\vec{0}, \Psi \mid \pi_v), \quad \widetilde{h}_v(\chi_v) = \widetilde{h}_v(\chi_v)(\Psi) = M_4(\vec{0}, \Psi \mid \chi_v, \tfrac{1}{2}). $$
	Their precise formulae will be recalled from \cite{Wu22} when we need them, see \eqref{eq: LocDWtDisDef} \& \eqref{eq: LocWtDisDef}. They are instances of distributions $\Theta$ satisfying the following invariance property (\cite[Definition 1.2]{Wu22}):
\begin{equation} \label{eq: LocMotInvProp}
	\Theta \left( \rpL_{a(t_1)} \rpR_{d(t_2z,z)} \Psi \right) = \norm[t_1/t_2]_v \norm[z]_v^{-2} \cdot \Theta(\Psi), \quad \forall \ \Psi \in \Sch(\Mat_2(\F_v)), \ t_1,t_2,z \in \F_v^{\times}. 
\end{equation}
\end{remark}
\begin{remark}
	For simplicity of notation, we write the two \emph{degenerate terms} in the above formula as
\begin{equation} \label{DGF}
	DG(\Psi) = \tfrac{1}{\zeta_{\F}^*} \sideset{}{_{\pm}} \sum \pm \Res_{s_1 = \pm \frac{1}{2}} \zeta_{\F}(1/2+s_1)^2 \zeta_{\F}(1/2-s_1)^2 \cdot \sideset{}{_{v \mid \infty}} \prod \widetilde{h}_v(\norm_v^{s_1}) \cdot \sideset{}{_{\vp \in S}} \prod \widetilde{H}_{\vp}(\norm_{\vp}^{s_1}),
\end{equation}
\begin{equation} \label{DSF}
	DS(\Psi) = \tfrac{1}{\zeta_{\F}^*} \Res_{s = \frac{1}{2}} \frac{\zeta_{\F}(1/2+s)^3\zeta_{\F}(1/2-s)^3}{2\Lambda_{\F}(2) \zeta_{\F}(1+2s)\zeta_{\F}(1-2s)} \cdot \sideset{}{_{v \mid \infty}} \prod h_v(\pi(\id,s)) \cdot \sideset{}{_{\vp \in S}} \prod H_{\vp}(\pi(\id,s)). 
\end{equation}	
\end{remark}

	\subsection{Main Local Results}
	
	Consider a real place at which $\F_v = \R$. Note that both $h_v(\pi(\chi_v, s))$ and $\widetilde{h}_v(\chi_v \norm_v^s)$ depend only on $\chi_v \norm_v^s$. Hence for the re-parametrization 
	$$ \chi_v \norm_v^s = \norm_v^{s_v} \sgn^{\varepsilon_v}, \quad s_v \in \C, \ \varepsilon_v \in \{ 0,1 \} $$
	we shall write
\begin{equation} \label{eq: WtFRealRepar}
	h_v(s_v, \varepsilon_v) := h_v(\pi(\chi_v,s)), \quad \widetilde{h}_v(s_v, \varepsilon_v) := \widetilde{h}_v(\chi_v \norm_v^s).
\end{equation}

\noindent Our first main result is an explicit inverse transform of weight functions at the \emph{real} places.
	
\begin{theorem} \label{ExpInvMF}
	Let $v \mid \infty$ be a real place. Assume $s \mapsto h_v(s,\varepsilon) \in \mathrm{Ad}_{\R}(\C)$ is \emph{admissible} in the sense of Definition \ref{def: AdmWtR}. Let $K(x,\tau)$ be the generalized hypergeometric special value
	$$ K(x,\tau) := \Gamma \left( \frac{1}{2}-i x \right) \GenHyGI{3}{2}{\frac{1}{2}+i\tau, \frac{1}{2}+i\tau, \frac{1}{2}+i\tau}{1-i x + i \tau, 1+2i \tau}{1}. $$
	Then we have the formula
\begin{multline*}
	\widetilde{h}(ix, \varepsilon') = \frac{1}{\pi^2} \sum_{\varepsilon \in \Z/2\Z} \int_{-\infty}^{\infty} \left( \sum_{\pm} (\pm 1)^{\varepsilon} K(\pm x, \tau) \right) \cdot h(i\tau, \varepsilon) \tau \tanh(\pi \tau) \ud \tau \\
	+ (-1)^{\varepsilon} \frac{i}{\pi^2} \sum_{\varepsilon \in \Z/2\Z} \int_{-\infty}^{\infty} \left( \sum_{\pm} (\pm 1)^{\varepsilon} K(\pm x, \tau) \right) \cdot h(i\tau, \varepsilon) \frac{\tau}{\cosh(\pi \tau)} \ud \tau + \\
	(-1)^{\varepsilon'} \sum_{\varepsilon \in \Z/2\Z} \frac{i}{\pi^2 \cosh(\pi x)} \int_{-\infty}^{\infty} \left( \sum_{\pm} (\pm 1)^{\varepsilon} K(\pm x, \tau) \right) \cdot h(i\tau, \varepsilon) \tau \ud \tau.
\end{multline*}
\end{theorem}

	At every place $v \in S_{\F}$, we consider a \emph{target} representation $\pi_{0,v}$ in the unitary principal series given by
\begin{equation}
	\pi_{0,v} = \begin{cases}
		\pi(\sgn^{\varepsilon_{0,v}}, i T_v) & \text{if } \F_v = \R \\
		\pi(\chi_{0,\vp}, \chi_{0,\vp}^{-1}) & \text{if } v = \vp < \infty
	\end{cases}.
\end{equation}
	
\noindent Our second main result is the existence of a weight function $h_v \geq 0$ selecting a \emph{short family} containing $\pi_{0,v}$, and a bound of the corresponding dual weight function.

\begin{theorem} \label{DualWtBd}
	(1) At $\F_v = \R$ there is an admissible $h_v \geq 0$ such that for any $\epsilon > 0$ and $C \gg 1$
	$$ h_v(\pi_{0,v})^{-1} \widetilde{h}_v \left( ix, \varepsilon_v \right) \begin{cases} 
		\begin{matrix} \ll_{\epsilon} (1+\norm[T_v])^{\epsilon} & \forall \ x \in \R \\ 
		\ll_{\epsilon,C} (1+\norm[T_v])^{\epsilon}(1+\norm[x])^{-C} & \text{if } \norm[x] \geq T_v \log^2T_2 \end{matrix} 
	\end{cases}.  $$
	
\noindent (2) At $\vp < \infty$ such that $n_{\vp} := \cond(\chi_{0,\vp}) \in \{ 1,2 \}$ there is an admissible $h_{\vp} \geq 0$ such that
	$$ h_{\vp}(\pi_{0,\vp})^{-1} \widetilde{h}_{\vp}(\chi_{\vp})) \ll \mathbbm{1}_{\leq n_{\vp}}(\cond(\chi_{\vp})). $$
\end{theorem}

\begin{remark} \label{rmk: DiffWithCI}
	Although the proof of Theorem \ref{DualWtBd} (2) has a lot of similarity with \cite{PY19_All}, it should be mentioned that the spectral reciprocity formula hidden in the approach of Conrey--Iwaniec or Petrow--Young is the ``$3+1$'' version of Motohashi's formula, where the fourth moment side is an average of $L(1/2, \chi)^3 L(1/2,\chi^{-1})$ instead of $L(1/2, \chi)^2 L(1/2,\chi^{-1})^2$. Consequently our local dual weight function $\widetilde{h}_{\vp}$ is different from the theirs by a local functional equation \`a la Tate. This accounts for the slightly different form of the algebraic exponential sum we met below in the end part of \S \ref{sec: BdExpSums}. However, this relation is far from being obvious. We plan to clarify it in a future paper where the ``$3+1$'' version will be extensively compared with the ``$2+2$'' version given here. We refer the interested reader to \cite{Kw24, Wu24+, Wu25+} for more information regarding the ``$3+1$'' version of Motohashi's formula.
\end{remark}

	\subsection{Main Global Steps}
	
	Let $\F$ be a totally real number field. Let $\chi_0$ be a (unitary) Hecke character of $\F^{\times} \backslash \A^{\times}$ whose analytic conductor $\Cond(\chi_0)$ tends to $\infty$. Write $\chi_{0,v}(t) = t^{iT_v}$ for $t>0$ and some $T_v \in \R$ (hence $\Cond(\chi_{0,v}) \approx 1+\norm[T_v]$). For every $\epsilon > 0$, we associate a set $B(\chi_0,\epsilon)$ of (cuspidal or continuous) automorphic representations $\pi$ of $\GL_2(\A)$ as follows:
\begin{itemize}
	\item[(1)] At any $v \mid \infty$, $\pi_v\simeq \pi(\norm_v^{i\mu}, \norm_v^{-i\mu})$ is in the principal series, then
	$$ \text{either} \quad \mu - T_v \ll (1+\norm[T_v])^{\epsilon} \quad \text{or} \quad \mu + T_v \ll (1+\norm[T_v])^{\epsilon}; $$
	\item[(2)] At any $\vp < \infty$, we have $\cond(\pi_{\vp} \otimes \chi_{0,\vp}^{-1}) \leq \cond(\chi_{0,\vp})$ (for the current paper we assume $\cond(\chi_{0,\vp}) \leq 2$).
\end{itemize}
	
\noindent We define another set $B^{\vee}(\chi_0,\epsilon)$ of (unitary) Hecke characters $\chi$ of $\F^{\times} \backslash \A^{\times}$ as follows:
\begin{itemize}
	\item[(1)] At any $v \mid \infty$, writing $\chi_v(t)=t^{i\mu}$ for $t > 0$, we have (recall $r = [\F:\Q]$) 
	$$ \norm[\mu] \leq \Cond(\chi_0)^{\frac{\epsilon}{r}} (1+\norm[T_v]) \log (1+\norm[T_v]); $$
	\item[(2)] At any $\vp < \infty$, we have $\cond(\chi_{\vp}) \leq \cond(\chi_{0,\vp})$.
\end{itemize}

\noindent Note that $\pi(\chi_0,\chi_0^{-1}) \in B(\chi_0,\epsilon)$. For any (unitary) Hecke character $\chi$ of $\F^{\times} \backslash \A^{\times}$, we also write $\chi \in B(\chi_0,\epsilon)$ or $\id_{B(\chi_0,\epsilon)}(\chi)=1$ instead of $\pi(\chi,\chi^{-1}) \in B(\chi_0,\epsilon)$ by abuse of notation. 

\noindent Plugging the local estimations of Theorem \ref{DualWtBd} into the formula \eqref{MTF}, and taking into account the non-negativity of the $L$-values on the cubic moment side, we obtain the following bound.
	
\begin{proposition} \label{CubicMBd}
	For any $\epsilon > 0$, we have
\begin{align*}
	&\quad \sum_{\pi \in B(\chi_0,\epsilon)} L \left( \tfrac{1}{2},\pi \right)^3 + \sum_{\chi \in \widehat{\F^{\times} \R_{>0} \backslash \A^{\times}}} \int_{\R} \extnorm{L \left( \tfrac{1}{2}+i\tau,\chi \right)}^6 \id_{B(\chi_0,\epsilon)}(\chi \norm_{\A}^{i\tau}) \ud \tau \\
	&\ll_{\epsilon} \Cond(\chi_0)^{1+\epsilon} + \Cond(\chi_0)^{\epsilon} \sum_{\chi \in \widehat{\F^{\times} \R_{>0} \backslash \A^{\times}}} \int_{\R} \extnorm{L \left( \tfrac{1}{2}+i\tau,\chi \right)}^4 \id_{B^{\vee}(\chi_0,\epsilon)}(\chi \norm_{\A}^{i\tau}) \ud \tau.
\end{align*}
\end{proposition}

	We apply a period version of the Motohashi formula (\ref{MTF}) in the opposite direction, variant and generalization of Sarnak's method \cite{Sar85}, to get an analogue of the spectral large sieve inequality over an arbitrary number field $\F$. Since this result has its own interests, we state it as a theorem. Precisely, given an integral ideal $\idl{N}$ and a number $T_v \geq 1$ for every place $v \mid \infty$, we define $B(\vec{T},\idl{N})$ to be the set of (unitary) Hecke characters $\chi$ of $\F^{\times} \backslash \A^{\times}$ such that:
\begin{itemize}
	\item[(1)] At any $v \mid \infty$, we have $\Cond(\chi_v) \leq T_v$;
	\item[(2)] At any $\vp < \infty$, we have $\cond(\chi_{\vp}) \leq \mathrm{ord}_{\vp}(\idl{N})$.
\end{itemize}

\begin{theorem} \label{4thMBd}
	Let $\F$ be any number field (not necessarily totally real). For any $\epsilon > 0$ we have a bound 
	$$ \sum_{\chi \in \widehat{\F^{\times} \R_{>0} \backslash \A^{\times}}} \int_{\R} \extnorm{L \left( \tfrac{1}{2}+i\tau,\chi \right)}^4 \id_{B(\vec{T},\idl{N})}(\chi \norm_{\A}^{i\tau}) \ud \tau \ll_{\F, \epsilon} \left( \prod_{v \mid \infty} T_v \cdot \Nr(\idl{N}) \right)^{1+\epsilon}, $$
	where the implied constant depends polynomially on the discriminant $D_{\F}$ of $\F$ when the degree $[\F:\Q]$ is absolutely bounded.
\end{theorem}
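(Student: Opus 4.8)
The plan is to deduce Theorem \ref{4thMBd} from the period interpretation underlying Motohashi's formula \eqref{MTF} — the intermediate step in the Michel--Venkatesh/\cite{Wu11} derivation of \eqref{MTF} — by running Sarnak's $\GL_2\times\GL_1$ argument \cite{Sar85, MV10} \emph{backwards}: the fourth moment of $\GL_1$ is the second moment of a single fixed Eisenstein series of $\GL_2$ twisted by Hecke characters, since $L(s,\chi)^2$ is the standard $L$-function of $\pi(\id,\id)\otimes\chi$.

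\smallskip\noindent\textbf{Reduction to a period.} First I would choose a decomposable $\Psi=\otimes_v'\Psi_v\in\Sch(\Mat_2(\A))$ of product shape adapted to $B(\vec{T},\idl{N})$: away from a fixed finite $S\supseteq S_\infty\cup\{\vp\mid\Dis_\F\}$ take $\Psi_\vp=\id_{\Mat_2(\vo_\vp)}$, so that by \eqref{FWt} the factor $M_4(\Psi\mid\chi,s)$ carries the Euler product $L(s,\chi)^2L(1-s,\chi^{-1})^2$; at the remaining places take $\Psi_v\begin{pmatrix}x_1&x_2\\x_3&x_4\end{pmatrix}=\phi_v(x_1)\,\overline{\phi_v(x_2)}\,\overline{\phi_v(x_3)}\,\phi_v(x_4)$ (using the compatibility of $\Sch(\GL_2(\F_v))$ with $\Sch(\Mat_2(\F_v))$ from \S 2 at $v\mid\infty$). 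Then $M_4(\Psi\mid\chi,\tfrac12+i\tau)=\extnorm{\Zeta(\tfrac12+i\tau,\chi,\phi)}^4\ge 0$, and choosing $\phi_v$ to be a suitably amplified smooth indicator of the local conductor condition defining $B(\vec T,\idl N)$ — at $v\mid\infty$ one whose Tate integral $\Zeta_v(\tfrac12+i\tau,\chi_v,\phi_v)$ is $\asymp 1$ over the spectral box of size $T_v$ and rapidly decaying outside it, at $\vp\mid\idl{N}$ the amplified indicator of $1+\vp^{\mathrm{ord}_{\vp}(\idl{N})}\vo_\vp$ — one gets $M_4(\Psi\mid\chi,\tfrac12+i\tau)\asymp\extnorm{L(\tfrac12+i\tau,\chi)}^4$ on $\{\chi\norm_\A^{i\tau}\in B(\vec{T},\idl{N})\}$ with rapid decay off this set. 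Since $M_4(\Psi\mid\chi,\tfrac12+i\tau)\ge 0$ everywhere, \eqref{II_10AC} then shows that the left-hand side of the theorem is $\ll\zeta_\F^*\,M_4(\Psi)$, and it suffices to prove $M_4(\Psi)\ll(\prod_{v\mid\infty}T_v\cdot\Nr(\idl{N}))^{1+\epsilon}$.

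\smallskip\noindent\textbf{The period and its bound.} Up to an explicit positive constant and the degenerate terms $DG(\Psi),DS(\Psi)$ (given by residues via \eqref{DGF}, \eqref{DSF} and estimated separately in \S 5, and of lower order for the present $\Psi$), $M_4(\Psi)$ equals a regularized period over $[\PGL_2]$ of the product $\E$ of two Eisenstein series at the centre against the $\Psi$-twisted Poincaré series; carrying out the unfolding in the direction complementary to the one producing the four Tate integrals rewrites it as the regularized $L^2$-mass of the non-constant part $\varphi-\varphi_{\gp N}$ of a fixed Eisenstein series $\varphi=\eis(0,f)$ of $\PGL_2$ (with $f$ adapted to $\vec T$ and $\idl N$ through $\Psi$) restricted to the diagonal torus $\F^\times\backslash\A^\times\hookrightarrow[\PGL_2]$ — this being Plancherel on $\F^\times\backslash\A^\times\simeq\R_{>0}\times(\F^\times\backslash\A^{(1)})$ applied to $y\mapsto(\varphi-\varphi_{\gp N})(a(y))$, whose $(\chi,i\tau)$-Mellin component is, up to the local factors already built into $\Psi$, the value $L(\tfrac12+i\tau,\chi)^2$. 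I would then bound this regularized toric $L^2$-mass by expanding $\varphi-\varphi_{\gp N}$ in Whittaker functions: the diagonal contribution is $\asymp\prod_{v\mid\infty}T_v\cdot\Nr(\idl{N})$ times the (bounded) square of the first Fourier coefficient of $\varphi$, and the off-diagonal contribution is smaller. The essential point — this is where the argument plays the role of the classical spectral large sieve, and why it must \emph{not} be replaced by a crude $\|\cdot\|_\infty\cdot\Vol(\cdot)$ estimate, which would cost an extra power of $\prod_{v\mid\infty}T_v$ — is that one compares the restricted toric $L^2$-mass to the ambient (regularized) $L^2$-mass rather than to a sup-norm.

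\smallskip\noindent\textbf{Main obstacle.} The chief difficulty is the regularization: $\varphi=\eis(0,f)$ lies in the continuous spectrum, so neither it nor $\varphi-\varphi_{\gp N}$ is square-integrable on $[\PGL_2]$, and one must legitimise the Plancherel step and verify that the polar and constant-term contributions are genuinely of lower order — this is precisely why the period interpretation ``cannot be discarded'' and has independent interest. Secondary difficulties are: proving the $L^2$-restriction/large-sieve bound sharply and \emph{simultaneously} in the archimedean box aspect $\prod_{v\mid\infty}T_v$ and the conductor aspect $\Nr(\idl{N})$ (which requires the local Whittaker/Bessel analysis at the archimedean places and the exact local volume computation for the conductor-raised vectors at $\vp\mid\idl{N}$); and, when $\F$ has unit rank $\ge 1$, that the compact factor $\F^\times\backslash\A^{(1)}$ is positive-dimensional, so the ``archimedean box'' is a box in $\R^{r}$ sliced by the unit lattice — dealt with by elementary lattice-point counting.
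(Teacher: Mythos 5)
You correctly identify the right ingredients — the period interpretation of Motohashi's formula, Sarnak's method, test functions adapted to $B(\vec T,\idl N)$, and the role of the Tate zeta integral — but the central mechanism is misplaced, and the key step is circular as written.

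You say that unfolding the regularized $[\PGL_2]\times[\PGL_2]$-period ``in the direction complementary to the one producing the four Tate integrals'' rewrites $M_4(\Psi)$ as the regularized toric $L^2$-mass of $\varphi-\varphi_{\gp N}$. But the toric $L^2$-mass \emph{is} the fourth-moment unfolding (it is Mellin–Plancherel on $\F^\times\backslash\A^\times$ applied to the four Tate integrals); the \emph{complementary} unfolding gives $M_3(\Psi)$, i.e.\ the spectral decomposition over cuspidal and continuous $\pi$. You then propose to bound the toric $L^2$-mass by a Whittaker (diagonal + off-diagonal) expansion, asserting the off-diagonal is ``smaller.'' The off-diagonal is a shifted-convolution sum over $\F^\times\setminus\{1\}$ and is exactly what is hard to control with the widely-spread, amplified Whittaker functions your test function produces; no argument is offered for it, and this is where the proof as you describe it breaks.

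The paper avoids the off-diagonal entirely by genuinely using the Motohashi formula \eqref{MTF} in the direction $M_4=M_3+DS-DG$ and then bounding $M_3$, not $M_4$, trivially. Concretely: it takes $2^{r}$ test functions $\Psi^{(j)}=\rpR_{n(B^{(j)})}\Psi_0$, translates of the \emph{fixed} spherical Gaussian, indexed by sign choices $\epsilon_v\in\{\pm\}$ at each real place (needed because a single archimedean shift $B_v$ cannot give a lower bound for both even and odd $\chi_v$, cf.\ Lemma \ref{4thMLLowerBdA}(1)). Lemma \ref{4thMGLowerBd} yields $\sum_j M_4(\Psi^{(j)})\gg(\prod T_v\Nr(\idl N))^{-1}\cdot(\text{4th moment})$ — note the \emph{lower} bound carries the $(\prod T_v\Nr(\idl N))^{-1}$ loss, opposite to your normalization. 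Then the period lemma factors $M_3(\Psi^{(j)}\mid\pi)=\Pairing{\eis_0^2-\Reis_0}{\bar e_\pi}\cdot\Zeta(\tfrac12,\pi^\vee(n(B^{(j)})).e_\pi^\vee)$, where $\eis_0^2-\Reis_0$ is a \emph{fixed} $L^2$-function of $[\PGL_2]$ (the product of two Eisenstein series minus its polar part, not $\eis_0$ itself, which is not $L^2$). The sum over $\pi$ of the first factor times a decaying weight in $\Cond(\pi)$ is controlled by a Sobolev norm independent of $\vec T,\idl N$; the entire $\vec T,\idl N$-dependence sits in the second factor, which Lemmas \ref{4thMLUpperBdNA} and \ref{4thMLUpperBdA} bound by $(\prod T_v\Nr(\idl N))^{-1/2+\theta+\epsilon}$. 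This shift of the $\vec T,\idl N$-dependence from the period to a small Tate integral is exactly the ``compare to ambient $L^2$-mass rather than sup-norm'' idea you gesture at, but it is achieved through Motohashi's formula, not through a Whittaker expansion of the toric restriction. Finally, the unit-lattice counting you mention as a difficulty belongs to \S 8 (the proof of Lemma \ref{CubicMBd}), not to the proof of Theorem \ref{4thMBd}, which holds over any number field precisely because the Sobolev/Bessel bound is blind to the unit rank.
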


	Combining Proposition \ref{CubicMBd} and Theorem \ref{4thMBd}, one easily deduces
	$$ \int_{-1}^1 \extnorm{L \left( \tfrac{1}{2}+i\tau,\chi_0 \right)}^6 \ud \tau \ll_{\epsilon} \Cond(\chi_0)^{1+\epsilon} $$
	by positivity of every term on the cubic moment side. We readily get Theorem \ref{WeylBd}.

	\subsection{Discussion on Local Methods}
	
	Fix a $v \in S_{\F}$ and omit it from notation for simplicity.
	
		\subsubsection{Disparity of Bounding Dual Weight Functions}
	
	It is reasonable to expect that for all $\F$ an integral transform relating weight functions should exist in the form
\begin{equation} \label{eq: LocWtTrans}
	\widetilde{h}(\chi) = \int_{\widehat{\PGL_2(\F)}} h(\pi) K(\pi, \chi) \ud \mu_{\mathrm{PL}}(\pi).
\end{equation}
	If the weight function $h(\pi)$ selects a \emph{short family} around a target representation $\pi_0$, then the dual weight function $\widetilde{h}(\chi)$ is an approximation of the value $K(\pi_0, \chi)$. The shorter the relevant family is, the closer $\widetilde{h}(\chi)$ is to $K(\pi_0, \chi)$. If $\pi_0$ appears discretely in $\widehat{\PGL_2(\F)}$, the function $h(\pi)$ can be the characteristic function of $\{ \pi_0 \}$, implying $\widetilde{h}(\chi) = K(\pi_0,\chi)$. Hence different methods employed to bound the dual weight function $\widetilde{h}(\chi)$ could be regarded as bounding the kernel function $K(\pi,\chi)$ itself, via possibly different \emph{integral representations}.
	
	In the situation of the current paper, both weight functions are realized as tempered distributions on the same space of Schwartz functions $\Sch(\Mat_2(\F))$. Hence if a construction of $h$ from a function $\Psi \in \Sch(\Mat_2(\F))$ is available, one may try to bound $\widetilde{h}$ directly from its definition via $\Psi$. In other words, one may try to bound $K(\pi,\chi)$ via an integral representation from $\Psi$. Note that our purpose requires the non-negativity of $h$, which is satisfied if $\Psi$ is of \emph{positive type}, namely of the form $\overline{\phi^{\vee}} * \phi$ for some $\phi \in \Sch(\GL_2(\F))$, where $\phi^{\vee}(g) := \phi(g^{-1})$ and the convolution is essentially \emph{three dimensional} over $\PGL_2(\F)$. In particular, we have $\Psi \in \Sch(\GL_2(\F))$.
	
	For non-archimedean $\F$, we are able to find such $\phi$ (see \eqref{TestPhiNA}), adapted from a clever construction due to Nelson \cite{Ne20}. The point is that this $\phi$ is essentially a linear character of a compact open subgroup of $\GL_2(\F)$, so that the convolution $\overline{\phi^{\vee}} * \phi$ can be easily computed (essentially equal to $\phi$ itself up to a scalar). Consequently the integral representation of $K(\pi,\chi)$ obtained via $\Psi$ can be conveniently analyzed. Moreover the adaptation to $h$ selecting some families of \emph{mixed length}, as required in the application to \cite{AW22}, is convenient. For these reasons, we employ the integral representations of $K(\pi,\chi)$ via $\Psi$ at non-archimedean $\F$. (See Remark \ref{rmk: GenLocTransF} for the possibility of an alternative approach.)
	
	By contrast, for an archimedean $\F$ the ``analytic new vector'' construction proposed by Nelson in \cite[Lemma 13.1]{Ne20} does not seem to select relevant families as short as the purpose of the current paper requires (see also \cite{JN19+}, expecially \cite[Theorem 1]{BJN23}). While we are able to produce some $\phi$ for short families at \emph{real} places, we are not able to give simple description of the convolution $\overline{\phi^{\vee}} * \phi$. However, it would definitely be interesting if one finds a method to bound the dual weight by its integral representation from $\Psi = \overline{\phi^{\vee}} * \phi$, say for either our $\phi$, or a new generation of ``analytic new vectors'' which can select short families. It would also be extremely interesting if such a method avoids special functions.\footnote{For our $\phi$ it does not seems to be the case.}
	
	On the other hand, the pattern of the behavior of the dual weight function $\widetilde{h}$ for weight functions $h$ selecting short families turns out to be universal for the ``$3+1$'' generalization of Motohashi's formula (see \cite{Wu25+} for the non-archimedean case). The current paper corresponds to the very special case where the $\GL_3(\F)$ representation $\Pi = \id \boxplus \id \boxplus \id$ is induced from the minimal parabolic subgroup. Taking into account Sakellaridis's work \cite[\S 2.9]{Sa13}, an integral representation of the dual weight function $\widetilde{h}$ via $\Psi \in \Sch(\Mat_2(\F))$ may exist also in the case $\Pi = \pi \boxplus \mu$, where $\pi$ is a dihedral representation of $\GL_2(\F)$. But we have no idea how such integral representation may exist for supercuspidal $\Pi$. 

\begin{remark}
	We do not mean that pure tensors in $\Sch(\F^2) \otimes \Sch(\F^2) \subset \Sch(\Mat_2(\F))$ of the form
	$$ \Psi \left( \begin{pmatrix} x_1 & x_2 \\ x_3 & x_4 \end{pmatrix} \right) = \Psi_1(x_1,x_2) \Psi_2(x_3,x_4) $$
	do not produce interesting $h \geq 0$ (for archimedean $\F$). They are just more difficult to find. In fact, the relative orbital integrals $I(t, \Psi)$ given below in \eqref{GeomIntR} for pure tensors is already large (at least it contains $\Sch(\F^{\times})$), and may contain the $h$ produced in this paper. But it looks difficult to verify if the space of pure tensors contains sufficiently many test functions of positive type. One would need to find another sufficient condition for the non-negativity of $h \geq 0$ if one sticks to pure tensor test functions. Note that the pure tensor test functions correspond to those offered by the period approach of Motohashi's formula, along \cite[\S 4.5.3]{MV10} and \cite{Ne20}. It is for the same reason that Nelson \cite[\S 2.2]{Ne20} needed to pass to \emph{complete} tensor product.
\end{remark}

		\subsubsection{Bessel-like Transforms and Inversions}
		
	Write $\gp{G} = \PGL_2(\F)$. Let $\gp{N} = \gp{N}(\F)$, resp. $\gp{A} = \gp{A}(\F)$ be the subgroup of upper triangular, resp. diagonal matrices. The \emph{Whittaker-Plancherel theorem} is the spectral analysis of the space
	$$ \intL^2(\gp{N} \backslash \gp{G}, \psi) := \left\{ f: \gp{G} \to \C \ \middle| \ f \left( \begin{pmatrix} 1 & x \\ & 1 \end{pmatrix} g \right) = \psi(x) f(g), \ \int_{\gp{N} \backslash \gp{G}} \extnorm{f(g)}^2 \ud g < \infty \right\}, $$
	for the right regular representation of $\gp{G}$, known in quite general setting \cite[\S 15]{Wal92}. The Bessel-Plancherel theorem is the spectral analysis of the space
	$$ \intL^2(\gp{G} // \gp{N}, \psi) := \left\{ f: \gp{G} \to \C \ \middle| \ f \left( \begin{pmatrix} 1 & x_1 \\ & 1 \end{pmatrix} g \begin{pmatrix} 1 & x_2 \\ & 1 \end{pmatrix} \right) = \psi(x_1+x_2) f(g), \ \int_{\gp{N} \backslash \gp{G} / \gp{N}} \extnorm{f(g)}^2 \ud g < \infty \right\}, $$
	and is known only for $\F=\R$ \cite[B.5]{Iw92} and for $\F=\C$ \cite[\S 11]{BrM03}. It is not known for non-archimedean $\F$, nor for other group $\gp{G}$. Note that a naive integration of the Whittaker-Plancherel theorem against $\psi$ on $\gp{N}$ by the right translation has severe convergence issue.
	
	Similarly we may consider the \emph{Linear-Plancherel problem}, which is the spectral analysis of
	$$ \intL^2(\gp{A} \backslash \gp{G}) := \left\{ f: \gp{G} \to \C \ \middle| \ f \left( \begin{pmatrix} t & \\ & 1 \end{pmatrix} g \right) = f(g), \ \int_{\gp{A} \backslash \gp{G}} \extnorm{f(g)}^2 \ud g < \infty \right\}, $$
	for the right regular representation of $\gp{G}$. It turns out that the equation \eqref{eq: LocWtTrans} is essentially equivalent to the \emph{$\mathcal{L}$-Plancherel problem}, which is the spectral analysis of
	$$ \intL^2(\gp{G} // \gp{A}) := \left\{ f: \gp{G} \to \C \ \middle| \ f \left( \begin{pmatrix} t_1 & \\ & 1 \end{pmatrix} g \begin{pmatrix} t_2 & \\ & 1 \end{pmatrix} \right) = f(g), \ \int_{\gp{A} \backslash \gp{G} / \gp{A}} \extnorm{f(g)}^2 \ud g < \infty \right\}. $$
	Our method indicates the possibility to deduce the \emph{spectral decomposition} of $\intL^2(\gp{G} // \gp{A})$ from that of $\intL^2(\gp{A} \backslash \gp{G})$. The key idea is to consider test functions on $\gp{G}$ of convolution type.

\begin{remark}	
	If we restrict the test functions from $\Sch(\Mat_2(\F))$ to $\Sch(\GL_2(\F))$, and consider the transform
	$$ \Sch(\GL_2(\F)) \to \Sch(\gp{G}), \quad \Psi \mapsto \phi(g) := \int_{\F^{\times}} \Psi(zg) \norm[z]_{\F}^2 \ud^{\times} z \cdot \norm[\det g]_{\F}, $$
then the invariance property of $h(\pi)(\Psi)$ and $\widetilde{h}(\chi)(\Psi)$ recalled in \eqref{eq: LocMotInvProp} shows that they are bi-$\gp{A}$-invariant functionals on $\phi \in \Sch(\gp{G})$. This explains the equivalence of our weight transform formula with the above $\mathcal{L}$-Plancherel problem.
\end{remark}
	
\begin{remark}
	Let $\pi \in \widehat{\gp{G}}$. A $\psi$-Whittaker functional $\ell$ is a continuous linear functional on $V_{\pi}^{\infty}$ with
	$$ \ell \left( \begin{pmatrix} 1 & x \\ & 1 \end{pmatrix}.v \right) = \psi(x) \ell(v), \quad \forall \ v \in V_{\pi}^{\infty}, \ x \in \F. $$
	A \emph{linear functional} $\ell$ is a continuous functional on $V_{\pi}^{\infty}$ with
	$$ \ell \left( \begin{pmatrix} t & \\ & 1 \end{pmatrix}.v \right) = \ell(v), \quad \forall \ v \in V_{\pi}^{\infty}, \ t \in \F^{\times}. $$
	The existence and uniqueness of a linear functional are equivalent to those of a $\psi$-Whittaker functional. This has generalization to higher rank groups \cite{JR96}. It is a consequence of some principles in the relative Langlands program that the spectral analysis of $\intL^2(\gp{G} // \gp{N}, \psi)$ and $\intL^2(\gp{G} // \gp{A})$ should be deducible from each other. In fact, an explicit invertible transform $T$ between some dense subspaces of $\intL^2(\gp{G} // \gp{N}, \psi)$ and $\intL^2(\gp{G} // \gp{A})$ commuting with the action of Hecke algebra is found by Sakellaridis \cite[\S 5]{Sa13}. It can be verified that this transform $T$ also commutes with the spectral projections.
\end{remark}

\begin{remark} \label{rmk: GenLocTransF}
	In a forthcoming paper \cite{Wu25+_Temp} the method in this paper will be extended (with technical simplification) to study the spectral analysis of some spaces including $\intL^2(\gp{G} // \gp{A})$. In particular, for $\pi = \pi(\chi_0,\chi_0^{-1})$ a principal or complementary series the formulae of the kernel functions in \eqref{eq: LocWtTrans}
	$$ Q_{\pi}(m) := \zeta_{\F}(1) \int_{\F} \chi_0 \left( \frac{(1-x)x}{x-m} \right) \frac{\ud x}{\norm[(1-x)x(x-m)]_{\F}^{\frac{1}{2}}}, $$
	$$ K(\pi,\chi) := \int_{\F^{\times}} Q_{\pi} \left( \frac{t}{t-1} \right) \chi(t) \norm[t]_{\F}^{\frac{1}{2}} \frac{\ud^{\times} t}{\norm[t-1]_{\F}} $$
will be established uniformly over local fields $\F$. Note that in the case of $\F=\R$ we are able to identify $t \mapsto Q_{\pi} \left( \tfrac{t}{t-1} \right)$ with some hypergeometric functions. We are unable to do so for $\F=\C$. However, we believe this difficulty is resolvable or dispensable. At least Qi succeeded in identifying the kernel functions (in the ``$3+1$'' version) with hypergeometric functions in a recent work \cite{Qi23+}.
\end{remark}

	\subsubsection{Test Functions}
	
	For a test function of positive type $\Psi = \overline{\phi^{\vee}} * \phi$, the local weight function becomes $h(\pi)(\Psi) = \Norm[v(\phi \mid \pi)]^2$, where $v(\phi \mid \pi)$ is the spectral projection of $\widetilde{\phi}(g) := \int_{\gp{A}} \phi^{\vee}(ag) \ud a \in \intL^2(\gp{A} \backslash \gp{G})$ onto the $\pi$-isotypic part. See Remark \ref{rmk: WtAsSqNr} below, which is stated for $\F=\R$ but valid for all local fields $\F$. Hence we are led to finding $\widetilde{\phi}$ whose spectral projections concentrate in a short family of $\pi$. 
	
	At a real place $\F=\R$, we are exclusively interested in short families contained in the spherical series of $\pi$. Hence we consider $\gp{K}$-invariant $\widetilde{\phi} \in \intL^2(\gp{A} \backslash \gp{G} / \gp{K})$ for $\gp{K}=\SO_2(\R)$, which are essentially functions on $\gp{N} \simeq \R$. The spectral projection in $\intL^2(\gp{A} \backslash \gp{G} / \gp{K})$ is essentially the classical Kontorovich--Lebedev transform. Our key observation is that this transform can be decomposed into the composition of two Fourier(-Mellin) transforms with an elementary transform (see Lemma \ref{MPTrans}). We are able to find a subspace of the Schwartz space $\Sch(\R) \simeq \Sch(\gp{N})$, and characterize its image under the Kontorovich--Lebedev transform. Some Gaussian-type functions selecting short families lie in the image.
	
	At a non-archimedean place, we follow similar idea. Since the set of compact subgroups is richer than in the archimedean case, some linear characters $\phi_0$ of certain subgroups of $\gp{K} = \GL_2(\vo)$ discovered by Nelson already select short families of $\pi$. We then adjust the ``free variable'' $\gp{N}$ in $\gp{G} = \gp{A} \gp{N} \gp{K}$ to find a suitable $\phi$ not making $h(\pi)(\Psi)$ identically vanish, which turns out to be a simple right translation of $\phi_0$.
	
\begin{remark}
	It would be worth mentioning an advantage of the above consideration of test functions of convolution type, compared with the analysis of the test functions in the classical Petersson-Kuznetsov formulae. Our method makes use of the Iwasawa decomposition, which is smooth as a map of manifolds. The analysis in the Petersson-Kuznetsov formulae, such as \cite[\S 3, \S 8]{KL13} makes use of the Cartan decomposition $\gp{G} = \gp{K} \gp{A} \gp{K}$, which has non-trivial analytic issue when the $\gp{A}$-coordinate approaches the identity element. Therefore in our analysis of test function we don't have those subtleties encountered in the classical Petersson-Kuznetsov formulae. We also note that our test functions $\Psi$ at a real place is \emph{not} bi-$\SO_2(\R)$-invariant, although $\phi$ is left $\SO_2(\R)$-invariant.
\end{remark}

\section{Preliminaries}

	\subsection{Notation and Convention}
	\label{Notation}
	
		\subsubsection{Special Functions}
		
	It will be notationally convenient to express the formulae in terms of the following variant of the hypergeometric functions.
\begin{definition}
	Let $(p,q) = (2,1)$ or $(3,2)$. We introduce ${}_p\mathrm{I}_q$ by
	$$ \GenHyGI{p}{q}{a_1,a_2,\cdots,a_p}{b_1,b_2,\cdots,b_q}{z} := \frac{\Gamma(a_1)\Gamma(a_2)\cdots \Gamma(a_p)}{\Gamma(b_1)\Gamma(b_2)\cdots \Gamma(b_q)} \GenHyG{p}{q}{a_1,a_2,\cdots,a_p}{b_1,b_2,\cdots,b_q}{z}. $$
	In the case $p=2$ and $q=1$ we also write $\HyGI(a,b;c;z)$ resp. $\HyG(a,b;c;z)$ for the same function above.
\end{definition}
	
\noindent For example, we have for $\Re c > \Re b > 0$
	$$ \HyGI(a,b;c;z) = \frac{\Gamma(a)}{\Gamma(c-b)} \int_0^1 t^{b-1}(1-t)^{c-b-1}(1-zt)^{-a}\ud t, \quad \norm[\arg(1-z)] < \pi $$
	where the integral is Euler's integral for hypergeometric functions. We will encounter many such functions with $c=a+b$ in this paper.
	
	We denote by $\BesselK_{\nu}(x)$ the standard modified Bessel function, given by (see \cite[10.32.9]{OLBC10})
\begin{equation} \label{eq: KBesselIntRep}
	\BesselK_{\nu}(x) = \int_0^{\infty} e^{-x \cosh(t)} \cosh(\nu t) \ud t, \quad \norm[\arg(x)] < \frac{\pi}{2}.
\end{equation}

		\subsubsection{Number Theoretic Notation}
		
	Throughout the paper, $\F$ is a (fixed) totally real number field with ring of integers $\vo$ and of degree $r=[\F : \Q]$. Let its absolute discriminant be $D_{\F}$. $S_{\F}$ resp. $S_{\infty}$ denotes the set of places resp. infinite places of $\F$ and for any $v \in S_{\F}$, $\F_v$ is the completion of $\F$ with respect to the absolute value $\norm_v$ corresponding to $v$. $\norm[x]_{\A} = \sideset{}{_v} \prod \norm[x_v]_v$ for $x=(x_v)_{v \in S_{\F}} \in \A$ is the adelic norm. $\A = \A_\F$ is the ring of adeles of $\F$, while $\A^{\times}$ denotes the group of ideles. We often write a finite place by $\vp$, which also denotes its corresponding prime ideal of $\vo$. We fix a section $s_{\F}$ of the adelic norm map $\norm_{\ag{A}}: \ag{A}^{\times} \to \ag{R}_+$, identifying $\R_+$ as a subgroup of $\A^{\times}$. Hence any character of $\R_+ \F^{\times} \backslash \A^{\times}$ is identified with a character of $\F^{\times} \backslash \A^{\times}$.
	
	We put the standard Tamagawa measure $\ud x = \sideset{}{_v} \prod \ud x_v$ on $\A$ resp. $\ud^{\times} x = \sideset{}{_v} \prod \ud^{\times} x_v$ on $\A^{\times}$. We recall their constructions. Let $\Tr = \Tr_{\Q}^{\F}$ be the trace map, extended to $\A \to \A_{\Q}$. Let $\psi_{\Q}$ be the additive character of $\A_{\Q}$ trivial on $\Q$, restricting to the infinite place as
	$$ \Q_{\infty} = \R \to \C^{(1)}, \quad x \mapsto e^{2\pi i x}. $$
	We put $\psi = \psi_{\Q} \circ \Tr$, which decomposes as $\psi(x) = \sideset{}{_v} \prod \psi_v(x_v)$ for $x=(x_v)_v \in \A$. $\ud x_v$ is the additive Haar measure on $\F_v$, self-dual with respect to $\psi_v$. Precisely, if $\F_v = \R$, then $\ud x_v$ is the usual Lebesgue measure on $\R$; if $v = \vp < \infty$ such that $\vo_{\vp}$ is the valuation ring of $\F_{\vp}$ with prime ideal $\vp$ and a fixed uniformizer $\varpi_{\vp}$, then $\ud x_{\vp}$ gives $\vo_{\vp}$ the mass $D_{\vp}^{-1/2}$, where $D_{\vp}$ is the local component at $\vp$ of the discriminant $D_\F$ of $\F/\Q$ such that $D_\F = \sideset{}{_{\vp < \infty}} \prod D_{\vp}$. Consequently, the quotient space $\F \backslash \A$ with the above measure quotient by the discrete measure on $\F$ admits the total mass $1$ \cite[Ch.\Rmnum{14} Prop.7]{Lan03}. Recall the local zeta-functions: if $\F_v = \R$, then $\zeta_v(s) = \Gamma_{\R}(s) = \pi^{-s/2} \Gamma(s/2)$; if $v=\vp < \infty$ then $\zeta_{\vp}(s) = (1-q_{\vp}^{-s})^{-1}$, where $q_{\vp} := \Nr(\vp)$ is the cardinality of $\vo/\vp$. We then define the local measures on $\F_v^{\times}$ as
	$$ \ud^{\times} x_v := \zeta_v(1) \frac{\ud x_v}{\norm[x]_v}. $$
	In particular, $\Vol(\vo_{\vp}^{\times}, \ud^{\times} x_{\vp}) = \Vol(\vo_{\vp}, \ud x_{\vp})$ for $\vp < \infty$. Their product gives a measure of $\A^{\times}$. Equip $s_{\F}(\R_+)$ with the measure $\ud t/t$ on $\R_+$, where $\ud t$ is the (restriction of the) usual Lebesgue measure on $\R$, and $\F^{\times}$ with the counting measure. Then we have
\begin{equation} \label{eq: VolIdelicQuotient}
	\Vol(\R_+ \F^{\times} \backslash \A^{\times}) = \zeta_{\F}^*, 
\end{equation}
	where $\zeta_{\F}^*$ is the residue at $1$ of the Dedekind zeta function $\zeta_{\F}(s)$.
	
	For $\vp < \infty$, let $\Sch(\F_{\vp}) = \Cont_c^{\infty}(\F_{\vp})$. We call $\Sch(\A) = \otimes_v' \Sch(\F_v)$ the space of Schwartz functions over $\A$. Let $\chi \in \widehat{\F^{\times} \backslash \A^{\times}}$ be a unitary Hecke character, and let $f \in \Sch(\A)$. Tate's global zeta function is defined by
	$$ \Zeta(s,\chi,f) := \int_{\A^{\times}} f(x) \chi(x) \norm[x]_{\A}^s \ud^{\times} x. $$
	These are integral representations of the complete Dedekind zeta function $\Lambda_{\F}(s)$, which has residue $\zeta_{\F}^*$ at $s=1$. If $f=\otimes_v' f_v$ is decomposable, then $\Zeta(s,\chi,f) = \sideset{}{_{v \in S_{\F}}} \prod \Zeta_v(s,\chi_v,f_v)$ is decomposable, too. For $v \mid \infty$, we recall the local functional equation
	$$ \Zeta_v(s,\id,f_v) = \frac{\Gamma_{\R}(s)}{\Gamma_{\R}(1-s)} \Zeta_v(1-s,\id,\hat{f}_v), \quad \Gamma_{\R}(s):= \pi^{-\frac{s}{2}} \Gamma\left( \frac{s}{2} \right), $$
	where $\hat{f}_v(x) := \int_{\R} f_v(y) \psi_v(xy) \ud y$ is the $\psi_v$-Fourier transform of $f_v$.
	
	We also use $\OFour$ for the Fourier transform defined via $\psi$ or $\psi_v$ above. If the transform is defined with respect to a variable indexed by $j$, we write $\OFour_j$ for the partial Fourier transform.

		\subsubsection{Automorphic Representation Theoretic Notation}
		
	For a ring with unity $R$ such as $\F_v$ or $\A$, we define the following subgroups of $\GL_2(R)$
	$$ \gp{Z}(R) = \left\{ z(u) := \begin{pmatrix} u & 0 \\ 0 & u \end{pmatrix} \ \middle| \ u \in R^{\times} \right\}, \quad \gp{N}(R) = \left\{ n(x) := \begin{pmatrix} 1 & x \\ 0 & 1 \end{pmatrix} \ \middle| \ x \in R \right\}, $$
	$$ \gp{A}(R) = \left\{ a(y) := \begin{pmatrix} y & 0 \\ 0 & 1 \end{pmatrix} \ \middle| \ y \in R^{\times} \right\}, \quad \gp{A}(R)\gp{Z}(R) = \left\{ d(t_1,t_2) := \begin{pmatrix} t_1 & \\ & t_2 \end{pmatrix} \ \middle| \ t_1,t_2 \in R^{\times} \right\}. $$
The Haar measures on them are defined in terms of the Haar measures on $R^{\times}, R$ when the latter are defined previously. The product $\gp{B} := \gp{Z} \gp{N} \gp{A}$ is a Borel subgroup of $\GL_2$. We pick the standard maximal compact subgroup $\gp{K} = \sideset{}{_v} \prod \gp{K}_v$ of $\GL_2(\ag{A})$ by defining
	$$ \gp{K}_v = \left\{ \begin{matrix} \SO_2(\ag{R}) & \text{if } v \mid \infty \\ \GL_2(\vo_{\vp}) & \text{if } v = \vp < \infty \end{matrix} \right. , $$
and equip it with the Haar \emph{probability} measure $\ud \kappa_v$. Note that at $v \mid \infty$, this measure coincides with
	$$ \ud g = \frac{\ud X}{\norm[\det X]^2} =  \frac{\ud x_1\ud x_2\ud x_3\ud x_4}{\norm[x_1x_4-x_2x_3]^2}, \quad g = X = \begin{pmatrix} x_1 & x_2 \\ x_3 & x_4 \end{pmatrix} \in \GL_2(\R). $$
	We then define and equip the quotient space
	$$ [\PGL_2] := \gp{Z}(\ag{A}) \GL_2(\F) \backslash \GL_2(\ag{A}) = \PGL_2(\F) \backslash \PGL_2(\ag{A}) $$
with the product measure $\ud \bar{g} := \sideset{}{_v} \prod \ud \bar{g}_v$ on $\PGL_2(\ag{A})$ quotient by the discrete measure on $\PGL_2(\F)$.

	Let $\intL^2(\PGL_2)$ denote the (Hilbert) space of Borel measurable functions $\varphi$ satisfying
	$$ \left\{ \begin{matrix} \varphi(z \gamma g) = \varphi(g), \quad \forall \gamma \in \GL_2(\F), z \in \gp{Z}(\ag{A}), g \in \GL_2(\ag{A}), \\ \int_{[\PGL_2]} \norm[\varphi(g)]^2 \ud \bar{g} < \infty. \end{matrix} \right. $$
	Let $\intL_0^2(\PGL_2)$ denote the subspace of $\varphi \in \intL^2(\PGL_2)$ such that its \emph{constant term}
\begin{equation} \label{eq: ConstTerm}
	\varphi_{\gp{N}}(g) := \int_{\F \backslash \ag{A}} \varphi(n(x)g) \ud x = 0, \quad \text{a.e. } \bar{g} \in [\PGL_2].
\end{equation}
	$\intL_0^2(\PGL_2)$ is a closed subspace of $\intL^2(\PGL_2)$. $\GL_2(\ag{A})$ acts on $\intL_0^2(\PGL_2)$ resp. $\intL^2(\PGL_2)$, giving rise to a unitary representation $\rpR_0$ resp. $\rpR$. The ortho-complement of $\rpR_0$ in $\rpR$ is the orthogonal sum of the one-dimensional spaces
	$$ \ag{C} \left( \xi \circ \det \right) : \quad \xi \text{ Hecke character such that } \xi^2 = \id $$
and $\rpR_c$, which can be identified as a direct integral representation over the unitary dual of $\F^{\times} \backslash \ag{A}^{\times} \simeq \ag{R}_+ \times (\F^{\times} \backslash \ag{A}^{(1)} )$. More precisely, let $\tau \in \ag{R}$ and $\chi$ be a unitary character of $\F^{\times} \R_+ \backslash \ag{A}$, we associate a unitary representation $\pi(\chi,i\tau)$ of $\GL_2(\ag{A})$ on the following Hilbert space $V_{\chi,i\tau}$ of functions via right regular translation
	$$ \left\{ \begin{matrix} f\left( \begin{pmatrix} t_1 & x \\ 0 & t_2 \end{pmatrix} g \right) = \chi(t_1) \chi^{-1}(t_2) \extnorm{\frac{t_1}{t_2}}_{\ag{A}}^{\frac{1}{2}+i\tau} f(g), \quad \forall t_1,t_2 \in \ag{A}^{\times}, x \in \ag{A}, g \in \GL_2(\ag{A}) ; \\ \int_{\gp{K}} \norm[f(\kappa)]^2 \ud \kappa < \infty . \end{matrix} \right. $$
	If $f_{i\tau} \in V_{\chi,i\tau}$ satisfies $f_{i\tau} \mid_{\gp{K}} = f$ is independent of $\tau$, we call it a flat section, which has an obvious extension to $f_s \in \pi(\chi,s)$ for $s \in \C$. Then $\pi(\chi,i\tau)$ is realized as an irreducible integrand of $\rpR_c$ via the Eisenstein series/intertwiner
\begin{equation} \label{eq: EisDef}
	\eis(s,f)(g) = \eis(f_s)(g) := \sum_{\gamma \in \gp{B}(\F) \backslash \PGL_2(\F)} f_s(\gamma g), 
\end{equation}
	absolutely convergent for $\Re s > 1/2$ and admits a meromorphic continuation which is regular at $s=i\tau$.
	
	Let $\pi$ be any irreducible summand (resp. integrand) of $\rpR_0$ (resp. $\rpR_c$), called cuspidal (resp. continuous) automorphic representation. Let $e_2 \in V_{\pi}^{\infty}$, the subspace of the underlying Hilbert space of $\pi$ consisting of smooth vectors, and let $e_1 \in V_{\pi^{\vee}}^{\infty}$ be an element of the smooth dual space of $V_{\pi}^{\infty}$. The associated function on $\GL_2(\A)$
	$$ \beta(g) = \beta(e_2,e_1)(g) := \Pairing{\pi(g).e_2}{e_1} $$
	is called a matrix coefficient of $\pi$. For $\Psi \in \Sch(\Mat_2(\A))$, the Godement-Jacquet zeta function
	$$ \Zeta(s, \Psi, \beta) := \int_{\GL_2(\A)} \Psi(g) \beta(g) \norm[\det g]_{\A}^{s+\frac{1}{2}} \ud g $$
	is absolutely convergent for $\Re s \gg 1$ and admits a meromorphic continuation to $s \in \C$. For $\varphi \in V_{\pi}^{\infty}$ with the constant term defined by the formula \eqref{eq: ConstTerm}, the Hecke-Jacquet-Langlands zeta function
	$$ \Zeta(s,\varphi) := \int_{\F^{\times} \backslash \A^{\times}} \left( \varphi - \varphi_{\gp{N}} \right)(a(y)) \norm[y]_{\A}^s \ud^{\times}y $$
	is absolutely convergent for $\Re s \gg 1$ and admits a meromorphic continuation to $s \in \C$. Both zeta functions have obvious analogues for $\pi(\chi,s)$ with arbitrary $s \in \C$ and for flat sections, in which case we write $\beta_s(e_2,e_1)$ for $\beta(e_{2,s},e_{1,-s})$.
	
	At a finite place $\vp$, we introduce the subgroups $\gp{K}_0[\vp^n]$ and $\gp{K}_1[\vp^n]$ of $\gp{K}_{\vp}$ for $n \in \Z_{\geq 0}$
	$$ \gp{K}_0[\vp^n] = \left\{ \begin{pmatrix} a & b \\ c & d \end{pmatrix} \in \gp{K}_{\vp} \ \middle| \ c \in \vp^n \right\}, \quad \gp{K}_1[\vp^n] = \left\{ \begin{pmatrix} a & b \\ c & d \end{pmatrix} \in \gp{K}_{\vp} \ \middle| \ c,d-1 \in \vp^n \right\}. $$
	For a generic irreducible representation $\pi$ of $\GL_2(\F_{\vp})$, the conductor exponent $\cond(\pi)$ is the least integer $n \geq 0$ such that $\pi$ contains a non zero vector invariant by $\gp{K}_1[\vp^n]$. The conductor $\Cond(\pi)=\Nr(\vp)^{\cond(\pi)}$.
	
		\subsubsection{Other Notation}
	
	$\Mat_2(\A)$ admits an action of $\GL_2(\A) \times \GL_2(\A)$ which induces an action on $\Sch(\Mat_2(\A))$
	$$ \rpL_{g_1} \rpR_{g_2} \Psi(x) := \Psi \left( g_1^{-1} x g_2 \right), \quad \forall g_1,g_2 \in \GL_2(\A), \Psi \in \Sch(\Mat_2(\A)). $$
	This is a smooth Fr\'echet-representation. We also have the obvious local version. The derived action of the universal enveloping algebra of the Lie algebra of $\GL_2(\R)$ at an archimedean place is written as $\rpL(X)$ resp. $\rpR(X)$.

	\subsection{Compatibility of Schwartz Functions}
	
	If $\F = \R$ or $\C$, we define a norm $\Norm$ on $\GL_2(\F)$ by
	$$ \Norm[g] := \Tr (g g^*) + \Tr (g^{-1} (g^{-1})^*), \quad \text{or equivalently} \quad \extNorm{\begin{pmatrix} x_1 & x_2 \\ x_3 & x_4 \end{pmatrix}} := \sum_{i=1}^4 \norm[x_i]^2 + \frac{\sum_{i=1}^4 \norm[x_i]^2}{\norm[x_1x_4-x_2x_3]^2}. $$
	Recall (see \cite[\S 7.1.2]{Wal88}) that a Schwartz function $\phi \in \Sch(\GL_2(\F))$ is a smooth one such that for any $X, Y$ in the enveloping algebra of the complexified Lie algebra of $\GL_2(\F)$ and any $r \geq 0$
	$$ \sup_{g \in \GL_2(\F)} \Norm[g]^r \extnorm{ \rpL(X) \rpR(Y) \phi (g) } < +\infty. $$
	It is closed under convolution \cite[Theorem 7.1.1]{Wal88}.

\begin{proposition} \label{SchExt}
	Let $\F$ be any local field. Any $\phi \in \Sch(\GL_2(\F))$ can be extended by $0$ to a function $\Psi \in \Sch(\Mat_2(\F))$.
\end{proposition}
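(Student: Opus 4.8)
The plan is to verify directly that the zero-extension of $\phi$ satisfies the definition of a Schwartz function on $\Mat_2(\F)$, treating the archimedean and non-archimedean cases separately. For $\F$ non-archimedean the statement is essentially trivial: $\phi \in \Sch(\GL_2(\F)) = \Cont_c^{\infty}(\GL_2(\F))$ already has compact support inside the \emph{open} set $\GL_2(\F) \subset \Mat_2(\F)$, so its zero-extension $\Psi$ is locally constant with compact support on $\Mat_2(\F)$, hence lies in $\Sch(\Mat_2(\F)) = \Cont_c^{\infty}(\Mat_2(\F))$. So the content is at the real (and complex) places, and I will focus there.

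For $\F = \R$ or $\C$, recall that $\Sch(\Mat_2(\F))$ is the usual space of Schwartz functions on the four-dimensional vector space $\Mat_2(\F)$: smooth functions all of whose derivatives decay faster than any polynomial. I first check smoothness of $\Psi$ across the boundary $\partial \GL_2(\F) = \{\det = 0\}$. The key observation is that the defining norm satisfies $\Norm[g] \geq \norm[\det g]^{-2} \cdot (\text{something})$; more precisely $\Norm[g] \to \infty$ as $g$ approaches the singular locus, because the term $\sum_i \norm[x_i]^2 / \norm[x_1x_4 - x_2x_3]^2$ blows up (the numerator is bounded below away from $0$ near any nonzero singular matrix, and $=0$ only at the origin, which is not a limit point of a set on which the numerator stays bounded away from zero). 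Hence the Schwartz condition $\sup_{g} \Norm[g]^r \norm[\rpL(X)\rpR(Y)\phi(g)] < \infty$ forces $\phi$ together with all its left/right invariant derivatives to vanish to infinite order as $g \to \partial\GL_2(\F)$. Since the invariant derivatives $\rpL(X), \rpR(Y)$ span (over $\Cont^{\infty}$) the same module of differential operators as the coordinate partials $\partial/\partial x_i$ on the open set $\GL_2(\F)$, this gives that all Euclidean partial derivatives of $\phi$ extend continuously by $0$ across $\{\det = 0\}$; a standard Taylor/Whitney argument then shows $\Psi \in \Cont^{\infty}(\Mat_2(\F))$.

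It remains to check the rapid decay of $\Psi$ and all its partials at infinity in $\Mat_2(\F)$. On the region where $\det$ stays bounded away from $0$ this is immediate, since there $\Norm[g]$ is comparable to the Euclidean norm $\sum_i \norm[x_i]^2$ and the coordinate partials are again expressible through $\rpL(X)\rpR(Y)$ with coefficients of polynomial growth, so $\phi$'s rapid decay transfers. On the region where $\det \to 0$, $\Psi$ is actually identically $0$ in a neighborhood of each such point (by the infinite-order vanishing just established, combined with the fact that $\Norm[g]\to\infty$ there), so decay is automatic. Patching these with a partition of unity subordinate to $\{\norm[\det g] > c\} \cup \{\norm[\det g] < 2c\}$ completes the verification. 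The main obstacle is the bookkeeping in the boundary step: one must argue carefully that the invariant-derivative control translates into control of ordinary Euclidean partial derivatives near the singular locus (where the change-of-basis matrix between $\{\partial_{x_i}\}$ and $\{\rpL(X_j), \rpR(Y_k)\}$ degenerates), and that the resulting infinite-order vanishing genuinely yields $\Cont^{\infty}$-smoothness of the zero-extension rather than merely continuity of finitely many derivatives. I expect this to follow from an inductive estimate: bounding the $k$-th Euclidean derivative of $\phi$ near $\{\det=0\}$ by a finite combination of invariant derivatives of order $\leq k$ times powers of $\norm[\det g]^{-1}$, which the Schwartz bound dominates.
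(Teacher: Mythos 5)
Your proposal is correct and takes essentially the same route as the paper: invert the matrix relation between the invariant derivatives $\rpL(E_i)$ and the Euclidean partials $\partial_i$, note that the coefficients of the inversion are polynomial in the $x_i$'s and in $(\det)^{-1}$, and dominate the resulting negative powers of $\det$ by the Schwartz bound $\sup_g \Norm[g]^r\,\extnorm{\rpL(X)\rpR(Y)\phi(g)} < \infty$ together with $\Norm[g] \gg \norm[\det g]^{-1}$; the ``inductive estimate'' you defer at the end is precisely the computation the paper carries out. One small slip worth flagging: infinite-order vanishing of $\phi$ along $\{\det = 0\}$ does \emph{not} make $\Psi$ identically zero in a neighborhood of boundary points, so the claimed local vanishing in your rapid-decay step is false. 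It is also unnecessary: on all of $\GL_2(\F)$ the norm $\Norm[g]$ simultaneously dominates the Euclidean norm $\sum_i\norm[x_i]^2$ and $\norm[\det g]^{-1}$, so the same Schwartz bound controls every expression of the form (polynomial in $x_i$, $(\det)^{-1}$) times (invariant derivative of $\phi$) uniformly, and rapid decay follows with no case split or partition of unity.
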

\begin{proof}
	The non archimedean case is easy. Assume $\F = \R$ for example. Let $\Psi$ be the extension by $0$ of $\phi$. Let $E_i$ be the matrix whose $x_j$ entry is $1$ if $j=i$ and $0$ otherwise. Write $L_i$ for $\rpL(E_i)$ and $\partial_i$ for $\partial/\partial x_i$. Then we have
	$$ \begin{pmatrix} L_1 & L_3 \\ L_2 & L_4 \end{pmatrix} = \begin{pmatrix} x_1 & x_2 \\ x_3 & x_4 \end{pmatrix} \begin{pmatrix} \partial_1 & \partial_3 \\ \partial_2 & \partial_4 \end{pmatrix}. $$
	By induction on the degree of a polynomial $P$, we easily show the existence of finitely many polynomials $Q_{\vec{\alpha}}$ such that
	$$ P(\partial_1, \partial_2, \partial_3, \partial_4) = \sum Q_{\vec{\alpha}}(x_1,x_2,x_3,x_4, (\det)^{-1}) L_1^{\alpha_1} L_2^{\alpha_2} L_3^{\alpha_3} L_4^{\alpha_4}, $$
	where $\vec{\alpha} = (\alpha_1, \alpha_2, \alpha_3, \alpha_4) \in \Z_{\geq 0}^4$ and $\det := x_1x_4-x_2x_3$. The smoothness of $\Psi$ on $\GL_2(\F)$ follows readily. Let $m_0 \in \Mat_2(\F) - \GL_2(\F)$. If $m \in \GL_2(\F)$ with each entry in the ball of radius $\delta$ of the corresponding entry of $m_0$, then $\det m \ll_{m_0} \delta$. Since $\Norm[m] \geq 2^{-1} \norm[\det m]^{-1}$, we get
	$$ 2^{-1} \norm[\det m]^{-1} \extnorm{P(\partial_1, \partial_2, \partial_3, \partial_4) \Psi(m)} \ll \sum_{\vec{\alpha}} \sup_{g \in \GL_2(\F)} \Norm[g] \cdot (1+\Norm[g])^{\deg Q_{\vec{\alpha}}} \extnorm{L_1^{\alpha_1} L_2^{\alpha_2} L_3^{\alpha_3} L_4^{\alpha_4} \phi(m)} < \infty. $$
	Hence $\extnorm{P(\partial_1, \partial_2, \partial_3, \partial_4) \Psi(m)} \ll \norm[\det m] \ll_{m_0} \delta$, proving its continuity at $m_0$. The rapid decay is easy to verify by definition.
\end{proof}

\begin{remark}
	Write $R_i$ for $\rpR(E_i)$, then we have
	$$ \begin{pmatrix} R_1 & R_2 \\ R_3 & R_4 \end{pmatrix} = \begin{pmatrix} x_1 & x_3 \\ x_2 & x_4 \end{pmatrix} \begin{pmatrix} \partial_1 & \partial_2 \\ \partial_3 & \partial_4 \end{pmatrix}. $$
	Hence the $R_i$'s are expressible in terms of the $L_i$'s with coefficients polynomial in $x_i$'s and $(\det)^{-1}$, and conversely. This shows that in the definition of $\Sch(\GL_2(\F))$, one may use only the left or right derivatives.
\end{remark}

\begin{lemma} \label{SchExist}
	Let $\F = \R$ or $\C$. Let $\phi \in \Cont^{\infty}(\GL_2(\F))$ be given as
	$$ \phi \left( \begin{pmatrix} z & \\ & z \end{pmatrix} \kappa \begin{pmatrix} 1 & x \\ & 1 \end{pmatrix} \begin{pmatrix} y & \\ & 1 \end{pmatrix} \right) = \ell(z) s(\kappa) f(x) h(y) $$
	for some functions $\ell, h \in \Sch(\F^{\times})$, $f \in \Sch(\F)$ and $s \in \Cont^{\infty}(\gp{K})$. Then $\phi \in \Sch(\GL_2(\F))$.
\end{lemma}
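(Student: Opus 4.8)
The plan is to use the Iwasawa-type parametrization $g = z(z)\,\kappa\, n(x)\, a(y)$ together with the characterization of $\Sch(\GL_2(\F))$ via left-invariant derivatives (justified by the Remark following Proposition \ref{SchExt}). First I would observe that the coordinates $(z,\kappa,x,y)$ give a diffeomorphism $\F^\times \times \gp{K} \times \F \times \F^\times \to \GL_2(\F)$, so $\phi$ is automatically smooth on $\GL_2(\F)$ since $\ell, h, f, s$ are smooth. The substance is the decay estimate: I must show that $\Norm[g]^r \,\lvert \rpL(X)\phi(g)\rvert$ is bounded for every $r\ge 0$ and every $X$ in the universal enveloping algebra. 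Since $\rpR$-derivatives are expressible through $\rpL$-derivatives (the Remark), it suffices to control $\rpL(X)$.

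Next I would compute how $\rpL(X)$ acts in these coordinates. Because $\rpL$ is left translation, $\rpL(X)$ for $X\in\lieg = \mathfrak{gl}_2$ differentiates in the direction of $X$ acting on the \emph{left}, i.e.\ it sees the $z$- and $\kappa$-variables directly and only feeds into the $n(x)a(y)$-part after conjugating through $\kappa$. Concretely, writing $X = \mathrm{Ad}(\kappa^{-1})X' $, one gets that $\rpL(X)\phi$ is a finite sum of terms $\ell^{(a)}(z)\cdot (\text{derivative of }s)(\kappa)\cdot (\text{polynomial in }\kappa)\cdot D\big(f(x)h(y)\big)$, where $D$ ranges over left-invariant vector fields on the group $\gp{N}\gp{A}$; since $\gp{K}$ is compact, all the $\kappa$-dependent factors are bounded. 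Iterating, for general $X\in\Univ(\lieg)$ one obtains a finite sum of products of: a Schwartz function of $z$ (a derivative of $\ell$, possibly times a power of $z$ coming from the $\gp{Z}$-entries of $X$), a bounded smooth function of $\kappa$, and a function of $(x,y)$ of the shape $P(x,y,y^{-1})\cdot \partial_x^i\partial_y^j\big(f(x)h(y)\big)$ with $P$ a polynomial --- this is the same bookkeeping as in the proof of Proposition \ref{SchExt}, now on $\gp{N}\gp{A}$ rather than on $\Mat_2$.

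Then I would estimate $\Norm[g]$ in the coordinates. A direct computation gives
$$
g = \begin{pmatrix} z & \\ & z\end{pmatrix}\kappa\begin{pmatrix} y & x \\ & 1\end{pmatrix}, \qquad
\Norm[g] = \Tr(gg^*) + \Tr(g^{-1}(g^{-1})^*),
$$
and since $\kappa\in\gp{K}$ is orthogonal (resp.\ unitary) this equals $\lvert z\rvert^2\big(\lvert y\rvert^2 + \lvert x\rvert^2 + 1\big) + \lvert z\rvert^{-2}\lvert y\rvert^{-2}\big(\lvert y\rvert^2 + \lvert x\rvert^2 + 1\big)$. Hence $\Norm[g] \ll (\lvert z\rvert^2 + \lvert z\rvert^{-2})(1 + \lvert x\rvert^2)(1 + \lvert y\rvert^2 + \lvert y\rvert^{-2})$. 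Therefore a bound $\Norm[g]^r\lvert\rpL(X)\phi(g)\rvert < \infty$ reduces to: for each $N$, $(\lvert z\rvert^2+\lvert z\rvert^{-2})^N$ times a derivative of $\ell$ is bounded --- true because $\ell\in\Sch(\F^\times)$ decays rapidly at $0$ and $\infty$ along with all derivatives; $(1+\lvert x\rvert^2)^N$ times $\partial_x^i f(x)$ is bounded --- true because $f\in\Sch(\F)$; and $(1+\lvert y\rvert^2+\lvert y\rvert^{-2})^N\,P(y,y^{-1})\,\partial_y^j h(y)$ is bounded --- true because $h\in\Sch(\F^\times)$. Multiplying, and using boundedness of the $\kappa$-factors, gives the desired finiteness, and taking the supremum over $g$ completes the proof.

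The main obstacle is the second step: correctly tracking that left-invariant differentiation in the Iwasawa coordinates only produces polynomial-in-$y^{-1}$ (never worse, e.g.\ never negative powers of $z$ beyond what $\ell\in\Sch(\F^\times)$ already handles, and no $x$-denominators) growth, i.e.\ that the ``bad'' directions are confined to the $\gp{A}$-variable $y$ exactly as in Proposition \ref{SchExt}. Once the shape of $\rpL(X)\phi$ is pinned down, the decay estimate itself is a routine comparison of the explicit formula for $\Norm[g]$ against the rapid decay of $\ell, h \in \Sch(\F^\times)$ and $f\in\Sch(\F)$.
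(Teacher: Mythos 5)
Your plan is correct and arrives at the same norm estimate the paper uses (you in fact rederive the paper's exact formula for $\Norm[g]$ before relaxing it), but the route you take through the differentiation step is genuinely different. The paper starts from the matrix-entry derivatives $\partial_1,\dots,\partial_4$, expresses a polynomial $P(\partial_1,\dots,\partial_4)$ in the Iwasawa-coordinate derivatives $\partial_z,\partial_\theta,\partial_x,\partial_y$ by induction on degree (citing \cite[Proposition 2.2.5]{Bu98} for the base step), and then reads off decay from $\ell, s, f, h$. You instead compute $\rpL(X)$ intrinsically: pushing $X$ past the $\kappa$-factor via $\mathrm{Ad}$ and re-expressing the infinitesimal left translation in Iwasawa coordinates yields a first-order operator of the shape $z\dot z_1(\kappa)\partial_z + \dot\kappa_1(\kappa) + \big(\dot x_1(\kappa)+\dot y_1(\kappa)x\big)\partial_x + \dot y_1(\kappa)y\,\partial_y$, with coefficients that are bounded smooth functions of $\kappa$ times polynomials in $z, x, y$, and iteration preserves this shape. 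This buys you something: the paper's change of variables between $\partial_i$ and the Iwasawa derivatives nominally requires inverting a Jacobian (with determinant a nonzero multiple of $z^3 y$), whereas the $\rpL$-route never produces a denominator, so the ``no bad directions'' point you emphasize in your last paragraph is completely transparent in your setup.

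Two small slips, neither affecting validity. The phrase ``writing $X = \mathrm{Ad}(\kappa^{-1})X'$'' should read $X' = \mathrm{Ad}(\kappa^{-1})X$, so that $e^{-tX}\kappa = \kappa\, e^{-tX'}$. And the vector fields appearing in the $\gp{N}\gp{A}$-block --- namely $\partial_x$ and $x\partial_x + y\partial_y$ up to $\kappa$-dependent scalars --- are the \emph{right}-invariant ones on $\gp{N}\gp{A}$, not left-invariant (those would be $y\partial_x$ and $y\partial_y$). This is as it must be, since the $\gp{N}\gp{A}$ factor sits on the right of the Iwasawa decomposition while you differentiate on the left; both families have polynomial coefficients, so the estimate closes either way.
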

\begin{proof}
	We treat the real case. The complex case is only notationally more complicated.  Then $\kappa = \begin{pmatrix} \cos \theta & \sin \theta \\ - \sin \theta & \cos \theta \end{pmatrix}$. By induction on the degree of a polynomial $P$, we can show the existence of finitely many polynomials $Q_{\vec{\alpha}}$ such that
	$$ P(\partial_1, \partial_2, \partial_3, \partial_4) = \sum Q_{\vec{\alpha}}(z,y,\cos \theta, \sin \theta) \partial_z^{\alpha_1} \partial_{\theta}^{\alpha_2} \partial_x^{\alpha_3} \partial_y^{\alpha_4}, $$
	the initial step being given by \cite[Proposition 2.2.5]{Bu98}. The statement then follows from
	$$ \extNorm{ \begin{pmatrix} z & \\ & z \end{pmatrix} \begin{pmatrix} \cos \theta & \sin \theta \\ - \sin \theta & \cos \theta \end{pmatrix} \begin{pmatrix} 1 & x \\ & 1 \end{pmatrix} \begin{pmatrix} y & \\ & 1 \end{pmatrix} } \ll \norm[z]^2(1+\norm[x]^2 + \norm[y]^2) + \frac{1+\norm[x]^2 + \norm[y]^2}{\norm[yz]^2}. $$
\end{proof}

	\subsection{Motohashi Invariants}
	\label{sec: MInv}
	
	Consider a $v \in S_{\F}$. For simplicity we drop the subscript $v$ in this subsection. Recall the invariance property of the local weight functions in \eqref{eq: LocMotInvProp}.
	
\begin{definition} \label{MoInvDef}
	We introduce two \emph{Motohashi invariants} as functions on open dense subsets of $\Mat_2(\F)$:
	$$ \Minv: \GL_2(\F) \to \F, \quad x = \begin{pmatrix} x_1 & x_2 \\ x_3 & x_4 \end{pmatrix} \mapsto \frac{x_1x_4}{\det x}; $$
	$$ \Tinv: \left\{ \begin{pmatrix} x_1 & x_2 \\ x_3 & x_4 \end{pmatrix} \in \Mat_2(\F) \ \middle| \ x_2x_3 \neq 0 \right\} \to \F, \quad x = \begin{pmatrix} x_1 & x_2 \\ x_3 & x_4 \end{pmatrix} \mapsto \frac{x_1x_4}{x_2 x_3}. $$
	They are invariant under the left and right multiplication by the diagonal torus of $\GL_2(\F)$.
\end{definition}

\noindent For $t \in \F$ and $\Psi \in \Sch(\Mat_2(\F))$, we define
\begin{equation} \label{GeomIntR}
	I(t, \Psi) := \zeta_{\F}(1)^3 \int_{(\F-\{ 0 \})^3} \Psi \begin{pmatrix} z t_1 t_2 & z t_1 \\ z t_2 & z t \end{pmatrix} \norm[z]_{\F} \ud z \ud t_1 \ud t_2.
\end{equation}

\begin{proposition} \label{M4IntRep}
	The restriction of $\widetilde{h}(\chi)$ to $\Sch(\GL_2(\F))$ ($=\Cont_c^{\infty}(\GL_2(\F))$ if $\F$ is non-archimedean) is represented by $G_{\chi}(g) \norm[\det g]_{\F} \ud g$, where $G_{\chi}(g)$ is a locally integrable function on $\PGL_2(\F)$ determined by
	$$ G_{\chi} \begin{pmatrix} zab & za \\ zb & zc \end{pmatrix} = c_{\F} \chi \left( c \right) \norm[c]_{\F}^{-\frac{1}{2}} \norm[c-1]_{\F}, \quad c_{\F} := \begin{cases} 1 & \text{if } \F = \R \\ \frac{\zeta_{\F}(1)^3}{\zeta_{\F}(2)} \Vol(\vo_{\F}, \ud x)^4 & \text{if } \F \text{ is non-archimedean} \end{cases}. $$
\end{proposition}
\begin{proof}
	We rewrite the definition (see \cite[(1.15)]{Wu22} with an obvious functional equation) as
\begin{align}
	\widetilde{h}(\chi)(\Psi) &= \zeta_{\F}(1)^4 \int_{\F^4} \Psi \begin{pmatrix} x_1 & x_2 \\ x_3 & x_4 \end{pmatrix} \chi \left( \frac{x_1 x_4}{x_2 x_3} \right) \frac{\norm[x_1x_4-x_2x_3]_{\F}^2}{\norm[x_1x_2x_3x_4]_{\F}^{\frac{1}{2}}} \frac{\prod_i \ud x_i}{\norm[x_1x_4-x_2x_3]_{\F}^2} \label{eq: LocDWtDisDef} \\
	&= \zeta_{\F}(1)^4 \int_{\F^4} \Psi \begin{pmatrix} x_1 & x_2 \\ x_3 & x_4 \end{pmatrix} \chi \left( \Tinv \right) \frac{\norm[\Tinv - 1]_{\F}}{\norm[\Tinv]_{\F}^{\frac{1}{2}}} \norm[x_1x_4-x_2x_3]_{\F} \frac{\prod_i \ud x_i}{\norm[x_1x_4-x_2x_3]_{\F}^2}, \nonumber
\end{align}
	where we have written $\Tinv = \Tinv(g)$ and have the measure identification for
	$$ g = \begin{pmatrix} x_1 & x_2 \\ x_3 & x_4 \end{pmatrix} \quad \Rightarrow \quad \zeta_{\F}(1)^4 \frac{\prod_i \ud x_i}{\norm[x_1x_4-x_2x_3]_{\F}^2} = c_{\F} \ud g. $$
	The stated formula for $G_{\chi}(g)$ follows readily.
\end{proof}

\begin{lemma} \label{GeomToM4}
	For any $\Psi \in \Sch(\Mat_2(\F))$, $\widetilde{h}(\chi)(\Psi)$ and $I(t, \Psi)$ determine each other via
	$$ \widetilde{h}(\chi)(\Psi) = \int_{\F^{\times}} I(t,\Psi) \chi(t) \norm[t]_{\F}^{\frac{1}{2}} \ud^{\times}t. $$
\end{lemma}
\begin{proof}
	A direct change of variables gives
\begin{align*}
	\widetilde{h}(\chi)(\Psi) &= \zeta_{\F}(1)^4 \int_{\F^4} \Psi \begin{pmatrix} x_1 & x_2 \\ x_3 & x_4 \end{pmatrix} \chi \left( \frac{x_1 x_4}{x_2 x_3} \right) \norm[x_1x_2x_3x_4]_{\F}^{-\frac{1}{2}} \prod_i \ud x_i \\
	&= \zeta_{\F}(1)^4 \int_{(\F-\{ 0 \})^4} \Psi \begin{pmatrix} z t_1 t_2 & z t_1 \\ z t_2 & z t \end{pmatrix} \chi(t) \norm[t]_{\F}^{-\frac{1}{2}} \norm[z]_{\F} \ud z \ud t_1 \ud t_2 \ud t \\
	&= \zeta_{\F}(1) \int_{\F-\{ 0 \}} I(t,\Psi) \chi(t) \norm[t]_{\F}^{-\frac{1}{2}} \ud t,
\end{align*}
	which obviously is equal to the right hand side of the desired equality.
\end{proof}

\section{Local Analysis at Real Places}

	We prove Theorem \ref{ExpInvMF} and Theorem \ref{DualWtBd} (1) in this section, while providing some auxiliary results needed in the estimation of the degenerate terms. We fix a real place $v \mid S_{\infty}$ and omit it from the subscript for simplicity of notation. In particular $\F=\R$ throughout this section.

	\subsection{Analysis with Test Function}
	
		\subsubsection{Choice of Test Function}
		
	Consider $\phi_j$ (for $j=1,2$) given by
\begin{equation} \label{TestPhiR}
	\phi_j(\kappa n(x) t(y) z) = \ell_j(z) f_j(x) h_j(y), \quad t(y) := \begin{pmatrix} y^{\frac{1}{2}} & \\ & y^{-\frac{1}{2}} \end{pmatrix}, \kappa \in \SO_2(\R)
\end{equation} 
	for some $\ell_j \in \Cont_c^{\infty}(\R^{\times}), h_j \in \Cont_c^{\infty}(\R_{>0}), f_j \in \Sch(\R)$. We take $\Psi = \phi_1^{\vee} * \phi_2$, where $\phi_1^{\vee}(g)=\phi_1(g^{-1})$ and the convolution is taken over $\GL_2(\R)^+$. By an obvious variant of Lemma \ref{SchExist}, we have $\phi_j \in \Sch(\GL_2(\R)^+)$. Hence $\Psi \in \Sch(\Mat_2(\R))$ by Proposition \ref{SchExt}.
	
\begin{definition} \label{def: VecRepZeta}
	For $\phi \in \Sch(\GL_2(\R))$, we denote by $v(\phi \mid \pi)$ (see \cite[Proposition 3.2]{Sha74}) the smooth vector in $\pi$ which represents the linear functional on $V_{\pi^{\vee}}^{\infty}$
	$$ \Pairing{v(\phi \mid \pi)}{f} = \Zeta \left( \tfrac{1}{2}, \pi^{\vee}(\phi^{\vee}).W_f \right), \quad \forall f \in V_{\pi^{\vee}}^{\infty}. $$
\end{definition}

\begin{lemma} \label{M3VSNormLoc}
	Let $\alpha(g)=\norm[\det g]$. For $\Psi = \phi_1^{\vee}*\phi_2$ with any $\phi_j\in \Sch(\GL_2(\R))$, we have
	$$ h(\pi)(\Psi) = \Pairing{v(\phi_2 \cdot \alpha \mid \pi)}{v(\phi_1 \cdot \alpha^{-1} \mid \pi^{\vee})}. $$
\end{lemma}
\begin{proof}
	The definition of $v(\phi \mid \pi)$ implies
\begin{equation} \label{eq: LocMotVec}
	v(\phi \mid \pi) = \sideset{}{_{e \in \Bas(\pi)}} \sum \Zeta \left( \tfrac{1}{2}, W_{e^{\vee}} \right) \cdot \pi(\phi).e, 
\end{equation}
	where $e$, resp. $e^{\vee}$, traverses any orthogonal basis, resp. the dual basis. Hence we get by \cite[(1.17)]{Wu22}
\begin{align}
	h(\pi)(\Psi) &:= \sum_{e_1,e_2 \in \Bas(\pi)} \int_{\GL_2(\F)} \Psi(g) \Pairing{g.e_2}{e_1^{\vee}} \norm[\det g] \ud g \cdot \Zeta \left( \tfrac{1}{2}, W_{e_1} \right) \cdot \Zeta \left( \tfrac{1}{2}, W_{e_2^{\vee}} \right) \label{eq: LocWtDisDef} \\
	&= \sum_{e_1,e_2 \in \Bas(\pi)} \int_{\GL_2(\F)^2} \phi_1^{\vee}(g_1) \phi_2(g_2) \norm[\det(g_1g_2)] \Pairing{g_1g_2.e_2}{e_1^{\vee}} \ud g_1\ud g_2 \cdot \Zeta \left( \tfrac{1}{2}, W_{e_1} \right) \cdot \Zeta \left( \tfrac{1}{2}, W_{e_2^{\vee}} \right) \nonumber \\
	&= \sum_{e_1,e_2 \in \Bas(\pi)} \Pairing{ \pi(\phi_2 \cdot \alpha)e_2 }{\pi^{\vee}(\phi_1 \cdot \alpha^{-1}).e_1^{\vee}} \cdot \Zeta \left( \tfrac{1}{2}, W_{e_1} \right) \cdot \Zeta \left( \tfrac{1}{2}, W_{e_2^{\vee}} \right) \nonumber \\
	&= \Pairing{v(\phi_2 \cdot \alpha \mid \pi)}{v(\phi_1 \cdot \alpha^{-1} \mid \pi^{\vee})}. \nonumber
\end{align}
\end{proof}

\begin{remark} \label{rmk: WtAsSqNr}
	If we let $\phi_2 \cdot \alpha = \phi = \overline{\phi_1} \cdot \alpha^{-1}$ for some $\phi \in \Sch(\GL_2(\R))$, then $h(\pi)(\Psi) = \Norm[v(\phi \mid \pi)]^2$. The definition of $v(\phi \mid \pi)$ implies for any $f \in V_{\pi^{\vee}}^{\infty}$ we have
\begin{equation} \label{eq: VecRepZeta}
	\Pairing{v(\phi \mid \pi)}{f} = \int_{\GL_2(\R)} \phi^{\vee}(g) \cdot \Zeta \left( \frac{1}{2}, \pi^{\vee}(g).W_f \right) \ud g = \int_{\gp{A}(\R) \backslash \GL_2(\R)} \widetilde{\phi}(g) \cdot Z_f(g) \ud g, 
\end{equation}
	where we have written
	$$ \widetilde{\phi}(g) := \int_{\gp{A}(\R)} \phi^{\vee}(ag) \ud a, \quad Z_f(g) := \Zeta \left( \frac{1}{2}, \pi^{\vee}(g).W_f \right). $$
	Note that $A_{\pi^{\vee}}: V_{\pi^{\vee}}^{\infty} \to \Cont^{\infty}(\gp{A}(\R) \backslash \GL_2(\R)), \ f \mapsto Z_f(g)$ is $\GL_2(\R)$-intertwining, and the image is precisely the $\pi^{\vee}$-isotypic part of $\intL^2(\gp{A}(\R) \backslash \GL_2(\R))$. Therefore $v(\phi \mid \pi) = \ProjP_{\pi}(\widetilde{\phi})$ by \eqref{eq: VecRepZeta} since $\ProjP_{\pi}$ is the adjoint of $A_{\pi^{\vee}}$ by definition.
\end{remark}

\begin{corollary} \label{M3Exp}
	Consider $\phi_j$ of the form \eqref{TestPhiR}. Let $w_0 \in V_{\pi}^{\infty}$ resp. $w_0^{\vee} \in V_{\pi^{\vee}}^{\infty}$ be a pair of dual unitary \emph{spherical} vector such that $\Pairing{w_0}{w_0^{\vee}}=1$, whose $\psi_{\R}$-Kirillov functions are $K_{\pi}$ resp. $K_{\pi^{\vee}}$, namely
	$$ \int_{\R^{\times}} K_{\pi}(t) K_{\pi^{\vee}}(-t) \ud^{\times} t = 1. $$
	Then we have the formula
\begin{align*}
	h(\pi)(\Psi) &= \int_{\R^{\times}} \ell_1(z) \norm[z]^{-2} \ud^{\times}z \cdot \int_{\R^{\times}} \ell_2(z) \norm[z]^2 \ud^{\times}z \cdot \int_{\R^{\times}} h_1(y) \ud^{\times}y \cdot \int_{\R^{\times}} h_2(y) \ud^{\times}y \cdot \\
	&\quad \int_{\R^{\times}} \OFour(f_1)(t) K_{\pi}(t) \ud^{\times}t \cdot \int_{\R^{\times}} \OFour(f_2)(t) K_{\pi^{\vee}}(t) \ud^{\times}t.
\end{align*}
\end{corollary}
\begin{proof}
	Since $\phi_j$ is left-invariant by $\SO_2(\R)$, the vectors $v(\phi_2 \cdot \alpha \mid \pi)$ resp. $v(\phi_1 \cdot \alpha^{-1} \mid \pi^{\vee})$ are spherical. We get by Definition \ref{def: VecRepZeta} and an Iwasawa decomposition
\begin{align*}
	\Pairing{v(\phi_2 \cdot \alpha \mid \pi)}{w_0^{\vee}} &= \int_{\R^{\times}} \left( \int_{\GL_2(\R)} \norm[\det g] \phi_2(g) \pi^{\vee}(g^{-1}).K_{\pi^{\vee}}(t) \ud g \right) \ud^{\times}t \\
	&= \int_{(\R^{\times})^2} \left( \int_{\R_{>0} \times \R} \phi_2(n(x)t(y)z) K_{\pi^{\vee}}(ty^{-1}) \psi_{\R}(-txy^{-1}) \ud^{\times}y \ud x \right) \norm[z]^2 \ud^{\times}t \ud^{\times}z \\
	&= \int_{\R^{\times}} \ell_2(z) \norm[z]^2 \ud^{\times}z \cdot \int_{\R^{\times}} h_2(y) \ud^{\times}y  \cdot \int_{\R^{\times}} \OFour(f_2)(t) K_{\pi^{\vee}}(t) \ud^{\times}t.
\end{align*}
	Similarly, we have
	$$ \Pairing{w_0}{v(\phi_1 \cdot \alpha^{-1} \mid \pi^{\vee})} = \int_{\R^{\times}} \ell_1(z) \norm[z]^{-2} \ud^{\times}z \cdot \int_{\R^{\times}} h_1(y) \ud^{\times}y  \cdot \int_{\R^{\times}} \OFour(f_1)(t) K_{\pi}(t) \ud^{\times}t. $$
	Since $\Pairing{v(\phi_2 \cdot \alpha \mid \pi)}{v(\phi_1 \cdot \alpha^{-1} \mid \pi^{\vee})} = \Pairing{v(\phi_2 \cdot \alpha \mid \pi)}{w_0^{\vee}} \Pairing{w_0}{v(\phi_1 \cdot \alpha^{-1} \mid \pi^{\vee})}$, we deduce from Lemma \ref{M3VSNormLoc} the stated formula.
\end{proof}

		\subsubsection{A Distributional Fourier Transform}
		
	We have parametrized the unitary \emph{spherical} series by $i\R \times \{ \pm 1 \} \ni (i\tau, \epsilon) \mapsto \pi(\sgn^{\frac{\epsilon-1}{2}}, i\tau)$. Write $K_{i\tau}^{\epsilon}$ for $K_{\pi}$ in Corollary \ref{M3Exp}. These functions are related to the classical Whittaker functions, because they satisfy the following equations (see \cite[\S 2 (8.2)]{Bu98})
	$$ \Delta K_{i\tau}^{\epsilon} = - \left( \tfrac{1}{4} + \tau^2 \right) K_{i\tau}^{\epsilon}, \quad \Delta := t^2 \left( \tfrac{\ud^2}{\ud t^2} - 4\pi^2 \right); $$
	$$ K_{i\tau}^{\epsilon}(-t) = \epsilon \cdot K_{i\tau}^{\epsilon}(t). $$
	If we impose $K_{i\tau}^{\epsilon}$ to be unitary in the Kirillov model, then they can be rewritten in terms of the $K$-Bessel functions $\BesselK_{\lambda}$ defined in \eqref{eq: KBesselIntRep} as
\begin{equation} \label{KirToKBessel}
	K_{i\tau}^{+}(t) = \sqrt{\tfrac{\cosh(\pi \tau)}{\pi}} \norm[t]^{\frac{1}{2}} \BesselK_{i\tau}(2\pi \norm[t]), \quad K_{i\tau}^{-}(t) = \sqrt{\tfrac{\cosh(\pi \tau)}{\pi}} \sgn(t) \norm[t]^{\frac{1}{2}} \BesselK_{i\tau}(2\pi \norm[t]),
\end{equation}
	Note that the functions $t \mapsto \norm[t]^{-1} K_{i\tau}^{\epsilon}(t)$ are \emph{real valued} and are eigenfunctions for the differential operator
	$$ \widetilde{\Delta} := t^{-1} \circ \Delta \circ t = t^2 \tfrac{\ud^2}{\ud t^2} + 2t \tfrac{\ud}{\ud t} - (2\pi)^2 t^2 $$
	which is self-dual for $\intL^2(\R, \ud t)$. Their distributional $\psi_{\R}$-Fourier transform are eigen-distributions of
\begin{equation} \label{InvOp}
	D := (t^2+1) \tfrac{\ud^2}{\ud t^2} + 2t \tfrac{\ud}{\ud t} 
\end{equation}
	with eigenvalue $-\left( \tfrac{1}{4}+\tau^2 \right)$. These distributions are represented by smooth functions $e_{i\tau}^{\epsilon}$ satisfying
	$$ \overline{e_{i\tau}^{\epsilon}(x)}=e_{i\tau}^{\epsilon}(-x), \quad D e_{i\tau}^{\epsilon} = -\left( \tfrac{1}{4}+\tau^2 \right) e_{i\tau}^{\epsilon} $$
	since $D$ is elliptic. Hence $e_{i\tau}^+(t)$ resp. $e_{i\tau}^-(t)$ is proportional to (see \cite[(1.9) \& (1.10)]{Du13} or verify directly)
\begin{equation} \label{DualEigenFunc}
	F_+(\tau,t) := \HyG \left( \frac{1}{4} - \frac{i\tau}{2}, \frac{1}{4} + \frac{i\tau}{2}; \frac{1}{2}, -t^2 \right) \quad \text{resp.} \quad F_-(\tau,t) := t \cdot \HyG \left( \frac{3}{4} - \frac{i\tau}{2}, \frac{3}{4} + \frac{i\tau}{2}; \frac{3}{2}, -t^2 \right), 
\end{equation}
	which are real valued and satisfy (using the definition via Taylor expansion for hypergeometric functions)
	$$ (F_+(\tau,0), \partial_t F_+(\tau,0)) = (1,0), \quad (F_-(\tau,0), \partial_t F_-(\tau,0)) = (0,1). $$

		\subsubsection{Admissible Weights}
		
	The transforms $f \mapsto F^{\epsilon}$ on $\Sch(\R)$ given by
\begin{equation} \label{NormKLTrans}
	F^{\epsilon}(\tau) = F^{\epsilon}[f](\tau) := \int_{\R^{\times}} f(t) K_{i\tau}^{\epsilon}(t) \ud^{\times}t \left(= \int_{\R} \OFour(f)(x) \overline{e_{i\tau}^{\epsilon}(x)} \ud x\right)
\end{equation}
	are intimately related to the classical Kontorovich-Lebedev transform (see \cite[\S 6]{YL94})
	$$ K[f](\tau) := \int_0^{\infty} f(t) t^{-\frac{1}{2}} \BesselK_{i\tau}(t) \ud t. $$
	In fact, it is easy to see (recall (\ref{KirToKBessel}))
	$$ F^{\epsilon}[f](\tau) = \frac{1}{\pi} \sqrt{\frac{\cosh(\pi \tau)}{2}} \int_0^{\infty} \left( f \left( \frac{t}{2\pi} \right) + \epsilon f \left( -\frac{t}{2\pi} \right) \right) t^{-\frac{1}{2}} \BesselK_{i\tau}(t) \ud t. $$
	We shall be interested in the following subspace of Schwartz functions.
	
\begin{definition} \label{SchwartzSpaces}
	Let $\Sch_0(\R) \subset \Sch(\R)$ be the space of Schwartz functions $f$ satisfying $f^{(n)}(0)=0$ for all $n \in \Z_{\geq 0}$. Write $\Sch^0(\R)$ for the image of $\Sch_0(\R)$ under Fourier transform. Let $\Sch(\R_{\geq 0})$ be the space of restrictions of functions in $\Sch_0(\R)$ to $[0,\infty)$. Let $\Sch(\R_{>0}^{\times})$ be the space of smooth functions $f$ on $\R_{>0}$ such that for any $m,n \in \Z_{\geq 0}$
	$$ \sup_{t \in \R_{>0}} \extnorm{ (t+t^{-1})^m f^{(n)}(t) } < \infty. $$
\end{definition}
\noindent It is easy to see that for either $\epsilon = \pm 1$
	$$ \Sch_0(\R) \to \Sch(\R_{\geq 0}), \quad f \mapsto F(t) := f \left( \frac{t}{2\pi} \right) + \epsilon f \left( -\frac{t}{2\pi} \right) $$
	is a surjective map. Hence the space of functions $F^{\epsilon}[f](\tau)$ contains $K[f](\tau)$ for $f \in \Sch(\R_{\geq 0})$.
\begin{lemma} \label{MPTrans}
	For $f \in \Sch(\R_{\geq 0})$, define the transforms
	$$ g(x) := \int_0^{\infty} \sin(tx) f(t) t^{-\frac{1}{2}} \ud t, \quad h(y) := g\left( \frac{1}{2}(y-y^{-1}) \right). $$
	Then $K[f]$ is given in terms of the Mellin transform of $h$ as
	$$ \sin \left( \frac{i \pi \tau}{2} \right) K[f](\tau) = \Mellin{h}(i\tau) := \int_0^{\infty} h(y) y^{i\tau} \frac{\ud y}{y}. $$
\end{lemma}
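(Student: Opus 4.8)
The plan is to convert $\Mellin{h}(i\tau)$ into a Fourier integral by the exponential substitution $y=e^{u}$, which sends $\frac{1}{2}(y-y^{-1})$ to $\sinh u$, then to collapse the resulting double integral against $f$ by Fubini, and finally to recognise the remaining kernel as a classical integral representation of the Macdonald function $\BesselK_{i\tau}$. The only analytic input needed for the last step is the Mellin transform $\int_{0}^{\infty}\BesselK_{i\tau}(t)\,t^{s}\,d^{\times}t=2^{s-2}\Gamma\!\left(\frac{s+i\tau}{2}\right)\Gamma\!\left(\frac{s-i\tau}{2}\right)$ already recalled in the proof of Lemma~\ref{FourTransToLegendre}.

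First I would record that $t\mapsto f(t)t^{-1/2}$, extended oddly to $\R$, is a Schwartz function — all of its derivatives vanish at the origin because $f\in\Sch(\R_{\geq 0})$ — so $g$ is its odd Fourier partner, hence odd and Schwartz. Therefore $h(y)=g\!\left(\frac{1}{2}(y-y^{-1})\right)$ decays faster than any power of $y$ and of $y^{-1}$, and $\Mellin{h}(i\tau)$ converges absolutely on a strip containing the imaginary axis. Substituting $y=e^{u}$ gives
$$ \Mellin{h}(i\tau)=\int_{-\infty}^{\infty}g(\sinh u)\,e^{i\tau u}\,du, $$
and since $u\mapsto g(\sinh u)$ is odd the cosine part drops out, leaving $2i\int_{0}^{\infty}g(\sinh u)\sin(\tau u)\,du$. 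Inserting the definition of $g$ and interchanging the $t$- and $u$-integrations — which I would justify by restoring the absolutely convergent integral defining $K[f]$ and using the smoothness and rapid decay of $f(t)t^{-1/2}$, truncating the oscillatory $u$-integral if needed — reduces the claim to the kernel identity
$$ \int_{0}^{\infty}\sin(t\sinh u)\,\sin(\tau u)\,du=\sinh\!\left(\frac{\pi\tau}{2}\right)\BesselK_{i\tau}(t),\qquad t>0,\ \tau\in\R, $$
the ``sine-sinh'' integral representation of $\BesselK_{i\tau}$.

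I would prove this kernel identity by Mellin transform in $t$. By the recalled formula the right side has Mellin transform $\sinh\!\left(\frac{\pi\tau}{2}\right)2^{s-2}\Gamma\!\left(\frac{s+i\tau}{2}\right)\Gamma\!\left(\frac{s-i\tau}{2}\right)$. For the left side, swap the $t$- and $u$-integrals, use $\int_{0}^{\infty}t^{s-1}\sin(ct)\,dt=\Gamma(s)\sin\!\left(\frac{\pi s}{2}\right)c^{-s}$ with $c=\sinh u$ (valid for $0<\Re s<1$), and then evaluate $\int_{0}^{\infty}(\sinh u)^{-s}\sin(\tau u)\,du$ by writing $\sin(\tau u)=\frac{1}{2i}(e^{i\tau u}-e^{-i\tau u})$ and reducing each exponential integral to a Beta integral via $q=e^{-2u}$:
$$ \int_{0}^{\infty}(\sinh u)^{-s}e^{i\tau u}\,du=\frac{2^{s-1}\,\Gamma(1-s)\,\Gamma\!\left(\frac{s-i\tau}{2}\right)}{\Gamma\!\left(1-\frac{s+i\tau}{2}\right)}. $$
A short simplification using the reflection formula $\Gamma(z)\Gamma(1-z)=\pi/\sin(\pi z)$, the product-to-sum identity $\sin(\pi a)\sin(\pi b)=\frac{1}{2}\bigl(\cos\pi(a-b)-\cos\pi(a+b)\bigr)$, and $\Gamma(s)\Gamma(1-s)\sin\!\left(\frac{\pi s}{2}\right)\cos\!\left(\frac{\pi s}{2}\right)=\frac{\pi}{2}$ collapses the left-hand Mellin transform to exactly $\sinh\!\left(\frac{\pi\tau}{2}\right)2^{s-2}\Gamma\!\left(\frac{s+i\tau}{2}\right)\Gamma\!\left(\frac{s-i\tau}{2}\right)$ as well; Mellin inversion (both sides being continuous in $t$) then gives the pointwise identity. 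Feeding it back into the reduction above and keeping track of the normalisation $\sin\!\left(\frac{i\pi\tau}{2}\right)=i\sinh\!\left(\frac{\pi\tau}{2}\right)$ produces the asserted formula.

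The step I expect to cost the most care is the repeated interchange of integration order: $g$, the inner $u$-integral, and the $t$-Mellin transform of $\sin$ are each only conditionally convergent as written, and the strips on which the relevant Mellin transforms converge absolutely overlap only on $0<\Re s<1$. The clean way around this is to perform every Fubini step against the absolutely convergent integral defining $K[f]$, inserting the flat-at-zero, rapidly decaying function $f(t)t^{-1/2}$ before swapping, and to obtain the kernel identity either by a standard truncation/Abel-summation argument or simply by quoting it from integral tables. All remaining manipulations are routine.
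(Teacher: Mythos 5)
Your proposal follows essentially the same route as the paper: substitute $y=e^u$ to turn the Mellin transform of $h$ into a Fourier integral in $u$, use oddness of $g(\sinh u)$ to reduce to the sine integral $\int_0^\infty g(\sinh u)\sin(\tau u)\,du$, swap the $t$- and $u$-integrations, and land on the ``sine--sinh'' integral representation of $\BesselK_{i\tau}$. The one genuine difference is that the paper simply cites this kernel identity (attributed to Erd\'elyi via \cite{YL94}) together with a concrete majorant
$\left|\int_0^N\sin(\tau u)\sin(t\sinh u)\,du\right|\leq\frac{1}{t\cosh N}+\frac{1}{t}\int_0^N\frac{\tau\cosh u+\sinh u}{\cosh^2 u}\,du$
to justify Fubini by dominated convergence, whereas you re-derive the kernel identity from scratch via a Mellin-transform/Beta-function computation. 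Your derivation of that identity is correct and makes the argument self-contained, at the cost of a heavier chain of reflection and duplication manipulations and a less explicit Fubini justification than the paper's.

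One accounting point you should flag: carried through literally, your own reduction gives $\Mellin{h}(i\tau)=2i\int_0^\infty g(\sinh u)\sin(\tau u)\,du$ (you correctly keep the $2i$ from dropping the even part), and your kernel identity gives $\int_0^\infty\sin(t\sinh u)\sin(\tau u)\,du=\sinh(\pi\tau/2)\BesselK_{i\tau}(t)$; using $\sin(i\pi\tau/2)=i\sinh(\pi\tau/2)$ these combine to $\Mellin{h}(i\tau)=2\sin(i\pi\tau/2)K[f](\tau)$, i.e.\ twice the stated right-hand side. Your individual steps are right, so this exposes a factor-of-$2$ normalization mismatch between the kernel identity you use and the one needed for the lemma as literally stated; the constant is harmless for the downstream application in Proposition~\ref{KLRange}, but the last sentence of your proposal claiming the asserted formula is ``produced'' should have recorded the discrepancy rather than glossing over it.
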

\begin{proof}
	This is essentially \cite[(6.86)]{YL94}, based on the following integral representation \cite[(6.84)]{YL94} of $K_{i\tau}(t)$ due to A.Erdelyi
	$$ \sin \left( \frac{i \pi \tau}{2} \right) \BesselK_{i\tau}(t) = \int_0^{\infty} \sin (t \sinh(u)) \sin(\tau u) \ud u, $$
	where the integral is in the Cauchy principal sense. We also recall the estimation
	$$ \extnorm{ \int_0^N \sin(\tau u) \sin(t \sinh(u)) \ud u} \leq \frac{1}{t \cosh(N)} + \frac{1}{t} \int_0^N \frac{\tau \cosh(u) + \sinh(u)}{\cosh^2(u)} \ud u $$
	which justifies the change of order of integrations in
	$$ \sin \left( \frac{i \pi \tau}{2} \right) K[f](\tau) = \int_0^{\infty} \sin(\tau u) \left( \int_0^{\infty} \sin(t \sinh(u)) f(t) t^{-\frac{1}{2}} \ud t \right) \ud u $$
	by Lebesgue's dominated convergence theorem.
\end{proof}

\begin{definition} \label{HolSpace}
	Let $\Hol^*(\C)$ be the space of \emph{even} entire functions $m(s)$ such that for any given real number $a < b$ and $N \in \Z_{\geq 0}$, we have the following bound uniform in $\tau = \Im(s) \in \R$
	$$ m(\sigma + i\tau) \ll e^{-\frac{\pi}{2} \norm[\tau]} (1+\norm[\tau])^{-N}, \quad a \leq \sigma \leq b. $$
\end{definition}
\begin{proposition} \label{KLRange}
	If $f \in \Sch(\R_{\geq 0})$, then $K[f](is) \in \Hol^*(\C)$. Conversely, for any $m(s) \in \Hol^*(\C)$, there is $f \in \Sch(\R_{\geq 0})$ such that $m(s) = K[f](is)$.
\end{proposition}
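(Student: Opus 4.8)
The plan is to characterize the Kontorovich-Lebedev transform of $\Sch(\R_{\geq 0})$ via the Mellin transform, using Lemma \ref{MPTrans}. Recall that for $f \in \Sch(\R_{\geq 0})$ the lemma gives $\sin(i\pi\tau/2) K[f](\tau) = \Mellin{h}(i\tau)$, where $h(y) = g(\tfrac{1}{2}(y-y^{-1}))$ and $g(x) = \int_0^\infty \sin(tx) f(t) t^{-1/2} dt$. So the first step is to track the regularity and decay properties of $g$ and then of $h$. Since $f$ is Schwartz on $[0,\infty)$ with all derivatives vanishing at $0$ (being a restriction of a function in $\Sch_0(\R)$, extended oddly to an element of $\Sch(\R)$ because of the factor $t^{-1/2}$ — more precisely $t^{-1/2}f(t)$ extends to an even Schwartz function times $\sqrt{|t|}$, and the sine kernel makes $g$ the imaginary part of a Fourier transform), $g$ is an odd Schwartz function on $\R$ that additionally vanishes to infinite order at $0$: indeed $g^{(2k)}(0)=0$ automatically by oddness, and $g^{(2k+1)}(0) = \int_0^\infty (-1)^k t^{2k} f(t) t^{-1/2}dt$ need not vanish, so in fact one only gets $g \in \Sch(\R)$, odd. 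Under $x = \tfrac{1}{2}(y-y^{-1})$, which is a diffeomorphism $\R_{>0} \to \R$ sending $y\to\infty$ to $x\to+\infty$ and $y\to 0^+$ to $x\to-\infty$, the Schwartz decay of $g$ in $x$ translates into the condition defining $\Sch(\R_{>0}^\times)$: rapid decay of $h$ and all its derivatives as $y\to 0^+$ and $y\to\infty$, i.e. $h \in \Sch(\R_{>0}^\times)$. Hence the second step is the standard fact that the Mellin transform is an isomorphism from $\Sch(\R_{>0}^\times)$ onto the space of entire functions $G(s)$ that decay faster than any polynomial in vertical strips of bounded width; this is the multiplicative analogue of the Paley-Wiener-Schwartz description, proved by integration by parts (giving polynomial decay) and shifting contours (giving entirety). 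Applying this to $h$ shows $\Mellin{h}(is)$ is entire and rapidly decaying in vertical strips.

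The third step is to divide by $\sin(i\pi\tau/2) = \sin(\pi s/2)|_{\tau = -is}$... more carefully, with $s = i\tau$ we have $\sin(i\pi\tau/2) = \sin(-\pi s/2) = -\sin(\pi s/2)$, so $K[f](is) = -\Mellin{h}(is)/\sin(\pi s/2)$. The function $1/\sin(\pi s/2)$ has simple poles at the even integers $s \in 2\Z$, with $|1/\sin(\pi s/2)| \asymp e^{-\pi|\tau|/2} \cdot 2$ for $|\Im(\sigma+i\tau)| = |\tau|$ large. So the exponential factor $e^{-\pi|\tau|/2}$ in Definition \ref{HolSpace} comes out automatically from the reciprocal sine, and the rapid polynomial decay is inherited from $\Mellin{h}$. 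The one genuine issue is removing the poles: one must check $\Mellin{h}(is)$ vanishes at $s \in 2\Z_{\neq 0}$ and at $s=0$ to the correct order so that $K[f](is)$ is entire. At $s=0$: $\Mellin{h}(0) = \int_0^\infty h(y)\,dy/y = \int_{-\infty}^\infty g(x)\cdot(\text{even positive Jacobian factor})\,dx$, and since $g$ is odd while the Jacobian $dy/y = dx/\sqrt{x^2+1}$ (computing $dy/y$ in terms of $x$: $y = x+\sqrt{x^2+1}$, $dy/y = dx/\sqrt{1+x^2}$) is even, the integral vanishes — so the simple zero of $\Mellin{h}$ at $s=0$ cancels the simple pole of $1/\sin$. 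At $s = 2k \neq 0$: here I would argue that $\Mellin{h}(2ik)$... actually these need not vanish, which would be fatal; so the correct claim must be that $K[f](\tau)$ genuinely is holomorphic because the classical Kontorovich-Lebedev kernel $\BesselK_{i\tau}(t)$ is entire in $\tau$ and $K[f](\tau) = \int_0^\infty f(t)t^{-1/2}\BesselK_{i\tau}(t)\,dt$ converges absolutely and locally uniformly in $\tau \in \C$ (using $f$ Schwartz and the bound $\BesselK_{i\tau}(t) \ll_\tau e^{-t/2}$ for $t\geq 1$ together with $\BesselK_{i\tau}(t) \ll_{\tau} t^{-|\Im\tau|-\delta}$ near $0$) — hence $K[f](is)$ is manifestly entire and the apparent poles of the Mellin-side formula are spurious. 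The evenness $K[f](-\tau) = K[f](\tau)$ is immediate from $\BesselK_{-i\tau} = \BesselK_{i\tau}$. Combining: $K[f](is)$ is even, entire, and satisfies the bound of Definition \ref{HolSpace}, i.e. lies in $\Hol^*(\C)$.

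For the converse, given $m(s) \in \Hol^*(\C)$, I would reverse the chain. Set $G(s) := -\sin(\pi s/2) m(s)$; since $m$ decays like $e^{-\pi|\tau|/2}(1+|\tau|)^{-N}$ and $|\sin(\pi s/2)| \ll e^{\pi|\tau|/2}$, the product $G$ is entire and rapidly decaying in vertical strips, hence $G(s) = \Mellin{h}(s)$ for a unique $h \in \Sch(\R_{>0}^\times)$ by Mellin inversion. Transport $h$ back through $x = \tfrac12(y-y^{-1})$ to get $g_0(x) := h(y(x))$, a Schwartz function on $\R$; we need $g_0$ to be odd — this forces the constraint that $\Mellin{h}$ vanish appropriately, which is exactly guaranteed because $m$ is even: evenness of $m(s)$ combined with $\sin(\pi s/2)$ being odd makes $G(s) = \Mellin{h}(s)$ odd, i.e. $\Mellin{h}(-s) = -\Mellin{h}(s)$, which under Mellin inversion and the change of variables is precisely the statement that $g_0$ is odd in $x$. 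Then recover $f$ from $g_0$ by inverting the sine transform: $f(t)t^{-1/2}$ is (up to constants) the odd Fourier sine transform of $g_0$, so $f(t) := c\, t^{1/2}\int_0^\infty \sin(xt) g_0(x)\,dx$, and the Schwartz property of $g_0$ (odd, all even-order Taylor coefficients zero automatically, and the $t^{1/2}$ absorbed since $g_0$ odd means its sine transform is a Schwartz function vanishing like an odd function at $0$, whose product with $t^{1/2}$... here one checks directly that $f \in \Sch(\R_{\geq 0})$) gives $f \in \Sch_0(\R)$ restricted to $[0,\infty)$, hence $f \in \Sch(\R_{\geq 0})$. Finally $K[f](is) = -\Mellin{h}(is)/\sin(\pi s/2) = m(s)$ by construction, closing the loop.

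The main obstacle I anticipate is the bookkeeping at the potential poles $s \in 2\Z$ of $1/\sin(\pi s/2)$: one must be careful that the Mellin-transform identity of Lemma \ref{MPTrans} is only asserted for real $\tau$, so promoting it to an identity of entire functions requires separately establishing analyticity of $K[f](is)$ (cleanest via the integral representation with the entire kernel $\BesselK_{i\tau}$, together with uniform-in-$\tau$ decay estimates for $\BesselK_{i\tau}(t)$ both near $t=0$ and as $t\to\infty$), after which the "poles" are automatically removable and the clean bound from $\Hol^*(\C)$ drops out. The rest — Mellin-Paley-Wiener for $\Sch(\R_{>0}^\times)$, the change of variables $x = \tfrac12(y-y^{-1})$, and the sine-transform inversion — is routine but must be assembled in the right order.
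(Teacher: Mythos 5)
Your proof is essentially correct and follows the same skeleton as the paper's (reduce to a Mellin‐side description via Lemma \ref{MPTrans} and the change of variables $x=\tfrac12(y-y^{-1})$), but diverges at one pivotal step. The paper proves directly that all \emph{moments} $\int_{\R} g(x)x^{2n+1}\,dx$ vanish — this is the dual statement to $f(t)t^{-1/2}$ extending to $\Sch_0(\R)$, since the Fourier transform of $g$ vanishes to infinite order at $0$ — and then shows by induction that this is equivalent to $\Mellin{h}(2n)=0$ for all $n\in\Z$, which exactly cancels the simple zeros of $\sin(\pi s/2)$. You instead spot that these moment conditions are needed, declare that you "don't see why $\Mellin{h}(2ik)$ vanishes," and pivot to proving entirety of $K[f](is)$ directly from the $\BesselK_{i\tau}$-integral (using $\BesselK_{i\tau}(t)\ll t^{-|\Im\tau|}$ near $0$ and $f(t)t^{-1/2}$ vanishing to infinite order); this is a valid alternative for the forward direction, though it costs you an extra analytic-continuation estimate on Bessel functions that the paper's moment argument avoids. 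The cost shows up in the converse: to verify that the reconstructed $f(t)=c\,t^{1/2}\tilde g(t)$ actually lies in $\Sch(\R_{\geq 0})$, you need $\tilde g$ (the sine transform of $g_0$) to vanish to infinite order at $0$, i.e. precisely the vanishing of all odd moments of $g_0$, which is exactly $\Mellin{h}(2k)=0$, which holds here because $G(s)=-\sin(\pi s/2)m(s)$ vanishes at $2\Z$ — the very fact you avoided establishing. You gesture at this with "here one checks directly" but never close the loop. So the one piece of content you skipped in the forward direction is the piece you implicitly need for the converse; the paper's route, by confronting the moment conditions head-on, handles both directions with the same observation. (Also a small slip: with $s=i\tau$ you have $\sin(i\pi\tau/2)=\sin(\pi s/2)$, not $-\sin(\pi s/2)$; harmless by parity, but worth fixing.)
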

\begin{proof}
	When $f$ traverses $\Sch(\R_{\geq 0})$, $f(t)t^{-\frac{1}{2}}$ also traverses $\Sch(\R_{\geq 0})$, and can be extended to an odd function $f_- \in \Sch(\R)$ whose derivatives $f_-^{(n)}(0)=0$ vanishes at $0$. Using notation from Lemma \ref{MPTrans},
	$$ g(x) = \frac{1}{2i} \int_{-\infty}^{\infty} e^{itx} f_-(t) \ud t $$
traverses odd functions in $\Sch(\R)$ such that
	$$ \int_{-\infty}^{\infty} g(x) x^{2n+1} \ud x = 0, \quad \forall n \in \Z_{\geq 0}. $$
	It is easy to see that $\R_{>0} \to \R, t \mapsto \frac{1}{2}(t-t^{-1})$ is a $\Cont^{\infty}$-diffeomorphism. Hence $h(y) = g \left( \frac{1}{2}(y-y^{-1}) \right)$ traverses functions in $\Sch(\R_{>0}^{\times})$ satisfying $h(y^{-1}) = -h(y)$ and
	$$ \int_0^{\infty} h(y) (y-y^{-1})^{2n+1} (y+y^{-1}) \frac{\ud y}{y} = 0, \quad \forall n \in \Z_{\geq 0}. $$
	By induction in $n$, the last condition above is equivalent to
	$$ \int_0^{\infty} h(y) y^{2n} \frac{\ud y}{y} = \pm \int_0^{\infty} h(y) y^{-2n} \frac{\ud y}{y} = 0, \quad \forall n \in \Z_{\geq 0}. $$
	Hence $\sin \left( \frac{\pi s}{2} \right) K[f](-is) = \Mellin{h}(s)$ traverses the space of odd entire functions 
\begin{itemize}
	\item with rapid decay in $\Im(s)$ as $\Re(s)$ lying in any given compact interval,
	\item satisfying $\Mellin{h}(2n) = 0$ for $n \in \Z$.
\end{itemize}
	Since $\sin \left( \frac{\pi s}{2} \right)$ is odd and has precisely $s=2n, n \in \Z$ as simple zeroes, this is equivalent to $K[f](-is) \in \Hol^*(\C)$.
\end{proof}
\begin{corollary} \label{AdmWtR}
	For either $\epsilon \in \{ \pm 1 \}$, the space of functions $F^{\epsilon}[f](\tau)$ contains functions of the form 
	$$ \sqrt{\frac{e^{\pi \tau} + e^{-\pi \tau}}{4\pi}} \cdot m(i\tau) $$
	where $m(s) \in \Hol^*(\C)$ (see Definition \ref{HolSpace}).
\end{corollary}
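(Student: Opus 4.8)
The plan is to read off the claim from the explicit Bessel–integral expression for $F^{\epsilon}[f](\tau)$ recorded just before Definition \ref{SchwartzSpaces}, together with the surjectivity remark following Definition \ref{SchwartzSpaces} and the range characterization of the Kontorovich--Lebedev transform in Proposition \ref{KLRange}.

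First I would substitute, for fixed $\epsilon \in \{ \pm 1 \}$, the function $F(t) := f(t/(2\pi)) + \epsilon f(-t/(2\pi))$, so that the displayed identity becomes $F^{\epsilon}[f](\tau) = \pi^{-1} \sqrt{\cosh(\pi\tau)/2}\, K[F](\tau)$. One checks the elementary identity $\pi^{-1}\sqrt{\cosh(\pi\tau)/2} = \pi^{-1/2}\sqrt{(e^{\pi\tau}+e^{-\pi\tau})/(4\pi)}$, since $e^{\pi\tau}+e^{-\pi\tau}=2\cosh(\pi\tau)$; hence $F^{\epsilon}[f](\tau) = \sqrt{(e^{\pi\tau}+e^{-\pi\tau})/(4\pi)}\cdot \pi^{-1/2} K[F](\tau)$. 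By the remark after Definition \ref{SchwartzSpaces} the assignment $f \mapsto F$ maps $\Sch_0(\R)$ onto $\Sch(\R_{\geq 0})$, so it suffices to realize $\pi^{-1/2}K[F](\tau)$, for a suitable $F \in \Sch(\R_{\geq 0})$, as $m(i\tau)$ with $m$ a prescribed element of $\Hol^*(\C)$.

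Given $m \in \Hol^*(\C)$, I would put $\mu := \sqrt{\pi}\, m$, which again lies in $\Hol^*(\C)$ because that space is a $\C$-vector space (scaling does not affect the decay bounds in Definition \ref{HolSpace}). The converse half of Proposition \ref{KLRange} then produces $F \in \Sch(\R_{\geq 0})$ with $\mu(s) = K[F](is)$. Substituting $s = -i\tau$ and using that $\mu$, being proportional to $m$, is even, one gets $K[F](\tau) = \mu(-i\tau) = \sqrt{\pi}\, m(i\tau)$ for every real $\tau$, so $\pi^{-1/2}K[F](\tau) = m(i\tau)$. Pulling $F$ back to some $f \in \Sch_0(\R) \subset \Sch(\R)$ through the surjection above and inserting it into the identity of the previous paragraph yields $F^{\epsilon}[f](\tau) = \sqrt{(e^{\pi\tau}+e^{-\pi\tau})/(4\pi)}\cdot m(i\tau)$, which is exactly the asserted membership.

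I do not anticipate a real obstacle: the statement is essentially bookkeeping on top of Proposition \ref{KLRange}, the surjectivity remark, and the Bessel–integral formula for $F^{\epsilon}$. The only points needing a little care are (i) tracking the change of variable $s \leftrightarrow i\tau$ in tandem with the evenness of functions in $\Hol^*(\C)$, so that a holomorphic function and its reflection across the origin are matched correctly, and (ii) pinning down the constant, i.e.\ verifying that $\pi^{-1}\sqrt{\cosh(\pi\tau)/2}$ agrees with $\sqrt{(e^{\pi\tau}+e^{-\pi\tau})/(4\pi)}$ up to the scalar $\pi^{-1/2}$ that one absorbs into $m$.
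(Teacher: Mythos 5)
Your proposal is correct and follows the same route the paper takes (the paper compresses it to ``This follows directly from Proposition \ref{KLRange}''); you simply spell out the constant-tracking, the reduction via the surjection $\Sch_0(\R)\twoheadrightarrow\Sch(\R_{\geq 0})$, and the evenness of $m$, all of which check out.
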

\begin{proof}
	This follows directly from Proposition \ref{KLRange}.
\end{proof}

\begin{definition} \label{def: AdmWtR}
	An entire function $f(s)$ is an \emph{admissible weight function}, written as $f \in \mathrm{Ad}_{\R}(\C)$, if it is a finite linear combination of functions of the form $\cos(\pi s) m_1(s) m_2(s)$, where $m_j(s)$ are even entire functions such that $e^{\frac{\pi}{2}\norm[\Im s]} m_j(s)$ has rapid decay in any vertical region of the form $a \leq \Re s \leq b$.
\end{definition}

\begin{proposition} \label{prop: AdmWtR}
	For any pair of $f_{\epsilon} \in \mathrm{Ad}_{\R}(\C)$ with $\epsilon \in \{ \pm 1 \}$, there is $\Psi \in \Sch(\GL_2(\R))$ such that $h(\pi(\sgn^{\frac{\epsilon - 1}{2}}, i\tau))(\Psi) = f_{\epsilon}(i\tau)$.
\end{proposition}
\begin{proof}
	This follows readily from Corollary \ref{M3Exp} \& \ref{AdmWtR}.
\end{proof}

	\subsection{Dual Weight Formula}
	
	We would like to express $\widetilde{h}(\chi)(\Psi)$ in terms of $h(\pi)(\Psi)$ for $\Psi = \phi_1^{\vee} * \phi_2$ as constructed in (\ref{TestPhiR}) above. By Lemma \ref{GeomToM4}, we are reduced to expressing $I(t,\Psi)$ in terms of $h(\pi)(\Psi)$. The idea is to view $I(t,\Psi)$ as a tempered distribution on $\Sch(\R)$. Hence we take $g \in \Sch(\R)$ and consider
	$$ b(\Psi, g) := \int_{\GL_2(\R)} \Psi(X) g(\Minv(X)) \norm[\det X]^{-1} \ud X, \quad \Minv \left( \begin{pmatrix} x_1 & x_2 \\ x_3 & x_4 \end{pmatrix} \right) := \frac{x_1 x_4}{x_1 x_4 - x_2 x_3}, $$
	where $\ud X =  \ud x_1 \ud x_2 \ud x_3 \ud x_4$ is the additive Haar measure on $\Mat_2(\R)$, with which a Haar measure 
	$$ \ud g := \frac{\ud X}{\norm[\det X]^2} $$
	on $\GL_2(\R)$ is associated. One checks easily that it is consistent with the Haar measure given via the Iwasawa decomposition. We shall compute $b(\Psi, g)$ in two ways. The first one is the following lemma.
\begin{lemma} \label{Bilin1}
	For any $\Psi \in \Sch(\Mat_2(\R))$, we have
	$$ b(\Psi, g) = \int_{\R} I \left( \frac{y+\frac{1}{2}}{y-\frac{1}{2}}, \Psi \right) g\left( y+\frac{1}{2} \right) \extnorm{y-\frac{1}{2}}^{-1} \ud y. $$
\end{lemma}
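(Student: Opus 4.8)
The plan is to prove the identity by an explicit change of variables on $\GL_2(\R)$, recognising the right-hand side as what remains after three of the four matrix entries have been integrated out. First I would record that for $\F=\R$ one has $\zeta_{\R}(1)=\Gamma_{\R}(1)=1$, so that in the real case
$$ I(t,\Psi) = \int_{(\R^{\times})^3} \Psi\begin{pmatrix} zt_1t_2 & zt_1 \\ zt_2 & zt \end{pmatrix} \lvert z\rvert\, dz\, dt_1\, dt_2. $$
Next, in $b(\Psi,g)$ the integral over $\GL_2(\R)$ is unchanged if we restrict to the open dense subset $U=\{X\in\GL_2(\R): x_1x_2x_3x_4\neq 0\}$, whose complement is Lebesgue-null in $\Mat_2(\R)$.

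On $U$ I would introduce the coordinates $(z,t_1,t_2,t)$ determined by
$$ x_1 = zt_1t_2, \quad x_2 = zt_1, \quad x_3 = zt_2, \quad x_4 = zt, $$
equivalently $z=x_2x_3/x_1$, $t_1=x_1/x_3$, $t_2=x_1/x_2$ and $t=x_1x_4/(x_2x_3)=\Tinv(X)$; this is a $\Cont^{\infty}$-diffeomorphism of $(\R^{\times})^3\times(\R\setminus\{1\})$ onto $U$. A direct computation gives $\det X = z^2t_1t_2(t-1)$, hence $\Minv(X)=x_1x_4/\det X = t/(t-1)$, and the Jacobian $\lvert\partial(x_1,x_2,x_3,x_4)/\partial(z,t_1,t_2,t)\rvert = \lvert z\rvert^3\lvert t_1t_2\rvert$ (conveniently checked via the inverse map, whose Jacobian is $\lvert x_1\rvert\lvert x_2\rvert^{-2}\lvert x_3\rvert^{-2}$). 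Substituting, the factor $\lvert\det X\rvert^{-1}\,dX$ becomes $\lvert z^2t_1t_2(t-1)\rvert^{-1}\cdot\lvert z\rvert^3\lvert t_1t_2\rvert\, dz\,dt_1\,dt_2\,dt = \lvert z\rvert\,\lvert t-1\rvert^{-1}\,dz\,dt_1\,dt_2\,dt$, so
$$ b(\Psi,g) = \int_{\R^{\times}} \left( \int_{(\R^{\times})^3} \Psi\begin{pmatrix} zt_1t_2 & zt_1 \\ zt_2 & zt \end{pmatrix} \lvert z\rvert\, dz\, dt_1\, dt_2 \right) g\!\left(\tfrac{t}{t-1}\right) \lvert t-1\rvert^{-1}\,dt = \int_{\R^{\times}} I(t,\Psi)\, g\!\left(\tfrac{t}{t-1}\right) \lvert t-1\rvert^{-1}\,dt. $$
Applying Fubini here needs a word of justification, which I would supply from the rapid decay of the Schwartz functions: after the auxiliary substitution $u_i=zt_i$ the $(1,1)$-entry $u_1u_2/z$ forces decay of $\Psi$ as $z\to0$, the $(2,2)$-entry $zt$ forces decay as $\lvert z\rvert\to\infty$ and as $\lvert t\rvert\to\infty$, and near $t=1$ the argument $t/(t-1)\to\infty$ forces decay of $g$; together these give absolute integrability, so all the interchanges are legitimate and both sides are finite.

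Finally I would perform the Möbius change of variable $t=\frac{y+1/2}{y-1/2}$, equivalently $y=\frac{t+1}{2(t-1)}$, which sends $\R\setminus\{1/2\}$ bijectively onto $\R\setminus\{1\}$. One computes $t-1=(y-1/2)^{-1}$, hence $\tfrac{t}{t-1}=y+\tfrac12$ and $\lvert t-1\rvert^{-1}=\lvert y-\tfrac12\rvert$, while $\lvert dt\rvert = \lvert y-\tfrac12\rvert^{-2}\,\lvert dy\rvert$; therefore $\lvert t-1\rvert^{-1}\,dt$ transforms exactly into $\lvert y-\tfrac12\rvert^{-1}\,dy$, and (up to the null exceptional values $t=0,1$ resp. $y=\pm\tfrac12$) the integral becomes $\int_{\R} I\!\left(\tfrac{y+1/2}{y-1/2},\Psi\right) g\!\left(y+\tfrac12\right)\lvert y-\tfrac12\rvert^{-1}\,dy$, which is the asserted formula.

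The computation is essentially mechanical; the main point requiring care is the bookkeeping of the Jacobian against the factor $\lvert\det X\rvert^{-1}$ (so that the $t_1,t_2$ and $z$ powers cancel to leave precisely $\lvert z\rvert\,\lvert t-1\rvert^{-1}$), together with the absolute-convergence check that licenses Fubini. Everything else — the negligibility of the coordinate hyperplanes $x_i=0$ and of $\{t=1\}$, and the final Möbius substitution — is routine.
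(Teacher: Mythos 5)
Your proof is correct and follows essentially the same route as the paper: the change of variables $x_1=zt_1t_2$, $x_2=zt_1$, $x_3=zt_2$, $x_4=zt$ (which the paper invokes by reference to its proof of Lemma~\ref{GeomToM4}), yielding $b(\Psi,g)=\int I(t,\Psi)\,g(t/(t-1))\,\norm[t-1]^{-1}dt$, followed by the Möbius substitution $t=(y+\tfrac12)/(y-\tfrac12)$. The Jacobian bookkeeping (including the observation $\zeta_{\R}(1)=1$) and the final substitution all check out; the paper merely leaves the convergence and null-set details implicit.
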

\begin{proof}
	The same change of variables as in the proof of Lemma \ref{GeomToM4} gives
	$$ b(\Psi, g) = \int_{\R} I(t,\Psi) g \left( \frac{t}{t-1} \right) \norm[t-1]^{-1} \ud t. $$
	Another change of variables $t = (y+1/2)(y-1/2)^{-1}$ gives the desired equality.
\end{proof}

\noindent To derive another form of $b(\Psi, g)$ and link it to $h(\pi)(\Psi)$, we specialize $\Psi$ to the form $\Psi = \phi_1^{\vee} * \phi_2$. Using the coordinates of $\GL_2(\R)$ given in (\ref{TestPhiR}) (a variant of Iwasawa decomposition), we get
\begin{multline} \label{eq:b}
	b(\Psi, g) = \int_{\GL_2(\R)^2} \phi_1(g_1) \phi_2(g_2) g \left( \Minv(g_1^{-1} g_2) \right) \norm[\det g_1^{-1}g_2] \ud g_1 \ud g_2 \\
	= \int_{\R^{\times}} \ell_1(z) \norm[z]^{-2} \ud^{\times}z \cdot \int_{\R^{\times}} \ell_2(z) \norm[z]^2 \ud^{\times}z \cdot \int_0^{\infty} h_1(y) \ud^{\times}y \cdot \int_0^{\infty} h_2(y) \ud^{\times}y \cdot \\
	\int_{\R^2} f_1(x_1) f_2(x_2) \left( \int_0^{2\pi} g \left( \Minv \left( n(-x_1)\begin{pmatrix} \cos \theta & \sin \theta \\ - \sin \theta & \cos \theta \end{pmatrix} n(x_2) \right) \right) \frac{d\theta}{2\pi} \right) \ud x_1 \ud x_2.
\end{multline}

\begin{lemma} \label{KerFuncInv}
	To any $g \in \Cont^2(\R)$ we associate a kernel function 
	$$ K_g(x_1,x_2) := \int_0^{2\pi} g \left( \Minv \left( n(-x_1)\begin{pmatrix} \cos \theta & \sin \theta \\ - \sin \theta & \cos \theta \end{pmatrix} n(x_2) \right) \right) \frac{d\theta}{2\pi}. $$
	Recall the differential operator $D$ in (\ref{InvOp}) and write $D_j$ for its counterpart with respect to the variable $x_j$. Then $K_g$ is in $\Cont^2(\R^2)$ and satisfies
	$$ K_g(-x_1,-x_2) = K_g(x_1,x_2) = K_g(x_2,x_1), \quad D_2 K_g(x_1,x_2) = D_1 K_g(x_1,x_2). $$
\end{lemma}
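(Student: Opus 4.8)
\emph{Proof plan.} The first step is to make the argument of $g$ explicit. Writing $\kappa(\theta):=\begin{pmatrix}\cos\theta & \sin\theta\\ -\sin\theta & \cos\theta\end{pmatrix}$, the matrix $n(-x_1)\kappa(\theta)n(x_2)$ has determinant $1$, so it lies in $\GL_2(\R)$ for all $x_1,x_2,\theta$, and carrying out the multiplication one finds
$$ \Minv\bigl(n(-x_1)\kappa(\theta)n(x_2)\bigr)=(\cos\theta+x_1\sin\theta)(\cos\theta-x_2\sin\theta)=:F(x_1,x_2,\theta), $$
which is a polynomial in $x_1,x_2,\cos\theta,\sin\theta$. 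Hence $K_g(x_1,x_2)=\int_0^{2\pi}g\bigl(F(x_1,x_2,\theta)\bigr)\,\frac{d\theta}{2\pi}$, and the membership $K_g\in\Cont^2(\R^2)$ follows by differentiating under the integral sign: every partial derivative of $g\circ F$ in $x_1,x_2$ of total order $\le 2$ is a finite sum of terms $g^{(j)}(F)\cdot(\text{polynomial in }\cos\theta,\sin\theta)$ with $j\le 2$, all jointly continuous on $\R^2\times[0,2\pi]$, and $[0,2\pi]$ is compact, so the standard theorem on parameter integrals applies.

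For the two symmetries I would use that $\Minv(X)=x_1x_4/\det X$ is invariant under $X\mapsto X^{-1}$ (which divides $\det$ by $\det X$ and replaces the diagonal entries by $x_4/\det X$ and $x_1/\det X$) and under conjugation by $w:=\diag(1,-1)$ (conjugation by a diagonal matrix fixes the diagonal entries and the determinant). Combining this with
$$ \bigl(n(-x_1)\kappa(\theta)n(x_2)\bigr)^{-1}=n(-x_2)\kappa(-\theta)n(x_1),\qquad w\,n(-x_1)\kappa(\theta)n(x_2)\,w^{-1}=n(x_1)\kappa(-\theta)n(-x_2), $$
and the invariance of $d\theta$ over $[0,2\pi]$ under $\theta\mapsto-\theta$, the first identity gives $K_g(x_1,x_2)=K_g(x_2,x_1)$ and the second gives $K_g(x_1,x_2)=K_g(-x_1,-x_2)$. (Equivalently, these follow from the substitutions $\theta\mapsto-\theta$ and $\theta\mapsto\pi-\theta$ directly in the explicit integral.)

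The substantive point is the identity $D_2K_g=D_1K_g$. Put $a=\cos\theta+x_1\sin\theta$, $b=\cos\theta-x_2\sin\theta$, so $F=ab$, $\partial_{x_1}F=b\sin\theta$, $\partial_{x_2}F=-a\sin\theta$ and $\partial_{x_1}^2F=\partial_{x_2}^2F=0$. The chain rule together with the definition $D=(t^2+1)\frac{d^2}{dt^2}+2t\frac{d}{dt}$ from (\ref{InvOp}) give
$$ (D_1-D_2)[g(F)]=\sin^2\theta\,g''(F)\bigl[(x_1^2+1)b^2-(x_2^2+1)a^2\bigr]+2\sin\theta\,g'(F)\,(x_1b+x_2a). $$
The plan is to verify, by expanding in $\cos2\theta$ and $\sin2\theta$, the identities $x_1b+x_2a=(x_1+x_2)\cos\theta$ and $(x_1^2+1)b^2-(x_2^2+1)a^2=(x_1+x_2)\,\partial_\theta F$, where $\partial_\theta F=(x_1-x_2)\cos2\theta-(1+x_1x_2)\sin2\theta$. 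These collapse the right-hand side to
$$ (D_1-D_2)[g(F)]=(x_1+x_2)\bigl(\sin^2\theta\,g''(F)\,\partial_\theta F+\sin2\theta\,g'(F)\bigr)=(x_1+x_2)\,\partial_\theta\bigl(\sin^2\theta\,g'(F)\bigr), $$
whose integral over $\theta\in[0,2\pi]$ vanishes by $2\pi$-periodicity; here the hypothesis $g\in\Cont^2$ is precisely what makes $\theta\mapsto\sin^2\theta\,g'(F)$ a $\Cont^1$ function, so the fundamental theorem of calculus applies. This gives $D_2K_g=D_1K_g$ on $\R^2$.

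The only step that is not pure bookkeeping is the algebraic reduction of $(D_1-D_2)[g\circ F]$ to a total $\theta$-derivative; morally this is the kernel-level manifestation of the relation $D=\OFour(\widetilde{\Delta})$ of (\ref{InvOp}) together with the fact that $\widetilde{\Delta}$ is essentially the Casimir, so that it acts on $K_g$ identically through the left unipotent coordinate $x_1$ and the right unipotent coordinate $x_2$. Everything else reduces to standard theorems on parameter integrals and elementary substitutions.
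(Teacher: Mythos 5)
Your proof is correct and follows essentially the same route as the paper's: the same explicit form $F(x_1,x_2,\theta)=(\cos\theta+x_1\sin\theta)(\cos\theta-x_2\sin\theta)$, the same reduction of $(D_1-D_2)[g\circ F]$ to the total derivative $(x_1+x_2)\,\partial_\theta\bigl(\sin^2\theta\,g'(F)\bigr)$ via the two algebraic identities, and the same vanishing by $2\pi$-periodicity. Your derivation of the two symmetries via $\Minv$-invariance under inverse and conjugation by $\diag(1,-1)$ is a mild repackaging of the paper's direct substitutions $(x_1,x_2,\theta)\mapsto(-x_1,-x_2,-\theta)$ and $(x_2,x_1,-\theta)$, not a genuinely different argument.
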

\begin{proof}
	It is clear that $K_g(x_1,x_2)$ is in $\Cont^2(\R^2)$ since
	$$ \Minv := \Minv \left( n(-x_1)\begin{pmatrix} \cos \theta & \sin \theta \\ - \sin \theta & \cos \theta \end{pmatrix} n(x_2) \right) = (\cos \theta + x_1 \sin \theta) (\cos \theta - x_2 \sin \theta) $$
	is a smooth function in $(\theta, x_1, x_2)$. For $j=1,2$, we have (note that $\Minv$ is linear in $x_j$)
	$$ D_j g(\Minv) = (x_j^2+1) \left( \frac{\partial \Minv}{\partial x_j} \right)^2 g''(\Minv) + 2x_j \frac{\partial \Minv}{\partial x_j} g'(\Minv). $$
	A little calculation shows
	$$ (x_1^2+1) \left( \frac{\partial \Minv}{\partial x_1} \right)^2 - (x_2^2+1) \left( \frac{\partial \Minv}{\partial x_2} \right)^2 = (x_1+x_2) (\sin \theta)^2 \frac{\partial \Minv}{\partial \theta}, $$
	$$ 2x_1 \frac{\partial \Minv}{\partial x_1} - 2x_2 \frac{\partial \Minv}{\partial x_2} = 2 (x_1+x_2) \sin \theta \cos \theta. $$
	Consequently, we get
	$$ D_1 g(\Minv) - D_2 g(\Minv) = (x_1+x_2) \frac{\partial}{\partial \theta} \left\{ (\sin \theta)^2 g'(\Minv) \right\}. $$
	It follows that
	$$ \left( D_1 K_g - D_2 K_g \right)(x_1,x_2) = (x_1+x_2) \int_0^{2\pi} \frac{\partial}{\partial \theta} \left\{ (\sin \theta)^2 g'(\Minv) \right\} d\theta = 0. $$
	Since changing $(x_1,x_2, \theta)$ to $(-x_1,-x_2, -\theta)$ or $(x_2,x_2,-\theta)$ leaves $\Minv$ invariant, we get the first equation and conclude.
\end{proof}
\begin{remark}
	In other words, the operator from $\mathcal{D}(\R) = \Cont_c^{\infty}(\R)$ to $\mathcal{D}'(\R)$ (the space of distributions on $\R$) defined by $K_g(x_1,x_2)$ commutes with $D$ and with the involution $f(x) \mapsto f(-x)$. Hence it is expected that all $K_g$ are simultaneously diagonalizable on the space of even or odd functions.
\end{remark}
\begin{remark}
	It is easy to deduce and is worth writing another expression of
\begin{equation} \label{eq: KgIntRep}
	K_g(x_1,x_2) = \int_0^{\pi} g \left( \frac{1-x_1x_2}{2} + \frac{\cos \theta}{2} \sqrt{(1+x_1^2)(1+x_2^2)} \right) \frac{d\theta}{\pi}. 
\end{equation}
\end{remark}

\begin{lemma} \label{KerFuncEst}
	For $g \in \Sch(\R)$ and any $\epsilon > 0$, we have the bounds
	$$ K_g(x_1,x_2) \ll_{\epsilon, x_1} (1+\norm[x_2])^{-1+\epsilon}, \quad \frac{\partial K_g}{\partial x_1}(x_1,x_2) \ll_{\epsilon, x_1} (1+\norm[x_2])^{-1+\epsilon} $$
	with implied constants bounded by some Schwartz norm of $g$.
\end{lemma}
\begin{proof}
	Note that we may and will assume $\norm[x_2]$ to be larger than any constant depending only on $\epsilon, x_1$.

\noindent (1) We first treat the bound for $K_g$. For simplicity of notations, write
	$$ a(x_1,x_2) := \frac{1}{2} \sqrt{(1+x_1^2)(1+x_2^2)}, \quad \epsilon(x_1,x_2) := \frac{1-x_1x_2}{\sqrt{(1+x_1^2)(1+x_2^2)}}. $$
	The integrand in \eqref{eq: KgIntRep} becomes $g(a(x_1,x_2)(\cos \theta + \epsilon(x_1,x_2)))$. Introduce $\widetilde{\epsilon_0} = \min (1/16, \epsilon_0)$ with
	$$ \epsilon_0 := \frac{1}{4} \left( 1 - \frac{\norm[x_1]}{\sqrt{1+x_1^2}} \right) = \frac{1}{4 \sqrt{1+x_1^2} (\norm[x_1]+\sqrt{1+x_1^2})}. $$
	Note that $\lim_{x_2 \to \pm \infty} \epsilon(x_1,x_2) \in \{ 1-4\epsilon_0, -1+4\epsilon_0 \}$. For any constant $0 < \delta <1$, we can assume $\norm[x_2]$ to be sufficiently large, so that
	$$ \epsilon(x_1,x_2) \in [-1+3\widetilde{\epsilon_0}, 1-3\widetilde{\epsilon_0}], \quad \frac{(1+x_2^2)^{\frac{\delta}{2}}}{a(x_1,x_2)} \leq \widetilde{\epsilon_0}. $$
	Let $I_+$ resp. $I_-$ be the subset of $[0,\pi)$ such that
	$$ \theta \in I_+ \text{ resp. } I_- \Leftrightarrow \extnorm{\cos \theta + \epsilon(x_1,x_2)} \geq \text{ resp. } < \frac{(1+x_2^2)^{\frac{\delta}{2}}}{a(x_1,x_2)}. $$
	In particular, $I_-$ is an interval of length $\norm[I_-] \ll_{\delta, x_1} (1+x_2^2)^{\frac{\delta-1}{2}}$. Bounding $g$ by its sup-norm we get
	$$ \int_{I_-} g \left( a(x_1,x_2) ( \cos \theta + \epsilon(x_1,x_2) ) \right) \frac{d\theta}{\pi} \ll_{\delta, x_1} (1+x_2^2)^{\frac{\delta-1}{2}}. $$
	While on $I_+$ we have the bound $g \left( a(x_1,x_2) ( \cos \theta + \epsilon(x_1,x_2) ) \right) \ll (1+x_2^2)^{-\frac{A\delta}{2}}$ for any constant $A > 0$, we deduce
	$$ \int_{I_+} g \left( a(x_1,x_2) ( \cos \theta + \epsilon(x_1,x_2) ) \right) \frac{d\theta}{\pi} \ll_{\delta, A, x_1} (1+x_2^2)^{-\frac{A\delta}{2}} $$
	and conclude by choosing $\delta$ sufficiently small and $A$ sufficiently large.

\noindent (2) We maintain the notation and assumption in (1). Note that
	$$ \frac{\partial K_g}{\partial x_1}(x_1,x_2) = \int_0^{\pi} \left( -\frac{x_2}{2} + \frac{\cos \theta}{2} \frac{x_1}{\sqrt{1+x_1^2}} \sqrt{1+x_2^2} \right) g' \left( a(x_1,x_2) (\cos \theta + \epsilon(x_1,x_2)) \right) \frac{d\theta}{\pi}. $$
	Write $I_- = (\theta_-, \theta_+)$. Then we have
	$$ 1-2\widetilde{\epsilon_0} \geq \cos \theta_- = \frac{(1+x_2^2)^{\frac{\delta}{2}}}{a(x_1,x_2)} - \epsilon(x_1,x_2) > \cos \theta_+ = -\frac{(1+x_2^2)^{\frac{\delta}{2}}}{a(x_1,x_2)} - \epsilon(x_1,x_2) \geq -1 + 2 \widetilde{\epsilon_0}. $$
	By integration by parts, we get for any constant $A > 0$
\begin{align*}
	&\quad \int_{I_-} \left( -\frac{x_2}{2} + \frac{\cos \theta}{2} \frac{x_1}{\sqrt{1+x_1^2}} \sqrt{1+x_2^2} \right) g' \left( a(x_1,x_2) (\cos \theta + \epsilon(x_1,x_2)) \right) \frac{d\theta}{\pi} \\
	&= \frac{\frac{x_2}{\sqrt{1+x_2^2}} - \frac{x_1}{\sqrt{1+x_1^2}} \cos \theta_+}{\pi \sqrt{1+x_1^2} \sin \theta_+} g(-(1+x_2^2)^{\frac{\delta}{2}}) - \frac{\frac{x_2}{\sqrt{1+x_2^2}} - \frac{x_1}{\sqrt{1+x_1^2}} \cos \theta_-}{\pi \sqrt{1+x_1^2} \sin \theta_-} g((1+x_2^2)^{\frac{\delta}{2}}) \\
	&\quad - \int_{I_-} \frac{\frac{x_1}{\sqrt{1+x_1^2}} - \frac{x_2}{\sqrt{1+x_2^2}} \cos \theta}{\sqrt{1+x_1^2} \sin^2 \theta} g\left( a(x_1,x_2) (\cos \theta + \epsilon(x_1,x_2)) \right) \frac{d\theta}{\pi} \\
	&\ll_{\delta, A, x_1} (1+x_2^2)^{-\frac{A\delta}{2}} + (1+x_2^2)^{\frac{\delta - 1}{2}}.
\end{align*}
	While on $I_+$ we have the bound $g' \left( a(x_1,x_2) ( \cos \theta + \epsilon(x_1,x_2) ) \right) \ll (1+x_2^2)^{-\frac{A\delta}{2}}$ for any constant $A > 0$, we deduce
	$$ \int_{I_+} \left( -\frac{x_2}{2} + \frac{\cos \theta}{2} \frac{x_1}{\sqrt{1+x_1^2}} \sqrt{1+x_2^2} \right) g' \left( a(x_1,x_2) (\cos \theta + \epsilon(x_1,x_2)) \right) \frac{d\theta}{\pi} \ll_{\delta, A, x_1} (1+x_2^2)^{\frac{1-A\delta}{2}} $$
	and conclude by choosing $\delta$ sufficiently small and $A$ sufficiently large.
\end{proof}

\begin{remark}
	The implicit constants in the above bounds depends polynomially on $x_1$.
\end{remark}

\begin{lemma} \label{DualWtFC2}
	Let $g \in \Sch(\R)$. Then for any $f_1,f_2 \in \Sch^0(\R)$ we have
	$$ \int_{\R} K_g(x_1,x_2) \overline{e_{i\tau}^{\epsilon}(x_2)} \ud x_2 = \lambda_g^{\epsilon}(\tau) \overline{e_{i\tau}^{\epsilon}(x_1)}, $$
	$$ \int_{\R^2} f_1(x_1) K_g(x_1,x_2) f_2(x_2) \ud x_1 \ud x_2 = \frac{1}{2\pi} \sum_{\epsilon = \pm 1} \int_{-\infty}^{\infty} \tau \tanh(\pi \tau) \lambda_g^{\epsilon}(\tau) \cdot \epsilon F^{\epsilon}[\OFour(f_1)](\tau) F^{\epsilon}[\OFour(f_2)](\tau) \ud \tau, $$
	where $\lambda_g^{\epsilon}$ is given by (recall $F_{\pm}(\tau,t)$ given in (\ref{DualEigenFunc}))
	$$ \lambda_g^+(\tau) = \int_{\R} K_g(0,t) F_+(\tau,t) \ud t, \quad \lambda_g^-(\tau) = \int_{\R} \frac{\partial K_g}{\partial x_1}(0,t) F_-(\tau,t) \ud t $$
	with absolutely convergent integrals. The maps $g \mapsto \lambda_g^{\pm}(\tau)$ are tempered distributions.
\end{lemma}
\begin{proof}
	The inversion formula for Kontorovich-Lebedev transform (see \cite[(6.3)]{YL94} for example) implies
\begin{equation} \label{eq: KLTransInv}
	\OFour(f)(x) = \frac{1}{2\pi} \sum_{\epsilon = \pm 1} \int_{-\infty}^{\infty} \tau \tanh(\pi \tau) F^{\epsilon}[\OFour(f)](\tau) K_{i\tau}^{\epsilon}(x) \ud \tau \cdot \norm[x]^{-1}. 
\end{equation}
	Proposition \ref{KLRange} ensures the absolute convergence. Taking Fourier inversion yields
	$$ f(x) = \frac{1}{2\pi} \sum_{\epsilon = \pm 1} \int_{-\infty}^{\infty} \tau \tanh(\pi \tau) F^{\epsilon}[\OFour(f)](\tau) \overline{e_{i\tau}^{\epsilon}(x)} \ud \tau, $$
	Applying the above formula for $f_2(x_2)$ gives
\begin{align*} 
	&\quad \int_{\R^2} f_1(x_1) K_g(x_1,x_2) f_2(x_2) \ud x_1 \ud x_2 \\
	&= \frac{1}{2\pi} \sum_{\epsilon = \pm 1} \int_{-\infty}^{\infty} \tau \tanh(\pi \tau) F^{\epsilon}[\OFour(f_2)](\tau) \left( \int_{\R^2} f_1(x_1) K_g(x_1,x_2) \overline{e_{i\tau}^{\epsilon}(x_2)} \ud x_1 \ud x_2 \right) \ud \tau.
\end{align*}
	By \cite[(1.19) \& (1.20)]{Du13}, we have $F_{\pm}(\tau, t) \ll \norm[x]^{-1/2}$ as $x \to \pm \infty$. By Lemma \ref{KerFuncInv} \& \ref{KerFuncEst}, the smooth function $h(x_1) := \int_{\R} K_g(x_1,x_2) \overline{e_{i\tau}^{\epsilon}(x_2)} \ud x_2$ satisfies
	$$ Dh(x_1) = \int_{\R} D_2 K_g(x_1,x_2) \overline{e_{i\tau}^{\epsilon}(x_2)} \ud x_2 = \int_{\R} K_g(x_1,x_2) D\overline{e_{i\tau}^{\epsilon}(x_2)} \ud x_2 = \left( \frac{1}{4}+\tau^2 \right) h(x_1), $$
	$$ h(-x_1) = \int_{\R} K_g(x_1,-x_2) \overline{e_{i\tau}^{\epsilon}(x_2)} \ud x_2 = \epsilon h(x_1). $$
	Hence $h(x_1) = \lambda_g^{\epsilon}(\tau) \overline{e_{i\tau}^{\epsilon}(x_1)}$ for some $\lambda_g^{\epsilon}(\tau) \in \C$ by the uniqueness of the solutions, proving the first equation. Since $e_{i\tau}^{\epsilon}$ is proportional to $F_{\epsilon}$ given in (\ref{DualEigenFunc}), we have
	$$ \lambda_g^{\epsilon}(\tau) F_{\epsilon}(\tau,x_1) = \int_{\R} K_g(x_1,x_2) F_{\epsilon}(\tau, x_2) \ud x_2. $$
	Evaluating the above equation resp. its derivative at $x_1=0$ for $\epsilon = +1$ resp. $\epsilon = -1$, we get the desired formulas for $\lambda_g^{\epsilon}(\tau)$. Recalling the definition \eqref{NormKLTrans} of $F^{\epsilon}[\cdot]$ we deduce the second equation by
\begin{align*}
	\int_{\R^2} f_1(x_1) K_g(x_1,x_2) f_2(x_2) \ud x_1 \ud x_2 &= \frac{1}{2\pi} \sum_{\epsilon = \pm 1} \int_{-\infty}^{\infty} \tau \tanh(\pi \tau) F^{\epsilon}[\OFour(f_2)](\tau) \left( \lambda_g^{\epsilon}(\tau) \int_{\R} f_1(x_1) \overline{e_{i\tau}^{\epsilon}(x_1)} \right) \ud \tau \\
	&= \frac{1}{2\pi} \sum_{\epsilon = \pm 1} \epsilon \int_{-\infty}^{\infty} \tau \tanh(\pi \tau) \lambda_g^{\epsilon}(\tau) F^{\epsilon}[\OFour(f_1)](\tau)F^{\epsilon}[\OFour(f_2)](\tau) \ud \tau.
\end{align*}
	The statement on the maps $g \mapsto \lambda_g^{\pm}(\tau)$ follows directly from Lemma \ref{KerFuncEst}.
\end{proof}

\begin{lemma} \label{VarMF}
\begin{itemize}
	\item[(1)] For $s \in \C$ with $\Re s > \frac{1}{2}$, we have
	$$ \int_{\R} (1+x^2)^{-\frac{s}{2}} F_+(\tau,x) \ud x = \Gamma \left( \frac{1}{2} \right) \frac{\Gamma \left( \frac{s+i\tau}{2} - \frac{1}{4} \right) \Gamma \left( \frac{s-i\tau}{2} - \frac{1}{4} \right)}{\Gamma \left( \frac{s}{2} \right)^2}. $$
	\item[(2)] For $s \in \C$ with $\Re s > \frac{3}{2}$, we have
	$$ \int_{\R} (1+x^2)^{-\frac{s}{2}} x F_-(\tau,x) \ud x = \Gamma \left( \frac{3}{2} \right) \frac{\Gamma \left( \frac{s+i\tau}{2} - \frac{3}{4} \right) \Gamma \left( \frac{s-i\tau}{2} - \frac{3}{4} \right)}{\Gamma \left( \frac{s}{2} \right)^2}. $$
	\item[(3)] Define a function $Q_+(t; \tau) = Q_+^1(t; \tau) \mathbbm{1}_{\norm[t] < \frac{1}{2}} +(Q_+^2(t; \tau) + Q_+^2(t; -\tau)) \mathbbm{1}_{\norm[t] > \frac{1}{2}}$ with
	$$ Q_+^1(t; \tau) = \frac{2}{\pi} \cdot \HyGI \left( \frac{1}{4}+\frac{i\tau}{2}, \frac{1}{4}-\frac{i\tau}{2}; \frac{1}{2}; 4t^2 \right), $$
	$$ Q_+^2(t; \tau) = \frac{1}{2\pi} \frac{1 + i\sinh(\pi \tau)}{i\sinh(\pi \tau)} \frac{2^{\frac{1}{2} + i\tau}}{\norm[t]^{\frac{1}{2} - i\tau}} \HyGI \left( \frac{1}{4} - \frac{i\tau}{2}, \frac{3}{4} - \frac{i\tau}{2}; 1 - i\tau; \frac{1}{4t^2} \right). $$
	Then the (tempered) distribution $g \mapsto \lambda_g^+(\tau)$ is represented by $Q_+(t; \tau)$ in the sense that
	$$ \lambda_g^+(\tau) = \int_{\R} g\left( t + \frac{1}{2} \right) Q_+(t;\tau) \ud t. $$
	\item[(4)] Define a function $Q_-(t; \tau) = Q_-^1(t; \tau) \mathbbm{1}_{\norm[t] < \frac{1}{2}} +(Q_-^2(t; \tau) + Q_-^2(t; -\tau)) \mathbbm{1}_{\norm[t] > \frac{1}{2}}$ with
	$$ Q_-^1(t; \tau) = \frac{4}{\pi} t \cdot \HyGI \left( \frac{3}{4}+\frac{i\tau}{2}, \frac{3}{4}-\frac{i\tau}{2}; \frac{3}{2}; 4t^2 \right), $$
	$$ Q_-^2(t; \tau) =  \frac{-1 + i\sinh(\pi \tau)}{2\pi i\sinh(\pi \tau)} \frac{2^{\frac{1}{2} + i\tau}}{\norm[t]^{\frac{1}{2} - i\tau}} \sgn(t) \HyGI \left( \frac{1}{4} - \frac{i\tau}{2}, \frac{3}{4} - \frac{i\tau}{2}; 1 - i\tau; \frac{1}{4t^2} \right). $$
	Then the (tempered) distribution $g \mapsto \lambda_g^-(\tau)$ is represented by $Q_-(t; \tau)$ in the sense that
	$$ \lambda_g^-(\tau) = \int_{\R} g\left( t + \frac{1}{2} \right) Q_-(t;\tau) \ud t. $$
\end{itemize}
\end{lemma}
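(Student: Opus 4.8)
The plan is to establish (1)--(2) first by Mellin--Barnes calculus, and then deduce (3)--(4) from them by unfolding the kernel $K_g$ and re-summing.

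\emph{Parts (1) and (2).} Since $F_+(\tau,\cdot)$ is even I would write $\int_{\R}(1+x^2)^{-s/2}F_+(\tau,x)\,dx=2\int_0^{\infty}(1+x^2)^{-s/2}F_+(\tau,x)\,dx$ and compute the half-line integral by Mellin--Parseval. The Mellin transform of $F_+(\tau,\cdot)$ is the one exhibited in the proof of Lemma~\ref{FourTransToLegendre}, i.e. $\frac{\Gamma(1/2)}{2\extnorm{\Gamma(\tfrac14+\tfrac{i\tau}2)}^2}\,\Gamma(\tfrac14+\tfrac{i\tau}2-\tfrac w2)\Gamma(\tfrac14-\tfrac{i\tau}2-\tfrac w2)\Gamma(\tfrac w2)/\Gamma(\tfrac{1-w}2)$ for $0<\Re w<\tfrac12$, while that of $(1+x^2)^{-s/2}$ is the beta integral $\tfrac12\Gamma(\tfrac w2)\Gamma(\tfrac{s-w}2)/\Gamma(\tfrac s2)$. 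Multiplying the first by the second evaluated at $1-w$ and integrating over a line $\Re w=c$ with $\max(0,1-\Re s)<c<\tfrac12$ (possible precisely because $\Re s>\tfrac12$), the factors $\Gamma(\tfrac{1-w}2)$ cancel and, after $u=w/2$, one is left with $\tfrac1{2\pi i}\int\Gamma(u)\Gamma(\tfrac{s-1}2+u)\Gamma(\tfrac14+\tfrac{i\tau}2-u)\Gamma(\tfrac14-\tfrac{i\tau}2-u)\,du$, which Barnes' first lemma evaluates; after using $\Gamma(\tfrac14+\tfrac{i\tau}2)\Gamma(\tfrac14-\tfrac{i\tau}2)=\extnorm{\Gamma(\tfrac14+\tfrac{i\tau}2)}^2$ the prefactors telescope to exactly the right-hand side of (1). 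Part (2) is the same computation with the Mellin transform of $F_-(\tau,\cdot)$ (obtained as in Lemma~\ref{FourTransToLegendre}, or from the standard Mellin transform of $\HyG$) in place of that of $F_+$.

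\emph{Unfolding the kernel and the range $\norm[t]<\tfrac12$.} For (3)--(4) I would start from $\lambda_g^+(\tau)=\int_{\R}K_g(0,t)F_+(\tau,t)\,dt$ and $\lambda_g^-(\tau)=\int_{\R}\partial_{x_1}K_g(0,t)F_-(\tau,t)\,dt$ (Lemma~\ref{DualWtFC2}) and the representation $K_g(x_1,x_2)=\int_0^{\pi}g\!\left(\tfrac{1-x_1x_2}2+\tfrac{\cos\theta}2\sqrt{(1+x_1^2)(1+x_2^2)}\right)\tfrac{d\theta}{\pi}$ from the remark after Lemma~\ref{KerFuncInv}. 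At $x_1=0$ this gives $K_g(0,x)=\int_0^{\pi}g(\tfrac12+\tfrac{\cos\theta}2\sqrt{1+x^2})\tfrac{d\theta}{\pi}$ and $\partial_{x_1}K_g(0,x)=-\tfrac x2\int_0^{\pi}g'(\tfrac12+\tfrac{\cos\theta}2\sqrt{1+x^2})\tfrac{d\theta}{\pi}$; the substitution $u=\tfrac{\cos\theta}2\sqrt{1+x^2}$ converts the inner integral to $\tfrac2\pi\int_{\norm[u]<\frac12\sqrt{1+x^2}}g(\tfrac12+u)\big((1+x^2)-4u^2\big)^{-1/2}\,du$ (similarly with $g'$). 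Interchanging the $u$- and $x$-integrals — for $\lambda_g^+$ this is justified by the decay of $g$, Lemma~\ref{KerFuncEst}, $\norm[F_\pm(\tau,x)]\ll\norm[x]^{-1/2}$, and the identity $\int_{\norm[u]<\frac12\sqrt{1+x^2}}\big((1+x^2)-4u^2\big)^{-1/2}\,du=\tfrac\pi2$ for the tails — and, in the minus case, integrating by parts in $u$ to move $\partial/\partial x_1$ off $g$, one gets $\lambda_g^\pm(\tau)=\int_{\R}g(\tfrac12+t)Q_\pm(t;\tau)\,dt$ with $Q_+(t;\tau)=\tfrac2\pi\int_{x^2>\max(0,4t^2-1)}F_+(\tau,x)\big((1+x^2)-4t^2\big)^{-1/2}\,dx$ and a similar formula for $Q_-$ carrying an extra factor of $x$ (and one $u$-derivative from the integration by parts). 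In the range $\norm[t]<\tfrac12$ the integration region is all of $\R$; expanding $\big((1-4t^2)+x^2\big)^{-1/2}=\sum_{k\ge0}\tfrac{(1/2)_k}{k!}(4t^2)^k(1+x^2)^{-k-1/2}$ (absolutely convergent since $4t^2<1$), interchanging with $\int_{\R}F_+(\tau,x)(\cdot)\,dx$, and invoking part (1) with $s=2k+1$ turns the $k$-th term into $\tfrac{(1/2)_k}{k!}(4t^2)^k\,\Gamma(\tfrac12)\Gamma(k+\tfrac14+\tfrac{i\tau}2)\Gamma(k+\tfrac14-\tfrac{i\tau}2)/\Gamma(k+\tfrac12)^2$; written with Pochhammer symbols the $(1/2)_k$ cancel, the series resums to $\HyG(\tfrac14+\tfrac{i\tau}2,\tfrac14-\tfrac{i\tau}2;\tfrac12;4t^2)$, and collecting constants gives exactly $Q_+^1$. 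Part (4) in this range is the same, with part (2), $s=2k+3$, and the expansion of $\big((1-4t^2)+x^2\big)^{-3/2}$.

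\emph{The range $\norm[t]>\tfrac12$, and the main obstacle.} Here $Q_+(t;\tau)=\tfrac4\pi\int_{\sqrt{4t^2-1}}^{\infty}F_+(\tau,x)\big(x^2-(4t^2-1)\big)^{-1/2}\,dx$, and the substitution $x=\sqrt{4t^2-1}\cosh\eta$ turns this into $\tfrac4\pi\int_0^{\infty}F_+(\tau,\sqrt{4t^2-1}\cosh\eta)\,d\eta$. I would then insert the $x\to\infty$ connection formula for $\HyG(\tfrac14-\tfrac{i\tau}2,\tfrac14+\tfrac{i\tau}2;\tfrac12;-x^2)$: its two $z\to\infty$ solutions are $\propto x^{-1/2+i\tau}\HyG(\tfrac14-\tfrac{i\tau}2,\tfrac34-\tfrac{i\tau}2;1-i\tau;-x^{-2})$ and its $\tau\mapsto-\tau$ partner, and these produce respectively $Q_+^2(t;\tau)$ and $Q_+^2(t;-\tau)$ — explaining a priori why the answer has that shape. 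For each piece, expanding the inner $\HyG$ in powers of $-(4t^2-1)^{-1}\cosh^{-2}\eta$ and integrating termwise with $\int_0^{\infty}(\cosh\eta)^{-\mu}\,d\eta=\tfrac{\sqrt\pi}2\Gamma(\mu/2)/\Gamma(\tfrac{\mu+1}2)$ collapses the $\eta$-integral, and a Pfaff transformation converts the hypergeometric of argument $-(4t^2-1)^{-1}$ into one of argument $(4t^2)^{-1}$, with the power prefactor simplifying to $(4t^2)^{-1/4+i\tau/2}$. The gamma bookkeeping is where the stated normalization surfaces: $\Gamma(i\tau)\Gamma(1-i\tau)=\pi/(i\sinh\pi\tau)$, $\Gamma(\tfrac14+\tfrac{i\tau}2)\Gamma(\tfrac34-\tfrac{i\tau}2)=\pi/\sin(\pi(\tfrac14+\tfrac{i\tau}2))$, and the elementary identity $2\sin^2(\pi(\tfrac14+\tfrac{i\tau}2))=1+i\sinh\pi\tau$ produce precisely the factor $\tfrac{1+i\sinh\pi\tau}{i\sinh\pi\tau}$ of $Q_+^2$ (and its sign-variant in $Q_-^2$). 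I expect the delicate points to be: first, that for the minus case the interchange of integration is only conditionally convergent, since $\norm[xF_-(\tau,x)]\ll\norm[x]^{1/2}$ — this should be handled by a cutoff $\norm[t]\le R$, doing the (then legitimate) interchange and the integration by parts, and letting $R\to\infty$, convergence being secured by the oscillation $F_-(\tau,x)\sim c_+\norm[x]^{-1/2+i\tau}+c_-\norm[x]^{-1/2-i\tau}$; second, keeping the branch of $x^{-1/2\pm i\tau}$ and the connection-formula constants straight; and third, verifying that $Q_\pm(t;\tau)$ is locally integrable across $\norm[t]=\tfrac12$, where $c-a-b=0$ in both hypergeometric factors so only a harmless logarithmic singularity appears, and is of moderate growth ($\ll\norm[t]^{-1/2}$ as $\norm[t]\to\infty$), so that it genuinely represents the tempered distribution $g\mapsto\lambda_g^\pm(\tau)$.
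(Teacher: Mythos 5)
Your overall strategy is valid, and parts (1)--(2) are handled cleanly via Mellin--Parseval and Barnes's first lemma — a route genuinely different from the paper's, which instead Pfaff-transforms $F_\pm$, Taylor-expands, integrates term by term using a beta integral, and invokes Gauss's summation theorem. Both are standard and neither has a clear advantage.

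For (3)--(4), you unfold $K_g(0,x)$ into a $u$-integral, interchange, and identify $Q_\pm(t;\tau)$ as an explicit integral of $F_\pm(\tau,x)$ against the weight $\bigl((1+x^2)-4t^2\bigr)^{-1/2}$; you then evaluate this by Taylor expansion (small-$t$ regime, via parts (1)--(2) at odd integers $s$) and by the $x\to\infty$ connection formula for $\HyG$ (large-$t$ regime). The paper takes a related but logically tidier path: it substitutes a Mellin representation of $K_g(0,x)$, arrives at a Mellin--Barnes integral for $\lambda_g^\pm(\tau)$ in which $\int_{\R^\times} g(t+1/2)(2\norm[t])^s\,d^\times t$ is a factor, then shifts the contour right for $g$ supported in $\{\norm[t]<1/2\}$ and left for $g$ supported in $\{\norm[t]>1/2\}$, the residues at $s\in 1+2\Z_{\geq0}$ producing $Q_\pm^1$ and those at $s\in\tfrac12\pm i\tau-2\Z_{\geq0}$ producing $Q_\pm^2(\cdot;\pm\tau)$. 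These residue series are exactly your two expansions; the contour shift packages the interchanges and rearrangements you have to justify by hand into a single analytic continuation. Crucially, the paper then handles arbitrary $g$ by a distributional uniqueness argument: the difference between $g\mapsto\lambda_g^\pm(\tau)$ and integration against $Q_\pm(t;\tau)$ has support in $\{\pm1/2\}$, is therefore a finite combination of $\delta_{1/2}^{(n)}+\delta_{-1/2}^{(n)}$, and a Mellin-transform comparison forces it to vanish. This neatly sidesteps the two delicate points you flag — local integrability of $Q_\pm$ across $\norm[t]=1/2$ and the only-conditional convergence of $\int x F_-(\tau,x)\bigl((1+x^2)-4t^2\bigr)^{-1/2}dx$. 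Your cutoff-and-integration-by-parts remedy for the minus case should in principle work (the paper also integrates by parts in $t$, turning $g'$ into $g$, before shifting), but you leave the gamma-bookkeeping in the large-$t$ regime unverified, which is precisely where the nontrivial factor $\tfrac{1+i\sinh\pi\tau}{i\sinh\pi\tau}$ must emerge. I would recommend either completing that computation with the identities you list, or adopting the residue-plus-uniqueness structure, which renders the boundary behaviour at $\norm[t]=1/2$ a non-issue.
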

\begin{proof}
	(1) We apply Pfaff's relation (see \cite[15.8.1]{OLBC10}) and insert the Taylor expansion
\begin{align*} 
	F_+(\tau,x) &= \HyG \left( \frac{1}{4}-\frac{i\tau}{2}, \frac{1}{4}+\frac{i\tau}{2}; \frac{1}{2}; -x^2 \right) = (1+x^2)^{-\left( \frac{1}{4} + \frac{i\tau}{2} \right)} \HyG \left( \frac{1}{4}+\frac{i\tau}{2}, \frac{1}{4}+\frac{i\tau}{2}; \frac{1}{2}; \frac{x^2}{1+x^2} \right) \\
	&= \frac{\Gamma \left( \frac{1}{2} \right)}{\Gamma \left( \frac{1}{4}+\frac{i\tau}{2} \right)^2} \sum_{j=0}^{\infty} \frac{\Gamma \left( \frac{1}{4}+\frac{i\tau}{2}+j \right)^2}{\Gamma \left( \frac{1}{2}+j \right) j!} \frac{x^{2j}}{(1+x^2)^{j+\frac{1}{4}+\frac{i\tau}{2}}}.
\end{align*}
	Since we have (see \cite[5.12.3]{OLBC10})
	$$ \int_{\R} \frac{x^{2j}}{(1+x^2)^{j+\frac{1}{4}+\frac{i\tau}{2}+\frac{s}{2}}} \ud x = \int_0^{\infty} \frac{y^{j-\frac{1}{2}}}{(1+y)^{j+\frac{1}{4}+\frac{i\tau}{2}+\frac{s}{2}}} \ud y = \frac{\Gamma \left( j+\frac{1}{2} \right) \Gamma \left( \frac{s}{2}-\frac{1}{4}+\frac{i\tau}{2} \right)}{\Gamma \left( \frac{s}{2}+\frac{1}{4}+\frac{i\tau}{2}+j \right)}, $$
	we can change the order of the integration and the summation for $\Re s > 1/2$ to obtain
	$$ \int_{\R} (1+x^2)^{-\frac{s}{2}} F_+(\tau,x) \ud x = \frac{\Gamma \left( \frac{1}{2} \right) \Gamma \left( \frac{s}{2}-\frac{1}{4}+\frac{i\tau}{2} \right)}{\Gamma \left( \frac{s}{2}+\frac{1}{4}+\frac{i\tau}{2} \right)} \HyG \left( \frac{1}{4}+\frac{i\tau}{2}, \frac{1}{4}+\frac{i\tau}{2}; \frac{s}{2}+\frac{1}{4}+\frac{i\tau}{2}; 1 \right) $$
	and conclude the first equation by Gauss's summation theorem (see \cite[15.1.20]{Ab72} for example).
	
\noindent (2) The proof is quite similar to the one of (1). We only remark
\begin{align*}
	x F_-(\tau,x) &= x^2 \HyG \left( \frac{3}{4}-\frac{i\tau}{2}, \frac{3}{4}+\frac{i\tau}{2}; \frac{3}{2}; -x^2 \right) = \frac{x^2}{(1+x^2)^{\frac{3}{4} + \frac{i\tau}{2}}} \HyG \left( \frac{3}{4}+\frac{i\tau}{2}, \frac{3}{4}+\frac{i\tau}{2}; \frac{3}{2}; \frac{x^2}{1+x^2} \right) \\
	&= \frac{\Gamma \left( \frac{3}{2} \right)}{\Gamma \left( \frac{3}{4}+\frac{i\tau}{2} \right)^2} \sum_{j=0}^{\infty} \frac{\Gamma \left( \frac{3}{4}+\frac{i\tau}{2}+j \right)^2}{\Gamma \left( \frac{3}{2}+j \right) j!} \frac{x^{2j+2}}{(1+x^2)^{j+\frac{3}{4}+\frac{i\tau}{2}}}
\end{align*}
	and leave the details to the reader. 
	
\noindent (3) For the second equation, note that for $a > 0, b \in \R$, the following formula is obvious
	$$ \int_0^{\infty} \left( g \left( \frac{t}{a} + b \right) + g \left( -\frac{t}{a} + b \right) \right) t^s \frac{\ud t}{t} = a^s \int_{\R^{\times}} g(t+b) \norm[t]^s \ud^{\times}t. $$
	Applying Mellin inversion formula, we get for $0 < c < 1$
\begin{align*} 
	K_g(x_1,x_2) &= \int_0^{\pi} g \left( \frac{1-x_1x_2}{2} + \frac{\cos \theta}{2} \sqrt{(1+x_1^2)(1+x_2^2)} \right) \frac{d\theta}{\pi} \\
	&= \int_{(c)} \left( \int_{\R^{\times}} g \left( t+\frac{1-x_1x_2}{2} \right) \norm[t]^s \ud^{\times}t \cdot \int_0^{\frac{\pi}{2}} \left( \frac{2}{\cos \theta \sqrt{(1+x_1^2)(1+x_2^2)}} \right)^s \frac{d\theta}{\pi} \right) \frac{\ud s}{2\pi i} \\
	&= \int_{(c)} \frac{2^{s-1} \Gamma \left( \frac{1-s}{2} \right)}{\sqrt{\pi} \Gamma \left( 1-\frac{s}{2} \right)} [(1+x_1^2)(1+x_2^2)]^{-\frac{s}{2}} \int_{\R^{\times}} g \left( t+\frac{1-x_1x_2}{2} \right) \norm[t]^s \ud^{\times}t \frac{\ud s}{2\pi i}.
\end{align*}
	It follows that for $1/2 < c < 1$, we get by (1)
\begin{align*}
	\lambda_g^+(\tau) &= \int_{\R} K_g(0,x) F_+(\tau,x) \ud x \\
	&= \int_{(c)} 2^{s-1} \frac{\Gamma \left( \frac{1-s}{2} \right) \Gamma \left( \frac{s+i\tau}{2}-\frac{1}{4} \right) \Gamma \left( \frac{s-i\tau}{2}-\frac{1}{4} \right)}{\Gamma \left( \frac{s}{2} \right)^2 \Gamma \left( 1-\frac{s}{2} \right)} \left( \int_{\R^{\times}} g\left( t + \frac{1}{2} \right) \norm[t]^s \ud^{\times}t \right) \frac{\ud s}{2\pi i} \\
	&= \int_{(c)} \frac{\sin \left( \frac{\pi s}{2} \right) \Gamma \left( \frac{1-s}{2} \right) \Gamma \left( \frac{s+i\tau}{2}-\frac{1}{4} \right) \Gamma \left( \frac{s-i\tau}{2}-\frac{1}{4} \right)}{2\pi \Gamma \left( \frac{s}{2} \right)} \left( \int_{\R^{\times}} g\left( t + \frac{1}{2} \right) (2\norm[t])^s \ud^{\times}t \right) \frac{\ud s}{2\pi i}.
\end{align*}
	If $g$ has support contained in $\left\{ t+1/2 \ \middle| \ \norm[t] < 1/2 \right\}$, then we can shift the contour to $+\infty$, picking up the residues at $s \in 1+2\Z_{\geq 0}$ and get
\begin{align*} 
	\lambda_g^+(\tau) &= \frac{1}{\pi} \sum_{n=0}^{\infty} \frac{\Gamma \left( n+\frac{1}{4}+\frac{i\tau}{2} \right) \Gamma \left( n+\frac{1}{4}-\frac{i\tau}{2} \right)}{\Gamma \left( n+\frac{1}{2} \right) n!} \int_{\norm[t] < \frac{1}{2}} g\left( t + \frac{1}{2} \right) (2\norm[t])^{2n+1} \ud^{\times}t \\
	&=  \int_{\norm[t]<\frac{1}{2}} g\left( t + \frac{1}{2} \right) Q_+^1(t;\tau) \ud t.
\end{align*}
	If $g$ has support contained in $\left\{ t+1/2 \ \middle| \ \norm[t] > 1/2 \right\}$, then we can shift the contour to $-\infty$, picking up the residues at $s \in 1/2\pm i\tau - 2\Z_{\geq 0}$ and get (note $\sin (ix) = i \sinh(x)$)
\begin{align*} 
	\lambda_g^+(\tau) &= \frac{1}{\pi} \sum_{\pm} \sum_{n=0}^{\infty} \frac{(-1)^n \sin \left( \pi \left( \frac{1}{4} \pm \frac{i\tau}{2} - n \right) \right) \Gamma \left( n +\frac{1}{4} \mp \frac{i\tau}{2} \right) \Gamma \left( \pm i\tau -n \right) }{\Gamma \left( \frac{1}{4} \pm \frac{i\tau}{2} -n \right) n!} \cdot \\
	&\qquad \qquad \quad \int_{\norm[t] > \frac{1}{2}} g\left( t + \frac{1}{2} \right) (2\norm[t])^{\frac{1}{2} \pm i\tau -2n} \ud^{\times}t \\
	&= \frac{1}{\pi} \sum_{\pm} \sum_{n=0}^{\infty} \frac{\sin^2 \left( \pi \left( \frac{1}{4} \pm \frac{i\tau}{2} \right) \right) \Gamma \left( n +\frac{1}{4} \mp \frac{i\tau}{2} \right) \Gamma \left( n + \frac{3}{4} \mp \frac{i\tau}{2} \right)}{\sin \left( \pm i \pi \tau \right) \Gamma \left( n+1 \mp i\tau \right) n!} \int_{\norm[t] > \frac{1}{2}} g\left( t + \frac{1}{2} \right) (2\norm[t])^{\frac{1}{2} \pm i\tau -2n} \ud^{\times}t \\
	&=  \int_{\norm[t]>\frac{1}{2}} g\left( t + \frac{1}{2} \right) (Q_+^2(t;\tau) + Q_+^2(t;-\tau)) \ud t.
\end{align*}
	Let $\ell(g) := \lambda_{\tilde{g}}^+(\tau)$ where $\tilde{g}(t) := g(t+1/2)$. Then $D := \ell - Q_+(t;\tau)\ud t$ is an even tempered distribution with support contained in $\{ \pm 1/2 \}$. Thus $D$ must be a finite linear combination of $\delta_{1/2}^{(n)} + \delta_{-1/2}^{(n)}$, where $\delta_t$ is the Dirac measure at $t$. On the one hand, the (even) Mellin transform (on $\R^{\times}$) for $1/2 < \Re s < 1$, in the sense of distributions is
	$$ \Mellin[+]{D}(-s) = \pi 2 ^{s-1} \frac{\Gamma \left( \frac{1-s}{2} \right) \Gamma \left( \frac{s+i\tau}{2}-\frac{1}{4} \right) \Gamma \left( \frac{s-i\tau}{2}-\frac{1}{4} \right)}{\Gamma \left( \frac{s}{2} \right)^2 \Gamma \left( 1-\frac{s}{2} \right)} - \frac{1}{2} \int_{\R^{\times}} Q_+(t;\tau) \norm[t]^{1-s} \ud^{\times}t. $$
	Due to the usual asymptotic estimation of hypergeometric functions, the above integral is absolutely convergent. Hence by Stirling's estimation of the gamma functions and the Riemann-Lebesgue lemma, we have $\lim_{\Im s \to \pm \infty} \Mellin[+]{D}(-s) = 0$. On the other hand, we have
	$$ \Mellin[+]{\delta_{1/2}^{(n)} + \delta_{-1/2}^{(n)}}(-s) = (-1)^n s(s+1) \cdots (s+n-1) 2^{s+n}, $$
whose finite linear combination has limit $0$ as $\Im s \to \pm \infty$ for any $1/2 < \Re s < 1$ only if it is the trivial linear combination $0$. Hence $D = 0$.

\noindent (4) Similar to (3), we have for $0 < c < 1$
\begin{align*} 
	\frac{\partial K_g}{\partial x_1}(0,x) &= -\frac{x}{2} \int_0^{\pi} g' \left( \frac{1}{2} + \frac{\cos \theta}{2} \sqrt{1+x^2} \right) \frac{d\theta}{\pi} \\
	&= -\frac{x}{2} \int_{(c)} \frac{2^{s-1} \Gamma \left( \frac{1-s}{2} \right)}{\sqrt{\pi} \Gamma \left( 1-\frac{s}{2} \right)} (1+x^2)^{-\frac{s}{2}} \int_{\R^{\times}} g' \left( t+\frac{1}{2} \right) \norm[t]^s \ud^{\times}t \frac{\ud s}{2\pi i}.
\end{align*}
	By integration by parts, we have for $\Re s > 1$
	$$ \int_{\R^{\times}} g' \left( t+\frac{1}{2} \right) \norm[t]^s \ud^{\times}t = -(s-1) \int_{\R^{\times}} g \left( t + \frac{1}{2} \right) \norm[t]^{s-1} \sgn(t) \ud^{\times}t. $$
	Thus we can shift the contour to $1 < c < 2$ and obtain
	$$ \frac{\partial K_g}{\partial x_1}(0,x) = -x \int_{(c)} \frac{2^{s-1} \Gamma \left( \frac{3-s}{2} \right)}{\sqrt{\pi} \Gamma \left( 1-\frac{s}{2} \right)} (1+x^2)^{-\frac{s}{2}} \int_{\R^{\times}} g \left( t+\frac{1}{2} \right) \norm[t]^{s-1} \sgn(t) \ud^{\times}t \frac{\ud s}{2\pi i}. $$
	It follows that for $3/2 < c < 2$ we get by (2)
\begin{align*}
	\lambda_g^-(\tau) &= \int_{\R} K_g(0,x) F_-(\tau,x) \ud x \\
	&= -\frac{1}{4} \int_{(c)}2^s \frac{\Gamma \left( \frac{3-s}{2} \right) \Gamma \left( \frac{s+i\tau}{2}-\frac{3}{4} \right) \Gamma \left( \frac{s-i\tau}{2}-\frac{3}{4} \right)}{\Gamma \left( \frac{s}{2} \right)^2 \Gamma \left( 1-\frac{s}{2} \right)} \left( \int_{\R^{\times}} g\left( t + \frac{1}{2} \right) \norm[t]^{s-1} \sgn(t) \ud^{\times}t \right) \frac{\ud s}{2\pi i} \\
	&= -\frac{1}{4} \int_{(c)}2^s \frac{\sin(\pi s/2)\Gamma \left( \frac{3-s}{2} \right) \Gamma \left( \frac{s+i\tau}{2}-\frac{3}{4} \right) \Gamma \left( \frac{s-i\tau}{2}-\frac{3}{4} \right)}{\pi \Gamma \left( \frac{s}{2} \right) } \left( \int_{\R^{\times}} g\left( t + \frac{1}{2} \right) \norm[t]^{s-1} \sgn(t) \ud^{\times}t \right) \frac{\ud s}{2\pi i}.
\end{align*}
	The rest of the proof is quite similar to the proof of (3). We leave the details to the reader.
\end{proof}

\begin{lemma} \label{TechInts}
	We introduce four integrals
	$$ I_1^-(x, \tau) = \int_0^{\infty} \HyG \left( \frac{1}{4}+\frac{i\tau}{2}, \frac{1}{4}-\frac{i\tau}{2}; \frac{1}{2}; \left( \frac{1-y}{1+y} \right)^2 \right) \frac{y^{ix-\frac{1}{2}}}{1+y} \ud y, $$
	$$ I_2^-(x, \tau) = \int_0^{\infty} \HyG \left( \frac{3}{4}+\frac{i\tau}{2}, \frac{3}{4}-\frac{i\tau}{2}; \frac{3}{2}; \left( \frac{1-y}{1+y} \right)^2 \right) \frac{1-y}{1+y} \frac{y^{ix-\frac{1}{2}}}{1+y} \ud y; $$
	$$ I_1^+(x, \tau) = \int_0^1 \HyG \left( \frac{1}{4}-\frac{i\tau}{2}, \frac{3}{4}-\frac{i\tau}{2}; 1-i\tau; \left( \frac{1-y}{1+y} \right)^2 \right) (1-y)^{-\frac{1}{2}-i\tau} (1+y)^{-\frac{1}{2}+i\tau} y^{ix-\frac{1}{2}} \ud y, $$
	$$ I_2^+(x, \tau) = \int_1^{\infty} \HyG \left( \frac{1}{4}-\frac{i\tau}{2}, \frac{3}{4}-\frac{i\tau}{2}; 1-i\tau; \left( \frac{1-y}{1+y} \right)^2 \right) (y-1)^{-\frac{1}{2}-i\tau} (1+y)^{-\frac{1}{2}+i\tau} y^{ix-\frac{1}{2}} \ud y. $$
	Then we have the formulae
	$$ I_1^-(x, \tau) = \frac{\extnorm{\Gamma \left( \frac{1}{2}+ix \right)}^2 \extnorm{ \Gamma \left( \frac{3}{4} + \frac{i\tau}{2} \right) }^2}{\Gamma \left( \frac{1}{2} \right)} \Re \left( \GenHyG{3}{2}{\frac{1}{2}+ix, \frac{1}{2}+i\tau, \frac{1}{2}-i\tau}{1,1}{1} \right), $$
	$$ I_2^-(x, \tau) =-i \frac{\extnorm{\Gamma \left( \frac{1}{2}+ix \right)}^2 \extnorm{\Gamma \left( \frac{1}{4} + \frac{i\tau}{2} \right) }^2}{2\Gamma \left( \frac{1}{2} \right)} \Im \left( \GenHyG{3}{2}{\frac{1}{2}+ix, \frac{1}{2}+i\tau, \frac{1}{2}-i\tau}{1,1}{1} \right), $$
	$$ I_1^+(x, \tau) = 2^{-\frac{1}{2}+i\tau} \frac{\Gamma \left( \frac{1}{2}+ix \right) \Gamma \left( \frac{1}{2}-i\tau \right)}{\Gamma \left( 1+ix-i\tau \right)} \GenHyG{3}{2}{\frac{1}{2}-i\tau, \frac{1}{2}-i\tau, \frac{1}{2}-i\tau}{1+ix-i\tau, 1-2i\tau}{1}, $$
	$$ I_2^+(x, \tau) = 2^{-\frac{1}{2}+i\tau} \frac{\Gamma \left( \frac{1}{2}-ix \right) \Gamma \left( \frac{1}{2}-i\tau \right)}{\Gamma \left( 1-ix-i\tau \right)} \GenHyG{3}{2}{\frac{1}{2}-i\tau, \frac{1}{2}-i\tau, \frac{1}{2}-i\tau}{1-ix-i\tau, 1-2i\tau}{1}. $$
\end{lemma}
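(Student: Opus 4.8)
The plan is to evaluate all four integrals by the device already used in the proof of Lemma~\ref{FourTransToLegendre}: replace the inner $\HyG$ by its Euler integral representation, interchange the two integrations, perform the resulting elementary $y$-integral, and recognise the outcome as an Euler-type integral representation of a ${}_3F_2$ at $1$ whose closed form is supplied by Gauss's summation theorem. (The four integrands are, up to the substitutions $2t=\pm\tfrac{1-y}{1+y}$ and $2t=\pm\tfrac{1+y}{1-y}$, precisely the Mellin transforms of the kernels $Q_\pm^1$ and $Q_\pm^2$ of Lemma~\ref{VarMF}.) The structural fact that makes the scheme work is that every hypergeometric function occurring has lower parameter equal to the sum of its two upper parameters, so Euler's formula applies with $\Re c>\Re b>0$; moreover, at the endpoints where $\tfrac{1-y}{1+y}\to\pm1$ these kernels grow only like a logarithm, so each $y$-integral converges absolutely against $y^{ix-1/2}$ for $\Im x$ in a strip.

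For $I_1^-$ and $I_2^-$ I would first substitute $y=\tfrac{1-w}{1+w}$, turning $(0,\infty)$ into $(-1,1)$ and the $\HyG$-argument into $w^2$, and then fold along $w\mapsto -w$, i.e.\ along $y\mapsto y^{-1}$. This collapses the two weights into $\tfrac{(1-w)^{ix-1/2}}{(1+w)^{ix+1/2}}+\tfrac{(1+w)^{ix-1/2}}{(1-w)^{ix+1/2}}=2(1-w^2)^{-1/2}\cos\!\bigl(x\log\tfrac{1+w}{1-w}\bigr)$ for $I_1^-$, and into the corresponding odd expression carrying the extra factor $\tfrac{1-w}{1+w}$ for $I_2^-$. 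Since $y\mapsto y^{-1}$ acts as $x\mapsto -x$ on the Mellin variable, these even and odd parts are exactly what produce $\Re$ and $\Im$ of $\GenHyG{3}{2}{\tfrac12+ix,\tfrac12+i\tau,\tfrac12-i\tau}{1,1}{1}$. Inserting Euler's integral for $\HyG(\,\cdot\,;w^2)$, interchanging, and integrating out $w$ (a Beta integral once one passes to the variable $w^2$) should leave precisely the Euler representation $\Gamma(\tfrac12+i\tau)^{-1}\Gamma(\tfrac12-i\tau)^{-1}\int_0^1 t^{-1/2+i\tau}(1-t)^{-1/2-i\tau}\HyG(\tfrac12+ix,\tfrac12-i\tau;1;t)\,dt$ of that ${}_3F_2$, with the prefactors $\extnorm{\Gamma(\tfrac34+\tfrac{i\tau}{2})}^2/\Gamma(\tfrac12)$ resp.\ $\extnorm{\Gamma(\tfrac14+\tfrac{i\tau}{2})}^2/(2\Gamma(\tfrac12))$ appearing after the reflection and duplication formulae.

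For $I_1^+$ and $I_2^+$ I would run the same argument with Euler's integral for $\HyG\bigl(\tfrac14-\tfrac{i\tau}{2},\tfrac34-\tfrac{i\tau}{2};1-i\tau;\left(\tfrac{1-y}{1+y}\right)^2\bigr)$, integrating $y$ over $(0,1)$ for $I_1^+$ and over $(1,\infty)$ for $I_2^+$. The involution $y\mapsto y^{-1}$ interchanges these two half-lines while sending $ix\mapsto -ix$, which is the structural reason that $I_2^+$ is obtained from $I_1^+$ by replacing $x$ by $-x$ in the lower parameter $1\pm ix-i\tau$. Writing $(1+y)^2-u(1-y)^2=4(y+\rho)(1+\rho y)(1+\rho)^{-2}$ with $\sqrt u=\tfrac{1-\rho}{1+\rho}$ makes the $y$-integral a Beta integral, after which the $u$-integral reassembles $2^{-1/2+i\tau}\tfrac{\Gamma(1/2\pm ix)\Gamma(1/2-i\tau)}{\Gamma(1\pm ix-i\tau)}$ times the Euler representation $\tfrac{\Gamma(1-2i\tau)}{\Gamma(1/2-i\tau)^2}\int_0^1 t^{-1/2-i\tau}(1-t)^{-1/2-i\tau}\HyG(\tfrac12-i\tau,\tfrac12-i\tau;1\pm ix-i\tau;t)\,dt$ of $\GenHyG{3}{2}{\tfrac12-i\tau,\tfrac12-i\tau,\tfrac12-i\tau}{1\pm ix-i\tau,1-2i\tau}{1}$, which is the asserted value.

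The step I expect to be the main obstacle is the justification of the interchanges of integration together with the accompanying analytic continuation in $x$: because the $\HyG$-kernels are only $O(\log)$ at the relevant endpoints (or $O(\norm[w]^{-1/2+\varepsilon})$ after a contour shift) while the Mellin weights $y^{ix-1/2}$ are only conditionally integrable, the double integrals converge absolutely only for $\Im x$ in a strip; I would establish each identity there by Fubini and then continue analytically in $x$ — and, where needed, in an auxiliary exponent introduced exactly as in the proof of Lemma~\ref{VarMF} — to the stated range. Once the interchanges are secured, the evaluation of the emerging $\HyG$ at argument $1$ by Gauss's theorem and the final matching of gamma prefactors by reflection and duplication are entirely routine.
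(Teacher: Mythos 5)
The overall framework you propose—reduce the inner ${}_2F_1$ to an elementary kernel, integrate against $y^{ix-1/2}$ to get a Gamma-quotient, and reassemble the Euler-type integral representation of a ${}_3F_2$—is in the same spirit as the paper's proof, and your observation that $I_2^+(x,\tau)=I_1^+(-x,\tau)$ via $y\mapsto y^{-1}$ is exactly how the paper treats that pair. However, the specific reduction you propose does not go through: the claim that inserting Euler's integral representation for the ${}_2F_1$ (at the original argument $\bigl(\tfrac{1-y}{1+y}\bigr)^2$) and then factoring leaves ``a Beta integral'' in $y$ is incorrect. After the factorization $(1+y)^2-u(1-y)^2 = 4(y+\rho)(1+\rho y)(1+\rho)^{-2}$ the inner $y$-integral for $I_1^+$ becomes
\[
\int_0^1 (y+\rho)^{-\frac14+\frac{i\tau}{2}}(1+\rho y)^{-\frac14+\frac{i\tau}{2}}(1-y)^{-\frac12-i\tau}\,y^{ix-\frac12}\,dy,
\]
which has \emph{three} $y$-dependent factors besides $y^{ix-1/2}$; this is an Appell $F_1$-type integral, not a Beta integral, and the standard $F_1\to{}_2F_1$ degenerations (e.g.\ equal arguments, or lower parameter equal to the sum of the upper ones) do not apply here because $c=1+ix-i\tau\neq b+b'=\tfrac12-i\tau$. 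Similarly, for $I_1^-$ the folded $w$-integral carries a factor $\cos\!\bigl(x\log\tfrac{1+w}{1-w}\bigr)$ that is not a function of $w^2$, so passing to the variable $w^2$ does not produce a Beta integral either. You are thus one key step short of the paper's argument: the paper first applies the quadratic transformations \cite[15.8.27, 15.8.28, 15.8.13]{OLBC10}, which replace the argument $\bigl(\tfrac{1-y}{1+y}\bigr)^2$ by the Möbius functions $\tfrac{1}{1+y}$, $\tfrac{y}{1+y}$, and $1-y$ respectively. Only after this step does Taylor-expanding the transformed ${}_2F_1$ and integrating term by term against $y^{ix-1/2}$ yield a genuine Beta integral in each term, whose sum is the stated ${}_3F_2$. (For the record, if you run your Euler-integral argument \emph{after} the quadratic transformation—e.g.\ for $I_1^+$ with the kernel $\HyG(\tfrac12-i\tau,\tfrac12-i\tau;1-2i\tau;1-y)$—the $y$-integral becomes, after $z=1-y$, precisely the Euler integral of $\HyG(\tfrac12-i\tau,\tfrac12-i\tau;1+ix-i\tau;t)$, and the outer $t$-integral is the Euler representation of the target ${}_3F_2$; so your integral-representation route is a workable alternative to the paper's series route once the missing quadratic transformation is supplied.)
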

\begin{proof}
	For the first integral, the quadratic relation \cite[15.8.27]{OLBC10} implies
\begin{align*} 
	&\quad \HyG \left( \frac{1}{4}+\frac{i\tau}{2}, \frac{1}{4}-\frac{i\tau}{2}; \frac{1}{2}; \left( \frac{1-y}{1+y} \right)^2 \right) \\
	&= \frac{\extnorm{ \Gamma \left( \frac{3}{4} + \frac{i\tau}{2} \right) }^2}{2 \Gamma \left( \frac{1}{2} \right)} \cdot \left( \HyG \left( \frac{1}{2}+i\tau, \frac{1}{2}-i\tau; 1; \frac{1}{1+y} \right) + \HyG \left( \frac{1}{2}+i\tau, \frac{1}{2}-i\tau; 1; \frac{y}{1+y} \right) \right). 
\end{align*}
	Inserting the Taylor series expansion, we get
\begin{align}\label{I1-2F1int}
	&\quad \int_0^{\infty} \HyG \left( \frac{1}{2}+i\tau, \frac{1}{2}-i\tau; 1; \frac{1}{1+y} \right) \frac{y^{ix-\frac{1}{2}}}{1+y} \ud y \nonumber\\
	&= \sum_{n=0}^{\infty} \frac{\extnorm{\left( \frac{1}{2}+i\tau \right)_n}^2}{(n!)^2} \int_0^{\infty} \frac{y^{ix-\frac{1}{2}}}{(1+y)^{n+1}} \ud y = \sum_{n=0}^{\infty} \frac{\extnorm{\left( \frac{1}{2}+i\tau \right)_n}^2}{(n!)^2} \frac{\Gamma \left( \frac{1}{2}+ix \right) \Gamma \left( \frac{1}{2}-ix+n \right)}{\Gamma \left( n+1 \right)} \nonumber\\
	&= \extnorm{\Gamma \left( \frac{1}{2}+ix \right)}^2 \GenHyG{3}{2}{\frac{1}{2}+i\tau, \frac{1}{2}-i\tau, \frac{1}{2}-ix}{1,1}{1}.
\end{align}
	The following integral is obtained by the change of variables $y \mapsto 1/y, x \to -x$ in the previous one
	$$ \int_0^{\infty} \HyG \left( \frac{1}{2}+i\tau, \frac{1}{2}-i\tau; 1; \frac{y}{1+y} \right) \frac{y^{ix-\frac{1}{2}}}{1+y} \ud y = \extnorm{\Gamma \left( \frac{1}{2}+ix \right)}^2 \GenHyG{3}{2}{\frac{1}{2}+i\tau, \frac{1}{2}-i\tau, \frac{1}{2}+ix}{1,1}{1}. $$
	We thus obtain the first formula. To prove the second formula we apply \cite[15.8.28]{OLBC10} 
\begin{align*} 
	&\quad \frac{1-y}{1+y} \cdot \HyG \left( \frac{3}{4}+\frac{i\tau}{2}, \frac{3}{4}-\frac{i\tau}{2}; \frac{3}{2}; \left( \frac{1-y}{1+y} \right)^2 \right) \\
	&= \frac{\extnorm{ \Gamma \left( \frac{1}{4} + \frac{i\tau}{2} \right) }^2}{2 \Gamma \left( -\frac{1}{2} \right)} \cdot \left( \HyG \left( \frac{1}{2}+i\tau, \frac{1}{2}-i\tau; 1; \frac{y}{1+y} \right) - \HyG \left( \frac{1}{2}+i\tau, \frac{1}{2}-i\tau; 1; \frac{1}{1+y} \right) \right). 
\end{align*}
Therefore, using $\Gamma(-1/2)=-2\Gamma(1/2)$, we obtain 
\begin{align*} 
	&\quad I_2^-(x,\tau)=\frac{\extnorm{ \Gamma \left( \frac{1}{4} + \frac{i\tau}{2} \right) }^2}{4 \Gamma \left( \frac{1}{2} \right)}
	\int_0^{\infty} \HyG \left( \frac{1}{2}+i\tau, \frac{1}{2}-i\tau; 1; \frac{1}{1+y} \right) \frac{y^{ix-\frac{1}{2}}-y^{-ix-\frac{1}{2}}}{1+y} \ud y. 
\end{align*}
Then \eqref{I1-2F1int} yields the second formula.

	 The change of variables $y \mapsto 1/y$ easily identifies $I_2^+(x,\tau) = I_1^+(-x,\tau)$. Hence it suffices to treat $I_1^+(x,\tau)$. The quadratic relation \cite[15.8.13]{OLBC10} implies
	$$ \HyG \left( \frac{1}{4}-\frac{i\tau}{2}, \frac{3}{4}-\frac{i\tau}{2}; 1-i\tau; \left( \frac{1-y}{1+y} \right)^2 \right) = 2^{-\frac{1}{2}+i\tau} (1+y)^{\frac{1}{2}-i\tau} \HyG \left( \frac{1}{2}-i\tau, \frac{1}{2}-i\tau; 1-2i\tau; 1-y \right). $$
	Inserting the Taylor series expansion, we get
\begin{align*}
	\frac{I_1^+(x, \tau)}{2^{-\frac{1}{2}+i\tau}} &=  \int_0^1 \HyG \left( \frac{1}{2}-i\tau, \frac{1}{2}-i\tau; 1-2i\tau; 1-y \right) (1-y)^{-\frac{1}{2}-i\tau} y^{ix-\frac{1}{2}} \ud y \\
	&= \sum_{n=0}^{\infty} \frac{\left( \frac{1}{2}-i\tau \right)_n^2}{(1-2i\tau)_n n!} \int_0^1 (1-y)^{n-\frac{1}{2}-i\tau} y^{ix-\frac{1}{2}} \ud y = \sum_{n=0}^{\infty} \frac{\left( \frac{1}{2}-i\tau \right)_n^2}{(1-2i\tau)_n n!} \frac{\Gamma \left( \frac{1}{2}+ix \right) \Gamma \left( \frac{1}{2}-i\tau+n \right)}{\Gamma \left( 1+ix-i\tau+n \right)} \\
	&= \frac{\Gamma \left( \frac{1}{2}+ix \right) \Gamma \left( \frac{1}{2}-i\tau \right)}{\Gamma \left( 1+ix-i\tau \right)} \GenHyG{3}{2}{\frac{1}{2}-i\tau, \frac{1}{2}-i\tau, \frac{1}{2}-i\tau}{1+ix-i\tau, 1-2i\tau}{1},
\end{align*}
	and conclude the proof.
\end{proof}

\begin{proposition} \label{M3ToM4}
	Let $Q_{\pm}(t;\tau)$ be defined in Lemma \ref{VarMF} (3) \& (4). For simplicity of notation write $\pi(i\tau, \epsilon) := \pi(\sgn^{\frac{\epsilon-1}{2}}, i\tau)$. 
\begin{itemize}
	\item[(1)] The function $I(y,\Psi)$ is given as
	\begin{equation}\label{eq:I(y,Psi)}
	I(y,\Psi) = \norm[y-1]^{-1} \int_{-\infty}^{\infty} \left( \sum_{\pm} Q_{\pm}\left( \frac{y+1}{2(y-1)} ;\tau \right) h \left( \pi(i\tau,\pm) \right)(\Psi) \right) \tau \tanh(\pi \tau) \frac{\ud \tau}{2\pi}. 
	\end{equation}
	\item[(2)] Let $\chi(t) = \norm[t]^{ix} \sgn(t)^{\varepsilon'}$ for some $x \in \R$ and $\varepsilon' \in \{ 0,1 \}$. Define $K_{1,+}(x,\tau)$, $K_{1,-}(x,\tau)$, $K_{2,+}(x,\tau)$ and $K_{2,-}(x,\tau)$ by \eqref{K1+Alt}, \eqref{K1-Alt}, \eqref{K2+Alt} and \eqref{K2-Alt} below respectively. Then we have the formula
\begin{align*}
	\widetilde{h}(\chi)(\Psi) = & (-1)^{\varepsilon'} \int_{-\infty}^{\infty} \left( \sum_{\pm} K_{1,\pm}(x,\tau) h \left( \pi(i\tau,\pm) \right)(\Psi) \right) \tau \tanh(\pi \tau) \frac{\ud \tau}{2\pi} + \\
	&\int_{-\infty}^{\infty} \left( \sum_{\pm} K_{2,\pm}(x,\tau) h \left( \pi(i\tau,\pm) \right)(\Psi) \right) \tau \tanh(\pi \tau) \frac{\ud \tau}{2\pi}.
\end{align*}
\end{itemize}
\end{proposition}
\begin{proof}
	(1) Note that for $\pi=\pi(i\tau,\epsilon)$ the normalizing condition in Corollary \ref{M3Exp} imposes $K_{\pi^{\vee}} = \epsilon K_{\pi}$, if we choose $K_{\pi} = K_{i\tau}^{\epsilon}$ by \eqref{KirToKBessel}. We can thus rewrite Corollary \ref{M3Exp} as
\begin{multline} \label{eq: M3ExpVar}
	h(\pi(i\tau, \epsilon))(\Psi) = \int_{\R^{\times}} \ell_1(z) \norm[z]^{-2} \ud^{\times}z \cdot \int_{\R^{\times}} \ell_2(z) \norm[z]^2 \ud^{\times}z \cdot \int_{\R^{\times}} h_1(y) \ud^{\times}y \cdot \int_{\R^{\times}} h_2(y) \ud^{\times}y \cdot \\
	\epsilon F^{\epsilon}[\OFour(f_1)](\tau) F^{\epsilon}[\OFour(f_2)](\tau).
\end{multline}

\noindent Combining \eqref{eq:b} and Lemma \ref{DualWtFC2} we get
\begin{multline} \label{eq: bExpVar}
	b(\Psi,g)= \int_{\R^{\times}} \ell_1(z) \norm[z]^{-2} \ud^{\times}z \cdot \int_{\R^{\times}} \ell_2(z) \norm[z]^2 \ud^{\times}z \cdot \int_0^{\infty} h_1(y) \ud^{\times}y \cdot \int_0^{\infty} h_2(y) \ud^{\times}y \cdot \\
	\frac{1}{2\pi} \sum_{\epsilon = \pm 1} \int_{-\infty}^{\infty} \tau \tanh(\pi \tau) \lambda_g^{\epsilon}(\tau) \cdot \epsilon F^{\epsilon}[\OFour(f_1)](\tau) F^{\epsilon}[\OFour(f_2)](\tau) \ud \tau
\end{multline}
	
\noindent Comparing \eqref{eq: M3ExpVar} and \eqref{eq: bExpVar} and taking into account the formulae of $\lambda_g^{\pm}$ in Lemma \ref{VarMF} (3) (4), we get
\begin{multline} \label{bpsig2}
	b(\Psi,g)
	= \frac{1}{2\pi} \sum_{\pm} \int_{-\infty}^{\infty} \tau \tanh(\pi \tau) \lambda_g^{\pm}(\tau) \cdot h \left( \pi(i\tau,\pm) \right)(\Psi) \ud \tau \\
	= \int_{\R} g \left( t+\frac{1}{2} \right) \int_{-\infty}^{\infty} \left( \sum_{\pm} Q_{\pm}(t;\tau) h \left( \pi(i\tau,\pm) \right)(\Psi) \right) \tau \tanh(\pi \tau) \frac{\ud \tau}{2\pi} \ud t.
\end{multline}

\noindent On the other hand, according to Lemma \ref{Bilin1} we have
\begin{equation} \label{bpsig}
	b(\Psi,g) = \int_{\R} I \left( \frac{y+\frac{1}{2}}{y-\frac{1}{2}}, \Psi \right) \extnorm{y-\frac{1}{2}}^{-1} \cdot g\left( y+\frac{1}{2} \right) \ud y.
\end{equation}

\noindent Since $g \in \Sch(\R)$ is arbitrary, comparing \eqref{bpsig} and \eqref{bpsig2} we deduce
	$$ I \left( \frac{t+\frac{1}{2}}{t-\frac{1}{2}}, \Psi \right) \extnorm{t-\frac{1}{2}}^{-1} = \int_{-\infty}^{\infty} \left( \sum_{\pm} Q_{\pm}(t;\tau) h \left( \pi(i\tau,\pm) \right)(\Psi) \right) \tau \tanh(\pi \tau) \frac{\ud \tau}{2\pi}. $$
	The desired formula follows by a change of variables.
	
\noindent (2) By Lemma \ref{GeomToM4}, we can decompose
	\begin{equation} \label{eq:M4}
		\widetilde{h}(\chi)(\Psi) =  \chi(-1) \int_0^{\infty} I(-y,\Psi) y^{ix-\frac{1}{2}} \ud y + \int_0^{\infty} I(y,\Psi) y^{ix-\frac{1}{2}} \ud y. 
	\end{equation}
	Consider the first integral in \eqref{eq:M4}. It follows from \eqref{eq:I(y,Psi)} that
	\begin{multline} \label{eq:int of I(-y)}
	\int_0^{\infty} I(-y,\Psi) y^{ix-\frac{1}{2}} \ud y \\
	= \int_{-\infty}^{\infty} \sum_{\pm} \int_{0}^{\infty}Q_{\pm}\left( \frac{y-1}{2(1+y)},\tau\right) \frac{y^{ix-1/2}\ud y}{|1+y|} h \left( \pi(i\tau,\pm) \right)(\Psi) \tau \tanh(\pi \tau) \frac{\ud \tau}{2\pi}.
	\end{multline}
	For $y > 0$, we have $\extnorm{\frac{y-1}{2(y+1)}} < \frac{1}{2}$, hence by Lemma \ref{VarMF} 
	$$Q_{\pm } \left( \frac{y-1}{2(1+y)}; \tau \right) = Q_{\pm }^1 \left( \frac{y-1}{2(1+y)}; \tau \right).$$ 
	Inserting the first two formulae in Lemma \ref{TechInts}, we get
\begin{multline} \label{K1+Alt}
	\int_0^{\infty} Q_+^1 \left( \frac{y-1}{2(1+y)}; \tau \right) (y+1)^{-1} y^{ix-\frac{1}{2}} \ud y =  \frac{2\extnorm{\Gamma \left( \frac{1}{4}+\frac{i\tau}{2} \right)}^2}{\pi\Gamma \left( \frac{1}{2} \right)} I_1^-(x, \tau) \\
	= \frac{2}{\pi^2} \extnorm{\Gamma \left( \frac{1}{2}+ix \right)}^2 \extnorm{\Gamma \left( \frac{1}{4}+\frac{i\tau}{2} \right)}^2 \extnorm{ \Gamma \left( \frac{3}{4} + \frac{i\tau}{2} \right) }^2 \Re \left( \GenHyG{3}{2}{\frac{1}{2}+ix, \frac{1}{2}+i\tau, \frac{1}{2}-i\tau}{1,1}{1} \right) \\
	= \frac{4}{\pi} \extnorm{\Gamma \left( \frac{1}{2}+ix \right)}^2 \extnorm{\Gamma \left( \frac{1}{2}+i\tau \right)}^2 \Re \left( \GenHyG{3}{2}{\frac{1}{2}+ix, \frac{1}{2}+i\tau, \frac{1}{2}-i\tau}{1,1}{1} \right) \\
	= \frac{2}{\pi} \sum_{\varepsilon = \pm 1} \Gamma \left( \frac{1}{2}-i\varepsilon x \right) \GenHyGI{3}{2}{\frac{1}{2}+i\varepsilon x, \frac{1}{2}+i\tau, \frac{1}{2}-i\tau}{1,1}{1} =: K_{1,+}(x,\tau),
\end{multline}
\begin{multline} \label{K1-Alt}
	\int_0^{\infty} Q_-^1 \left( \frac{y-1}{2(1+y)}; \tau \right) (y+1)^{-1} y^{ix-\frac{1}{2}} \ud y =  \frac{-2\extnorm{\Gamma \left( \frac{3}{4}+\frac{i\tau}{2} \right)}^2}{\pi\Gamma \left( \frac{3}{2} \right)} I_2^-(x, \tau) \\
	= \frac{2i}{\pi^2} \extnorm{\Gamma \left( \frac{1}{2}+ix \right)}^2 \extnorm{\Gamma \left( \frac{1}{4}+\frac{i\tau}{2} \right)}^2 \extnorm{ \Gamma \left( \frac{3}{4} + \frac{i\tau}{2} \right) }^2 \Im \left( \GenHyG{3}{2}{\frac{1}{2}+ix, \frac{1}{2}+i\tau, \frac{1}{2}-i\tau}{1,1}{1} \right) \\
	= \frac{2}{\pi} \sum_{\varepsilon = \pm 1} \varepsilon \Gamma \left( \frac{1}{2}-i\varepsilon x \right) \GenHyGI{3}{2}{\frac{1}{2}+i\varepsilon x, \frac{1}{2}+i\tau, \frac{1}{2}-i\tau}{1,1}{1} =: K_{1,-}(x,\tau),
\end{multline}
	by Legendre duplication formula for Gamma functions
\begin{equation} \label{duplication}
	\Gamma(1/4+i\tau/2)\Gamma(3/4+i\tau/2)=\pi^{1/2}2^{1/2-i\tau}\Gamma(1/2+i\tau).
\end{equation}
\noindent Hence we get by \eqref{eq:int of I(-y)}
	\begin{equation} \label{eq:I(-y)toK1pm}
	 \int_0^{\infty} I(-y,\Psi) y^{ix-\frac{1}{2}} \ud y = \int_{-\infty}^{\infty} \left( \sum_{\pm} K_{1,\pm}(x,\tau) h \left( \pi(i\tau,\pm)(\Psi) \right) \right) \tau \tanh(\pi \tau) \frac{\ud \tau}{2\pi},
	 \end{equation} 
	which gives the first summand of the formula. Turn to the second integral in \eqref{eq:M4}. By \eqref{eq:I(y,Psi)} we have
	\begin{multline} \label{eq:int of I(y)}
	\int_0^{\infty} I(y,\Psi) y^{ix-\frac{1}{2}} \ud y \\
	= \int_{-\infty}^{\infty} \sum_{\pm} \int_{0}^{\infty}Q_{\pm}\left( \frac{y+1}{2(y-1)},\tau\right) \frac{y^{ix-1/2}\ud y}{|1-y|} h \left( \pi(i\tau,\pm) \right)(\Psi) \tau \tanh(\pi \tau) \frac{\ud \tau}{2\pi}.
	\end{multline}

\noindent Similarly for $y > 0$ , we have $\extnorm{\frac{y+1}{2(y-1)}} > \frac{1}{2}$, hence by Lemma \ref{VarMF} 
	$$Q_{\pm } \left( \frac{y+1}{2(y-1)}; \tau \right) = Q_{\pm }^2 \left( \frac{y+1}{2(y-1)}; \tau \right) + Q_{\pm }^2 \left( \frac{y+1}{2(y-1)}; -\tau \right).$$ Inserting the definitions of $Q_+^2$ and $I_{k}^+(x,\tau),k=1,2$ (see Lemma \ref{VarMF} and \ref{TechInts}), we get
	$$ \int_0^{\infty} Q_+^2 \left( \frac{y+1}{2(y-1)}; \tau \right) \frac{y^{ix-\frac{1}{2}}}{\norm[y-1]} \ud y =  \frac{1 + i\sinh(\pi \tau)}{\pi i\sinh(\pi \tau)} \frac{\Gamma \left( \frac{1}{4} - \frac{i\tau}{2} \right) \Gamma \left( \frac{3}{4} - \frac{i\tau}{2} \right)}{\Gamma \left( 1 - i\tau \right)} \left( I_1^+(x,\tau) + I_2^+(x,\tau) \right). $$
Using Lemma \ref{TechInts}, the duplication formula \eqref{duplication} and the identity $\frac{1}{\Gamma(1-i\tau)}=\frac{\Gamma(1/2-i\tau)}{\pi^{1/2}2^{2i\tau}\Gamma(1-2i\tau)}$, we infer
	$$ \int_0^{\infty} Q_+^2 \left( \frac{y+1}{2(y-1)}; \tau \right) \frac{y^{ix-\frac{1}{2}}}{\norm[y-1]} \ud y = \sum_{\varepsilon_1 \in \{ \pm 1 \}} \Gamma \left( \frac{1}{2}+i\varepsilon_1 x \right) \frac{i\sinh(\pi\tau)+1}{\pi i\sinh(\pi\tau)} \GenHyGI{3}{2}{\frac{1}{2}-i \tau, \frac{1}{2}-i \tau, \frac{1}{2}-i \tau}{1+i\varepsilon_1 x -i \tau, 1-2i \tau}{1}. $$
Therefore, we get
\begin{multline} \label{K2+Alt}
	 \int_0^{\infty} \left( Q_+^2 \left( \frac{y+1}{2(y-1)}; \tau \right) + Q_+^2 \left( \frac{y+1}{2(y-1)}; -\tau \right) \right) \frac{y^{ix-\frac{1}{2}}}{\norm[y-1]} \ud y \\
	=  \frac{1}{\pi} \sum_{\varepsilon_1, \varepsilon_2 \in \{ \pm 1 \}} \Gamma \left( \frac{1}{2}+i\varepsilon_1 x \right) \frac{i\sinh(\pi \varepsilon_2 \tau)+1}{i\sinh(\pi \varepsilon_2 \tau)} \GenHyGI{3}{2}{\frac{1}{2}-i\varepsilon_2 \tau, \frac{1}{2}-i\varepsilon_2 \tau, \frac{1}{2}-i\varepsilon_2 \tau}{1+i\varepsilon_1 x - i\varepsilon_2 \tau, 1-2i\varepsilon_2 \tau}{1} =: K_{2,+}(x,\tau).
\end{multline}
Similarly, we obtain
\begin{multline} \label{K2-Alt}
	\int_0^{\infty} \left( Q_-^2 \left( \frac{y+1}{2(y-1)}; \tau \right) + Q_-^2 \left( \frac{y+1}{2(y-1)}; -\tau \right) \right) \frac{y^{ix-\frac{1}{2}}}{\norm[y-1]} \ud y \\
	= \sum_{\varepsilon_1, \varepsilon_2 \in \{ \pm 1 \}} \varepsilon_1 \Gamma \left( \frac{1}{2}+i\varepsilon_1 x \right) \frac{1-i\sinh(\pi \varepsilon_2 \tau)}{\pi i\sinh(\pi \varepsilon_2 \tau)} \GenHyGI{3}{2}{\frac{1}{2}-i\varepsilon_2 \tau, \frac{1}{2}-i\varepsilon_2 \tau, \frac{1}{2}-i\varepsilon_2 \tau}{1+i\varepsilon_1 x - i\varepsilon_2 \tau, 1-2i\varepsilon_2 \tau}{1} =: K_{2,-}(x,\tau).
\end{multline}
Hence we get by \eqref{eq:int of I(y)}
	\begin{equation}\label{eq:I(y)toK2pm}
	 \int_0^{\infty} I(y,\Psi) y^{ix-\frac{1}{2}} \ud y = \int_{-\infty}^{\infty} \left( \sum_{\pm} K_{2,\pm}(x,\tau) h \left( \pi(i\tau,\pm) \right)(\Psi) \right) \tau \tanh(\pi \tau) \frac{\ud \tau}{2\pi}. 
	 \end{equation}
	 Substituting \eqref{eq:I(-y)toK1pm} and \eqref{eq:I(y)toK2pm} to \eqref{eq:M4}, we complete the proof.
\end{proof}

\begin{remark} \label{K1Alt}
	Applying \cite[(7.4.4.3)]{PBM86}, we can rewrite $K_{1,\pm}(x,\tau)$ as
	$$ K_{1,+}(x,\tau) = \frac{i}{\pi} \sum_{\varepsilon_1, \varepsilon_2 \in \{ \pm 1 \}} \varepsilon_2 \frac{\Gamma \left( \frac{1}{2}-i\varepsilon_1 x \right)}{\cosh(\pi x)} \frac{\cosh(\pi \tau)}{\sinh(\pi \tau)} \GenHyGI{3}{2}{\frac{1}{2}+i\varepsilon_2 \tau, \frac{1}{2}+i\varepsilon_2 \tau, \frac{1}{2}+i\varepsilon_2 \tau}{1+i\varepsilon_2 \tau - i\varepsilon_1 x, 1+2i\varepsilon_2 \tau}{1}, $$
	$$ K_{1,-}(x,\tau) = \frac{i}{\pi} \sum_{\varepsilon_1, \varepsilon_2 \in \{ \pm 1 \}} \varepsilon_1 \varepsilon_2 \frac{\Gamma \left( \frac{1}{2}-i\varepsilon_1 x \right)}{\cosh(\pi x)} \frac{\cosh(\pi \tau)}{\sinh(\pi \tau)} \GenHyGI{3}{2}{\frac{1}{2}+i\varepsilon_2 \tau, \frac{1}{2}+i\varepsilon_2 \tau, \frac{1}{2}+i\varepsilon_2 \tau}{1+i\varepsilon_2 \tau - i\varepsilon_1 x, 1+2i\varepsilon_2 \tau}{1}. $$
\end{remark}

\begin{remark} \label{K2Alt}
	If we introduce the function
	$$ F_2(\tau,t) :=  t^{\frac{1}{2}+i\tau} \GenHyGI{2}{1}{\frac{1}{2}+i\tau, \frac{1}{2}+i\tau}{1+2i\tau}{t}, $$
	then it is easy to get the following integral formulae by (\ref{K2+Alt}) \& (\ref{K2-Alt}) (or \cite[16.5.2]{OLBC10})
		$$ K_{2,+}(x,\tau) = \frac{4}{\pi} \int_0^1 \left\{ \Re (F_2(\tau,t)) - \frac{1}{\sinh(\pi \tau)} \Im (F_2(\tau,t)) \right\} \cdot \Re ((1-t)^{ix}) \frac{\ud t}{t\sqrt{1-t}}, $$
	$$ K_{2,-}(x,\tau) = \frac{-4i}{\pi} \int_0^1 \left\{ \Re (F_2(\tau,t)) + \frac{1}{\sinh(\pi \tau)} \Im (F_2(\tau,t)) \right\} \cdot \Im ((1-t)^{ix}) \frac{\ud t}{t\sqrt{1-t}}. $$
	If we introduce
	$$ \Lambda_2(\tau,t) := t^{-\frac{1}{2}-i\tau} \GenHyGI{2}{1}{\frac{1}{2}+i\tau, \frac{1}{2}+i\tau}{1+2i\tau}{-t^{-1}}, $$
	then we have the relation
	$$ F_2(\tau,t) = \Lambda_2(\tau,t^{-1}-1), \quad 0 < t < 1, $$
	and we can rewrite
	$$ K_{2,+}(x,\tau) = \frac{4}{\pi} \int_0^{\infty} \left\{ \Re (\Lambda_2(\tau,t)) - \frac{1}{\sinh(\pi \tau)} \Im (\Lambda_2(\tau,t)) \right\} \cdot \Re ((1+t^{-1})^{ix}) \frac{\ud t}{\sqrt{t(t+1)}}, $$
	whose kernel is identified with the Motohashi's in \cite[(4.7.2)]{Mo97}.
\end{remark}

	\subsection{Dual Weight Estimation}
	
		\subsubsection{Qualitative Decay}
	
	We are ready to give a proof of Theorem \ref{ExpInvMF}.
	
\begin{proof}[Proof of Theorem \ref{ExpInvMF}]
	We introduce the following transform
\begin{equation} \label{BasicTrans}
	\widetilde{h}(ix) := \int_{-\infty}^{\infty} K(x,\tau) h(i\tau) \frac{\ud \tau}{2\pi}, 
\end{equation}
where the kernel function $K(x,\tau)$ is
\begin{equation} \label{BasicKer}
	K(x,\tau) := \Gamma \left( \frac{1}{2}-i x \right) \GenHyGI{3}{2}{\frac{1}{2}+i\tau, \frac{1}{2}+i\tau, \frac{1}{2}+i\tau}{1-i x + i \tau, 1+2i \tau}{1}. 
\end{equation}
Precisely, we have the following equalities implied by Remark \ref{K1Alt}
	$$ K_{1,+}(x,\tau) = \frac{i}{\pi} \frac{1}{\cosh(\pi x)} \frac{\cosh(\pi \tau)}{\sinh(\pi \tau)} \sum_{\varepsilon_1, \varepsilon_2 \in \{ \pm \}} \varepsilon_2 K(\varepsilon_1 x, \varepsilon_2 \tau), $$
	$$ K_{1,-}(x,\tau) = \frac{i}{\pi} \frac{1}{\cosh(\pi x)} \frac{\cosh(\pi \tau)}{\sinh(\pi \tau)} \sum_{\varepsilon_1, \varepsilon_2 \in \{ \pm \}} \varepsilon_1 \varepsilon_2 K(\varepsilon_1 x, \varepsilon_2 \tau), $$
	$$ K_{2,+}(x,\tau) = \frac{1}{\pi} \sum_{\varepsilon_1, \varepsilon_2 \in \{ \pm \}} K(\varepsilon_1x, \varepsilon_2 \tau) - \frac{1}{\pi i \sinh(\pi \tau)} \sum_{\varepsilon_1, \varepsilon_2 \in \{ \pm \}} \varepsilon_2 K(\varepsilon_1x, \varepsilon_2 \tau), $$
	$$ K_{2,-}(x,\tau) = \frac{1}{\pi} \sum_{\varepsilon_1, \varepsilon_2 \in \{ \pm \}} \varepsilon_1 K(\varepsilon_1x, \varepsilon_2 \tau) + \frac{1}{
	\pi i \sinh(\pi \tau)} \sum_{\varepsilon_1, \varepsilon_2 \in \{ \pm \}} \varepsilon_1 \varepsilon_2 K(\varepsilon_1x, \varepsilon_2 \tau). $$
	Theorem \ref{ExpInvMF} readily follows from Proposition \ref{M3ToM4} (2) and the above equalities.
\end{proof}

\begin{lemma} \label{AuxTrans}
	Suppose $h(s)$ is a holomorphic function in $\Re s > -\delta$ for some $\delta > 0$, which has rapid decay in vertical region $0 \leq \Re s \leq c$ for any $c>0$. Define for $0 < \Re s < 1/2$ a function
	$$ h^*(s) := \Gamma(2s) \int_{-\infty}^{\infty} \frac{\Gamma \left( \frac{1}{2}+i\tau-s \right)}{\Gamma \left( \frac{1}{2}+i\tau+s \right)} h(i\tau) \frac{\ud \tau}{2\pi}. $$
	Then $h^*(s)$ has analytic continuation to $\Re s > 0$ with rapid decay in any vertical region $0 < a \leq \Re s < b$.
\end{lemma}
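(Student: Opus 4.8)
\end{lemma}

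\begin{proof}
The plan is to produce the continuation by shifting the contour of integration in $\tau$ \emph{downward}: on a horizontal line $\Im\tau=-\eta$ with $\eta>0$ one has $\Re(i\tau)=\eta\ge 0$, so both the holomorphy and --- crucially --- the rapid decay of $h$ are available there, whereas the only poles of the integrand $\Gamma(\tfrac12+i\tau-s)/\Gamma(\tfrac12+i\tau+s)$, namely those $\tau$ with $\tfrac12+i\tau-s\in\Z_{\le 0}$, lie on the lines $\Im\tau=\tfrac12+n-\Re s$ $(n\ge 0)$, hence in the open upper half plane as long as $\Re s<\tfrac12$.

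\textbf{Step 1: the shifted representation and the continuation.} For $\eta>0$ set
\[
 h^*_\eta(s):=\Gamma(2s)\int_{\Im\tau=-\eta}\frac{\Gamma\left(\tfrac12+i\tau-s\right)}{\Gamma\left(\tfrac12+i\tau+s\right)}\,h(i\tau)\,\frac{d\tau}{2\pi}.
\]
By Stirling the Gamma-quotient on this line is $O_s\bigl((1+\norm[\tau])^{A}\bigr)$ for a suitable exponent $A$, while $h(i\tau)=h(\eta+i\,\Im\tau)$ decays faster than any power of $\norm[\tau]$ by hypothesis; hence the integral converges absolutely and locally uniformly in $s$, so (Morera) $h^*_\eta$ is holomorphic off the vertical lines $\Re s=\tfrac12+n+\eta$ $(n\ge 0)$, in particular on the strip $0<\Re s<\tfrac12+\eta$ (where $\Gamma(2s)$ is also regular). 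When $0<\Re s<\tfrac12$ all the poles $\Im\tau=\tfrac12+n-\Re s$ are strictly positive, so by Cauchy's theorem --- the closing segments at $\Re\tau=\pm R$ contribute $O(R^{A-N})\to 0$ by the rapid decay of $h$ --- one may push the contour back up to $\Im\tau=0$ without meeting a pole, giving $h^*_\eta(s)=h^*(s)$ there. Thus each $h^*_\eta$ analytically continues $h^*$ to the strip $0<\Re s<\tfrac12+\eta$; two such agree on the overlap $0<\Re s<\tfrac12$ and hence, by uniqueness on the connected strip, on every common vertical strip, so choosing for each $s_0$ with $\Re s_0>0$ any $\eta>\max(0,\Re s_0-\tfrac12)$ (so that $\tfrac12+\eta>\Re s_0$ and no exceptional line passes through $s_0$) shows $h^*$ extends holomorphically past $s_0$, hence to all of $\{\Re s>0\}$.

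\textbf{Step 2: rapid decay.} Fix a strip $0<a\le\Re s\le b$ and choose $\eta>0$ with $\tfrac12+\eta>b$, so that $h^*=h^*_\eta$ holds throughout the strip, with no pole on the contour and with $\tfrac12+\eta-\sigma\ge\tfrac12+\eta-b>0$. Write $s=\sigma+iT$ and $\tau=t-i\eta$, $t\in\R$. By Stirling, uniformly for $\sigma\in[a,b]$,
\[
 \norm[\Gamma(2s)]\ll(1+\norm[T])^{2\sigma-\frac12}e^{-\pi\norm[T]},\qquad
 \left|\frac{\Gamma(\tfrac12+i\tau-s)}{\Gamma(\tfrac12+i\tau+s)}\right|\ll\frac{(1+\norm[t-T])^{\eta-\sigma}}{(1+\norm[t+T])^{\eta+\sigma}}\;e^{\frac{\pi}{2}(\norm[t+T]-\norm[t-T])},
\]
and $\norm[h(\eta+it)]\ll_N(1+\norm[t])^{-N}$ for every $N$. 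Assume $T\ge 0$ (the case $T\le 0$ is symmetric) and split the $t$-integral at $t=T/2$. If $t\ge T/2$, then $\norm[t+T]-\norm[t-T]\le 2T$, so the exponential factors combine to $O(1)$, the power factors are $\ll(1+T)^{2\sigma-\frac12}(1+t)^{\norm[\eta-\sigma]}$, and integrating against $(1+t)^{-N}$ yields a bound $\ll_N(1+T)^{2\sigma-\frac12+\norm[\eta-\sigma]-N+1}$. If $t\le T/2$, then $\norm[t+T]-\norm[t-T]=2t$ for $t\ge -T$ and $=-2T$ for $t\le -T$, so the exponential factors combine to $O\bigl(e^{-\pi T/2}\bigr)$; the remaining factors are $\ll(1+T)^{2\sigma-\frac12+\norm[\eta-\sigma]}(1+\norm[t])^{\norm[\eta-\sigma]-N}$, with $t$-integral $O_N(1)$, giving a bound $\ll_N(1+T)^{2\sigma-\frac12+\norm[\eta-\sigma]}e^{-\pi T/2}$. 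Both estimates are rapidly decreasing in $T$, uniformly for $\sigma\in[a,b]$; hence $h^*(s)\ll_N(1+\norm[\Im s])^{-N}$ on $a\le\Re s\le b$.

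\textbf{Main obstacle.} The substance lies entirely in the exponential bookkeeping of Step 2: the factor $e^{-\pi\norm[T]}$ coming from $\Gamma(2s)$ must absorb the growing factor $e^{\frac{\pi}{2}(\norm[t+T]-\norm[t-T])}$ coming from the Gamma-quotient, and it only does so \emph{off} the diagonal $t\asymp T$, where one is instead forced to exploit the faster-than-polynomial decay of $h$. Separating these two regimes and keeping every estimate uniform for $\sigma\in[a,b]$ is the one delicate point; the remainder is Stirling's formula and contour shifting.
\end{proof}
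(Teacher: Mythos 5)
Your proof is correct and follows essentially the same route as the paper's: both shift the $\tau$-contour to $\Im\tau=-\eta$ (the paper phrases this as moving $s_1=i\tau$ from $\Re s_1=0$ to $\Re s_1=c$) to obtain the analytic continuation, then apply Stirling to the Gamma quotient, split the $t$-integral into the same three exponential regimes (off-diagonal vs. diagonal with respect to $t\asymp\pm T$), and let the rapid decay of $h$ on the shifted line beat the remaining polynomial growth after the $e^{\pm\pi T}$ factors cancel against $\Gamma(2s)$. The differences are purely notational.
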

\begin{proof}
	For any $c > 0$ we have by contour shifting
	$$ h^*(s) = \Gamma(2s) \int_{(0)} \frac{\Gamma \left( \frac{1}{2}+s_1-s \right)}{\Gamma \left( \frac{1}{2}+s_1+s \right)} h(s_1) \frac{\ud s_1}{2\pi i} = \Gamma(2s) \int_{(c)} \frac{\Gamma \left( \frac{1}{2}+s_1-s \right)}{\Gamma \left( \frac{1}{2}+s_1+s \right)} h(s_1) \frac{\ud s_1}{2\pi i}. $$
	The right most expression is a well defined holomorphic function in $0 < \Re s < 1/2+c$. To see the rapid decay in $a \leq \Re s \leq b$, we take $c > b$ large enough. Write $s_1=c+i\tau_1, s=\sigma+i\tau$ with $a \leq \sigma \leq b$. We treat the case $\tau > 0$ in detail, leaving the case $\tau < 0$ as an exercise. Stirling's bound \cite[(B.8)]{Iw02} implies the existence of constants $A_j > 0$ depending only on $a,b,c$ such that
	$$ \frac{\Gamma \left( \frac{1}{2}+c-\sigma+i(\tau_1-\tau) \right)}{\Gamma \left( \frac{1}{2}+c+\sigma+i(\tau_1+\tau) \right)} \ll_{a,b,c} \left\{ \begin{matrix} e^{-\pi \tau} (1+\norm[\tau_1])^{A_1} & \text{if } \tau_1 \leq -\tau \\ e^{\pi \tau_1} (1+\norm[\tau])^{A_2} & \text{if } -\tau < \tau_1 < \tau \\ e^{\pi \tau} (1+\norm[\tau_1])^{A_3} & \text{if } \tau_1 \geq \tau \end{matrix} \right. . $$
	Invoking the rapid decay of $h(s_1)$, we easily see for any $C > 0$
	$$ \int_{(c)} \frac{\Gamma \left( \frac{1}{2}+s_1-s \right)}{\Gamma \left( \frac{1}{2}+s_1+s \right)} h(s_1) \frac{\ud s_1}{2\pi i} \ll_{a,b,c,C} e^{\pi \tau} (1+\norm[\tau])^{-C}. $$
	For example, we have
	$$ \int_{-\tau}^{\tau} e^{\pi \tau_1}(1+\norm[\tau_1])^{-C} \ud \tau_1 \ll \int_{-\tau}^{\tau/2} e^{\pi \tau_1} \ud \tau_1 + \int_{\tau/2}^{\tau} e^{\pi \tau_1} \left(1+\extnorm{\frac{\tau}{2}} \right)^{-C} \ud \tau_1 \ll_C e^{\pi \tau} (1+\norm[\tau])^{-C}. $$
	The desired rapid decay of $h^*(s)$ follows readily by applying Stirling's bound to $\Gamma(2s)$.
\end{proof}

\begin{proposition} \label{BasisTransProp}
	The transform $h \to \widetilde{h}$ given by (\ref{BasicTrans}) has the following properties.
\begin{itemize}
	\item[(1)] The kernel function $K(x,\tau)$ satisfies the following uniform bound
	$$ K(x,\tau) \ll (1+\norm[\tau])^{-1}. $$
	\item[(2)] The kernel function $K(x,\tau)$ has an alternative expression, valid for any $0 < c < 1/2$, as
	$$ K(x,\tau) = \int_{(c)} \frac{\Gamma \left( \frac{1}{2}+i\tau-s \right) \Gamma(s)^2}{\Gamma \left( \frac{1}{2}+i\tau+s \right)} \cdot \frac{\Gamma(s) \Gamma \left( \frac{1}{2}-ix-s \right)}{\Gamma \left( \frac{1}{2}-ix \right)} \frac{\ud s}{2\pi i}. $$
	\item[(3)] Suppose $h(s)$ is a holomorphic function in $\Re s > -\delta$ for some $\delta > 0$, which has rapid decay in any vertical region $0 \leq \Re s \leq a$. Then $\widetilde{h}(ix)$ has rapid decay as $\norm[x] \to \infty$.
\end{itemize}
\end{proposition}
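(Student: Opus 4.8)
We outline the argument, treating part (2) first since it supplies the Mellin--Barnes kernel on which part (3) rests, and leaving the uniform bound (1), which is of a genuinely different nature, for last.

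\emph{Part (2).} Starting from the series definition, one has $K(x,\tau)=\Gamma(1/2-ix)\cdot\frac{\Gamma(1/2+i\tau)^3}{\Gamma(1-ix+i\tau)\Gamma(1+2i\tau)}\,{}_3F_2(1/2+i\tau,1/2+i\tau,1/2+i\tau;1-ix+i\tau,1+2i\tau;1)$, the series converging because the parameter excess equals $1/2-ix$, of positive real part. In the claimed integral the only poles to the right of $\Re s=c$ are the simple poles of $\Gamma(1/2+i\tau-s)$ at $s=1/2+i\tau+n$ and of $\Gamma(1/2-ix-s)$ at $s=1/2-ix+n$ (for generic $x,\tau$), and Stirling's asymptotics show that the contour may be closed to the right with summable residues. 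Summing the two resulting residue families and rewriting the Pochhammer quotients exhibits the integral as a sum of two ${}_3F_2$'s at $1$, and a Thomae three-term relation for ${}_3F_2(1)$ collapses this sum to the single hypergeometric series defining $K$. (Equivalently: insert $\Gamma(s)\Gamma(1/2-ix-s)/\Gamma(1/2-ix)=B(s,1/2-ix-s)$, exchange integrals, evaluate the two resulting Beta integrals, and recognise the remaining Mellin--Barnes integral as an Euler representation of $K$; a Thomae reduction in fact writes $K(x,\tau)=\int_0^1\!\!\int_0^1 (t(1-u))^{-1/2+i\tau}(u(1-t))^{-1/2-ix}(1-tu)^{-1/2+ix}\,dt\,du$.) Continuity extends the identity to all real $x,\tau$.

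\emph{Part (3).} Insert the representation of (2) into $\widetilde h(ix)=\int_{\mathbb R}K(x,\tau)h(i\tau)\,d\tau/2\pi$ and exchange the $s$- and $\tau$-integrals; this is legitimate because at $\Re s=c$ with $c$ small Stirling bounds the inner integrand by a factor polynomially bounded in $\tau$ and absolutely $ds$-integrable, against which the rapid decay of $h$ gives absolute convergence. The $\tau$-integral that remains is precisely $\int_{\mathbb R}\frac{\Gamma(1/2+i\tau-s)}{\Gamma(1/2+i\tau+s)}h(i\tau)\,d\tau/2\pi=h^*(s)/\Gamma(2s)$ in the notation of Lemma~\ref{AuxTrans}, whose hypotheses are exactly those imposed on $h$. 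Hence $\widetilde h(ix)=\int_{(c)}\frac{h^*(s)\Gamma(s)^3}{\Gamma(2s)}\cdot\frac{\Gamma(1/2-ix-s)}{\Gamma(1/2-ix)}\,\frac{ds}{2\pi i}$. Now shift the contour to $\Re s=N$ for a large integer $N$: by Lemma~\ref{AuxTrans} together with Stirling applied to $\Gamma(s)^3/\Gamma(2s)$ (of size $\asymp(1+|\Im s|)^{N-1}e^{-\pi|\Im s|/2}$ there), the integrand is holomorphic in $\Re s>0$ apart from the poles of $\Gamma(1/2-ix-s)$ at $s=1/2+n-ix$, $0\le n<N$, and has rapid decay in vertical strips. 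Each crossed residue is $O_{N,A}((1+|x|)^{-A})$ for every $A$, since $h^*$ decays rapidly in the imaginary direction at the fixed abscissa $\Re s=1/2+n$ while the surrounding Gamma quotient is only polynomially large in $|x|$; and the shifted integral is $\ll_{N,B}\int_{\mathbb R}(1+|x+y|)^{-N}(1+|y|)^{-B}\,dy\ll_N|x|^{-N}$ for every $B$ (split at $|y|=|x|/2$; the factors $e^{\mp\pi|y|/2}$ from $\Gamma(s)^3/\Gamma(2s)$ and from the Gamma ratio cancel). Thus $\widetilde h(ix)\ll_N|x|^{-N}$ for every $N$, which is the asserted rapid decay.

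\emph{The main obstacle, part (1).} The bound $|K(x,\tau)|\ll(1+|\tau|)^{-1}$ is not reachable by absolute-value estimates: bounding the hypergeometric series, or the Mellin--Barnes integral of (2), term by term yields only $|K(x,\tau)|\ll(1+|\tau|)^{-1/3+\epsilon}$, the loss arising from the resonant range $\Im s\approx\tau$ (equivalently $x\approx\tau$), where no Gamma factor provides exponential damping. One must instead exploit oscillation genuinely, working from the Euler double integral above, in which the $\tau$-oscillation $e^{i\tau\log(t(1-u))}$ has nowhere-vanishing phase gradient; after the substitution $w=t(1-u)$, which linearises the phase, one integrates by parts in $w$, carefully matching the endpoint singularities against the oscillation, and then iterates once in the $x$-variable to dispose of the residual non-integrable boundary contribution near $w=1$. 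This is the archimedean counterpart of an algebraic exponential-sum estimate, and — consistently with the plan announced in the introduction — I expect it to be carried out separately (in \S 6) rather than inline.
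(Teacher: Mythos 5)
Your treatments of parts (1) and (3) are essentially the paper's. For (3), you recognise the inner $\tau$-integral as $h^*(s)/\Gamma(2s)$, obtain $\widetilde h(ix)=\int_{(c)}h^*(s)\,\frac{\Gamma(s)^3\,\Gamma(1/2-ix-s)}{\Gamma(2s)\,\Gamma(1/2-ix)}\,\frac{ds}{2\pi i}$, shift past the poles at $s=\frac12+k-ix$, and bound residues and remainder by Lemma \ref{AuxTrans} with Stirling — this is exactly (\ref{ContShift}) and what follows. For (1) you correctly declare the uniform bound out of reach by elementary estimates and defer it to a separate technical section, just as the paper does (your heuristic for that section — phase linearisation in a double Euler integral — differs from what \S\ref{TechKerBd} actually does, which is to use the one-variable Euler integral $\int_0^1 y^{-1/2+i\tau}(1-y)^{-1/2+ix}\HyGI(\tfrac12+i\tau,\tfrac12+i\tau;1+2i\tau;y)\,dy$, substitute uniform asymptotics for the inner ${}_2F_1$, and run a Temme-type saddle-point analysis, but you present this only as a forward pointer, so it is immaterial to correctness of the Proposition). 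Where you genuinely diverge is part (2). The paper starts from the Euler integral $K(x,\tau)=\int_0^\infty\Lambda_2(\tau,t)\,t^{1/2-ix}(1+t)^{-(1/2-ix)}\,\frac{dt}{t}$, computes the Mellin transform of each factor, and concludes by Mellin–Parseval — an elementary convolution argument requiring no ${}_3F_2$ transformation theory. Your main route closes the Barnes contour to the right, assembles the two right-hand residue families into a pair of ${}_3F_2(1)$'s, and invokes a Thomae three-term relation to recombine them. That can be made to work, but note that the arc cannot be discarded by absolute estimates — at unit argument the Barnes integrand decays only conditionally along rays in $\Re s>0$, so one must argue Riemann–Lebesgue style along arcs avoiding poles — and that the Thomae relation you need is of comparable depth to the Barnes representation being proved, so the argument imports rather more ${}_3F_2(1)$ theory than the paper's. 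Your parenthetical Beta-integral alternative is in fact the paper's route and is the cleaner choice.
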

\begin{proof}	
	(1) The proof is highly technical. We postpone it to \S \ref{TechKerBd}.
	
\noindent (2) By Remark \ref{K2Alt} (or \cite[16.5.2]{OLBC10}), we see
	$$ K(x,\tau) = 
	\int_0^1 F_2(\tau,t) (1-t)^{-ix} \frac{\ud t}{t \sqrt{1-t}} =
	\int_0^{\infty} \Lambda_2(\tau,t) \frac{t^{\frac{1}{2}-ix}}{(1+t)^{\frac{1}{2}-ix}} \frac{\ud t}{t}. $$
	For $0 < \Re s < 1/2$, we have the Mellin transforms
	$$ \int_0^{\infty} \frac{t^{\frac{1}{2}-ix-s}}{(1+t)^{\frac{1}{2}-ix}} \frac{\ud t}{t} = \frac{\Gamma(s) \Gamma \left( \frac{1}{2}-ix-s \right)}{\Gamma \left( \frac{1}{2}-ix \right)}, $$
	$$ \int_0^{\infty} \Lambda_2(\tau,t) t^s \frac{\ud t}{t} = \int_0^{\infty} \GenHyGI{2}{1}{\frac{1}{2}+i\tau, \frac{1}{2}+i\tau}{1+2i\tau}{-t} t^{\frac{1}{2}+i\tau-s} \frac{\ud t}{t} = \frac{\Gamma \left( \frac{1}{2}+i\tau-s \right) \Gamma(s)^2}{\Gamma \left( \frac{1}{2}+i\tau+s \right)}. $$
	The desired formula follows by Mellin inversion.

\noindent (3) Using $h^*(s)$ defined in Lemma \ref{AuxTrans}, we have by contour shifting
\begin{align}
	\widetilde{h}(ix) &= \int_{(c)} h^*(s) \cdot \frac{\Gamma(s)^3 \Gamma \left( \frac{1}{2}-ix-s \right)}{\Gamma(2s) \Gamma \left( \frac{1}{2}-ix \right)} \frac{\ud s}{2\pi i} \nonumber \\
	&= \sum_{k=0}^{n-1} h^*\left( \frac{1}{2}-ix+k \right) \frac{(-1)^k \Gamma \left( \frac{1}{2}-ix+k \right)^3}{k! \Gamma \left( 1-2ix+2k \right) \Gamma \left( \frac{1}{2}-ix \right)} + \int_{(c+n)} h^*(s) \frac{\Gamma(s)^3 \Gamma \left( \frac{1}{2}-ix-s \right)}{\Gamma(2s) \Gamma \left( \frac{1}{2}-ix \right)} \frac{\ud s}{2\pi i} \label{ContShift}
\end{align}
	for any integer $n \geq 1$. Each summand indexed by $k$ above has rapid decay by Lemma \ref{AuxTrans} and Striling's bound. Arguing similarly as in the proof of Lemma \ref{AuxTrans} we see
	$$ \int_{(c+n)} h^*(s) \frac{\Gamma(s)^3 \Gamma \left( \frac{1}{2}-ix-s \right)}{\Gamma(2s) \Gamma \left( \frac{1}{2}-ix \right)} \frac{\ud s}{2\pi i} \ll (1+\norm[x])^{-c-n}. $$
	Since $n$ can be taken arbitrarily large, we conclude the desired rapid decay.
\end{proof}

		\subsubsection{Quantitative Decay}
		\label{DGAuxA}

	For simplicity of notation, we shall write $h(i\tau,\pm)$ resp. $\chi(-1) \widetilde{h}(ix,1) + \widetilde{h}(ix,2)$ instead of $h(\pi(i\tau,\pm))(\Psi)$ resp. $\widetilde{h}(\chi)(\Psi)$. By Proposition \ref{prop: AdmWtR}, for any large parameters $T \gg 1$ and $\Delta = T^{\epsilon}$ the following function is admissible and non-negative for $\tau \in \R \cup i (-1/2,1/2)$
\begin{equation} \label{ConcWtR}
	h(i\tau,\pm) = \frac{\cosh(\pi \tau)}{2\sqrt{\pi} \Delta} \left\{ \exp \left( - \frac{(\tau-T)^2}{2\Delta^2} - \frac{\pi}{2} \tau \right) + \exp \left( - \frac{(\tau+T)^2}{2\Delta^2} + \frac{\pi}{2} \tau \right) \right\}^2. 
\end{equation}
\begin{remark}
	Define $g_0(x) := G(x; T, \Delta) - e^{-\frac{\pi}{2}(2T-\pi \Delta^2)} G(x; T-\pi \Delta^2, \Delta)$ with
	$$ G(x; T, \Delta) := \tfrac{2i \Delta}{\sqrt{2\pi}} e^{- \frac{\Delta^2}{2} \left( \log (x+\sqrt{x^2+1}) \right)^2} \cdot \sin \left( T \log (x+\sqrt{x^2+1}) \right). $$
	One verifies readily $g_0^{(n)}(0)=0$ for all integer $n \geq 0$ via the following elementary calculation
	$$ \int_0^{\infty} G \left( \tfrac{1}{2}(y-y^{-1}); T,\Delta \right) y^{2n} \frac{\ud y}{y} = e^{\frac{4n^2-T^2}{2\Delta^2}} \sin \left( \tfrac{2nT}{\Delta^2} \right), \quad \forall \ n \in \Z. $$
	Let $\varepsilon_0 \in \{ \pm 1 \}$ and $f_j \in \Sch(\R)$ be determined by
\begin{equation*}
	\begin{cases} f_j(-t) = \varepsilon_0 f_j(t) & \forall \ t \in \R \\ 2\varepsilon_0 \OFour(f_1) \left( \tfrac{t}{2\pi} \right) = 2 \OFour(f_2) \left( \tfrac{t}{2\pi} \right) = \tfrac{i}{\pi} t^{\frac{1}{2}} \int_0^{\infty} g_0(x) \sin(xt) \ud x & \forall \ t > 0 \end{cases}. 
\end{equation*}
	For $\Psi = \phi_1^{\vee} * \phi_2$ with $\phi_j$ defined via $f_j$ by \eqref{TestPhiR} one checks easily that $h(i\tau, \varepsilon_0)(\Psi)$ is equal to \eqref{ConcWtR} and $h(i\tau, -\varepsilon_0)(\Psi)=0$ without appealing to the Paley-Wiener result Proposition \ref{prop: AdmWtR}.
\end{remark}
	
\noindent By Theorem \ref{ExpInvMF}, we need to consider $h(i\tau)$ of the form
\begin{equation} \label{eq: ConcWtRBis}
	h_j(i\tau) = \frac{w_j(\tau)}{2\sqrt{\pi} \Delta} \left\{ \exp \left( - \frac{(\tau-T)^2}{2\Delta^2} - \frac{\pi}{2} \tau \right) + \exp \left( - \frac{(\tau+T)^2}{2\Delta^2} + \frac{\pi}{2} \tau \right) \right\}^2,
\end{equation}
	where the functions $w_j$ are given by
	$$ w_1(\tau) = \tau \sinh(\pi \tau), \quad w_2(\tau) = \tau \cosh(\pi \tau), \quad w_3(\tau) = \tau, $$
	and to bound the corresponding dual weights $\widetilde{h}_j(ix)$. It would be convenient to introduce
	$$ g(\tau; T, \Delta) := \frac{1}{2\sqrt{\pi} \Delta} \left\{ \exp \left( - \frac{(\tau-T)^2}{2\Delta^2} - \frac{\pi}{2} \tau \right) + \exp \left( - \frac{(\tau+T)^2}{2\Delta^2} + \frac{\pi}{2} \tau \right) \right\}^2, $$
	whose Fourier transform is given by
\begin{align*} 
	\hat{g}(x; T, \Delta) &:= \int_{-\infty}^{\infty} g(\tau; T, \Delta) e^{-ix\tau} \ud \tau \\
	&= \frac{e^{-\pi T}}{2} \left\{ \exp \left( -\frac{1}{4}(\Delta (x-\pi i))^2 \right) e^{-iTx} + \exp \left( -\frac{1}{4}(\Delta (x+\pi i))^2 \right) e^{iTx} \right\} \\
	&\quad + e^{-\frac{T^2}{\Delta^2}} \exp \left( - \frac{1}{4} (\Delta x)^2 \right). 
\end{align*}
	We easily compute its derivative with respect to $x$
\begin{align*} 
	\hat{g}'(x; T,\Delta) &= - e^{-\pi T} \exp \left( -\frac{1}{4}(\Delta (x-\pi i))^2 \right) e^{-iTx} \cdot \left\{ \frac{1}{4} \Delta^2 (x-\pi i) + \frac{i}{2} T \right\} \\
	&\quad - e^{-\pi T} \exp \left( -\frac{1}{4}(\Delta (x+\pi i))^2 \right) e^{iTx} \cdot \left\{ \frac{1}{4} \Delta^2 (x + \pi i) - \frac{i}{2} T \right\} \\
	&\quad - e^{-\frac{T^2}{\Delta^2}} \exp \left( - \frac{1}{4} (\Delta x)^2 \right) \cdot \frac{1}{2} \Delta^2 x.
\end{align*}
	
\begin{lemma} \label{TechEst}
	There is an absolute constant $\delta > 0$ such that for $\norm[\theta] \leq \delta$ and for all $\lambda \geq 0$ we have
	$$ f(\lambda, \theta) := \frac{1}{4} \left( \log (1+2\lambda \cos \theta + \lambda^2) \right)^2 - \frac{1}{2} (\log(1+\lambda))^2 - \left( \min (\norm[\theta], \lambda \sin \norm[\theta]) \right)^2 \geq 0. $$
\end{lemma}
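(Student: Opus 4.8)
The plan is to reduce the two-variable inequality to a one-variable estimate by means of the elementary factorization
\[
1 + 2\lambda\cos\theta + \lambda^2 = (1+\lambda)^2 - 2\lambda(1-\cos\theta) = (1+\lambda)^2(1-P), \qquad P := \frac{2\lambda(1-\cos\theta)}{(1+\lambda)^2}\ge 0,
\]
and then to split into the ranges $0\le\lambda\le 1$ and $\lambda\ge 1$. First I would observe that $f(\lambda,\theta)$ is even in $\theta$ (each of $\cos\theta$, $\sin\lvert\theta\rvert$, $\lvert\theta\rvert$ is), so it suffices to take $0\le\theta\le\delta$. Writing $L_1=\tfrac12\log(1+2\lambda\cos\theta+\lambda^2)$, $L_2=\log(1+\lambda)$ and $m=\min(\theta,\lambda\sin\theta)$, the factorization gives $L_1=L_2-\eta$ with $\eta:=-\tfrac12\log(1-P)\ge 0$. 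Using the standard bounds $1-\cos\theta\le\theta^2/2$ and $\lambda/(1+\lambda)^2\le 1/4$ one gets $0\le P\le\lambda\theta^2/(1+\lambda)^2\le\theta^2/4\le\delta^2/4$, hence also $0\le\eta\le P$ (from $-\log(1-P)\le P/(1-P)\le 2P$ for small $P$). Then
\[
f = (L_2-\eta)^2 - \tfrac12 L_2^2 - m^2 = \tfrac12 L_2^2 - 2L_2\eta + \eta^2 - m^2 \ \ge\ \tfrac12 L_2\bigl(L_2-4\eta\bigr) - m^2,
\]
so the whole problem comes down to showing that $L_2-4\eta$ stays comfortably positive and that $m$ is small relative to $L_2$.

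For $0\le\lambda\le 1$ I would use $L_2=\log(1+\lambda)\ge\lambda/(1+\lambda)\ge\lambda/2$ together with $4\eta\le 4P\le 4\lambda\theta^2\le 4\lambda\delta^2$ and $m\le\lambda\sin\theta\le\lambda\theta\le\lambda\delta$; choosing $\delta\le\tfrac14$ makes both $4\eta\le\lambda/4\le\tfrac12 L_2$ and $m\le\lambda/4\le\tfrac12 L_2$, which yields $f\ge\tfrac14 L_2^2-m^2\ge 0$. For $\lambda\ge 1$ I would instead use $L_2\ge\log 2$, $4\eta\le 4P\le\theta^2\le\delta^2$, and $m\le\theta\le\delta$, together with the fact that $L\mapsto L(L-\delta^2)$ is increasing for $L\ge\log 2$; this reduces the claim to the numerical inequality $\tfrac12\log 2\,(\log 2-\delta^2)\ge\delta^2$, which holds for $\delta\le\tfrac14$. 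Combining the two cases gives $f(\lambda,\theta)\ge 0$ for all $\lvert\theta\rvert\le\delta:=\tfrac14$ (any smaller $\delta$ works as well).

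The only genuinely delicate point is the bookkeeping near $\lambda=0$: there $L_1$, $L_2$ and $m$ all degenerate to first order in $\lambda$, so one must not discard the correct power of $\lambda$ when bounding $\eta$ — this is exactly why the factor $\lambda/(1+\lambda)^2$ (rather than a crude absolute constant) is retained in $P$. Once that is respected, every inequality used is a one-variable calculus fact ($1-\cos\theta\le\theta^2/2$, $\sin\theta\le\theta$, $\log(1+\lambda)\ge\lambda/(1+\lambda)$, $-\log(1-P)\le P/(1-P)$, $\max_{\lambda\ge0}\lambda/(1+\lambda)^2=\tfrac14$), so no real obstacle remains; the proof is essentially the two-line reduction above followed by the $\lambda\le 1$ / $\lambda\ge 1$ dichotomy.
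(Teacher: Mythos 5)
Your proof is correct. The paper itself does not supply an argument for this lemma — it simply cites \cite[(5.1.17)]{Mo97} and says ``We leave the elementary details to the reader'' — so there is nothing in the text to compare against; you have filled in exactly the kind of one-variable bookkeeping the authors are gesturing at. I checked the chain carefully: the factorization $1+2\lambda\cos\theta+\lambda^2 = (1+\lambda)^2(1-P)$ is an identity, so $L_1 = L_2 - \eta$ with $\eta = -\tfrac12\log(1-P)\ge 0$; the algebraic rearrangement $f = \tfrac12 L_2^2 - 2L_2\eta + \eta^2 - m^2 \ge \tfrac12 L_2(L_2-4\eta) - m^2$ discards only the nonnegative $\eta^2$; the bound $\eta\le P$ needs $P\le\tfrac12$, which your chain $P\le\theta^2/4\le\delta^2/4=1/64$ guarantees with room to spare. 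In the range $0\le\lambda\le1$ the inequalities $L_2\ge\lambda/2$, $4\eta\le 4\lambda\delta^2\le\lambda/4\le\tfrac12 L_2$, and $m\le\lambda\delta\le\lambda/4\le\tfrac12 L_2$ give $f\ge\tfrac14 L_2^2-m^2\ge0$; in the range $\lambda\ge1$ the monotonicity of $L\mapsto L(L-\delta^2)$ for $L\ge\delta^2/2$ (certainly so for $L\ge\log2$ when $\delta\le\tfrac14$) reduces everything to the numerical check $\tfrac12\log2\,(\log2-\tfrac1{16})\approx 0.22>\tfrac1{16}$, which holds. The one point you correctly flagged as the crux — keeping the factor $\lambda/(1+\lambda)^2$ in $P$ rather than bounding it away prematurely — is precisely what makes the $\lambda\to0$ endpoint (where all three terms vanish to order $\lambda^2$) close: near zero one has $f\sim\lambda^2(\cos2\theta-\tfrac12)$, which is nonnegative only for $|\theta|\lesssim\pi/6$, and your $\delta=\tfrac14$ sits safely inside that window.
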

\begin{proof}
	This is \cite[(5.1.17)]{Mo97}. We leave the elementary details to the reader.
\end{proof}

\begin{lemma} \label{RefAuxTrans}
	Let $h_j^*(s)$ be the transform introduced in Lemma \ref{AuxTrans} of $h_j$. Write $\hat{g}(x)$ resp. $\hat{g}'(x)$ for $\hat{g}(x; T, \Delta)$ resp. $\hat{g}'(x; T, \Delta)$ for simplicity of notation.
\begin{itemize}
	\item[(1)] We have the formulae valid for $\Re s > 0$
	$$ h_1^*(s) = \frac{i}{4\pi} \int_0^{\infty} \frac{y^{2s-1}}{(1+y)^{s+\frac{1}{2}}} \left\{ \hat{g}'(\log(1+y)+\pi i) - \hat{g}'(\log(1+y)-\pi i) \right\} \ud y, $$
	$$ h_2^*(s) = \frac{i}{4\pi} \int_0^{\infty} \frac{y^{2s-1}}{(1+y)^{s+\frac{1}{2}}} \left\{ \hat{g}'(\log(1+y)+\pi i) + \hat{g}'(\log(1+y)-\pi i) \right\} \ud y, $$
	$$ h_3^*(s) = \frac{i}{2\pi} \int_0^{\infty} \frac{y^{2s-1}}{(1+y)^{s+\frac{1}{2}}} \hat{g}'(\log(1+y)) \ud y. $$	
	\item[(2)] Suppose $\Re s$ is positive and bounded. We have the following bounds
	$$ h_1^*(s), h_2^*(s) \ll T \Delta^{-2\Re s} e^{-\frac{\norm[\Im s]}{T}}, \quad \extnorm{h_3^*(s)} \ll_A 
	T^{-A} e^{-\frac{\norm[\Im s]}{T}}, \forall A > 1. $$
\end{itemize}
\end{lemma}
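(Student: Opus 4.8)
The plan is to prove part (1) by unfolding an Euler--Beta integral and then to derive part (2) from the explicit formula for $\hat{g}'(\cdot;T,\Delta)$ displayed just before the statement.

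\textbf{Part (1).} I start from the definition $h_j^*(s)=\Gamma(2s)\int_{\R}\frac{\Gamma(1/2+i\tau-s)}{\Gamma(1/2+i\tau+s)}h_j(i\tau)\frac{d\tau}{2\pi}$, valid for $0<\Re s<1/2$, and insert the Euler--Beta identity
\[
\frac{\Gamma(2s)\,\Gamma\left(\tfrac12+i\tau-s\right)}{\Gamma\left(\tfrac12+i\tau+s\right)}=\int_0^\infty\frac{y^{2s-1}}{(1+y)^{s+1/2+i\tau}}\,dy ,\qquad 0<\Re s<\tfrac12 .
\]
Because $h_j(i\tau)=w_j(\tau)g(\tau;T,\Delta)$ has Gaussian decay in $\tau$ (the factors $w_j$ grow at most like $|\tau|e^{\pi|\tau|}$, which $g$ kills), Fubini applies and yields $h_j^*(s)=\frac1{2\pi}\int_0^\infty\frac{y^{2s-1}}{(1+y)^{s+1/2}}\widehat{h_j}\!\left(\log(1+y)\right)dy$, where $\widehat{h_j}(\xi)=\int_{\R}h_j(i\tau)e^{-i\xi\tau}\,d\tau$. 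Writing $\sinh(\pi\tau)$, $\cosh(\pi\tau)$, $1$ as $\tfrac12(e^{\pi\tau}\mp e^{-\pi\tau})$ and $\tfrac12(e^{\pi\tau}\pm e^{-\pi\tau})$ and $1$, using $\tau g(\tau)\leftrightarrow i\hat g'$ under Fourier transform, and noting that multiplication by $e^{\pm\pi\tau}$ corresponds to the argument shift $\xi\mapsto\xi\pm\pi i$ (legitimate since $\hat g(\cdot;T,\Delta)$ is entire), one obtains exactly the three displayed formulae on $0<\Re s<1/2$. The right-hand integrals converge for all $\Re s>0$: near $y=0$ the only constraint is $\Re(2s)>0$, and for $y\to\infty$ the shifted $\hat g'$ decays super-exponentially (Gaussian in its argument). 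Since $h_j^*(s)$ is holomorphic on $\Re s>0$ by Lemma \ref{AuxTrans}, the identities persist there by analytic continuation.

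\textbf{Part (2), main/error split.} Substituting the explicit $\hat g'(\cdot;T,\Delta)$, I isolate inside $\hat g'(\log(1+y)+\pi i)$ the contribution of the \emph{first} summand of $\hat g'$: there the Gaussian $\exp(-\tfrac14(\Delta(z-\pi i))^2)$ at $z=\log(1+y)+\pi i$ becomes $\exp(-\tfrac{\Delta^2}{4}(\log(1+y))^2)$ and the prefactor $e^{-\pi T}$ is cancelled exactly by $e^{-iT\cdot\pi i}=e^{\pi T}$; this is the \emph{main term}, and similarly for $\hat g'(\log(1+y)-\pi i)$ from the second summand. Every remaining contribution to $\hat g'(\log(1+y)\pm\pi i)$ keeps an honest prefactor of size $e^{-2\pi T+\pi^2\Delta^2}$ or $e^{-T^2/\Delta^2+\pi^2\Delta^2/4}$, and for $h_3^*$ — where there is no $\pm\pi i$ shift, so no cancellation — \emph{every} summand keeps a prefactor $e^{-\pi T+\pi^2\Delta^2/4}$ or $e^{-T^2/\Delta^2+\pi^2\Delta^2/4}$. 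Since $\Delta=T^{\epsilon}$, all of these prefactors are $\ll_A T^{-A}$ for every $A$. Multiplying by $\frac{y^{2s-1}}{(1+y)^{s+1/2}}$, by the polynomial factors in the braces (bounded by $\mathrm{poly}(T,\Delta)(1+|\log(1+y)|)$), and by the surviving Gaussian, the $y$-integral of each such error term is $\ll_{\Re s}\mathrm{poly}(T,\Delta)\,\Delta^{-2\Re s}$. Hence the total error is $\ll_A T^{-A}$ for all $A$ — together with the $e^{-|\Im s|/T}$ factor extracted below this already finishes $h_3^*$, and for $h_{1,2}^*$ it is absorbed into the main-term bound.

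\textbf{Part (2), the main term of $h_{1,2}^*$.} Up to $O(1)$ constants and a lower-order piece (from the $\tfrac14\Delta^2\log(1+y)$ term in the braces, which gains an extra $\Delta^{-1}$ at the cost of $\Delta^2\ll T$), the main term equals $\frac{T}{8\pi}\sum_{\pm}\int_0^\infty\frac{y^{2s-1}}{(1+y)^{s+1/2\mp iT}}\exp\!\left(-\tfrac{\Delta^2}{4}(\log(1+y))^2\right)dy$. Substituting $w=\log(1+y)$ and then $w=v/\Delta$, the Gaussian becomes $e^{-v^2/4}$, the measure produces $\Delta^{-2s}$, and there remains $\Delta^{-2s}$ times $\int_0^\infty v^{2s-1}\omega_s(v/\Delta)e^{\pm i(T/\Delta)v}e^{-v^2/4}\,dv$ with $\omega_s(u)=\left(\tfrac{e^u-1}{u}\right)^{2s-1}e^{u(1/2-s)}$ holomorphic of moderate growth near $[0,\infty)$ and $\omega_s(0)=1$; bounding this trivially by $\int_0^\infty v^{2\Re s-1}e^{C_{\Re s}v/\Delta}e^{-v^2/4}\,dv\ll_{\Re s}1$ gives $|h_{1,2}^*(s)|\ll T\Delta^{-2\Re s}$. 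To gain the factor $e^{-|\Im s|/T}$ I put $v=e^{w'}$ and shift the $w'$-contour by $i\,\mathrm{sgn}(\Im s)/(2T)$: the Gaussian $e^{-e^{2w'}/4}$ is essentially unaffected by an $O(1/T)$ shift, the oscillatory factor $e^{\pm i(T/\Delta)e^{w'}}$ then only picks up $e^{O(e^{w'}/\Delta)}=O(1)$ on the effective support (and is dominated by the Gaussian elsewhere), while $e^{2sw'}$ contributes precisely $e^{-|\Im s|/T}$. The step I expect to be the genuine obstacle is making this contour shift rigorous \emph{uniformly in $T$}: the integrand carries branch points of the power function (from $(e^{w}-1)^{2s-1}$ and the logarithm) off the real axis, so one must either deform only the portion of the contour on which they are harmless or, better, exploit that they occur where $\exp(-\tfrac{\Delta^2}{4}w^2)$ is super-exponentially small, truncating there and shifting only the main part at no cost. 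Combining the main-term estimate with the error bound from the previous paragraph gives $|h_{1,2}^*(s)|\ll T\Delta^{-2\Re s}e^{-|\Im s|/T}$ and $|h_3^*(s)|\ll_A T^{-A}e^{-|\Im s|/T}$ for all $A>1$.
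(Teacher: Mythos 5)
Your Part (1) is essentially the paper's argument: insert the Euler--Beta integral, apply Fubini, and translate the three weights $w_j$ into the three Fourier-shift combinations of $\hat g'$; the paper's exposition is terse ("applying some elementary relations of Fourier transform") but it is the same computation. Part (2), however, is a genuinely different route from the paper. The paper performs \emph{one} global contour rotation in the $y$-variable, $y\mapsto\lambda e^{i\theta}$ with $\theta=\sgn(\Im s)T^{-1}$, at the very start (before any change of variables), extracting $e^{-|\Im s|/T}$ from $|y^{2s}|$, and then bounds the \emph{entire} integrand in absolute value; the price is an auxiliary elementary inequality (Lemma~\ref{TechEst}, Motohashi's (5.1.17)) controlling $\Re(\log(1+\lambda e^{i\theta}))^2$. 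You instead isolate the ``main'' piece of $\hat g'$ (where the $e^{\mp\pi T}$ prefactor cancels the $e^{\pm iTz}$ factor), change variables to $w=\log(1+y)$, $v=\Delta w$, $v=e^{w'}$, and rotate the innermost $w'$-contour. This makes the $\Delta^{-2s}$ scaling and the $T$-prefactor structurally transparent and sidesteps the need for Lemma~\ref{TechEst}, at the cost of a slightly longer chain of substitutions and a branch-point discussion. On the latter: your worry about branch points is actually unfounded. The branch points of $\omega_s(e^{w'}/\Delta)=\bigl(\tfrac{e^u-1}{u}\bigr)^{2s-1}e^{u(1/2-s)}$ occur where $e^u=1$ with $u=e^{w'}/\Delta\ne 0$, i.e.\ $e^{w'}=2\pi i k\Delta$, forcing $\Im w'\equiv\pm\pi/2\ (\mathrm{mod}\ 2\pi)$; this is far from the strip $|\Im w'|\le 1/(2T)$. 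Likewise the branch cut of $v^{2s-1}$ is at $v\le 0$, never met since $v=e^{w'}$ stays on a ray through the origin at angle $\eta=O(1/T)$. So the shift is rigorous with no truncation needed.

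There is one small organizational gap you should close. Your error terms are bounded by $T^{-A}$ \emph{without} the $e^{-|\Im s|/T}$ factor, and your closing remark that this ``is absorbed into the main-term bound'' is false if $|\Im s|$ is large compared to $T\log T$: for such $s$ the target bound $T\Delta^{-2\Re s}e^{-|\Im s|/T}$ is much smaller than any fixed power $T^{-A}$. You do parenthetically acknowledge that the $e^{-|\Im s|/T}$ factor also applies to the errors, and this is indeed the case (the same $w'$-contour shift can be applied to each error piece, since they have the same Gaussian structure after $w=\log(1+y)$, $v=\Delta w$), but as written the order of operations — bound errors in magnitude first, extract $e^{-|\Im s|/T}$ only for the main term — does not yield the claimed inequality. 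The clean fix, and what the paper effectively does, is to perform the contour deformation on the full integrand \emph{before} the main/error split, so that $e^{-|\Im s|/T}$ is extracted once and for all, and then separate main from error purely at the level of absolute-value estimates.
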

\begin{proof}
	(1) Inserting an integral representation of the beta function, we get
	$$ h^*(s) = \int_0^{\infty} \frac{y^{2s-1}}{(1+y)^{s+\frac{1}{2}}} \int_{-\infty}^{\infty} \frac{h(i\tau)}{(1+y)^{i\tau}} \frac{\ud \tau}{2\pi} \ud y $$
	valid for $0 < \Re s < 1/2$. Inserting the expressions of $h_j(i\tau)$ and applying some elementary relations of Fourier transform, we get the desired formulae for $0 < \Re s < 1/2$. Their validity for $\Re s > 0$ is ensured by the rapid decay of $\hat{g}'(\log(1+y); T,\Delta)$ and $\hat{g}'(\log(1+y) \pm \pi i; T,\Delta)$ in terms of $y \to \infty$.
	
\noindent (2) We shall turn the line of integration in the formulae obtain in (1) through a small angle $\theta = \sgn(\Im s) T^{-1}$, then bound the integrands in absolute value. Putting $y = \lambda e^{i\theta}$ with $\lambda > 0$, we have
	$$ 1+y = (1+2\lambda \cos\theta + \lambda^2)^{\frac{1}{2}} e^{i \vartheta} $$
	where the angle $\vartheta$ satisfies
	$$ \norm[\vartheta] = \arctan \left( \frac{\lambda \sin \norm[\theta]}{1+\lambda \cos \theta} \right) \leq \min \left\{ \norm[\theta], \lambda \sin \norm[\theta] \right\}. $$
	Then provided $\norm[\theta]$ is small, we easily deduce the following bounds (from Lemma \ref{TechEst})
	$$ \extnorm{y^{2s}} = \lambda^{2 \Re s} e^{- 2 \frac{\norm[\Im s]}{T}}, \quad \extnorm{(1+y)^{-s-\frac{1}{2}}} \leq \left( \frac{1}{2}+\lambda \right)^{-\Re s - \frac{1}{2}} e^{\frac{\norm[\Im s]}{T}}, $$
	$$ \Re \left( \log(1+y) \right)^2 = \frac{1}{4} \left( \log(1+2\lambda \cos\theta + \lambda^2) \right)^2 - \vartheta^2 \geq \frac{1}{2} (\log(1+\lambda))^2, $$
	$$ \Re \left( \log(1+y) \pm \pi i \right)^2 = \frac{1}{4} \left( \log(1+2\lambda \cos\theta + \lambda^2) \right)^2 - \vartheta^2 - \pi^2 \mp 2\pi \vartheta \geq \frac{1}{2} (\log(1+\lambda))^2 - \pi^2 - \frac{2\pi}{T}, $$
	$$ \Re \left( \log(1+y) \pm 2\pi i \right)^2 = \frac{1}{4} \left( \log(1+2\lambda \cos\theta + \lambda^2) \right)^2 - \vartheta^2 - 4\pi^2 \mp 4\pi \vartheta \geq \frac{1}{2} (\log(1+\lambda))^2 - 4\pi^2 - \frac{4\pi}{T}, $$
	$$ \extnorm{\log (1+y)} \leq \log(1+\lambda) + \frac{1}{T}. $$
	It follows that
\begin{multline*}
	h_1^*(s), h_2^*(s) \ll e^{-\frac{\norm[\Im s]}{T}} \left\{ T + \Delta^2 e^{-\frac{T^2}{\Delta^2}} \exp \left( \frac{\pi^2+1}{4} \Delta^2 \right) + T e^{-2\pi T} \exp((\pi^2+1)\Delta^2) \right\} \cdot \\
	\int_0^{\infty} \frac{\lambda^{2\Re s}}{(\frac{1}{2}+\lambda)^{\Re s + \frac{1}{2}}} \exp \left( - \frac{\Delta^2}{8} (\log(1+\lambda))^2 \right) \left( \log(1+\lambda) + 1 \right) \frac{d\lambda}{\lambda} \\
	= \Delta^{\frac{1}{2}-\Re s} e^{-\frac{\norm[\Im s]}{T}} \left\{ T + \Delta^2 e^{-\frac{T^2}{\Delta^2}} \exp \left( \frac{\pi^2+1}{4} \Delta^2 \right) + T e^{-2\pi T} \exp((\pi^2+1)\Delta^2) \right\} \cdot \\
	\int_0^{\infty} \frac{\lambda^{2\Re s}}{(\frac{\Delta}{2}+\lambda)^{\Re s + \frac{1}{2}}} \exp \left( - \frac{\Delta^2}{8} (\log(1+\frac{\lambda}{\Delta}))^2 \right) \left( \log(1+\frac{\lambda}{\Delta}) + 1 \right) \frac{d\lambda}{\lambda} \\
	\ll \Delta^{-2\Re s} e^{-\frac{\norm[\Im s]}{T}} \left\{ T + \Delta^2 e^{-\frac{T^2}{\Delta^2}} \exp \left( \frac{\pi^2+1}{4} \Delta^2 \right) + T e^{-2\pi T} \exp((\pi^2+1)\Delta^2) \right\},
\end{multline*}
\begin{multline*}
	h_3^*(s) \ll e^{-\frac{\norm[\Im s]}{T}} \left\{ \Delta^2 e^{-\frac{T^2}{\Delta^2}} + T \exp \left( \frac{\pi^2+1}{4} \Delta^2 \right) e^{-\pi T} \right\} \cdot \\
	\int_0^{\infty} \frac{\lambda^{2\Re s}}{(\frac{1}{2}+\lambda)^{\Re s + \frac{1}{2}}} \exp \left( - \frac{\Delta^2}{8} (\log(1+\lambda))^2 \right) \left( \log(1+\lambda) + 1 \right) \frac{d\lambda}{\lambda} \\
	\ll \Delta^{-2\Re s} e^{-\frac{\norm[\Im s]}{T}} \left\{ \Delta^2 e^{-\frac{T^2}{\Delta^2}} + T e^{-\pi T} \right\},
\end{multline*}
	where the implied constants depend only on $\Re s$, and we have used the inequalities for $\Delta \geq 1$
    $$ \Delta \log \left( 1+\frac{\lambda}{\Delta} \right) \geq \log(1+\lambda) \geq 0, \quad \left( \frac{\Delta}{2}+\lambda \right)^{\Re s + \frac{1}{2}} \geq \left( \frac{\Delta}{2} \right)^{\Re s + \frac{1}{2}}, \quad \log \left( 1+\frac{\lambda}{\Delta} \right) \leq \log(1+\lambda). $$
\end{proof}

\begin{proposition} \label{DualWtRD}
	Let $\widetilde{h}_j(ix)$ be the transforms of $h_j(s)$ given by (\ref{BasicTrans}). Suppose $\norm[x] \geq T \log^2T$. Then for any constant $C > 0$, we have
	$$ \widetilde{h}_j(ix) \ll_C \norm[x]^{-C}. $$
\end{proposition}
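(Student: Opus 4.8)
The plan is to make the proof of Proposition \ref{BasisTransProp} (3) quantitative by inserting the sharper estimates of Lemma \ref{RefAuxTrans}. Each $h_j(s)$ is entire and, because of the Gaussian factor in $g(\tau;T,\Delta)$, has rapid decay in every vertical strip $0\leq\Re s\leq a$; hence Lemma \ref{AuxTrans} applies, and so does the contour-shift identity (\ref{ContShift}) with $h=h_j$, for every integer $n\geq1$ and every $0<c<1/2$. I would fix the target exponent $C>0$, choose $n=n(C,\epsilon)$ large (any $n\geq(C+1)/(2\epsilon)$ will do, as we see below), and bound each of the $n$ residue terms and the tail integral in (\ref{ContShift}) by $\ll_{C,\epsilon}\norm[x]^{-C}$ in the range $\norm[x]\geq T\log^{2}T$. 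The whole mechanism rests on the factor $e^{-\norm[\Im s]/T}$ occurring in both bounds of Lemma \ref{RefAuxTrans} (2): where it is evaluated with $\norm[\Im s]\asymp\norm[x]\geq T\log^{2}T$, the two elementary facts
$$ (1+\norm[x])^{M}e^{-\norm[x]/(2T)}\ll_{M}(1+T)^{\norm[M]}, \qquad (1+T)^{M}e^{-\frac{1}{2}\log^{2}T}\ll_{M}1 \quad (T\geq1) $$
let it absorb, uniformly in $T$, every polynomial amplification in $\norm[x]$ and in $T$ that the Gamma-factors and the line integrals produce.

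For the residue term indexed by $k$ I estimate the Gamma-quotient by Stirling's formula: the exponential parts of $\Gamma(\tfrac{1}{2}-ix+k)^{3}$, of $\Gamma(1-2ix+2k)^{-1}$ and of $\Gamma(\tfrac{1}{2}-ix)^{-1}$ cancel and one is left with
$$ \extnorm{\frac{\Gamma \left( \tfrac{1}{2}-ix+k \right)^{3}}{k!\,\Gamma \left( 1-2ix+2k \right) \Gamma \left( \tfrac{1}{2}-ix \right)}}\ \ll_{k}\ (1+\norm[x])^{k-\frac{1}{2}}. $$
Combining this with $\extnorm{h_{1,2}^{*}(\tfrac{1}{2}-ix+k)}\ll_{k}T\Delta^{-1-2k}e^{-\norm[x]/T}$ (respectively $\extnorm{h_{3}^{*}(\tfrac{1}{2}-ix+k)}\ll_{k,A}T^{-A}e^{-\norm[x]/T}$), writing $e^{-\norm[x]/T}\leq e^{-\norm[x]/(2T)}e^{-\frac{1}{2}\log^{2}T}$, using the first elementary estimate to absorb $(1+\norm[x])^{k-\frac{1}{2}+C}$ (after dividing out $(1+\norm[x])^{C}$), then the second to absorb the resulting power of $T$, together with $\Delta\geq1$, one obtains that the $k$-th residue term is $\ll_{k,C}T^{A(k,C)}e^{-\frac{1}{2}\log^{2}T}(1+\norm[x])^{-C}\ll_{k,C}\norm[x]^{-C}$ for a suitable $A(k,C)$; summing over the $n$ indices $k<n$ preserves this.

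For the tail on $\Re s=c+n$, write $s=(c+n)+it$. Stirling's formula gives that $\extnorm{\Gamma(s)^{3}\Gamma(\tfrac{1}{2}-ix-s)\Gamma(2s)^{-1}\Gamma(\tfrac{1}{2}-ix)^{-1}}$ is $\ll_{n}(1+\norm[t])^{c+n-1}(1+\norm[x+t])^{-c-n}$ times an exponential factor which is $\leq1$ (by $\norm[x]-\norm[x+t]\leq\norm[t]$); combined with $\extnorm{h_{1,2}^{*}(s)}\ll_{n}T\Delta^{-2(c+n)}e^{-\norm[t]/T}$ (the case $j=3$ being even smaller) the integrand is $\ll_{n}T\Delta^{-2(c+n)}(1+\norm[t])^{c+n-1}(1+\norm[x+t])^{-c-n}e^{-\norm[t]/T}$. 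On $\norm[t]\leq\norm[x]/2$ one gains $(1+\norm[x+t])^{-c-n}\ll(1+\norm[x])^{-c-n}$ and $\int(1+\norm[t])^{c+n-1}e^{-\norm[t]/T}\,dt\ll_{n}T^{c+n}$; on $\norm[t]>\norm[x]/2$ one has $\norm[t]\asymp\norm[x]$, so $e^{-\norm[t]/T}\leq e^{-\norm[x]/(4T)}e^{-\norm[t]/(2T)}$, which extracts the factor $e^{-\norm[x]/(4T)}$ while the remaining $t$-integral is again $\ll_{n}T^{c+n}$. Either way one is left with $T^{\,c+n+1-2\epsilon(c+n)}$ times $(1+\norm[x])^{-c-n}$, respectively $e^{-\norm[x]/(4T)}$. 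Now fix $n$ with $2\epsilon(c+n)\geq C+1$: then $T^{\,c+n+1-2\epsilon(c+n)}\leq T^{\,c+n-C}$, and $T^{\,c+n-C}(1+\norm[x])^{-c-n}\leq(1+\norm[x])^{-C}$ (using $\norm[x]\geq T$), while $T^{\,c+n-C}e^{-\norm[x]/(4T)}\ll_{C}T^{\,c+n}e^{-\frac{1}{8}\log^{2}T}(1+\norm[x])^{-C}\ll_{n}(1+\norm[x])^{-C}$ (using $(1+\norm[x])^{C}e^{-\norm[x]/(8T)}\ll_{C}T^{C}$ and $\norm[x]\geq T\log^{2}T$). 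Hence the tail is $\ll_{C,\epsilon}\norm[x]^{-C}$; adding the residue contributions proves the claim, the case of bounded $\norm[x]$ being trivial since $\widetilde{h}_{j}(ix)=O(1)$.

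The only genuine difficulty is this $T$-uniform bookkeeping: all of the analytic input — the contour shift, the Stirling estimates, and, crucially, the decay $e^{-\norm[\Im s]/T}$ of $h_j^{*}$ from Lemma \ref{RefAuxTrans} — is already in place, and what remains is to check that this exponential decay, which is precisely what the hypothesis $\norm[x]\geq T\log^{2}T$ makes effective, dominates the polynomial-in-$T$ growth introduced by Stirling's formula and by integration over vertical lines once $n$ is chosen large enough to yield the required power $\norm[x]^{-C}$.
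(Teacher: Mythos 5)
Your proof is correct and follows essentially the same route as the paper's: apply the contour-shift identity (\ref{ContShift}), invoke the bounds of Lemma \ref{RefAuxTrans} with Stirling's estimates, and let the factor $e^{-\norm[\Im s]/T}$ — made effective by the hypothesis $\norm[x]\geq T\log^2 T$ — dominate the polynomial-in-$T$ growth after choosing $n$ large in terms of $C$ and $\epsilon$. The only cosmetic difference is in how the tail integral is partitioned (you split at $\norm[t]=\norm[x]/2$, the paper at quartile points of $[-x,0]$), which leads to the same conclusion.
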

\begin{proof}
	We assume $\norm[x] \geq T$ in the sequel. We only write details for $h_1,h_2$, since the case for $h_3$ is quite similar. We apply (\ref{ContShift}) to $h=h_j$ and bound the discrete sum in absolute value by Lemma \ref{RefAuxTrans} as
	$$ \sum_{k=0}^{n-1} \extnorm{ h_j^*\left( \frac{1}{2}-ix+k \right) \frac{(-1)^k \Gamma \left( \frac{1}{2}-ix+k \right)^3}{k! \Gamma \left( 1-2ix+2k \right) \Gamma \left( \frac{1}{2}-ix \right)} } \ll_n T e^{-\frac{\norm[x]}{T}} \left( \frac{1+\norm[x]}{\Delta} \right)^{n-\frac{3}{2}}. $$
	For any constant $C > 0$, we write $y = \norm[x]/T$ and find that if $y \geq \log^2T$
\begin{align*} 
	(1+\norm[x])^C \cdot T e^{-\frac{\norm[x]}{T}} \left( \frac{1+\norm[x]}{\Delta} \right)^{n-\frac{3}{2}} &\ll T^{C+n-\frac{1}{2}} e^{-y} (1+y)^{C+n-\frac{3}{2}} \\
	&\begin{cases} \leq e^{-\frac{y}{2}} (1+y)^{C+n-\frac{3}{2}} \ll_{n,C} 1 & \text{if } \log T \geq 2(C+n-1/2) \\ \ll_{n,C} 1 & \text{otherwise} \end{cases}.
\end{align*}
	Hence these terms are bounded by $(1+\norm[x])^{-C}$ as required. Writing $m=c+n$, we bound the remaining integral using Lemma \ref{RefAuxTrans} and Stirling's bound as
\begin{multline*}
	\int_{(c+n)} h_j^*(s) \frac{\Gamma(s)^3 \Gamma \left( \frac{1}{2}-ix-s \right)}{\Gamma(2s) \Gamma \left( \frac{1}{2}-ix \right)} \frac{\ud s}{2\pi i} \\
	\ll \Delta^{-2m} T \int_{-\infty}^{\infty} e^{-\frac{\norm[\tau]}{T}} e^{-\frac{\pi}{2}(\norm[\tau]+\norm[x+\tau]-\norm[x])} (1+\norm[\tau])^{m-1} (1+\norm[x+\tau])^{-m} \ud \tau.
\end{multline*}
	The major contribution of the common integral above comes from $\norm[\tau+x/2] \leq \norm[x]/2$, for which we have
\begin{align*}
	&\quad \int_{\norm[\tau+\frac{x}{2}] \leq \norm[\frac{x}{2}]} e^{-\frac{\norm[\tau]}{T}} e^{-\frac{\pi}{2}(\norm[\tau]+\norm[x+\tau]-\norm[x])} (1+\norm[\tau])^{m-1} (1+\norm[x+\tau])^{-m} \ud \tau \\
	&\ll_m (1+\norm[x])^{-m} \int_{\norm[\tau+\frac{x}{4}] \leq \norm[\frac{x}{4}]} e^{-\frac{\norm[\tau]}{T}} (1+\norm[\tau])^{m-1} \ud \tau + e^{-\frac{\norm[x]}{2T}} (1+\norm[x])^{m-1} \int_{\norm[\tau+\frac{3x}{4}] \leq \norm[\frac{x}{4}]} (1+\norm[x+\tau])^{-m} \ud \tau \\
	&\ll (1+\norm[x])^{-m} T^m + e^{-\frac{\norm[x]}{2T}} (1+\norm[x])^{m-1}.
\end{align*}
	Arguing similarly as for the discrete sum above, we find
	$$ \Delta^{-2m} T e^{-\frac{\norm[x]}{2T}} (1+\norm[x])^{m-1} \ll_{n,C} (1+\norm[x])^{-C}, \quad \text{if } \norm[x] \geq T\log^2T. $$
	Assuming $n$ large so that $m > C$ and writing again $y = \norm[x]/T \geq 1$, we find
	$$ \Delta^{-2m} T (1+\norm[x])^{C-m} T^m \leq \Delta^{-2m} T^{C+1} y^{C-m} \leq \Delta^{-2m} T^{C+1}. $$
	We take $n$ so large that $\Delta^{-2m} T^{C+1} \leq 1$, and conclude the proof.
\end{proof}

\begin{proof}[Proof of Theorem \ref{DualWtBd} (1)]
	Applying Proposition \ref{BasisTransProp} (1) to the weight function \eqref{ConcWtR} \& \eqref{eq: ConcWtRBis} we get
	$$ \widetilde{h}(ix,\varepsilon') \ll \sideset{}{_{\varepsilon}} \sum \int_{-\infty}^{\infty} (1+\norm[\tau])^{-1} \extnorm{h(i\tau, \varepsilon)} \norm[\tau \tanh(\pi \tau)] \ud \tau \ll 1. $$
	The bound for $\norm[x] \geq T \log^2 T$ is given by Proposition \ref{DualWtRD}.
\end{proof}

\section{Local Analysis at non-Archimedean Places}

	We turn to the proof of Theorem \ref{DualWtBd} (2) in this section, while providing auxiliary results needed for the estimation of the degenerate terms. The proof is formally summarized in the end of \S 4.2.

	\subsection{Choice of Test Function}
	\label{sec: TFNA}
	
	Recall that a unitary character $\chi_0$ of $\F^{\times}$ is fixed as parameter. Write $\cond(\chi_0)=n \geq 1$, $t := \varpi^{-n}$ and $q=\norm[\varpi]^{-1}$. Take
\begin{equation} \label{TestPhiNA}
	\Psi = \rpL_{n(t)} \rpR_{n(t)} \phi_0, \quad \phi_0 \begin{pmatrix} x_1 & x_2 \\ x_3 & x_4 \end{pmatrix} = \chi_0 \left( \frac{x_4}{x_1} \right) \id_{\gp{K}_0[\vp^n]} \begin{pmatrix} x_1 & x_2 \\ x_3 & x_4 \end{pmatrix}.
\end{equation}

\noindent Since $\phi_0$ is a character on its support (group) $\gp{K}_0[\vp^n]$, we have $\phi_0 * \phi_0 = \Vol(\gp{K}_0[\vp^n]) \phi_0$, where the convolution is taken over $\GL_2(\F)$. Consequently, we have
	$$ \Psi = \Vol(\gp{K}_0[\vp^n])^{-1} \left( \rpR_{n(t)} \phi_0^{\vee} \right)^{\vee} * \left( \rpR_{n(t)} \phi_0 \right). $$
	The proof of Lemma \ref{M3VSNormLoc} applies, giving for any irreducible admissible representation $\pi$ of $\GL_2(\F)$
	$$ h(\pi)(\Psi) = \Vol(\gp{K}_0[\vp^n])^{-1} \Pairing{v(\rpR_{n(t)} \phi_0 \mid \pi)}{v(\rpR_{n(t)}\phi_0^{\vee} \mid \pi^{\vee})}, $$
	where for any $\phi \in \Cont_c^{\infty}(\GL_2(\F))$ we have the following analogue of \eqref{eq: LocMotVec}
	$$ v(\phi \mid \pi) = \sideset{}{_{e \in \Bas(\pi)}} \sum \Zeta \left( \tfrac{1}{2}, W_{e^{\vee}} \right) \cdot \pi(\phi).e. $$
	
\begin{lemma} \label{CubeWtLocBdNA}
	Suppose $\pi \in \widehat{\PGL_2(\F)}$ is $\theta$-tempered. We have $\pi(\phi_0) \neq 0$ resp. $\pi^{\vee}(\phi_0^{\vee}) \neq 0$ only if $\cond(\pi \otimes \chi_0^{-1}) \leq n$. Under this condition, we have the lower bounds
	$$ \Pairing{v(\rpR_{n(t)} \phi_0 \mid \pi)}{v(\rpR_{n(t)}\phi_0^{\vee} \mid \pi^{\vee})} \gg_{\theta} q^{-3n}, \quad h(\pi)(\Psi) \gg_{\theta} q^{-2n}. $$
\end{lemma}
\begin{proof}
	$\pi(\phi_0) \neq 0$ only if $V_{\pi}$ contains a vector $e$ satisfying
	$$ \pi \begin{pmatrix} a & b \\ c & d \end{pmatrix}.e = \chi_0 \left( \frac{a}{d} \right) e, \quad \forall \begin{pmatrix} a & b \\ c & d \end{pmatrix} \in \gp{K}_0[\vp^n]. $$
	This is equivalent to
\begin{equation} \label{NewFormCond}
	(\pi \otimes \chi_0^{-1}) \begin{pmatrix} a & b \\ c & d \end{pmatrix}.(e \otimes \chi_0^{-1}) = \chi_0^{-2} (d) (e \otimes \chi_0^{-1}), \quad \forall \begin{pmatrix} a & b \\ c & d \end{pmatrix} \in \gp{K}_0[\vp^n]. 
\end{equation}
	The last condition is obviously equivalent to $\cond(\pi \otimes \chi_0^{-1}) \leq n$ by definition since the central character of $\pi \otimes \chi_0^{-1}$ is $\chi_0^{-2}$. The case for $\pi^{\vee}$ is similar. Moreover, the space of vectors satisfying (\ref{NewFormCond}) is a $\gp{K}_0[\vp^n]$-isotypic subspace. We can find an orthogonal basis $\Bas_0$, containing a new vector $e_0$ of $\pi \otimes \chi_0^{-1}$, of this space and extend it to an orthogonal basis of $\pi$. We obtain
	$$ v(\rpR_{n(t)}\phi_0 \mid \pi) = \Vol(\gp{K}_0[\vp^n]) \sideset{}{_{e \in \Bas_0}} \sum \Zeta \left( \tfrac{1}{2}, \chi_0^{-1}, n(t).W_{e^{\vee}} \right) \cdot n(t).(e \otimes \chi_0). $$
	Similarly, we have
	$$ v(\rpR_{n(t)}\phi_0^{\vee} \mid \pi^{\vee}) = \Vol(\gp{K}_0[\vp^n]) \sideset{}{_{e \in \Bas_0}} \sum \Zeta \left( \tfrac{1}{2}, \chi_0, n(t).W_{e} \right) \cdot n(t).(e^{\vee} \otimes \chi_0^{-1}). $$
	Consequently, we get
\begin{align*}
	\Pairing{v(\rpR_{n(t)} \phi_0 \mid \pi)}{v(\rpR_{n(t)}\phi_0^{\vee} \mid \pi^{\vee})} &= \Vol(\gp{K}_0[\vp^n])^2 \cdot \sideset{}{_{e \in \Bas_0}} \sum \Zeta \left( \tfrac{1}{2}, \chi_0^{-1}, n(t).W_{e^{\vee}} \right) \cdot \Zeta \left( \tfrac{1}{2}, \chi_0, n(t).W_{e} \right) \\
	&\geq \Vol(\gp{K}_0[\vp^n])^2 \cdot \Zeta \left( \tfrac{1}{2}, \chi_0^{-1}, n(t).W_0^{\vee} \right) \cdot \Zeta \left( \tfrac{1}{2}, \chi_0, n(t).W_0 \right),
\end{align*}
	where we have written $W_0 = W_{e_0}$, and have used the positivity of each summand. For an unramified character $\chi$, define $f_{\chi}(x)=\chi(x) \id_{\vo}(x)$. By the new vector theory, we can take
	$$ W_0(x) = \left\{ \begin{matrix} f_{\chi_1}(x) \norm[x]_{\F}^{\frac{1}{2}} & \text{if } \pi \simeq \pi(\chi,\chi^{-1}) \text{ with } \chi_1:=\chi\chi_0^{-1} \text{ or } \chi_1:=\chi^{-1}\chi_0^{-1} \text{ unramified} \\ (f_{\chi_1}*f_{\chi_2})(x) \norm[x]_{\F}^{\frac{1}{2}} & \text{if } \pi \simeq \pi(\chi,\chi^{-1}) \text{ with } \chi_1:=\chi\chi_0^{-1} \text{ and } \chi_2:=\chi^{-1}\chi_0^{-1} \text{ unramified} \\ \id_{\vo^{\times}}(x) & \text{otherwise} \end{matrix} \right., $$
	where the convolution is taken for $(\F^{\times},\ud^{\times} x)$. It follows that (see \cite[Proposition 4.6]{Wu14} for example)
	$$ \extnorm{\Zeta \left( \tfrac{1}{2}, \chi_0, n(t).W_0 \right)} = q^{-\frac{n}{2}} \zeta_{\vp}(1), \quad \Norm[W_0]^2 \ll_{\theta} 1. $$
	The desired bound follows readily.
\end{proof}

	\subsection{Dual Weight Estimation}
	
		\subsubsection{Preliminary Reductions}
	
	Recall the formula \eqref{eq: LocDWtDisDef} of $\widetilde{h}(\chi)(\Psi)$. A simple change of variables gives
\begin{multline*}
	\frac{\widetilde{h}(\chi)(\Psi)}{\zeta_{\vp}(1)^4} = \int_{\F^4} \phi_0 \begin{pmatrix} x_1 & x_2 \\ x_3 & x_4 \end{pmatrix} \chi \left( \frac{(x_1 + tx_3)(x_4 - tx_3)}{(x_2 - t(x_1 - x_4) - t^2 x_3)x_3} \right) \cdot \\
	\frac{\prod \ud x_i}{\norm[(x_1 + tx_3)(x_4 - tx_3)(x_2 - t(x_1 - x_4) - t^2 x_3)x_3]_{\F}^{\frac{1}{2}}}.
\end{multline*}

\noindent Under the change of variables 
	$$ x_3 \mapsto x_3(1+u), \quad x_1 \mapsto x_1-tux_3, \quad x_2 \mapsto x_2-t^2ux_3, \quad x_4 \mapsto x_4+tux_3, $$
	for an arbitrary $u \in \vp^n$, we have $ \widetilde{h}(\chi)(\Psi) = \chi(1+u)^{-1} \widetilde{h}(\chi)(\Psi) $. Hence $\widetilde{h}(\chi)(\Psi) \neq 0$ only if $m:=\cond(\chi) \leq n$, which we assume from now on.

	We notice that $\phi_0$ is given in the coordinates $(z,u,x,y) \in (\F^{\times})^2 \times \F^2$ of an open dense subset of $\Mat_2(\F)$ (actually of $\GL_2(\F) - w\gp{B}(\F)$) by a product of functions in each variable as
	$$ \phi_0 \left( \begin{pmatrix} z & \\ & z \end{pmatrix} \begin{pmatrix} 1 & \\ x & 1 \end{pmatrix} \begin{pmatrix} 1 & y \\ & 1 \end{pmatrix} \begin{pmatrix} u & \\ & 1 \end{pmatrix} \right) = \mathbbm{1}_{(\vo^{\times})^2}(z,u) \mathbbm{1}_{\vo^2}(\varpi^{-n}x,y) \chi_0(u)^{-1}. $$
	Making the change of variables
	$$ \begin{pmatrix} x_1 & x_2 \\ x_3 & x_4 \end{pmatrix} = \begin{pmatrix} z & \\ & z \end{pmatrix} \begin{pmatrix} 1 & \\ x & 1 \end{pmatrix} \begin{pmatrix} 1 & y \\ & 1 \end{pmatrix} \begin{pmatrix} u & \\ & 1 \end{pmatrix} = \begin{pmatrix} zu & zy \\ zux & z(1+xy) \end{pmatrix}, $$
	whose Jacobian is equal to $\norm[z^3u]$, we get
\begin{multline*}
	\frac{\widetilde{h}(\chi)(\Psi)}{\zeta_{\vp}(1)^4} = q^{-n} \int_{(\vo^{\times})^2} \int_{\vo^2} \chi \left( \frac{(1+x)(1-x(u-\varpi^ny))}{x(1-(1+x)(u-\varpi^ny))} \right) \chi_0(u)^{-1} \cdot \\
	\frac{\norm[z] \ud z \ud u \ud x \ud y}{\extnorm{x(1+x)(1-x(u-\varpi^ny))(1-(1+x)(u-\varpi^ny))}_{\F}^{\frac{1}{2}}} \\
	= \frac{q^{-n}}{\zeta_{\vp}(1)} \int_{\vo^{\times}} \int_{\vo} \chi \left( \frac{(1+x)(1-xu)}{x(1-(1+x)u)} \right) \chi_0(u)^{-1} \cdot \frac{\ud u \ud x}{\extnorm{x(1+x)(1-xu)(1-(1+x)u)}_{\F}^{\frac{1}{2}}},
\end{multline*}
	where we have used $\chi_0(u)=\chi_0(u-\varpi^n y)$ for $u \in \vo^{\times}, y \in \vo$ in the last line. The consecutive changes of variables $u \mapsto u^{-1}$ and $u \mapsto 1+x+y$ then gives
\begin{align*}
	\frac{\widetilde{h}(\chi)(\Psi)}{\zeta_{\vp}(1)^4} &= \frac{q^{-n}}{\zeta_{\vp}(1)} \int_{\vo^{\times}} \int_{\vo} \chi \left( \frac{(1+x)(u-x)}{x(u-1-x)} \right) \chi_0(u) \cdot \frac{\ud u \ud x}{\extnorm{x(1+x)(u-x)(u-1-x)}_{\F}^{\frac{1}{2}}} \\
	&= \frac{q^{-n}}{\zeta_{\vp}(1)} \int_{\vo^2} \mathbbm{1}_{\vo^{\times}}(1+x+y) \chi_0(1+x+y) \chi \left( \frac{(1+x)(1+y)}{xy} \right) \frac{\ud x \ud y}{\extnorm{xy(1+x)(1+y)}_{\F}^{\frac{1}{2}}}.
\end{align*}
	Introducing the function (note the symmetry $f(x,y) = f(y,x)$)
	$$ f(x,y) = f(x,y \mid \chi) := \mathbbm{1}_{\vo^{\times}}(1+x+y)\chi_0(1+x+y) \chi \left( \frac{(1 + x)(1 + y)}{xy} \right) \frac{1}{ \norm[(1 + x)(1+y)xy]_{\F}^{\frac{1}{2}} }, $$
	we rewrite the above equation as
\begin{equation} \label{Decomp}
	\frac{\widetilde{h}(\chi)(\Psi)}{\zeta_{\vp}(1)^3} = q^{-n} \int_{\vo^2} f(x,y) \ud x \ud y.
\end{equation}

\noindent Note that if $x y \notin \vo^{\times}$ (hence $\in \vp$) then $1+x+y \in \vo^{\times}$ implies $1+x, 1+y \in \vo^{\times}$. Hence
	$$ \int_{\vo^2 - (\vo^{\times})^2} f(x,y) \ud x\ud y = \int_{\vo^2 - (\vo^{\times})^2} \id_{\vo^{\times}}(1+x+y) \chi_0(1+x+y) \chi \left( \frac{(1 + x)(1 + y)}{xy} \right) \frac{\ud x\ud y}{\norm[xy]_{\F}^{\frac{1}{2}}}. $$
	On the other hand, we have (by the change of variables $x \mapsto x-1, y \mapsto y-1$ in the second integral below)
\begin{multline*}
	\int_{(\vo^{\times})^2} f(x,y) \ud x \ud y = \\
	\int_{(\vo^{\times})^2} \mathbbm{1}_{\vo^{\times}}(1+x+y) \mathbbm{1}_{\vo^{\times}}(1+x) \mathbbm{1}_{\vo^{\times}}(1+y) \chi_0(1+x+y) \chi \left( \frac{(1 + x)(1 + y)}{xy} \right) \ud x\ud y \\
	+ \int_{\vo^2 - (\vo^{\times})^2} \mathbbm{1}_{\vo^{\times}}(-1+x+y) \chi_0(-1+x+y) \chi \left( \frac{xy}{(-1 + x)(-1 + y)} \right) \frac{\ud x \ud y}{\norm[xy]_{\F}^{\frac{1}{2}}} \\
	= \int_{(\vo^{\times})^2} \mathbbm{1}_{\vo^{\times}}(1+x+y) \mathbbm{1}_{\vo^{\times}}(1+x) \mathbbm{1}_{\vo^{\times}}(1+y) \chi_0(1+x+y) \chi \left( \frac{(1 + x)(1 + y)}{xy} \right) \ud x\ud y \\
	+ \int_{\vo^2 - (\vo^{\times})^2} \mathbbm{1}_{\vo^{\times}}(1+x+y) \chi_0(1+x+y) \chi^{-1} \left( \frac{(1 + x)(1 + y)}{xy} \right) \frac{\ud x \ud y}{\norm[xy]_{\F}^{\frac{1}{2}}}.
\end{multline*}
	Let
	$$ g(x,y) = g(x,y \mid \chi) :=  \mathbbm{1}_{\vo^{\times}}(1+x) \mathbbm{1}_{\vo^{\times}}(1+y) f(x,y \mid \chi), $$
	and introduce for integers $k,\ell \geq 0$
\begin{multline*} 
	\widetilde{h}^{k,\ell}(\chi_0 \mid \chi) := \int_{\varpi^k \vo^{\times} \times \varpi^{\ell} \vo^{\times}} g(x,y \mid \chi) \ud x\ud y \\
	= \frac{q^{-\frac{k+\ell}{2}}}{\chi(\varpi)^{k+\ell}} \int_{(\vo^{\times})^2} \mathbbm{1}_{\vo^{\times}}(1+\varpi^k u + \varpi^{\ell} v) \mathbbm{1}_{\vo^{\times}}(1+\varpi^k u) \mathbbm{1}_{\vo^{\times}}(1+\varpi^{\ell} v) \cdot \\
	\chi_0(1+\varpi^ku +\varpi^{\ell}v) \chi \left( \frac{(1+\varpi^k u) (1+\varpi^{\ell}v)}{uv} \right) \ud u \ud v.
\end{multline*}
	We can rewrite \eqref{Decomp} as
\begin{equation} \label{DecompBis}
	\frac{\widetilde{h}(\chi)(\Psi)}{\zeta_{\vp}(1)^3} = q^{-n} \cdot \left\{ \widetilde{h}^{0,0}(\chi_0 \mid \chi) + \sideset{}{_{\substack{k, \ell \geq 0 \\ (k,\ell) \neq (0,0)}}} \sum \left( \widetilde{h}^{k,\ell}(\chi_0 \mid \chi) + \widetilde{h}^{k,\ell}(\chi_0 \mid \chi^{-1}) \right) \right\}.
\end{equation}
	We are thus led to bounding $\widetilde{h}^{k,\ell}(\chi_0 \mid \chi^{\pm 1})$.
	
		\subsubsection{Case $1 \leq m = \cond(\chi) \leq n-1$}
	
	We claim that only the terms for $k=\ell=n-m$ is non-vanishing. In fact, if $\ell > n-m$, then for any $y \in \varpi^{\ell}\vo^{\times}, \delta \in (1+\vp^{m-1}) \cap \vo^{\times}$ and $1+x+y \in \vo^{\times}$ we have
	$$ 1+y\delta \in (1+y)(1+\vp^m), \quad 1+x+y\delta \in (1+x+y)(1+\vp^n). $$
	Hence $g(x,y\delta \mid \chi^{\pm 1}) = \chi^{\mp 1}(\delta)g(x,y \mid \chi^{\pm 1})$. Thus $\widetilde{h}^{k,\ell}(\chi_0 \mid \chi^{\pm 1}) = 0$. While if $\ell < n-m$, then for any $y \in \varpi^{\ell}\vo^{\times}, \delta \in \vp^m$ and $1+x+y, 1+x, 1+y \in \vo^{\times}$ we have
	$$ 1+y(1+\delta) \in (1+y)(1+\vp^m), \quad 1+x+y(1+\delta) \in (1+x+y)(1+\vp^{m+\ell}). $$
	Hence $g(x,y(1+\delta)) = \chi_0(1+y\delta(1+x+y)^{-1})g(x,y)$, implying
\begin{multline*}
	\int_{\varpi^k \vo^{\times} \times \varpi^{\ell} \vo^{\times}} g(x,y) \ud x\ud y = \frac{1}{\Vol(\vp^{m})} \int_{\vp^{m}} \int_{\varpi^k \vo^{\times} \times \varpi^{\ell} \vo^{\times}} g(x,y(1+\delta)) \ud x\ud yd\delta \\
	= \frac{1}{\Vol(\vp^{m})} \int_{\varpi^k \vo^{\times} \times \varpi^{\ell} \vo^{\times}} g(x,y) \left( \int_{\vp^{m}} \chi_0 \left( 1 + \frac{y\delta}{1+x+y} \right) d\delta \right) \ud x\ud y = 0,
\end{multline*}
	since as $\delta$ traverses $\vp^{m}$, $1+y\delta(1+x+y)^{-1}$ traverses $1+\vp^{m+\ell} \subseteq 1+\vp^{n-1}$ on which $\chi_0$ is non-trivial. The claim is proved.
	
\noindent A change of variables
	$$ x = \varpi^{n-m}u, \quad y = \varpi^{n-m}v, \qquad \text{for } u,v \in \vo^{\times} $$
	transforms the term(s) for $k=\ell=n-m \geq 1$ as
\begin{multline*}
	\widetilde{h}^{n-m,n-m}(\chi_0 \mid \chi) \\
	= \frac{q^{-(n-m)}}{\chi(\varpi)^{2(n-m)}} \int_{(\vo^{\times})^2} \chi_0(1+\varpi^{n-m}u+\varpi^{n-m}v) \chi \left( \frac{(1+\varpi^{n-m}u)(1+\varpi^{n-m}v)}{uv} \right) \ud u\ud v.
\end{multline*}

\begin{lemma} \label{ExpBd1}
	Recall $\cond(\chi_0)=n, \cond(\chi)=m$ and assume $1 \leq m \leq n-1$. Then we have
	$$ \widetilde{h}^{n-m,n-m}(\chi_0 \mid \chi^{\pm 1}) \ll q^{-n}, \quad \widetilde{h}(\chi)(\Psi) \ll q^{-2n}. $$
	where the implied constant either is absolute if $2 \in \vo^{\times}$, or depends only on $[\F:\Q_2]$ if $2 \in \vp$.
\end{lemma}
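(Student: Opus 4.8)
## Proof proposal for Lemma 4.6 (the bound $|M_4^{n-m,n-m}(\chi_0\mid\chi^{\pm 1})|\ll q^{-n}$)

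The plan is to reduce the two-dimensional integral
$$
M_4^{n-m,n-m}(\chi_0\mid\chi)=\frac{q^{-(n-m)}}{\chi(\varpi)^{2(n-m)}}\int_{(\vo^\times)^2}\chi_0(1+\varpi^{n-m}u+\varpi^{n-m}v)\,\chi\!\left(\frac{(1+\varpi^{n-m}u)(1+\varpi^{n-m}v)}{uv}\right)du\,dv
$$
to a one-dimensional algebraic exponential sum and then estimate that sum. Since $|\chi(\varpi)|=1$, the prefactor contributes exactly $q^{-(n-m)}$ in absolute value, so I need the remaining integral to be $\ll q^{-m}$. First I would write $\chi_0=\chi_{0,1}\cdot\psi_\vp(c_0\,\cdot\,)$-type additive expansions: more precisely, expand $\chi_0(1+w)$ for $w\in\vp^{n-m}$ using the standard fact that on $1+\vp^{\lceil n/2\rceil}$ a character of conductor $n$ agrees with an additive character $z\mapsto\psi_\vp(\alpha z)$ for some $\alpha$ with $v_\vp(\alpha)=-n$ (the "stationary-phase" / Gauss-sum linearization of ramified characters), and similarly linearize $\chi$ restricted to $1+\vp^{n-m}$ (which sits inside $1+\vp^m$ since $n-m\ge m$ is NOT always true — so I must split into the ranges $n-m\ge m$ and $n-m<m$, i.e. $m\le n/2$ versus $m>n/2$, and handle the two-variable vs. one-variable oscillation accordingly). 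After linearization the integrand becomes $\psi_\vp$ of a rational function in $u,v$, and the $u,v$-integrals over $\vo^\times$ collapse to congruence conditions plus a genuine exponential sum modulo a bounded power of $\vp$.

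The key steps, in order: (1) separate the regimes $m\le\lfloor n/2\rfloor$ and $\lceil n/2\rceil\le m\le n-1$, since the size of the "new" oscillation relative to $n-m$ differs; (2) in each regime, linearize $\chi_0$ and $\chi$ on the relevant congruence subgroup, introducing dual parameters $a$ (for $\chi$, $v_\vp(a)=-m$) and $b$ (for $\chi_0$, $v_\vp(b)=-n$), and reduce the integral to a finite sum of the shape $q^{-m}\cdot(\text{normalized exponential sum } S)$ times a Gauss-sum-type factor of modulus $q^{m/2}$ or $q^{n/2}$; (3) identify $S$ as (a twist of) a one-variable Kloosterman- or Salié-type sum — the phase will be of the form $b(u+v)+a(u^{-1}+v^{-1})$ after suitable changes of variables, which factors over the two variables, so $S$ is a product of two one-variable sums each bounded by Weil/Salié square-root cancellation $\ll q^{1/2}$ — and (4) multiply the pieces together to get the claimed $q^{-n}$. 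The factor-of-$2$ issue (whether $2\in\vo^\times$) enters precisely at step (3): when the resulting one-variable sum is a Salié sum, its evaluation and the square-root bound involve $\sqrt{\text{disc}}$ and quadratic Gauss sums, which behave differently, and require tracking $[\F:\Q_2]$, when $2\in\vp$; in the tame case $2\in\vo^\times$ everything is uniform.

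I expect the main obstacle to be step (2)–(3) in the regime $m>n/2$, where $n-m<m$ and the twist $\chi$ on $1+\vp^{n-m}$ does \emph{not} linearize to a single additive character (its conductor exceeds the depth $n-m$), so one cannot directly reduce to a bounded-modulus exponential sum by naive linearization. The fix I would use is the one already signalled in the introduction: these sums are "very similar but not identical" to the Conrey–Iwaniec/Petrow–Young sums, and the relevant analysis — iterating the linearization, extracting the self-dual / stationary term, and invoking the resolution of the degenerate cases — is carried out in \cite{PY19_CF}; so I would match our $M_4^{k,\ell}$ to their normalized exponential sums by an explicit change of variables (essentially $u\mapsto 1+\varpi^{n-m}u$, then Möbius-type substitutions that turn $(1+\varpi^{n-m}u)/u$ into a Möbius image of $u$), cite their square-root bound, and only verify by hand the mild discrepancies (the presence of the extra factor $(1+x+y)$ inside $\chi_0$, and the precise modulus). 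The remaining work — the tame regime and the archimedean-style bookkeeping of constants — is then routine, and the $2$-adic case is handled by the same device with the dependence on $[\F:\Q_2]$ recorded explicitly, exactly as in \cite{PY19_CF}.
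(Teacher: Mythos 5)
Your proposal takes a genuinely different route from the paper, and there is a real gap in it. The paper does \emph{not} split by $m \le n/2$ versus $m > n/2$; it splits by the \emph{parity} of $m$ ($m=2\alpha$, $m=2\alpha+1\ge 3$, and $m=1$), and in each case linearizes $\chi_0$ on $U_{n-\alpha}$ (resp.\ $U_{n-\alpha-1}$) and $\chi$ on $U_\alpha$ via the $\log/\exp$ isomorphism of Lemma~\ref{GpsId}. Crucially, the resulting inner sum over the ``upper half'' coordinates $(u_1,v_1)$ is a \emph{pure} additive character sum that vanishes unless $A,B\in\vp^\alpha$; these two conditions have a \emph{unique} solution $(u_0,v_0)\equiv(\delta,\delta)$ modulo $\vp^\alpha$ by Hensel's lemma, so the total sum is exactly $q^{2\alpha}|h(\delta,\delta)|=q^m$ in the even case. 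No square-root cancellation, no Kloosterman or Sali\'e input, and no appeal to \cite{PY19_CF} or Weil's bound is needed here. (The odd case $m=2\alpha+1$ carries a quadratic correction to the linearization and ends with a one-variable Gauss sum of modulus $q^{\alpha_0}$, which is where the constant depending on $[\F:\Q_2]$ enters; the case $m=1$ is a product of two ordinary Gauss sums.) Your proposed identification of the reduced sum as a product of one-variable sums with phase $b(u+v)+a(u^{-1}+v^{-1})$ and invocation of Weil/Sali\'e cancellation does not match the actual structure: the $\chi_0(1+\varpi^{n-m}(u+v))$ factor does not split over $u$ and $v$ unless $\chi_0$ linearizes on $1+\vp^{n-m}$, which fails precisely when $m>\lfloor n/2\rfloor$ --- the very regime you flag as an obstacle. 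The paper avoids this obstacle entirely by never trying to linearize on $1+\vp^{n-m}$; it always linearizes at depth $\lfloor m/2\rfloor$, which is admissible for all $1\le m\le n-1$.

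The remark after Lemma~\ref{ExpBd1} does note that the resulting sums agree with \cite[Lemmas 2.8 \& 2.11]{PY19_All} when $\F=\Q_p$, so your instinct to ``match'' to the literature is not unreasonable --- but for this lemma the paper chooses to give a direct, self-contained, and fully elementary stationary-phase argument, and that argument relies on a single non-degenerate critical point rather than on cancellation in algebraic exponential sums. The deeper input from \cite{PY19_CF} (square-root cancellation for a two-variable character sum on a curve) is invoked only in Lemma~\ref{ExpBd2}, the case $m=n$, $m=1$, which is not part of the statement you were asked to prove.
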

\begin{proof}
	By the following transformation
\begin{align*}
	\widetilde{h}^{n-m,n-m}(\chi_0 \mid \chi) &= \frac{q^{-(n-m)}}{\chi(\varpi)^{2(n-m)}} \int_{(\vo^{\times})^2} \chi_0 \left( \frac{uv-\varpi^{2(n-m)}}{(u-\varpi^{n-m})(v-\varpi^{n-m})} \right) \chi(uv) \ud u\ud v \\
	&= \frac{q^{-(n-m)}}{\chi(\varpi)^{2(n-m)}} \int_{(\vo^{\times})^2} \chi_0 \left( \frac{1-\varpi^{2(n-m)}uv}{(1-\varpi^{n-m}u)(1-\varpi^{n-m}v)} \right) \chi^{-1}(uv) \ud u\ud v,
\end{align*}
	the first bound corresponds precisely to \cite[Lemma 2.8 \& 2.11]{PY19_All} in the cases $\F = \Q_p$. The proof works also in the general case. We leave the details to the reader. The second follows from the first and \eqref{DecompBis}.
\end{proof}

		\subsubsection{Case $m=\cond(\chi)=0$}
		\label{DGAuxNA}
	
	In this case, we easily get
\begin{align*}
	\widetilde{h}^{k,\ell}(\chi_0 \mid \chi^{\pm 1}) &= \chi(\varpi)^{\mp (k+\ell)} q^{-\frac{k+\ell}{2}} \cdot \\
	&\quad \int_{(\vo^{\times})^2} \mathbbm{1}_{\vo^{\times}}(1+\varpi^ku+\varpi^{\ell}v) \mathbbm{1}_{\vo^{\times}}(1+\varpi^ku) \mathbbm{1}_{\vo^{\times}}(1+\varpi^{\ell}v) \chi_0(1+\varpi^ku+\varpi^{\ell}v) \ud u\ud v.
\end{align*}
	We distinguish cases according to the values of $\min(k,\ell)$ and $\max(k,\ell)$.

\noindent (1) $\min(k,\ell) < n-1$. Say $k < n-1$. For any $\delta \in \vp^{n-1-k} \subseteq \vp$, we have
	$$ \mathbbm{1}_{\vo^{\times}}(1+\varpi^ku+\varpi^{\ell}v) = \mathbbm{1}_{\vo^{\times}}(1+\varpi^ku(1+\delta)+\varpi^{\ell}v), \quad \mathbbm{1}_{\vo^{\times}}(1+\varpi^ku) = \mathbbm{1}_{\vo^{\times}}(1+\varpi^ku(1+\delta)). $$
	It follows that
\begin{multline*}
	\int_{\vo^{\times}} \mathbbm{1}_{\vo^{\times}}(1+\varpi^ku+\varpi^{\ell}v) \mathbbm{1}_{\vo^{\times}}(1+\varpi^ku) \chi_0(1+\varpi^ku+\varpi^{\ell}v) \ud u \\
	= \frac{1}{\Vol(\vp^{n-1-k})} \int_{\vp^{n-1-k}} \int_{\vo^{\times}} \mathbbm{1}_{\vo^{\times}}(1+\varpi^ku+\varpi^{\ell}v) \mathbbm{1}_{\vo^{\times}}(1+\varpi^ku) \chi_0(1+\varpi^ku(1+\delta)+\varpi^{\ell}v) \ud u d\delta \\
	= \frac{1}{\Vol(\vp^{n-1-k})} \int_{\vo^{\times}} \mathbbm{1}_{\vo^{\times}}(1+\varpi^ku+\varpi^{\ell}v) \mathbbm{1}_{\vo^{\times}}(1+\varpi^ku) \chi_0(1+\varpi^ku +\varpi^{\ell}v) \cdot \\
	\int_{\vp^{n-1-k}} \chi_0 \left( 1 + \frac{\varpi^ku\delta}{1+\varpi^ku +\varpi^{\ell}v} \right) d\delta \ud u = 0,
\end{multline*}
	since as $\delta$ traverses $\vp^{n-1-k}$, $1+\varpi^ku\delta(1+\varpi^ku +\varpi^{\ell}v)^{-1}$ traverses $1+\vp^{n-1}$, on which $\chi_0$ is non trivial. Hence $\widetilde{h}^{k,\ell}(\chi_0 \mid \chi^{\pm 1}) = 0$.
	
\noindent (2) $\min(k,\ell) \geq n$. The integrand is equal to $1$ and we simply have
	$$ \widetilde{h}^{k,\ell}(\chi_0 \mid \chi^{\pm 1}) = \chi(\varpi)^{\mp (k+\ell)} q^{-\frac{k+\ell}{2}} \cdot \zeta_{\vp}(1)^{-2}. $$
	Consequently, we get
	$$ \sideset{}{_{\substack{k,\ell \geq 0 \\ \min(k,\ell) \geq n}}} \sum \extnorm{\widetilde{h}^{k,\ell}(\chi_0 \mid \chi^{\pm 1})} \ll q^{-n}. $$
	
\noindent (3) $\min(k,\ell) = n-1, \max(k,\ell) \geq n$. Say $k=n-1, \ell \geq n$. Then we have
\begin{multline*}
	\widetilde{h}^{n-1,\ell}(\chi_0 \mid \chi^{\pm 1}) = \frac{\chi(\varpi)^{\mp (n-1+\ell)} q^{-\frac{n-1+\ell}{2}}}{\zeta_{\vp}(1)} \cdot \int_{\vo^{\times}} \mathbbm{1}_{\vo^{\times}}(1+\varpi^{n-1}u) \chi_0(1+\varpi^{n-1}u) \ud u \\
	= -\chi(\varpi)^{\mp (n-1+\ell)} \zeta_{\vp}(1)^{-1} q^{-\frac{n+1+\ell}{2}}.
\end{multline*}
	Consequently, we get
	$$ \sideset{}{_{\substack{\min(k,\ell) = n-1 \\ \max(k,\ell) \geq n}}} \sum \extnorm{\widetilde{h}^{k,\ell}(\chi_0 \mid \chi^{\pm 1})} \ll q^{-n-\frac{1}{2}}. $$
	
\noindent (4) $k=\ell=n-1$ and $n \geq 2$. We integrate step by step and get
\begin{multline*}
	\widetilde{h}^{n-1,n-1}(\chi_0 \mid \chi^{\pm 1}) = \chi(\varpi)^{\mp 2(n-1)} q^{-(n-1)} \int_{(\vo^{\times})^2} \chi_0(1+\varpi^{n-1}(u+v)) \ud u\ud v \\
	= - \chi(\varpi)^{\mp 2(n-1)} q^{-n} \int_{\vo^{\times}} \chi_0(1+\varpi^{n-1}u) \ud u = \chi(\varpi)^{\mp 2(n-1)} q^{-n-1}.
\end{multline*}

\noindent (5) $k=\ell=0$ and $n=1$. We integrate step by step and get
\begin{multline*}
	\widetilde{h}^{0,0}(\chi_0 \mid \chi^{\pm 1}) = \int_{(\vo^{\times})^2} \mathbbm{1}_{\vo^{\times}}(1+u+v) \mathbbm{1}_{\vo^{\times}}(1+u) \mathbbm{1}_{\vo^{\times}}(1+v) \chi_0(1+u+v) \ud u \ud v \\
	= - q^{-1} \int_{\vo^{\times}-(-1+\vp)} \left( \chi_0(u) + \chi_0(1+u) \right) \ud u = q^{-2} \left\{ \chi_0(-1) + \chi_0(1) \right\}.
\end{multline*}

	We conclude by (\ref{DecompBis}) that in the case $m=\cond(\chi)=0$
	$$ \widetilde{h}(\chi)(\Psi) \ll q^{-2n}. $$
	
		\subsubsection{Case $m=\cond(\chi)=n$}
	
	The same argument as in the beginning of the case for $1 \leq m \leq n-1$ shows that if $(k,\ell) \neq (0,0)$ then $\widetilde{h}^{k,\ell}(\chi_0 \mid \chi^{\pm 1}) = 0$. The case for $m=n\geq 2$ is treated by the method of $p$-adic stationary phase. In particular, one finds the relevant \emph{coset-problem} at the set of critical points of a phase function. To introduce it, we begin with the following definition.
	
\begin{definition} \label{rDef}
	For any $\xi \in \vo^{\times}$ and any integer $\alpha \geq 1$, we denote
	$$ r(\xi, \vp^{\alpha}) := \left\{ \delta \in \vo/\vp^{\alpha} \ \middle| \ \delta^2 + \delta - \xi^2 \in \vp^{\alpha} \right\}. $$
	For any $\Delta \in \vo$, we denote
	$$ \rho(\Delta, \vp^{\alpha}) := \left\{ \delta \in \vo/\vp^{\alpha} \ \middle| \ \delta^2 \equiv \Delta \pmod{\vp^{\alpha}} \right\}. $$
\end{definition}
\begin{remark}
	Note that $\delta \in r(\xi,\vp^{\alpha})$ implies $\delta+\xi, 1+\delta+\xi, 1+2\delta+2\xi \in \vo^{\times}$.
\end{remark}

\begin{lemma} \label{rSize}
	We have the following estimation of $\extnorm{r(\xi,\vp^{\alpha})}, \alpha \geq 1$ (see Definition \ref{rDef}).
\begin{itemize}
	\item[(1)] If $\vp \nmid 2$, then $\extnorm{r(\xi,\vp^{\alpha})} = \extnorm{ \rho(\Delta, \vp^{\alpha}) }$ for $\Delta := 1+4\xi^2$. We have
	$$ \extnorm{\rho(\Delta,\vp^{\alpha})} \left\{ \begin{matrix} =1+ \left( \frac{\Delta}{\vp} \right) & \text{if } \Delta \notin \vp \\ =q^{\lfloor \frac{\alpha}{2} \rfloor} & \text{if } \Delta \in \vp^{\alpha} \\ \leq 2 q^{\frac{\mathrm{ord}(\Delta)}{2}} \mathbbm{1}_{2 \mid \mathrm{ord}(\Delta)} & \text{if } \Delta \in \vp - \vp^{\alpha} \end{matrix} \right. . $$
	\item[(2)] If $\vp \mid 2$, then we always have $\extnorm{r(\xi,\vp^{\alpha})} \leq 2$.
\end{itemize}
\end{lemma}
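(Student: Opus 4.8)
The plan is to separate the two residue characteristics. When $\vp\nmid 2$ the idea is to linearise the congruence by completing the square, reducing it to the count of square roots modulo $\vp^{\alpha}$; when $\vp\mid 2$ the point is that the relevant quadratic polynomial is everywhere unramified, so Hensel's lemma reduces everything to the residue field.

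First, for $\vp\nmid 2$: since $2\in\vo^{\times}$, multiplying the defining relation $\delta^{2}+\delta-\xi^{2}\equiv 0\pmod{\vp^{\alpha}}$ by $4$ rewrites it as $(2\delta+1)^{2}\equiv 1+4\xi^{2}=\Delta\pmod{\vp^{\alpha}}$. The map $\delta\mapsto 2\delta+1$ is a bijection of $\vo/\vp^{\alpha}$, so $\extnorm{r(\xi,\vp^{\alpha})}=\extnorm{\rho(\Delta,\vp^{\alpha})}$, which reduces us to bounding the number of solutions of $X^{2}\equiv\Delta$.

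Next, counting $\rho(\Delta,\vp^{\alpha})$ is classical: the polynomial $X^{2}-\Delta$ has derivative $2X$, a unit at any unit point, so the strong form of Hensel's lemma lifts each square root of $\Delta$ modulo $\vp$ uniquely, and distinct square roots modulo $\vp$ stay distinct because $\vp\nmid 2$; this gives $\extnorm{\rho(\Delta,\vp^{\alpha})}=1+\left(\frac{\Delta}{\vp}\right)$ when $\Delta\in\vo^{\times}$. When $\Delta\in\vp^{\alpha}$ one has $\delta^{2}\in\vp^{\alpha}$ iff $\mathrm{ord}(\delta)\geq\lceil\alpha/2\rceil$, a coset space of cardinality $q^{\lfloor\alpha/2\rfloor}$. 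When $0<\mathrm{ord}(\Delta)=e<\alpha$, a solution forces $2\,\mathrm{ord}(\delta)=e$; in particular there is no solution unless $e$ is even, in which case writing $\delta=\varpi^{e/2}u$ with $u\in\vo^{\times}$ turns the congruence into $u^{2}\equiv\Delta\varpi^{-e}\pmod{\vp^{\alpha-e}}$. There are at most two such $u$ modulo $\vp^{\alpha-e}$, and each pins down $\delta$ only modulo $\vp^{\alpha-e/2}$, hence accounts for at most $q^{e/2}$ residues modulo $\vp^{\alpha}$; this yields the bound $\leq 2q^{\mathrm{ord}(\Delta)/2}\mathbbm{1}_{2\mid\mathrm{ord}(\Delta)}$. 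Collecting the three cases gives part (1).

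Finally, for $\vp\mid 2$: the derivative of $P(X)=X^{2}+X-\xi^{2}$ is $P'(X)=2X+1$, which is a unit at every $X\in\vo$ because $2X\in\vp$. Since $P(\delta_{0})\in\vp$ at any root $\delta_{0}$ of $P$ modulo $\vp$, while $\extnorm{P'(\delta_{0})}=1$, the strong Hensel's lemma applies and each residue-field root lifts to a unique root modulo $\vp^{\alpha}$, so $\extnorm{r(\xi,\vp^{\alpha})}=\extnorm{r(\xi,\vp)}$. Over the residue field the map $\delta\mapsto\delta^{2}+\delta$ is the Artin--Schreier homomorphism, additive with kernel $\{0,1\}$, so every fibre has $0$ or $2$ elements; thus $\extnorm{r(\xi,\vp^{\alpha})}\leq 2$. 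The only slightly delicate point is the bookkeeping of how the ambiguity $u\bmod\vp^{\alpha-e}$ inflates to $\delta\bmod\vp^{\alpha}$ in the middle subcase of part (1); the rest is routine Hensel's-lemma and residue-field arithmetic, and I would leave those elementary verifications to the reader.
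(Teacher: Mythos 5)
Your proposal is correct. For part (2) you argue exactly as the paper does: observe that $P(X)=X^{2}+X-\xi^{2}$ has derivative $2X+1$ which is a unit in residue characteristic $2$ (so $P$ is separable modulo $\vp$), apply Hensel's lemma to lift each residue-field root uniquely, and count roots over $\vo/\vp$ — your Artin--Schreier description of the fibres is a clean way to see the ``$0$ or $2$'' count that the paper simply asserts. For part (1) the paper outsources the proof entirely to \cite[Lemma~3.1]{PY19_All}; you instead give a self-contained argument: complete the square via $\delta\mapsto 2\delta+1$ (legitimate since $2\in\vo^{\times}$), reduce to counting $\rho(\Delta,\vp^{\alpha})$, and handle the three valuation regimes of $\Delta$ separately, with the key bookkeeping point being that when $0<e=\mathrm{ord}(\Delta)<\alpha$ the ambiguity $u\bmod\vp^{\alpha-e}$ inflates by a factor $q^{e/2}$ when passing to $\delta=\varpi^{e/2}u\bmod\vp^{\alpha}$. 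That accounting is correct (the fibre of $\delta\bmod\vp^{\alpha}\mapsto u\bmod\vp^{\alpha-e}$ has size $q^{e/2}$), and the three displayed counts follow. So the content agrees with the cited reference; your version has the advantage of being self-contained.
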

\begin{proof}
	(1) is the same as \cite[Lemma 3.1]{PY19_All}. For (2), we note that the polynomial
	$$ x^2 + x - \xi^2 \pmod{\vp} $$
	is separable, since $\vo/\vp$ has characteristic $2$. By Hensel's lemma, its solvability mod $\vp^{\alpha}$ is the same as its solvability mod $\vp$, with the same number of solutions $0$ or $2$. Hence we have the stated bound.
\end{proof}

\begin{lemma} \label{ExpBd2}
	We associate to $\chi_0, \chi$ a number $\xi \in \vo^{\times} / (1+\vp^{\lceil \frac{n}{2}} \rceil)$ such that $\chi(\exp(x)) = \chi_0(\exp(\xi x))$ for $x \in \vp^{\lfloor \frac{n}{2} \rfloor}$ (with $n$ sufficiently large if $\vp \mid 2$, see Lemma \ref{GpsId}).
\begin{itemize}
	\item[(1)] If $n=m=2\alpha$ with $\alpha \geq 1$, then we have
	$$ \extnorm{\widetilde{h}^{0,0}(\chi_0 \mid \chi)} \leq \extnorm{r(\xi, \vp^{\alpha})} \cdot q^{-n}. $$
	\item[(2)] If $n=m=2\alpha+1$ with $\alpha \geq 1$, and $\vp \nmid 2$, writing $\Delta := 1+4\xi^2$, then we have
	$$ \extnorm{\widetilde{h}^{0,0}(\chi_0 \mid \chi)} \leq q^{-n} \cdot \left\{ \begin{matrix} 2 & \text{if } \Delta \notin \vp \\ q^{\frac{1}{2}} \mathbbm{1}_{\Delta \in \vp^2} \extnorm{\rho(\varpi^{-2}\Delta, \vp^{\alpha-1})} & \text{if } \Delta \in \vp \end{matrix} \right. . $$
	\item[(3)] If $n=m=2\alpha+1$ with $\alpha \geq 1$, and $\vp \mid 2$, then
	$$ \extnorm{\widetilde{h}^{0,0}(\chi_0 \mid \chi)} \leq \left\{ \begin{matrix} 2^{3[\F:\Q_2]+1} \cdot q^{-n} & \text{if } \alpha \geq \alpha_{\F} \\ 1 & \text{if } \alpha < \alpha_{\F} \end{matrix} \right.,  $$
	where $\alpha_{\F} := \max(\mathrm{ord}(2)+1, (3 \mathrm{ord}(2) + 1)/2)$.
	\item[(4)] If $n=m=1$, then
	$$ \widetilde{h}^{0,0}(\chi_0 \mid \chi) \ll \left\{ \begin{matrix} q^{-1} & \text{if } \vp \nmid 2 \\ 1 & \text{if } \vp \mid 2 \end{matrix} \right. . $$
\end{itemize}
\end{lemma}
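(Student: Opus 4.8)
The plan is to treat $M_4^{0,0}(\chi_0\mid\chi)$ as a $\vp$-adic oscillatory integral and run a non-archimedean stationary-phase argument, following the template of the algebraic exponential sum estimates of Petrow--Young \cite{PY19_CF,PY19_All} (to which, as the remarks in the excerpt indicate, this lemma is the number-field counterpart). Recall from the preliminary reductions that
$$ M_4^{0,0}(\chi_0\mid\chi)=\int_{(\vo^{\times})^2}\mathbbm{1}_{\vo^{\times}}(1+x)\,\mathbbm{1}_{\vo^{\times}}(1+y)\,\mathbbm{1}_{\vo^{\times}}(1+x+y)\,\chi_0(1+x+y)\,\chi\!\left(\frac{(1+x)(1+y)}{xy}\right)dxdy, $$
so the phase is the character value $\chi_0(1+x+y)\,\chi((1+x)(1+y)/xy)$. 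Using the identification $\chi(\exp u)=\chi_0(\exp(\xi u))$ near $1$ provided by the residue $\xi$ (Lemma \ref{GpsId}), the effect of the substitution $x\mapsto x(1+s)$ (resp. $y\mapsto y(1+t)$) on this phase is multiplication by $\chi_0(1+w)$ where, up to higher-order terms, $w=s\bigl(\tfrac{x}{1+x+y}-\tfrac{\xi}{1+x}\bigr)$ (resp. $t\bigl(\tfrac{y}{1+x+y}-\tfrac{\xi}{1+y}\bigr)$). Averaging $s$ and $t$ separately over $\vp^{\lceil n/2\rceil}$ and using that $\chi_0$ is non-trivial on $1+\vp^{n-1}$ forces the integrand to vanish unless both coefficients are $\equiv0\pmod{\vp^{\lfloor n/2\rfloor}}$; this pins $x\equiv y$ and, after the substitution $\delta=x-\xi$, exactly the congruence $\delta^2+\delta-\xi^2\equiv0\pmod{\vp^{\lfloor n/2\rfloor}}$, i.e. $x\bmod\vp^{\lfloor n/2\rfloor}$ runs over $\xi+r(\xi,\vp^{\lfloor n/2\rfloor})$ in the sense of Definition \ref{rDef}.

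For the even case $n=2\alpha$ this localization already gives $(1)$: the integrand is supported on at most $|r(\xi,\vp^{\alpha})|$ cosets of $\vp^{\alpha}$ in $x$ and on a single coset of $\vp^{\alpha}$ in $y$, hence on a set of measure $\le|r(\xi,\vp^{\alpha})|q^{-2\alpha}=|r(\xi,\vp^{\alpha})|q^{-n}$, while the phase has modulus $1$. For the odd case $n=2\alpha+1$ one needs a further half-level of analysis: localizing $x$ (and $y$) to precision $\vp^{\alpha}$ about a critical point and expanding the phase to second order in the residual variable, the Hessian of $\delta^2+\delta-\xi^2$ has discriminant $\Delta=1+4\xi^2$. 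When $\Delta\in\vo^{\times}$ the critical point is non-degenerate, $|r(\xi,\vp^{\alpha})|=1+(\tfrac{\Delta}{\vp})\le2$ by Lemma \ref{rSize}, and the residual oscillation is a non-degenerate quadratic Gauss sum of bounded size, yielding $\le2q^{-n}$; when $\Delta\in\vp$ the quadratic degenerates and, after extracting the appropriate power of $\varpi$, the number of refined critical points is controlled by $|\rho(\varpi^{-2}\Delta,\vp^{\alpha-1})|$ (Lemma \ref{rSize}(1)) at the cost of a single factor $q^{1/2}$, which is $(2)$.

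The dyadic case $(3)$, $\vp\mid2$, needs a separate treatment because the $\exp$/logarithm dictionary and the identification of $\chi$ with a power of $\chi_0$ are only valid once $\alpha$ exceeds the absolute constant $\alpha_{\F}$ attached to $\F/\Q_2$ (Lemma \ref{GpsId}): for $\alpha<\alpha_{\F}$ we simply use the trivial bound $|M_4^{0,0}(\chi_0\mid\chi)|\le\Vol((\vo^{\times})^2)\le1$, whereas for $\alpha\ge\alpha_{\F}$ the stationary-phase reduction above applies with the uniform count $|r(\xi,\vp^{\alpha})|\le2$ of Lemma \ref{rSize}(2), the bounded power of $2$ in the constant accounting for the discrepancy between $\exp(\vp^{k})$ and $1+\vp^{k}$ at the three relevant occurrences. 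Finally $(4)$, $n=m=1$, is a direct evaluation: the integral is $q^{-2}$ times a two-variable character sum over the residue field subject to three non-degeneracy conditions, which has square-root cancellation $\ll q$ for $\vp\nmid2$ by Weil and is bounded trivially by $q^{2}$ for $\vp\mid2$. The main obstacle is the odd case $n=2\alpha+1$: performing the second-order $\vp$-adic stationary-phase expansion cleanly, correctly isolating the degeneracy parameter $\Delta=1+4\xi^2$, and tracking the square-root loss together with the reduced count $|\rho(\varpi^{-2}\Delta,\vp^{\alpha-1})|$ — the precise bookkeeping, including which powers of $\varpi$ to factor out and the Gauss-sum evaluations, parallels the corresponding (and most technical) lemmas of \cite{PY19_All}, and the dyadic corrections for $\vp\mid2$ add another layer.
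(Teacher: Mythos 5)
Your proposal identifies the same $\vp$-adic stationary-phase strategy the paper actually uses: the localization to cosets of $\vp^{\lceil n/2\rceil}$ via the additive-character dictionary of Lemma~\ref{GpsId}, the critical-point equation $\delta^2+\delta-\xi^2\equiv 0$, the discriminant $\Delta=1+4\xi^2$ governing degeneracy of the residual quadratic form, and the dyadic split at $\alpha_{\F}$. Cases (1), (2), and (3) are described consistently with the paper's Cases~1, 2.1, and 2.2 of the proof (with only minor terminological conflation between the Hessian of the two-variable phase and the discriminant of the one-variable critical-point equation, which does not affect the outcome).

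The genuine gap is in case (4), $n=m=1$. You assert the two-variable residue-field sum ``has square-root cancellation $\ll q$ \dots by Weil,'' but that is not a one-line Weil estimate. Applying the Weil bound to the inner $y$-sum gives $O(\sqrt q)$ per $x$ and hence $O(q^{3/2})$ overall, which is insufficient; the required bound $O(q)$ is a genuinely two-dimensional estimate. The paper follows Petrow--Young \cite[\S 9.1]{PY19_CF}: after the substitution $x=-(u+1)$, $y=-(v+1)$ the sum becomes $g'(\chi,\chi_0)$, which is identified (up to a complex conjugate and an $O(q)$ error) as $-\sum_x t_{\mathcal{F}_1}(x)\,t_{\widetilde{\mathcal{G}}}(x)$ for certain $\ell$-adic sheaves, and is then controlled via purity and irreducibility of those sheaves together with a variant of Conrey--Iwaniec's Lemma~13.2, namely
$$\frac{1}{q-1}\sum_{\psi}\extnorm{g'(\chi,\psi)}^2 = q^2-2q-2-\extnorm{\sum_{x}\chi\!\left(\tfrac{(x+1)^2}{x}\right)}^2\le q^2-2q-2.$$
This second-moment argument combined with the trace-function/purity framework is the actual engine for case (4); a bare appeal to ``Weil'' does not close that step.
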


\noindent We postpone the proof to the next subsection. For the moment, we draw the following consequence.
\begin{corollary} \label{DWtBdNA}
	Assume $n \leq 2$. Then we have $\widetilde{h}(\chi)(\Psi) \neq 0$ only if $\cond(\chi) \leq n$. Under this condition, we have the bound
	$$ \widetilde{h}(\chi)(\Psi) \ll q^{-2n}, $$
	where the implied constant is absolute if $\vp \nmid 2$, or depends only on the degree of the residue field extension if $\vp \mid 2$.
\end{corollary}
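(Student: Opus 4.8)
The plan is to assemble the three case analyses already carried out in this section according to the value of $m:=\cond(\chi)$, the only genuinely new input being the extraction of the exponent $-2n$ from Lemma \ref{ExpBd2} in the range $m=n$. Throughout, $n\in\{1,2\}$, the test function (\ref{TestPhiNA}) being defined for $n\geq 1$. First, the vanishing assertion is immediate from the preliminary reductions: the change of variables $x_3\mapsto x_3(1+u)$, $x_1\mapsto x_1-tux_3$, $x_2\mapsto x_2-t^2ux_3$, $x_4\mapsto x_4+tux_3$ performed there shows $M_4(\Psi\mid\chi)=\chi(1+u)^{-1}M_4(\Psi\mid\chi)$ for every $u\in\vp^n$, hence $M_4(\Psi\mid\chi)=0$ unless $m\leq n$. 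From now on assume $m\leq n$ and use the decomposition (\ref{DecompBis}).

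For $m=0$, the desired bound $\extnorm{M_4(\Psi\mid\chi)}\ll q^{-2n}$ is precisely the conclusion reached at the end of \S\ref{DGAuxNA} after going through cases (1)--(5) there. For $1\leq m\leq n-1$ --- which under $n\leq 2$ forces $n=2$, $m=1$ --- the argument opening the subsection on that range shows that in (\ref{DecompBis}) only the term $(k,\ell)=(n-m,n-m)$ is nonzero, and Lemma \ref{ExpBd1} gives $\extnorm{M_4^{n-m,n-m}(\chi_0\mid\chi^{\pm1})}\ll q^{-n}$, whence $\extnorm{M_4(\Psi\mid\chi)}\ll q^{-n}\cdot q^{-n}=q^{-2n}$, with implied constant absolute if $\vp\nmid 2$ and depending only on the residue field degree if $\vp\mid 2$.

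The remaining case $m=n$ carries the actual content. The argument opening the subsection ``Case $m=n$'' shows that only $(k,\ell)=(0,0)$ survives in (\ref{DecompBis}), so $M_4(\Psi\mid\chi)=\zeta_\vp(1)^3 q^{-n}M_4^{0,0}(\chi_0\mid\chi)$ (with $\zeta_\vp(1)^3\ll 1$), and it suffices to estimate $M_4^{0,0}(\chi_0\mid\chi)$. For $n=1$ I would invoke Lemma \ref{ExpBd2}(4): when $\vp\nmid 2$ it gives $\extnorm{M_4^{0,0}}\ll q^{-1}$, hence $\extnorm{M_4(\Psi\mid\chi)}\ll q^{-2}$, while for $\vp\mid 2$ it gives only $\extnorm{M_4^{0,0}}\ll 1$, and since then $q$ is a power of $2$ with exponent the residue field degree $f$, one has $\extnorm{M_4(\Psi\mid\chi)}\ll q^{-1}=q\cdot q^{-2}\ll_f q^{-2}$. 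For $n=2$ (so $\alpha=1$) I would invoke Lemma \ref{ExpBd2}(1), $\extnorm{M_4^{0,0}}\leq\extnorm{r(\xi,\vp)}\cdot q^{-2}$, together with Lemma \ref{rSize}, which in the case $\alpha=1$ gives $\extnorm{r(\xi,\vp)}\leq 2$ both for $\vp\nmid 2$ (there $r(\xi,\vp)=\rho(1+4\xi^2,\vp)$ has size $\leq 2$) and for $\vp\mid 2$; this yields $\extnorm{M_4(\Psi\mid\chi)}\ll q^{-2}\cdot q^{-2}=q^{-4}$.

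The step I expect to be the main obstacle is the reconciliation with Lemma \ref{ExpBd2} when $\vp\mid 2$: that lemma rests on the $\vp$-adic exponential parametrization $\chi(\exp x)=\chi_0(\exp(\xi x))$ from its hypothesis, valid only once $n$ exceeds a threshold controlled by $\mathrm{ord}_\vp(2)$, so for $n\leq 2$ with $\vp\mid 2$ it is not directly applicable to part (1). In that regime I would fall back on the trivial bound $\extnorm{M_4^{0,0}(\chi_0\mid\chi)}\leq\Vol(\vo_\vp^\times)^2\ll 1$, obtained by estimating the integrand of $M_4^{0,0}$ in absolute value by $1$; this gives $\extnorm{M_4(\Psi\mid\chi)}\ll q^{-n}$, and since $q^n=2^{nf}$ is bounded in terms of the residue field degree $f$ alone, absorbing it into the implied constant recovers $\extnorm{M_4(\Psi\mid\chi)}\ll q^{-2n}$ with a constant depending only on $f$. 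Combining the three ranges of $m$ finishes the proof.
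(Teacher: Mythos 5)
Your proof is correct and follows the same route as the paper's own proof, namely combining the preliminary reductions, the $m=0$ case analysis in \S\ref{DGAuxNA}, Lemma \ref{ExpBd1} for $1\leq m\leq n-1$, and Lemma \ref{ExpBd2} together with Lemma \ref{rSize} for $m=n$. However, the ``obstacle'' you flag in the last paragraph is spurious, and you should not need the fallback. The parenthetical ``(with $n$ sufficiently large if $\vp\mid 2$, see Lemma \ref{GpsId})'' in the hypothesis of Lemma \ref{ExpBd2} is only about the odd-conductor parts (2) and (3): there the exponential parametrization must reach the stage $U_{\alpha}/U_{2\alpha+1}$ and thus invoke Lemma \ref{GpsId}(2), which carries the threshold $\alpha\geq\max(\mathrm{ord}(2)+1,(3\mathrm{ord}(2)+1)/2)$. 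In part (1), with $n=m=2\alpha$, the element $\xi=ll_0^{-1}$ is produced from Lemma \ref{GpsId}(1) applied with $\beta=\alpha$, which holds for all $\alpha\geq1$ and all $\vp$, including $\vp\mid 2$. So for $n=2$, Lemma \ref{ExpBd2}(1) applies directly, and $\extnorm{r(\xi,\vp)}\leq 2$ (Lemma \ref{rSize}(2) for $\vp\mid 2$) gives an \emph{absolute} constant there --- in fact stronger than the $f$-dependent constant the corollary permits. Your trivial-bound fallback for $\vp\mid 2$, $n=2$ does yield the claimed estimate with an $f$-dependent constant, but it is not needed; for $n=m=1$ and $\vp\mid 2$, on the other hand, one really does give up a factor of $q$ (as in Lemma \ref{ExpBd2}(4)), which is precisely where the $f$-dependence in the corollary comes from, as you correctly noted.
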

\begin{proof}
	If $\cond(\chi) < n$, then this is simply a summary of the discussion before Lemma \ref{ExpBd2}. If $\cond(\chi)=n$, then the required bound of the dual weight follows from Lemma \ref{ExpBd2} by noting that (2) \& (3) do not apply, and that $\norm[r(\xi,\vp)] \leq 2$ by Lemma \ref{rSize}.
\end{proof}

\begin{proof}[Proof of Theorem \ref{DualWtBd} (2)]
	It suffice to combine the lower bound of $h$ in Lemma \ref{CubeWtLocBdNA} and the upper bound of $\widetilde{h}$ is in Corollary \ref{DWtBdNA}.
\end{proof}

	\subsection{Auxiliary Bounds of Exponential Sums}
	\label{sec: BdExpSums}
	
	We prove Lemma \ref{ExpBd1} \& \ref{ExpBd2} in this subsection.	
	
\begin{lemma} \label{GpsId}
	Recall that for any integer $\alpha \geq 1$, $U_{\alpha} := 1+\vp^{\alpha}$ is a subgroup of $\vo^{\times}$.
\begin{itemize}
	\item[(1)] For any integers $\beta \geq \alpha \geq 1$, the following two maps
	$$ \log : U_{\beta}/U_{\beta+\alpha} \to \vp^{\beta}/\vp^{\beta+\alpha}, \quad 1+x \mapsto x, $$
	$$ \exp: \vp^{\beta}/\vp^{\beta+\alpha} \to U_{\beta}/U_{\beta+\alpha}, \quad y \mapsto 1+y $$
	are inverse to each other and establish group isomorphisms.
	\item[(2)] For integers $\alpha \geq \max(\mathrm{ord}(2)+1, (3 \mathrm{ord}(2) + 1)/2)$, the following two maps
	$$ \log : U_{\alpha}/U_{2\alpha+1} \to \vp^{\alpha}/\vp^{2\alpha+1}, \quad 1+x \mapsto x - \frac{x^2}{2}, $$
	$$ \exp: \vp^{\alpha}/\vp^{2\alpha+1} \to U_{\alpha}/U_{2\alpha+1}, \quad y \mapsto 1+y+\frac{y^2}{2} $$
	are inverse to each other and establish group isomorphisms.
\end{itemize}
\end{lemma}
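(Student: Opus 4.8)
The plan is to reduce everything to elementary estimates on $\vp$-adic valuations. For (1), I would first observe that the set bijection $1+x\leftrightarrow x$ between $U_\beta$ and $\vp^\beta$ carries the subgroup $U_{\beta+\alpha}$ onto $\vp^{\beta+\alpha}$: if $x,x'\in\vp^\beta$ then $1+x'$ is a unit and $(1+x)(1+x')^{-1}-1=(x-x')(1+x')^{-1}$, so $(1+x)(1+x')^{-1}\in U_{\beta+\alpha}$ if and only if $x-x'\in\vp^{\beta+\alpha}$. Hence the two displayed maps descend to well-defined maps on the quotients that are mutually inverse bijections, and it remains only to check that $\log$ is a homomorphism (whence $\exp$ is too). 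For this one expands $(1+x)(1+x')=1+(x+x')+xx'$ and uses $xx'\in\vp^{2\beta}\subseteq\vp^{\beta+\alpha}$, which holds exactly because $\beta\geq\alpha$; thus $\log((1+x)(1+x'))\equiv x+x'\pmod{\vp^{\beta+\alpha}}$, proving (1).

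For (2), write $e:=\mathrm{ord}(2)$, so the hypothesis reads $\alpha\geq\max(e+1,(3e+1)/2)$; in particular $\alpha\geq e$, which guarantees $\mathrm{ord}(x^2/2)\geq 2\alpha-e\geq\alpha$ for $x\in\vp^\alpha$, so that $x\mapsto x-x^2/2$ maps $\vp^\alpha$ into $\vp^\alpha$ and $y\mapsto 1+y+y^2/2$ maps $\vp^\alpha$ into $U_\alpha$. Well-definedness on the quotients is handled as in (1): the identification $U_{2\alpha+1}\leftrightarrow\vp^{2\alpha+1}$ takes care of $\exp$, while for $\log$ one checks that $x\equiv x'\pmod{\vp^{2\alpha+1}}$ forces $x^2/2-x'^2/2=(x-x')(x+x')/2\in\vp^{2\alpha+1}$ since its valuation is at least $(2\alpha+1)+\alpha-e$.

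The heart of the argument is then a pair of direct expansions. Multiplying out $(1+y_1+y_1^2/2)(1+y_2+y_2^2/2)$ and regrouping gives $1+(y_1+y_2)+(y_1+y_2)^2/2$ together with the remainder $y_1^2y_2/2+y_1y_2^2/2+y_1^2y_2^2/4$, whose valuation is at least $\min(3\alpha-e,4\alpha-2e)\geq 2\alpha+1$ because $\alpha\geq e+1$; dividing by the unit $\exp(y_1+y_2)$ shows $\exp$ is a homomorphism on $U_\alpha/U_{2\alpha+1}$. Next, substituting $v=x-x^2/2$ into $1+v+v^2/2$ and simplifying yields $\exp(\log(1+x))=1+x-x^3/2+x^4/8$, and symmetrically $\log(\exp(y))=y-y^3/2-y^4/8$; here $x^3/2$ and $y^3/2$ have valuation at least $3\alpha-e\geq 2\alpha+1$, while $x^4/8$ and $y^4/8$ have valuation at least $4\alpha-3e$, which is $\geq 2\alpha+1$ precisely because $\alpha\geq(3e+1)/2$. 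Hence both composites are the identity on $\vp^\alpha/\vp^{2\alpha+1}$ and on $U_\alpha/U_{2\alpha+1}$, so $\log$ and $\exp$ are inverse bijections, and together with the homomorphism property of $\exp$ this makes both group isomorphisms. The only real work is the valuation bookkeeping in these two expansions; the point to watch is that the two lower bounds $e+1$ and $(3e+1)/2$ in the hypothesis are exactly the thresholds forced, respectively, by the $y_1^2y_2^2/4$-type cross terms and by the quartic term carrying a $2^3$ in the denominator, and that when $\vp\nmid 2$ one has $e=0$ and the statement reduces to the classical truncated-logarithm computation with $\alpha\geq 1$.
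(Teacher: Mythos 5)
Your proof is correct. The paper's own proof of this lemma consists of the single word ``Elementary,'' so there is nothing to compare against; your argument fills in exactly the kind of valuation bookkeeping (well-definedness on the quotients, homomorphism property via expansion, and composite-is-identity via cancellation of the quadratic term, with the cubic and quartic remainders $x^3/2$ and $x^4/8$ landing in $\vp^{2\alpha+1}$ precisely under $\alpha\geq\mathrm{ord}(2)+1$ and $\alpha\geq(3\mathrm{ord}(2)+1)/2$ respectively) that the authors evidently had in mind when they left the verification to the reader.
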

\begin{proof}
	Elementary.
\end{proof}

\begin{proof}[Proof of Lemma \ref{ExpBd2}]
	We leave (1)-(3) to the reader, since they are either simple or not essentially needed in this paper. Assume $m=n=1$ from now on.

The case $\vp \mid 2$ is trivial. To prove the case $\vp \nmid 2$, we can assume $q:=\Nr(\vp)$ is larger than any fixed number. We follow \cite[\S 9.1]{PY19_CF}. By the change of variables $x=-(u+1)$ and $y=-(v+1)$, we have
	$$ h(u,v; \chi^{-1}) = \chi_0(xy-1) \chi^{-1} \left( \frac{xy}{(x+1)(y+1)} \right). $$
	Hence we are reduced to bounding
	$$ g'(\chi,\chi_0) := \sum_{x \in \mathbb{F}_q - \{ 0,-1 \}} \bar{\chi}(x)\chi(x+1) \left( \sum_{y \in \mathbb{F}_q - \{ x^{-1},0,-1 \}} \bar{\chi}(y)\chi(y+1)\chi_0(xy-1) \right).  $$
	Our exponential sum $g'(\chi,\chi_0)$ differs from the $g(\chi,\psi)$ of \cite[\S 9.1]{PY19_CF} only by a complex conjugate for the first summand of the outer sum over $x$. Hence all arguments in \cite[\S 9.1]{PY19_CF} go through, replacing $\psi$ by $\chi_0$. For example, \cite[(9.3)]{PY19_CF} becomes (using notation from \cite[\S 9.1]{PY19_CF})
	$$ g'(\chi,\chi_0) = - \sum_{x \in W(\mathbb{F}_q)} t_{\mathcal{F}_1}(x) t_{\widetilde{\mathcal{G}}}(x) + O(q). $$
	The sheaf $\overline{\mathcal{F}}_1$, whose trace function is equal to $\overline{t_{\mathcal{F}_1}(x)}$, is associated to $\chi^{-1}$ the same way as $\mathcal{F}_1$ to $\chi$. Hence it is also pure of weight $0$ and rank $1$, thus not isomorphic to $\widetilde{\mathcal{G}}$. Therefore, as the final part of \cite[\S 9.1]{PY19_CF}, we only need a slight variation of \cite[Lemma 13.2]{CI00} to conclude, which we state as\footnote{It seems that (the counterpart of) the last term of the right hand side was missing in Conrey-Iwaniec's paper. However, this does not affect the validity of \cite[Lemma 13.1]{CI00}.}
	$$ \frac{1}{q-1} \sum_{\psi \in \widehat{\mathbb{F}_q^{\times}}} \norm[g'(\chi,\psi)]^2 = q^2-2q-2- \extnorm{\sum_{x \in \mathbb{F}_q-\{ 0,-1 \}} \chi \left( \frac{(x+1)^2}{x} \right) }^2 \leq q^2-2q-2, $$
	where the sum is over all characters of $\mathbb{F}_q^{\times}$. We leave this elementary proof to the reader.
\end{proof}

\section{Degenerate Terms}
	
	Recall the two degenerate terms $DG(\Psi)$ \& $DS(\Psi)$ given by \eqref{DGF} \& \eqref{DSF} respectively. We first bound $DS(\Psi)$, which is easier. By (\ref{DSF}), this amounts to explicitly computing $h_v(\pi(\id,s))$.

\begin{lemma} \label{DSBd}
	For our choice of test function $\Psi_v$ determined by \eqref{ConcWtR} at $v \mid \infty$ and by \eqref{TestPhiNA} at $v=\vp < \infty$, the degenerate term $DS(\Psi) \neq 0$ only if $\F = \Q$ and $\chi_0$ is unramified at all finite places. In the latter case we have $DS(\Psi) \ll_A \Cond(\chi_0)^{-A}$ for any $A > 1$.
\end{lemma}
\begin{proof}
	If $\chi_0$ is ramified at some $\vp < \infty$, then for any $s \in \C$ we have $h_{\vp}(\pi(\id,s))(\Psi_{\vp}) = 0$ by Lemma \ref{CubeWtLocBdNA}, since $\cond(\pi(\norm_{\vp}^s, \norm_{\vp}^{-s}) \otimes \chi_{0,\vp}^{-1}) = 2\cond(\chi_{0,\vp}) > \cond(\chi_{0,\vp})$. Consequently, we have $DS(\Psi)=0$ by (\ref{DSF}). Assume $\chi_0$ is unramified at every finite place. Then we have
	$$ DS(\Psi) = \frac{1}{\zeta_{\F}^*} \Res_{s=\frac{1}{2}} \left( \Dis_{\F}^{-2} \frac{\zeta_{\F} \left( \frac{1}{2}+s \right)^3 \zeta_{\F} \left( \frac{1}{2}-s \right)^3}{\zeta_{\F}(1+2s) \zeta_{\F}(1-2s)} \prod_{v \mid \infty} h_v(s,0) \right) =: \Res_{s=\frac{1}{2}} DS(s, \Psi), $$
	where the factor $\Dis_{\F}^{-2}$ is the contribution of the places $\vp \in S-S_{\infty}$. Recall the formula \eqref{ConcWtR}
	$$ h_v(s,0) = \sqrt{\pi} \frac{\cos(\pi s)}{2 \Delta_v} \left\{ \exp \left( \frac{(s-iT_v)^2}{2\Delta_v^2} + i \frac{\pi}{2} s \right) + \exp \left( \frac{(s+iT_v)^2}{2\Delta_v^2} - i \frac{\pi}{2} s \right) \right\}^2, $$
	which vanishes at $s=1/2$ to order one. Hence the order of vanishing of $DS(s, \Psi)$ is $-3+2(r-1)+r = 3r-5$, where $r=[\F:\Q]$. This is $\geq 1$ if $\F \neq \Q$. Thus by (\ref{DSF}) $DS(\Psi) = 0$ if $\F \neq \Q$. Assume $\F=\Q$, then $r=1$. For any $A > 1$ and any integer $n \geq 0$, it is easy to see
	$$ \left. \frac{\ud^n}{\ud s^n} \right|_{s = \frac{1}{2}} h_v \left( s,0 \right) \ll_{n,A} (1+\norm[T_v])^{-A}. $$
	The desired bound of $\norm[DS(\Psi)]$ follows readily from the above bounds via (\ref{DSF}).
\end{proof}

	To bound $DG(\Psi)$, we recall its formula (\ref{DGF})
\begin{multline*} 
	DG(\Psi) = \sum_{\pm} \pm \Res_{s=\pm \frac{1}{2}} \left( \Dis_{\F}^{-2} \zeta_{\F} \left( \frac{1}{2}+s \right)^2 \zeta_{\F} \left( \frac{1}{2}-s \right)^2 \cdot \prod_{v \mid \infty} \widetilde{h}_v \left( s,0 \right) \prod_{\vp \in S-S_{\infty}} \widetilde{H}_{\vp}\left( s,0 \right) \right) \\
	=: \sum_{\pm} \pm \Res_{s=\pm \frac{1}{2}} DG(s, \Psi), 
\end{multline*}
	$$ \text{where} \quad \widetilde{H}_{\vp}\left( s,0 \right) := \widetilde{h}_{\vp}\left( s,0 \right) \zeta_{\vp} \left( \frac{1}{2}+s \right)^{-2} \zeta_{\vp} \left( \frac{1}{2}-s \right)^{-2}. $$
	
\noindent The computation of $\widetilde{h}_{\vp}$ is given in \S \ref{DGAuxNA}, replacing $\chi$ there by $\norm_{\vp}^s$. Writing $n_{\vp} = \cond(\chi_{0,\vp}), q_{\vp}=\Nr(\vp)$, and assuming $n_{\vp} \geq 1$ (otherwise $\widetilde{h}_{\vp}(\cdot)=1$ for $n_{\vp}=0$), we have by (\ref{DecompBis})
	$$ \widetilde{h}_{\vp}\left( s,0 \right) = \zeta_{\vp}(1) q_{\vp}^{-n_{\vp}} \left\{ \widetilde{h}^{0,0}(\chi_{0,\vp} \mid s) + \sum_{\substack{k,\ell \geq n_{\vp}-1 \\ (k,\ell) \neq (0,0)}} \left( \widetilde{h}^{k,\ell}(\chi_{0,\vp} \mid s) + \widetilde{h}^{k,\ell}(\chi_{0,\vp} \mid -s) \right) \right\}, $$
	where $\widetilde{h}^{k,\ell}$ are given by:
\begin{itemize}
	\item[(1)] If $\min(k,\ell) \geq n_{\vp}$, then $\widetilde{h}^{k,\ell}(\chi_{0,\vp} \mid s) = \zeta_{\vp}(1)^{-2} q_{\vp}^{(k+\ell)(s-\frac{1}{2})}$;
	\item[(2)] If $\min(k,\ell)=n_{\vp}-1, \max(k,\ell) \geq n_{\vp}$, then $\widetilde{h}^{k,\ell}(\chi_{0,\vp} \mid s) = -\zeta_{\vp}(1)^{-1} q_{\vp}^{-1} q_{\vp}^{(k+\ell)(s-\frac{1}{2})}$;
	\item[(3)] If $k=\ell=n_{\vp}-1$ and $n_{\vp} \geq 2$, then $\widetilde{h}^{k,\ell}(\chi_{0,\vp} \mid s) = q_{\vp}^{-2} q_{\vp}^{(k+\ell)(s-\frac{1}{2})}$;
	\item[(4)] If $k=\ell=0$ and $n_{\vp} = 1$, then $\widetilde{h}^{k,\ell}(\chi_{0,\vp} \mid s) = q_{\vp}^{-2}(\chi_{0,\vp}(1)+\chi_{0,\vp}(-1))$.
\end{itemize}
	It follows readily that
\begin{align*}
	\widetilde{H}_{\vp}\left( s,0 \right) &= \zeta_{\vp}(1) q_{\vp}^{-n_{\vp}} \left\{ q_{\vp}^{-2n_{\vp}(\frac{1}{2}-s)} \zeta_{\vp}\left( \frac{1}{2}+s \right)^{-2} + q_{\vp}^{-2n_{\vp}(\frac{1}{2}+s)} \zeta_{\vp}\left( \frac{1}{2}-s \right)^{-2} \right\} - \\
	&\quad \frac{2 \zeta_{\vp}(1)^2 q_{\vp}^{-1-n_{\vp}}}{\zeta_{\vp}(\frac{1}{2}-s)\zeta_{\vp}(\frac{1}{2}+s)} \left\{ q_{\vp}^{-2n_{\vp}(\frac{1}{2}-s)} \zeta_{\vp}\left( \frac{1}{2}+s \right)^{-1} + q_{\vp}^{-2n_{\vp}(\frac{1}{2}+s)} \zeta_{\vp}\left( \frac{1}{2}-s \right)^{-1} \right\} + \\
	&\quad \zeta_{\vp}(1)^3 \frac{q_{\vp}^{-2-n_{\vp}}}{\zeta_{\vp}(\frac{1}{2}-s)^2 \zeta_{\vp}(\frac{1}{2}+s)^2} \cdot \left\{ \begin{matrix} (q_{\vp}^{-(n_{\vp}-1)(1-2s)} + q_{\vp}^{-(n_{\vp}-1)(1+2s)}) & \text{if } n_{\vp} \geq 2 \\ (\chi_{0,\vp}(1)+\chi_{0,\vp}(-1)) & \text{if } n_{\vp}=1 \end{matrix} \right. .
\end{align*}

\noindent We deduce the bound for any integer $n \geq 0$
\begin{equation} \label{DGBdLocNA}
	\left. \frac{\partial^n}{\partial s^n} \right|_{s=\pm \frac{1}{2}} \widetilde{H}_{\vp}(s,0) \ll_n q_{\vp}^{-n_{\vp}} \left( \log q_{\vp} \right)^n. 
\end{equation}

\noindent For $v \mid \infty$, to analyze $\widetilde{h}_{v}(s,0)$ at $s = \pm 1/2$, we need to revisit \S \ref{DGAuxA}. With the notation in \S \ref{DGAuxA}, $\widetilde{h}_{v}(s,0)$ is a linear combination of $\widetilde{h}_{1}(\pm s)$, $\widetilde{h}_{2}(\pm s)$ and $\cos(\pi s)^{-1} \widetilde{h}_3(\pm s)$ by Theorem \ref{MTF}. For $h=h_j$ we have for any $0 < c < 1/2$ and $x \in \R$ (see (\ref{ContShift}))

\begin{align}
	\widetilde{h}(ix) &= \int_{(c)} h^*(s) \cdot \frac{\Gamma(s)^3 \Gamma \left( \frac{1}{2}-ix-s \right)}{\Gamma(2s) \Gamma \left( \frac{1}{2}-ix \right)} \frac{\ud s}{2\pi i} \nonumber \\
	&= h^*\left( \frac{1}{2}-ix \right) \frac{\Gamma \left( \frac{1}{2}-ix \right)^2}{\Gamma \left( 1-2ix \right)} + \int_{(c+1)} h^*(s) \frac{\Gamma(s)^3 \Gamma \left( \frac{1}{2}-ix-s \right)}{\Gamma(2s) \Gamma \left( \frac{1}{2}-ix \right)} \frac{\ud s}{2\pi i}, \label{ModDualWtF}
\end{align}
	where we recall the definition of
	$$ h^*(s) = \Gamma(2s) \int_{-\infty}^{\infty} \frac{\Gamma \left( \frac{1}{2}+i\tau-s \right)}{\Gamma \left( \frac{1}{2}+i\tau+s \right)} h(i\tau) \frac{\ud \tau}{2\pi}. $$
	$h^*(s)$ is holomorphic for $\Re s > 0$ (see Lemma \ref{AuxTrans}). Hence we are reduced to studying $h^*(s)$ near $s=0$ and the above integrals defining $\widetilde{h}(ix)$ near $ix=\pm 1/2$.
	
\begin{lemma}
	(1) $h^*(s)$ has a simple pole at $s=0$. Introducing the Laurent expansion 
	$$ h^*(s) = \frac{(h^*)^{(-1)}(0)}{s} + \sum_{n=0}^{\infty} \frac{(h^*)^{(n)}(0)}{n!} s^n, $$
	we have for any integer $n \geq 0$ and any $\epsilon > 0$ a bound
	$$ (h^*)^{(-1)}(0) \ll T \Delta, \quad (h^*)^{(n)}(0) \ll_{\epsilon,n} T^{1+\epsilon} \Delta. $$
	
\noindent (2) Write $\Gamma^{(n)}(s)$ for the $n$-th derivative of the Gamma function. For any integer $n \geq 0$, we have
	$$ \int_{(c)} h^*(s) \cdot \frac{\Gamma(s)^3 \Gamma^{(n)} \left( 1-s \right)}{\Gamma(2s)} \frac{\ud s}{2\pi i} \ll_{\epsilon,n} T^{1+\epsilon} \Delta^{-2c}, $$
	$$ \int_{(c+1)} h^*(s) \cdot \frac{\Gamma(s)^3 \Gamma^{(n)} \left( -s \right)}{\Gamma(2s)} \frac{\ud s}{2\pi i} \ll_{\epsilon,n} T^{1+\epsilon} \Delta^{-2-2c}. $$
\end{lemma}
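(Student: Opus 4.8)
The plan is to deduce everything from the factorisation
$$ h^*(s) = \Gamma(2s)\, J(s), \qquad J(s) := \int_{-\infty}^{\infty} \frac{\Gamma\left(\frac{1}{2}+i\tau-s\right)}{\Gamma\left(\frac{1}{2}+i\tau+s\right)}\, h(i\tau)\, \frac{d\tau}{2\pi}, $$
which is just the definition of $h^*$ from Lemma \ref{AuxTrans} for $0<\Re s<1/2$. By Stirling's bound the ratio of Gamma functions is $O((1+|\tau|)^{2|\Re s|})$ on each line $\Re s=\sigma$ with $|\sigma|<1/2$, while $h=h_j$ has Gaussian decay in $\tau$; hence $J(s)$ is holomorphic on the strip $|\Re s|<1/2$ and one may differentiate under the integral there. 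Since $\Gamma(2s)$ has a simple pole at $s=0$ with residue $1/2$, $h^*$ has at worst a simple pole there, with $(h^*)^{(-1)}(0)=\tfrac{1}{2}J(0)=\tfrac{1}{4\pi}\int_{-\infty}^{\infty}h(i\tau)\,d\tau$; and, comparing Laurent/Taylor coefficients, each $(h^*)^{(n)}(0)$ ($n\geq0$) is a universal constant-coefficient linear combination of $J(0),J'(0),\dots,J^{(n+1)}(0)$, the constants being the (absolute) Laurent coefficients of $\Gamma(2s)$ at $0$. Finally $J^{(l)}(0)=\int R_l(\tau)\,h(i\tau)\,\frac{d\tau}{2\pi}$ with $R_l(\tau):=\partial_s^l\big|_{s=0}\frac{\Gamma(1/2+i\tau-s)}{\Gamma(1/2+i\tau+s)}$, which by Fa\`a di Bruno is a universal polynomial in the polygamma values $\psi^{(m)}(\tfrac12+i\tau)$, $0\leq m\leq l-1$; hence $R_l(\tau)\ll_{l,\epsilon}(1+|\tau|)^{\epsilon}$.

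It then suffices, for part (1), to bound $\int_{-\infty}^{\infty}(1+|\tau|)^{\epsilon}|h_j(i\tau)|\,d\tau$. I would use the crude inequality $|w_j(\tau)|\leq |\tau|\,e^{\pi|\tau|}$, valid for $j=1,2,3$, so that $|h_j(i\tau)|\leq |\tau|\,e^{\pi|\tau|}\,g(\tau;T,\Delta)$; expanding the square defining $g$ and completing the square in the exponents shows that $e^{\pi|\tau|}g(\tau;T,\Delta)$ equals $\frac{1}{2\sqrt\pi\Delta}$ times a Gaussian of width $\asymp\Delta$ centred at $\tau=\pm T$ on $\pm\tau>0$, up to a term super-exponentially small in $T$ — this is the same bookkeeping already done in the proof of Lemma \ref{RefAuxTrans}(2). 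Integrating against the factor $|\tau|(1+|\tau|)^{\epsilon}\asymp T^{1+\epsilon}$ on the effective support gives $\ll T^{1+\epsilon}$, whence $|(h^*)^{(n)}(0)|\ll_{\epsilon,n}T^{1+\epsilon}\Delta$ for $n\geq0$ and $|(h^*)^{(-1)}(0)|\ll T\Delta$ (taking $\epsilon=0$).

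For part (2) I would insert absolute values on the two vertical lines (these integrals arise by Taylor-expanding $\widetilde h(ix)$ near $ix=\pm\tfrac12$ in \eqref{ModDualWtF}, i.e.\ expanding $\frac{\Gamma(1/2-ix-s)}{\Gamma(1/2-ix)}$). On $\Re s=c$ resp.\ $\Re s=c+1$, Lemma \ref{RefAuxTrans}(2) gives $|h^*(s)|\ll T\Delta^{-2\Re s}e^{-|\Im s|/T}$, the implied constant depending on $\Re s$, which here is bounded away from $0$. Stirling's asymptotics give, writing $t=\Im s$ and for $|t|$ large,
$$ \left|\frac{\Gamma(s)^3\,\Gamma^{(n)}(1-s)}{\Gamma(2s)}\right|\ll_n |t|^{-1/2}(\log(2+|t|))^n e^{-\pi|t|} \ \text{ on }\ \Re s=c\in(0,\tfrac12), $$
$$ \left|\frac{\Gamma(s)^3\,\Gamma^{(n)}(-s)}{\Gamma(2s)}\right|\ll_n |t|^{-3/2}(\log(2+|t|))^n e^{-\pi|t|} \ \text{ on }\ \Re s=c+1\in(1,\tfrac32), $$
the key point being the net exponential decay $e^{-\pi|t|}$ after dividing the three $\Gamma(s)$'s by $\Gamma(2s)$; near $t=0$ all the Gamma factors occurring are finite and nonzero (no poles are met for these ranges of $\Re s$), so the integrand there is $\ll T\Delta^{-2\Re s}$. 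Multiplying by the bound for $h^*$ and integrating $dt$ over $\R$ — splitting off $|t|\leq1$ so that the otherwise-harmless factor $|t|^{-3/2}$ is integrated away from the origin — yields $\ll_{\epsilon,n}T^{1+\epsilon}\Delta^{-2c}$ resp.\ $\ll_{\epsilon,n}T^{1+\epsilon}\Delta^{-2-2c}$.

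The only genuinely delicate ingredient is the one already present in Lemma \ref{RefAuxTrans}: that $\int|h_j(i\tau)|\,d\tau$ is only polynomially large in $T$ rather than of size $e^{\pi T}$, which rests on the exact cancellation between the factor $e^{\pi\tau}$ hidden in $w_1,w_2$ and the factor $e^{-\pi\tau}$ in the $\tau\approx T$ lobe of $g(\tau;T,\Delta)$; everything else is routine Stirling bookkeeping. Note for orientation that when $h=h_2$ or $h=h_3$ the integral $\int h(i\tau)\,d\tau$ vanishes by parity, so $h^*$ is in fact holomorphic at $s=0$ in those cases — the stated bounds of course remain valid.
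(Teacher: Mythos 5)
Your argument is correct and takes essentially the same route as the paper's: factor $h^*(s)=\Gamma(2s)J(s)$, note that $J$ is holomorphic near $s=0$ so the pole comes only from $\Gamma(2s)$, express the Laurent/Taylor coefficients of $h^*$ in terms of $J^{(l)}(0)$, observe that differentiating the ratio of Gamma functions under the integral produces polygamma factors of polynomial (indeed logarithmic) growth in $\tau$, and then exploit the Gaussian concentration of $h_j(i\tau)$ around $\tau=\pm T$ — exactly the bookkeeping of Lemma \ref{RefAuxTrans}. For part (2) you fill in the Stirling computation that the paper compresses into ``direct consequences of Lemma \ref{RefAuxTrans} (2)'', and your exponents ($|t|^{-1/2}$ and $|t|^{-3/2}$, both times $e^{-\pi|t|}$) check out; the observation that no Gamma poles are met on the two vertical lines, so the $|t|^{-3/2}$ is only a large-$|t|$ asymptotic and the integrand is bounded near $t=0$, is the right way to handle that factor.
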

\begin{proof}
	(1) The integral over $\tau$ in the defining formula of $h^*(s)$ is absolutely convergent near $s=0$, hence is regular at $0$. While $\Gamma(2s)$ has a simple pole at $s=0$, we get the first assertion. By induction on $n$, one easily shows
\begin{align*}
	\frac{\ud^n}{\ud s^n} \left( \frac{\Gamma \left( \frac{1}{2}+i\tau-s \right)}{\Gamma \left( \frac{1}{2}+i\tau+s \right)} \right) &= \frac{\Gamma \left( \frac{1}{2}+i\tau-s \right)}{\Gamma \left( \frac{1}{2}+i\tau+s \right)} \cdot \sum_{\substack{\sum_i a_i k_i + \sum_j b_j \ell_j = n \\ a_i,k_i,b_j,\ell_j \in \Z_{\geq 0}}} \omega(a_i,k_i;b_j,\ell_j) \cdot \\
	&\quad  \prod_{i,j} \left( \frac{\Gamma^{(a_i)} \left( \frac{1}{2}+i\tau-s \right)}{\Gamma \left( \frac{1}{2}+i\tau-s \right)} \right)^{k_i} \left( \frac{\Gamma^{(b_j)} \left( \frac{1}{2}+i\tau+s \right)}{\Gamma \left( \frac{1}{2}+i\tau+s \right)} \right)^{\ell_j},
\end{align*} 
	where $\omega(\cdots)$ are absolute constants depending only on $a_i,k_i,b_j,\ell_j$. Hence the desired bounds follows from the bounds
	$$ \int_{-\infty}^{\infty} h(i\tau) \ud \tau \ll T \Delta, $$
	$$ \int_{-\infty}^{\infty} \prod_i \left( \frac{\Gamma^{(a_i)} \left( \frac{1}{2}+i\tau \right)}{\Gamma \left( \frac{1}{2}+i\tau \right)} \right)^{k_i} h(i\tau) \ud \tau \ll_{\epsilon,n} T^{1+\epsilon} \Delta, $$
	where $\sideset{}{_i} \sum a_i k_i \leq n$. In fact, for $h=h_{1,2}$, $h(i\tau)$ is concentrated around $\tau = \pm T$ in an interval of size $O(\Delta)$, with absolute value bounded by $O(T)$, while by Stirling's estimation the factor of Gamma functions is bounded by $(1+\norm[\tau])^{\epsilon}$ for any $\epsilon > 0$. Hence the above bounds follow readily.
	
\noindent (2) These bounds are direct consequences of Lemma \ref{RefAuxTrans} (2).
\end{proof}

\begin{corollary} \label{DGBdLocA}
	$\widetilde{h}(s)$ is regular at $s=-1/2$, and has a double pole at $s=1/2$. Introducing the Laurent expansion for $a=\pm 1/2$
	$$ \widetilde{h}(s) = \frac{\widetilde{h}^{(-2)}(a)}{(s-a)^2} + \frac{\widetilde{h}^{(-1)}(a)}{s-a} + \sum_{n=0}^{\infty} \frac{\widetilde{h}^{(n)}(a)}{n!}(s-a)^n, $$
	we have the bounds for any $n \geq -2$
	$$ \widetilde{h}^{(n)} \left( \pm \frac{1}{2} \right) \ll_{\epsilon, n} T^{1+\epsilon}. $$
\end{corollary}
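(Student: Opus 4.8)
The plan is to read off the Laurent expansions of $\widetilde{h}$ at $s=\pm\tfrac12$ directly from the decomposition (\ref{ModDualWtF}), which I write as $\widetilde{h}(s)=T_1(s)+T_2(s)$ with
\[
T_1(s):=h^*\!\left(\tfrac12-s\right)\frac{\Gamma\!\left(\tfrac12-s\right)^2}{\Gamma(1-2s)},\qquad
T_2(s):=\int_{(c+1)}h^*(s')\,\frac{\Gamma(s')^3\,\Gamma\!\left(\tfrac12-s-s'\right)}{\Gamma(2s')\,\Gamma\!\left(\tfrac12-s\right)}\,\frac{ds'}{2\pi i}
\]
for a fixed $0<c<\tfrac12$. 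First one checks that $T_2$ is holomorphic in a neighbourhood of both $\pm\tfrac12$: the factor $\Gamma(\tfrac12-s)^{-1}$ is entire, and on the line $\Re s'=c+1>1$ the argument $\tfrac12-s-s'$ has real part in $(-\tfrac32,-1)\cup(-\tfrac12,0)$ when $s$ is near $\pm\tfrac12$, so it stays away from $\{0,-1,-2,\dots\}$ and the integrand has no $s$-pole; convergence after any number of $s$-derivatives is no issue, since $\partial_s$ only introduces polygamma factors of logarithmic growth, dominated by the super-polynomial decay of $h^*$ on vertical lines (Lemma \ref{AuxTrans}). Hence all the singular behaviour of $\widetilde{h}$ sits in $T_1$.

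Next I would analyse $T_1$. Near $s=\tfrac12$ we have $\tfrac12-s\to0$, where $h^*$ has a simple pole (established in the preceding lemma); combined with the elementary fact that $\Gamma(u)^2/\Gamma(2u)=2u^{-1}+O(1)$ near $u=0$, the product of the two Laurent series at $s=\tfrac12$ yields a pole of order exactly $2$, and the coefficients of $(s-\tfrac12)^{-2}$, $(s-\tfrac12)^{-1}$ and all the Taylor coefficients are finite linear combinations, with absolute constants, of the Laurent coefficients $(h^*)^{(-1)}(0),(h^*)^{(k)}(0)$. By the bounds $\lvert(h^*)^{(-1)}(0)\rvert\ll T\Delta$ and $\lvert(h^*)^{(k)}(0)\rvert\ll_{\epsilon,k}T^{1+\epsilon}\Delta$ of that lemma, these are all $\ll_{\epsilon,n}T^{1+\epsilon}\Delta$. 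Near $s=-\tfrac12$ every factor of $T_1$ is holomorphic ($\tfrac12-s\to1$ lies in $\Re>0$, away from the pole of $h^*$), $T_1(-\tfrac12)=h^*(1)$, and its Taylor coefficients are finite combinations of $(h^*)^{(k)}(1)$ with absolute constants; since Lemma \ref{RefAuxTrans}(2) gives $\lvert h^*(s')\rvert\ll T\Delta^{-2\Re s'}\le T$ on a fixed small circle about $s'=1$ (where $\Re s'$ is positive and bounded, so $\Delta^{-2\Re s'}\le1$), Cauchy's estimate gives $\lvert(h^*)^{(k)}(1)\rvert\ll_k T$. In particular $\widetilde{h}$ is regular at $-\tfrac12$ and has a pole of order $\le2$ at $\tfrac12$, with both confirmations using that $T_2$ contributes nothing singular there (and in fact $T_2(\tfrac12)=0$ because $\Gamma(\tfrac12-s)^{-1}$ vanishes).

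Finally one bounds the Taylor coefficients of $T_2$ at $a=\pm\tfrac12$: expanding the only $s$-dependent factor $\Gamma(\tfrac12-s-s')/\Gamma(\tfrac12-s)$ about $s=a$, each coefficient is a finite combination — with coefficients that are polygamma values at $\tfrac12-a$, hence absolute — of quotients $\Gamma^{(j)}(\tfrac12-a-s')/\Gamma(\tfrac12-a)$; for $a=-\tfrac12$ one has $\tfrac12-a=1$ and rewrites $\Gamma^{(j)}(1-s')$ via $\Gamma(1-s')=(-s')\Gamma(-s')$ as $\sum_{k\le j}p_k(s')\Gamma^{(k)}(-s')$ with $p_k$ polynomial, while for $a=\tfrac12$ one applies the same expansion to $\Gamma(\tfrac12-s-s')\cdot\Gamma(\tfrac12-s)^{-1}$. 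In every case the resulting $s'$-integrals are of the form bounded in part (2) of the preceding lemma (the extra polynomial factor $p_k(s')$ on $\Re s'=c+1$ being harmless against the decay of $h^*$), giving $\ll_{\epsilon,n}T^{1+\epsilon}\Delta^{-2-2c}$. Collecting the contributions from $T_1$ and $T_2$ and using $\Delta=T^{\epsilon}$ to absorb all powers of $\Delta$ (note $\Delta^{-2-2c}\le1$) yields $\lvert\widetilde{h}^{(n)}(\pm\tfrac12)\rvert\ll_{\epsilon,n}T^{1+\epsilon}$ for all $n\ge-2$. I expect the only real work to be this last bookkeeping step — expanding the $s$-dependent gamma quotient and matching the resulting integrals to the $\Gamma^{(n)}$-integrals already estimated — since the gamma-function manipulations for $T_1$ and the pole-order count are elementary once the simple pole of $h^*$ at $0$ is in hand.
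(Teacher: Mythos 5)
Your proposal is correct and fills in, with explicit bookkeeping, precisely the argument the paper compresses into the single line ``This follows from the lemma directly via (\ref{ModDualWtF})'': splitting $\widetilde{h}$ into the residue term $T_1$ and the shifted-contour integral $T_2$, locating the double pole in $T_1$ via the simple poles of $h^*$ and $\Gamma(u)^2/\Gamma(2u)$ at $u=0$, and matching the Taylor coefficients of $T_2$ to the $\Gamma^{(n)}$-integrals of the preceding lemma. The only minor imprecisions are cosmetic — the phrase ``polygamma values at $\tfrac12-a$'' is not quite right when $a=\tfrac12$ (one should say ``Taylor coefficients of the entire function $1/\Gamma$ at $0$''), and ``pole of order exactly $2$'' should read ``at most $2$'' until the residue of $h^*$ at $0$ is shown nonzero, as you in fact note a sentence later — neither of which affects the bounds.
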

\begin{proof}
	This follows from the lemma directly via (\ref{ModDualWtF}).
\end{proof}

\begin{lemma} \label{DGBd}
	(1) For our choice of weight function given in \eqref{ConcWtR}, $\widetilde{h}_v(s,0)$ has possible poles at $s=\pm 1/2$ of order $4$. 
	
\noindent (2)We have the bound for any $\epsilon > 0$
	$$ DG(\Psi) \ll_{\epsilon} \Cond_{\infty}(\chi_0)^{1+\epsilon} \Cond_{\fin}(\chi_0)^{-1+\epsilon}. $$
\end{lemma}
\begin{proof}
	By Corollary \ref{DGBdLocA}, we see that $\widetilde{h}_{v}(s,0)$ has possible poles at $s=\pm 1/2$ of order $2$. Hence the global weight $DG(s, \Psi)$ has possible poles at $s=\pm 1/2$ of order $2-2(r-1)+2r = 4$, proving (1). The bound in (2) is deduced via (\ref{DGF}) from the local bounds of the coefficients in the Laurent expansion of local weights in (\ref{DGBdLocNA}) and Corollary \ref{DGBdLocA}.
\end{proof}

\section{Uniform Bound of A Generalized Hypergeometric Special Value}
\label{TechKerBd}

	\subsection{Easy Cases}

	In this section, we prove Proposition \ref{BasisTransProp} (1), which we reformulate as follows.

\begin{theorem}\label{Thm:K main estimate}
For any real $x,\tau$ we have
\begin{equation}\label{K main estimate}
K(x,\tau) = \Gamma \left( \frac{1}{2}-i x \right) \GenHyGI{3}{2}{\frac{1}{2}+i\tau, \frac{1}{2}+i\tau, \frac{1}{2}+i\tau}{1-i x + i \tau, 1+2i \tau}{1} \ll \frac{1}{1+|\tau|}.
\end{equation}
\end{theorem}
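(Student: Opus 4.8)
The plan is to estimate $K(x,\tau)$ via the Mellin--Barnes representation established in Proposition \ref{BasisTransProp} (2), namely
\[
K(x,\tau) = \int_{(c)} \frac{\Gamma \left( \frac{1}{2}+i\tau-s \right) \Gamma(s)^2}{\Gamma \left( \frac{1}{2}+i\tau+s \right)} \cdot \frac{\Gamma(s) \Gamma \left( \frac{1}{2}-ix-s \right)}{\Gamma \left( \frac{1}{2}-ix \right)} \frac{ds}{2\pi i}, \qquad 0 < c < \tfrac{1}{2},
\]
and to exploit the fact that the product $\Gamma(\frac12+i\tau-s)/\Gamma(\frac12+i\tau+s)$ supplies, by Stirling, a polynomial gain $(1+|\tau|)^{-2\Re s}$ uniformly when $s$ stays in a fixed vertical strip, while the remaining gamma factors are $\tau$-independent. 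The region where things are comfortable is $|\tau| \gtrsim 1 + |x|$; there one simply moves the contour to $\Re s = c$ with $c$ close to $\frac12$ (but $< \frac12$, to stay left of the pole of $\Gamma(\frac12 - ix - s)$ and right of the poles of $\Gamma(s)^3$), and the $\tau$-integral converges absolutely against $e^{-\frac{\pi}{2}|\Im s|}$-type decay coming from the Stirling asymptotics of the other gamma factors; this yields $K(x,\tau) \ll (1+|\tau|)^{-2c}$, hence in particular $\ll (1+|\tau|)^{-1}$ once $c \geq \frac12 - \epsilon$ suffices — but note that a genuinely uniform $(1+|\tau|)^{-1}$ requires care because we cannot take $c = \frac12$. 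So the first step is to record the clean Stirling bounds for each gamma quotient in horizontal and slanted strips, and to establish \eqref{K main estimate} in the easy range $|\tau| \gg 1$ with $|x|$ bounded, and more generally whenever $|x| \ll |\tau|$.

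The second step handles the complementary, genuinely hard range, where $|x|$ is large compared to $|\tau|$ (and in particular $|\tau|$ may be small or bounded while $|x| \to \infty$); here the claimed bound $(1+|\tau|)^{-1}$ has \emph{no} decay in $x$, so the content is just uniform boundedness in $x$ together with the $\tau$-decay, and the difficulty is that naively moving the $s$-contour produces gamma factors that grow in $x$. The fix is the standard one: before shifting, pair $\Gamma(s)\,\Gamma(\frac12 - ix - s)/\Gamma(\frac12 - ix)$ and observe that this combination is exactly the Mellin transform of $t^{s}(1+t)^{-\frac12+ix}$ along $0<t<\infty$ (as in the proof of Proposition \ref{BasisTransProp}), so that the $x$-dependence is confined to an oscillatory, uniformly bounded factor $(1+t)^{ix}$ inside a $t$-integral; equivalently one returns to the integral representation $K(x,\tau) = \int_0^1 F_2(\tau,t)(1-t)^{-ix} \frac{dt}{t\sqrt{1-t}}$ of Remark \ref{K2Alt}, or the $\Lambda_2$-form over $(0,\infty)$. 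The job is then to show that $F_2(\tau,t)/(t\sqrt{1-t})$ (resp.\ $\Lambda_2(\tau,t)/\sqrt{t(1+t)}$) is absolutely integrable with norm $\ll (1+|\tau|)^{-1}$, using the known asymptotics of ${}_2F_1$-type functions near the three singular points $t=0$, $t=1$, $t=\infty$, together with the Legendre/duplication identities already invoked in the paper, and the $1/\sinh(\pi\tau)$-type cancellations that make the apparent poles at $\tau=0$ removable.

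The third step is the delicate bookkeeping needed to extract the $(1+|\tau|)^{-1}$ factor uniformly rather than $(1+|\tau|)^{-2c}$ with $c<\frac12$: one should shift the $s$-contour just past $\Re s = \frac12$, picking up the residue at $s=\frac12 - ix$, and argue that this residue term and the shifted integral each obey the bound. The residue at $s = \frac12 - ix$ is
\[
\Gamma \left( i\tau + ix \right) \Gamma \left( \tfrac12 - ix \right)^2 \big/ \Gamma \left( 1 + ix + i\tau \right)
\]
times a harmless factor, and its modulus is controlled by the reflection formula: $|\Gamma(i\tau+ix)/\Gamma(1+ix+i\tau)| = |1/(x+\tau)| \cdot (\text{bounded})$, which has the wrong shape in general, so one actually wants to combine it with the corresponding residue coming from pairing in the other order, or to symmetrize over the sign changes $x\mapsto -x$, $\tau\mapsto-\tau$ that appear in the definitions of $K_{1,\pm}, K_{2,\pm}$ — indeed, looking at the statement we only ever need $K(x,\tau)$ through those symmetrized combinations, so it is legitimate (and probably necessary) to prove the bound only for the symmetrized kernel, or to observe that the $\frac12-ix$ pole is cancelled against the pole of $\Gamma(\frac12-ix)^{-1}$ in the prefactor when $x$ is in the relevant configuration. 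The main obstacle, and the step I expect to absorb most of the work, is precisely this transition region $|\tau| \asymp 1$, $|x|$ large, where neither the crude Stirling bound on the $s$-integral nor a single residue cleanly gives uniform boundedness: one has to track the ${}_3F_2$ at its unit argument — a known but genuinely subtle object — using its integral representation and a stationary-phase / repeated-integration-by-parts analysis in the $t$-variable to beat the absence of absolute convergence, exactly the kind of archimedean-exponential-sum estimate the introduction advertises as the counterpart of the algebraic exponential sums of Conrey--Iwaniec and Petrow--Young.
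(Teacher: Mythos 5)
Your decomposition into easy ranges (handled by the Mellin--Barnes representation and Stirling) and a hard transition region is the right strategic picture, and your Step~1 and the contour-shift ideas in Step~3 broadly match Lemmas~\ref{lemma K++K--} and~\ref{lemma big and small tau} of the paper. However, there is a genuine gap in Step~2, and it is precisely where the work lies.

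You claim that showing $F_2(\tau,t)/(t\sqrt{1-t})$ is absolutely integrable with $L^1$-norm $\ll (1+|\tau|)^{-1}$ would suffice. This is false: the paper's own remark following \eqref{K to 2F1} shows, via the inequality \eqref{2F1 abs estimate}, that estimating the integrand in absolute value yields only the bound $|K(-x,\tau)| \ll 1$, uniformly in $\tau$, with \emph{no} decay. The $L^1$-norm of the kernel does not decay in $\tau$; the $\tau^{-1}$ gain must come entirely from oscillation in the $y$-variable, and the oscillation is shared between the overt factor $y^{i\tau}(1-y)^{ix}$ and the ${}_2F_1(\frac12+i\tau,\frac12+i\tau;1+2i\tau;y)$ sitting inside, which is itself an oscillatory function of $y$ with frequency comparable to $\tau$. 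Naively integrating by parts against $y^{i\tau}$ alone does not work because the ${}_2F_1$ factor has derivatives of size $\tau$ in the bulk; one must first replace it by an explicit WKB-type main term. This is the content of \eqref{2F1asymptotic}, which combines Zavorotny's asymptotics and the Khwaja--Olde Daalhuis confluent-hypergeometric expansion; only after this replacement does the integral become a tractable oscillatory integral $\int_0^1 q_0(y)\,e^{i\tau p_0(\alpha,y)}\,dy$ with $\alpha = x/\tau$, which is then evaluated by saddle point (Lemma~\ref{lemma K0 tau>x}, Corollary~\ref{main corollary}) and, in the ranges where the saddle point degenerates toward the endpoints, by Temme's uniform method (Lemmas~\ref{lemma K0 tau>x}, \ref{lemma K0 tau<x}). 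Your proposal's final paragraph gestures at ``stationary phase / repeated integration by parts,'' which is the correct instinct, but it is not a plan: without the uniform ${}_2F_1$ asymptotic one cannot set up the stationary-phase integral at all, and without Temme's method one cannot push through the full range $x^{2/3-\epsilon}\ll\tau\ll x^{3/2}$, which is exactly the range Lemma~\ref{lemma big and small tau} leaves open.

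Two smaller points. First, the symmetrization you suggest (replacing $K$ by the symmetrized combinations $K_{1,\pm}, K_{2,\pm}$ and cancelling residues) is not what the paper does; the paper proves the bound directly for each signed version $K(\epsilon_1 x, \epsilon_2 \tau)$ separately, observing that the cases $\epsilon_1=\epsilon_2$ decay rapidly (Lemma~\ref{lemma K++K--}), while the mixed cases are the hard ones. Second, your residue term at $s=\tfrac12-ix$ does not obviously obey the bound on its own as you note, and no cancellation against the prefactor $\Gamma(\tfrac12-ix)^{-1}$ is available in the regime of interest; this is another symptom that the contour-shift route alone cannot close the argument in the transition region.
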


\noindent For the sake of simplicity, we assume that $x,\tau>0$ and consider all four cases: $\epsilon_1,\epsilon_2=\pm1$ with

\begin{equation}\label{K Mellin}
K(\epsilon_1x,\epsilon_2\tau)=\frac{1}{2\pi i}\int_{(c)}
\frac{\Gamma^3(s)\Gamma(1/2-i\epsilon_1x-s)\Gamma(1/2+i\epsilon_2\tau-s)}{\Gamma(1/2+i\epsilon_2\tau+s)\Gamma(1/2-i\epsilon_1x)}\ud s,
\end{equation}
	
\noindent where $0<c<\frac{1}{2}.$ Let $s=c+iy$. Using the Stirling formula for estimating the Gamma-functions, we can estimate \eqref{K Mellin} by the following integral
\begin{equation}\label{K Mellin est1}
K(\epsilon_1x,\epsilon_2\tau)\ll
\int_{-\infty}^{\infty}g_{\epsilon_1,\epsilon_2}(c,y)e^{\frac{\pi}{2}f_{\epsilon_1,\epsilon_2}(y)}\ud y, \quad \text{where}
\end{equation}
$$ f_{\epsilon_1,\epsilon_2}(y)=x-3|y|-|\epsilon_1x+y|-|\epsilon_2\tau-y|+|\epsilon_2\tau+y|. $$
Now we are interested in proving the inequality $f(y)\le0$  and in finding the regions where $f(y)$ is close to zero. Note that the contribution of those $y$ such that 
	$$ f_{\epsilon_1,\epsilon_2}(y)\ll -\max \left( \log^2(1+x), \log^2(1+\tau) \right) $$
is bounded by  $\ll_A (1+x)^{-A}(1+\tau)^{-A}$ for any $A \gg 1$. Moreover, by the relation of symmetry
$$ f_{-1,1}(y)=f_{1,-1}(-y),\quad f_{-1,-1}(y)=f_{1,1}(-y), $$
it is enough to consider the cases $\epsilon_1=-1,\epsilon_2=1$ and $\epsilon_1=-1,\epsilon_2=-1$.

\begin{lemma}\label{lemma K++K--}
For any positive $x,\tau$ we have
\begin{equation}\label{K++and--estimate}
|K(x,\tau)|+|K(-x,-\tau)|\ll\frac{1}{1+\tau}.
\end{equation}
\end{lemma}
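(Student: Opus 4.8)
The plan is to carry out the analysis sketched just before the statement, namely the study of the ``phase function'' $f_{\epsilon_1,\epsilon_2}(y)$ appearing in the Stirling-reduced integral \eqref{K Mellin est1}. After the symmetry reductions noted in the text it suffices to treat the cases $(\epsilon_1,\epsilon_2)=(-1,1)$ and $(-1,-1)$, and the lemma concerns precisely the ``easy'' pair, where the two $\tau$-contributions $-|\epsilon_2\tau-y|$ and $+|\epsilon_2\tau+y|$ reinforce rather than cancel. First I would write out $f_{-1,1}(y)=x-3|y|-|{-x+y}|-|\tau-y|+|\tau+y|$ and $f_{-1,-1}(y)=x-3|y|-|{-x+y}|-|\tau+y|+|\tau-y|$ explicitly as piecewise-linear functions, breaking $\mathbb{R}$ into the finitely many intervals cut out by the breakpoints $y\in\{0,\pm x,\pm\tau\}$. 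On each such interval $f$ is affine, so one simply checks the sign at the breakpoints and the slopes; the claim will be that $f\le 0$ everywhere and, more importantly, that $f(y)\le -c_0\bigl(|y|+\min(|y|,x)+\ldots\bigr)$ away from a bounded neighbourhood of $y=0$, i.e. $f$ decays at least linearly in $|y|$, $x$, and $\tau$ once we move off the origin.

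Next I would insert this piecewise bound back into \eqref{K Mellin est1}. The polynomial factor $g_{\epsilon_1,\epsilon_2}(c,y)$ coming from Stirling is of moderate (polynomial) growth in $y$, $x$, $\tau$, so a linear upper bound $\frac{\pi}{2}f(y)\le -c_0(|y|+x+\tau)+O(\log)$ on the bulk of the line is more than enough: the $y$-integral converges and is bounded by $e^{-c_0'(x+\tau)}$ times a polynomial, hence by $O_A\bigl((1+x)^{-A}(1+\tau)^{-A}\bigr)$ for every $A$. One has to be slightly careful near $y=0$, where $f$ can be close to $0$: there the factor $\Gamma^3(s)$ contributes $\asymp |y|^{3c-1}\,e^{-\tfrac{3\pi}{2}|y|}$ rather than something that forces decay in $x,\tau$, but since $c>0$ we may simply take $c$ small, or alternatively absorb this into the observation already made in the text that the region where $f\ll -\max(\log^2(1+x),\log^2(1+\tau))$ contributes $\ll_A (1+x)^{-A}(1+\tau)^{-A}$, and check by hand that the complementary region forces $|y|$, and hence (through the piecewise formula) $x$ and $\tau$, to be small. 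Adding the two cases and using the symmetry $f_{-1,-1}(y)=f_{1,1}(-y)$ gives the combined bound \eqref{K++and--estimate}.

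The main obstacle is purely bookkeeping: verifying the inequality $f_{\epsilon_1,\epsilon_2}(y)\le 0$ together with the correct \emph{quantitative} linear decay rate requires organizing the case analysis over all orderings of $0,x,\tau$ (whether $x\le\tau$ or $\tau\le x$) and all sign regions of $y$, and one must be attentive that the decay is genuinely in $x+\tau$ and not merely in $|y|$ — this is what makes $(-1,1)$ and $(-1,-1)$ the ``easy'' cases, as opposed to $(1,1)$/$(-1,-1)$ paired with $(1,-1)$ where near-cancellation of the $\tau$-terms leaves only the weaker $(1+|\tau|)^{-1}$ bound of Theorem \ref{Thm:K main estimate}. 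A secondary technical point is the uniformity of the Stirling estimates for the ratio $\Gamma(1/2+i\epsilon_2\tau-s)/\Gamma(1/2+i\epsilon_2\tau+s)$ and for $\Gamma(1/2-i\epsilon_1 x-s)/\Gamma(1/2-i\epsilon_1 x)$ when $s=c+iy$ ranges over the whole vertical line; these are standard (e.g.\ \cite[(B.8)]{Iw02}) but the exponential factor $g_{\epsilon_1,\epsilon_2}(c,y)$ must be tracked with enough precision to see that it is only polynomial in the three parameters, so that the exponential gain from $f$ wins. Once the shape of $f$ is pinned down, the rest is routine.
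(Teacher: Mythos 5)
The plan founders on a false assertion about the phase function. You claim that on ``the bulk of the line'' one has $\tfrac{\pi}{2}f_{\epsilon_1,\epsilon_2}(y)\le -c_0(|y|+x+\tau)+O(\log)$, and hence the $y$-integral is $\ll e^{-c_0'(x+\tau)}$ times a polynomial. But $f_{\epsilon_1,\epsilon_2}(0)=x-|\epsilon_1 x|-|\epsilon_2\tau|+|\epsilon_2\tau|=0$ for \emph{every} choice of signs and \emph{every} $x,\tau>0$: the $+x$ from Stirling of the denominator $\Gamma(1/2-i\epsilon_1 x)$ is exactly cancelled at $y=0$ by the $-|\epsilon_1 x+y|$ from the numerator $\Gamma(1/2-i\epsilon_1x-s)$, and the two $\tau$-terms cancel identically. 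Concretely, in the easy case $(\epsilon_1,\epsilon_2)=(-1,-1)$ one computes $f_{-1,-1}(y)=-4y$ for $0<y<\min(x,\tau)$ and $f_{-1,-1}(y)=-2|y|$ for $-\min(x,\tau)<y<0$, so the phase vanishes linearly in $|y|$ near the origin \emph{with no dependence on $x$ or $\tau$ at all}. The exponential factor therefore provides no gain whatsoever near $y=0$; your fallback observation that ``the complementary region forces $x$ and $\tau$ to be small'' has the implication backwards — $|y|$ small is perfectly compatible with $x,\tau$ arbitrarily large.

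The decay in $x,\tau$ near $y=0$ can only come from the polynomial Stirling prefactor $g_{\epsilon_1,\epsilon_2}(c,y)$, which on the line $\Re s=c$ equals roughly $(1+x)^{-c}(1+\tau)^{-2c}$. Since the Mellin representation \eqref{K Mellin} is only given for $0<c<1/2$, this yields at best $(1+x)^{-1/2}(1+\tau)^{-1}$ — nowhere near the claimed $(1+x)^{-A}(1+\tau)^{-2A}$ for $A>1$, nor is taking $c$ small helpful (it makes the prefactor worse). This is precisely why the paper's argument does something you never mention: after localizing to $|\Im s|\le y_0$ it \emph{shifts the contour rightward to $\Re s=A$} (the piecewise contour $\gamma$ in \eqref{contour gamma}). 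On that vertical segment the Stirling exponents in $c$ become $(1+x)^{-A}(1+\tau)^{-2A}$, giving the stated decay when no poles are crossed ($x,\tau>y_0$); when one of $x,\tau$ falls inside the strip $|\Im s|\le y_0$ the shift picks up residues at $s_x(j)$ or $s_\tau(j)$, which the paper bounds separately (obtaining $\ll 1/\tau$ resp.\ $\ll 1/(\tau x)^{1/2}$ in \eqref{K-1-1estimate2}--\eqref{K-1-1estimate3}). Without the contour shift, and with the mistaken belief that $f$ supplies exponential decay in $x+\tau$, the proposed argument cannot reach the conclusion.
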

\begin{proof}
Note that $f_{-1,-1}(y)<0$ for $y\neq0$, and that $f_{-1,-1}(y)$ is close to zero only if $|y|$ is small. Therefore, the  product of Gamma functions in \eqref{K Mellin} is exponentially small unless
$$y \ll y_0:=\max\left(\log^2(1+x),\log^2(1+\tau)\right).$$
The integrand in \eqref{K Mellin} has poles at
$$ s_{x}(j)=\frac{1}{2}-i\epsilon_1x+j,\quad s_{\tau}(j)=\frac{1}{2}+i\epsilon_2\tau+j,\quad j=0,1,2,\ldots $$
Note that if  $x,\tau<y_0$ then $x,\tau\ll1$ and estimating \eqref{K Mellin est1} trivially we obtain \eqref{K++and--estimate}.

\noindent Consider two cases: $\tau>x>y_0$ and $x>\tau>y_0.$  We change the contour of integration in  \eqref{K Mellin} to
\begin{equation}\label{contour gamma}
\gamma=\gamma_1\cup\gamma_2\cup\gamma_3\cup\gamma_4\cup\gamma_5, \quad \text{where}
\end{equation}
$$ \gamma_1=(c-i\infty,c-iy_0),\,\gamma_2=(c-iy_0,A-iy_0),\,\gamma_3=(A-iy_0,A+iy_0), $$
$$ \gamma_4=(A+iy_0,c+iy_0),\,\gamma_5=(c+iy_0,c+i\infty). $$
While doing this we do not cross any poles at $s_{x}(j)$, $s_{\tau}(j)$.
Since the  product of Gamma functions in \eqref{K Mellin} is exponentially small if
$|y|\gg y_0$,  the integrals over $\gamma_1,\gamma_2,\gamma_4,\gamma_5$ are negligible. Therefore,
\begin{equation}\label{K-1-1estimate0}
K(-x,-\tau) \ll \frac{1}{(x\tau)^B} + \int_{-y_0}^{y_0}
\left|\frac{\Gamma^3(A+iy)\Gamma(1/2-A+i(x-y))\Gamma(1/2-A-i(\tau+y))}{\Gamma(1/2+A-i(\tau-y))\Gamma(1/2+ix)}\right|\ud y.
\end{equation}
Using the Stirling formula to estimate the Gamma functions in  \eqref{K-1-1estimate0}, we obtain
\begin{equation}\label{K-1-1estimate1}
K(-x,-\tau)\ll\frac{1}{(x\tau^2)^A},
\end{equation}
where $A>1$ is an arbitrary constant.

\noindent Consider the case $\tau>y_0>x$. In this situation, we cross the poles at $s_{x}(j)$ while changing the  contour $\Re{s}=c$ in \eqref{K Mellin} to the contour \eqref{contour gamma}. The integral over the contour \eqref{contour gamma} can be estimated as before, hence
\begin{equation}\label{K-1-1estimate2}
K(-x,-\tau)=
\sum_{j=0}^{[A]}\frac{(-1)^j}{j!}
\frac{\Gamma^3(1/2-i\epsilon_1x+j)\Gamma(i\epsilon_2\tau+i\epsilon_1x-j)}
{\Gamma(1+i\epsilon_2\tau-i\epsilon_1x+j)\Gamma(1/2-i\epsilon_1x)}+O\left(\frac{1}{\tau^A}\right)\ll\frac{1}{\tau}.
\end{equation}

\noindent Consider the case $x>y_0>\tau$. In the same way (by computing residues at the poles $s_{\tau}(j)$), we obtain
\begin{equation}\label{K-1-1estimate3}
K(-x,-\tau)=
\sum_{j=0}^{[A]}\frac{(-1)^j}{j!}
\frac{\Gamma^3(1/2+i\epsilon_2\tau+j)\Gamma(-i\epsilon_1x-i\epsilon_2\tau-j)}
{\Gamma(1+2i\epsilon_2\tau+j)\Gamma(1/2-i\epsilon_1x)}+O\left(\frac{1}{x^A}\right)\ll\frac{1}{(\tau x)^{1/2}}.
\end{equation}
Finally combining \eqref{K-1-1estimate1}, \eqref{K-1-1estimate2} and \eqref{K-1-1estimate3}, we prove  \eqref{K++and--estimate}.
\end{proof}

Consider the case $\epsilon_1=-1,\epsilon_2=1$ in \eqref{K Mellin}, \eqref{K Mellin est1}. We have $f_{-1,1}(y)\le0$ and $f_{-1,1}(y)=0$ for $0\le y\le \min(x,\tau)$.
Note that arguing as before, it is not possible to obtain a uniform bound of the same strength as \eqref{K main estimate}.
For example, for $\tau\sim x$ one can prove only the estimate
$$ K(-x,\tau)\ll\frac{1}{\tau^{1/2}}.$$
Nevertheless, it is possible to prove an estimate of size $1/\tau$ in some ranges of $x$ and $\tau$.

\begin{lemma}\label{lemma big and small tau}
For $\tau\ll x^{2/3-\varepsilon}$ or $\tau\gg x^{3/2}$ we have
$$ |K(-x,\tau)|+|K(x,-\tau)|\ll\frac{1}{1+\tau}. $$
\end{lemma}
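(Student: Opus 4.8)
The plan is to start from the Mellin--Barnes representation \eqref{K Mellin} and move the contour across a couple of poles, estimating everything by Stirling's formula. Since $\overline{\Gamma(w)}=\Gamma(\bar w)$, applying complex conjugation to \eqref{K Mellin} and reversing the orientation of the line of integration gives $K(x,-\tau)=\overline{K(-x,\tau)}$, so $|K(-x,\tau)|+|K(x,-\tau)|=2|K(-x,\tau)|$ and it suffices to bound $K(-x,\tau)$ (with $x,\tau>0$), which is \eqref{K Mellin} for $(\epsilon_1,\epsilon_2)=(-1,1)$, namely
\[
K(-x,\tau)=\frac{1}{2\pi i}\int_{(c)}\frac{\Gamma^3(s)\,\Gamma(\tfrac12+ix-s)\,\Gamma(\tfrac12+i\tau-s)}{\Gamma(\tfrac12+i\tau+s)\,\Gamma(\tfrac12+ix)}\,ds,\qquad 0<c<\tfrac12 .
\]
I would move $\Re s=c$ to a line $\Re s=A$ with $\tfrac12<A<\tfrac32$; then only the two simple poles $s=\tfrac12+i\tau$ and $s=\tfrac12+ix$ (of the two numerator Gamma factors) are crossed, since $\Gamma^3(s)$ has poles only at $s\in\Z_{\le 0}$. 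On $\Re s=A$, writing $s=A+iy$, the uniform Stirling bound gives an integrand $\ll_A (1+|y|)^{3A-\frac32}\langle x-y\rangle^{-A}\langle \tau-y\rangle^{-A}\langle \tau+y\rangle^{-A}\,e^{\frac{\pi}{2}f_{-1,1}(y)}$, with $\langle u\rangle:=\max(1,|u|)$; by the behaviour of $f_{-1,1}$ recalled before the lemma (vanishing exactly on $[0,\min(x,\tau)]$, linear decay of rate $\ge 2$ outside), the mass of the $y$-integral lies in $-y_0\le y\le\min(x,\tau)+y_0$, $y_0=\max(\log^2(1+x),\log^2(1+\tau))$, up to a negligible error $\ll_B(x\tau)^{-B}$. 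The content of the lemma is that for $\tau$ small or $\tau$ large, bounding the remaining integral simply in absolute value already suffices.

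For $\tau\ll x^{2/3-\varepsilon}$ (so $\min(x,\tau)=\tau$), take $A=1-\delta$ with $\delta=\delta(\varepsilon)>0$ small. The residue at $s=\tfrac12+i\tau$ is $-\Gamma^3(\tfrac12+i\tau)\Gamma(i(x-\tau))/\bigl(\Gamma(1+2i\tau)\Gamma(\tfrac12+ix)\bigr)$, in which the exponential factors cancel, so by Stirling it is $\ll(x\tau)^{-1/2}\ll(1+\tau)^{-1}$ (using $\tau\ll x$); the residue at $s=\tfrac12+ix$ is $\ll e^{-\pi(x-\tau)/2}$, negligible since $x-\tau\gg x$. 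For the integral on $\Re s=1-\delta$: on $[0,\tau]$ one has $\langle x-y\rangle\asymp x$ and $\langle \tau+y\rangle\asymp\tau$, so it is $\ll_\delta x^{-1+\delta}\tau^{-1+\delta}\int_0^{\tau}(1+|y|)^{\frac32-3\delta}\langle\tau-y\rangle^{-1+\delta}\,dy\ll_\delta x^{-1+\delta}\tau^{\frac12-\delta}$ (the exponent $-1+\delta\in(-1,-\tfrac12)$ being integrable at $y=\tau$). This is $\ll\tau^{-1}$ exactly when $\tau^{\frac32-\delta}\ll x^{1-\delta}$, i.e. $\tau\ll x^{(1-\delta)/(\frac32-\delta)}$; since $(1-\delta)/(\tfrac32-\delta)\to\tfrac23$ as $\delta\to0$, choosing $\delta$ small enough (depending on $\varepsilon$) makes this cover the hypothesised range. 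Adding the three contributions gives $K(-x,\tau)\ll(1+\tau)^{-1}$.

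For $\tau\gg x^{3/2}$ (so $\min(x,\tau)=x$), take $A=1+\eta$ with $\eta\in(0,\tfrac12)$ fixed. Now the residue at $s=\tfrac12+ix$ is $-\Gamma^2(\tfrac12+ix)\Gamma(i(\tau-x))/\Gamma(1+i(\tau+x))$, which by Stirling is $\ll\bigl((\tau-x)(\tau+x)\bigr)^{-1/2}\asymp\tau^{-1}$ --- this is the main term --- while the residue at $s=\tfrac12+i\tau$ is $\ll e^{-\pi(\tau-x)/2}$, negligible since $\tau-x\gg\tau$. On $\Re s=1+\eta$, for $0\le y\le x$ one has $\langle\tau-y\rangle\asymp\langle\tau+y\rangle\asymp\tau$, so the integral is $\ll_\eta\tau^{-2-2\eta}\int_0^{x}(1+|y|)^{\frac32+3\eta}\langle x-y\rangle^{-1-\eta}\,dy\ll_\eta\tau^{-2-2\eta}x^{\frac32+3\eta}$. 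Since $\tfrac32+3\eta=\tfrac32(1+2\eta)$, this is $\ll_\eta\tau^{-1}$ precisely when $\tau\gg x^{3/2}$, with no loss in the exponent. Hence $K(-x,\tau)\ll(1+\tau)^{-1}$ in this range as well, and the lemma follows.

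The only real work is the uniform bookkeeping of Stirling's formula across the strip, in particular the ``regularisation'' at the endpoint $y=\min(x,\tau)$: there the argument $\tfrac12+i(x-y)$ (or $\tfrac12+i(\tau-y)$) of one Gamma factor has bounded imaginary part, so the power-law bound must be replaced by $O(1)$, and one must check the exponential factor $e^{\frac{\pi}{2}f_{-1,1}(y)}$ degenerates consistently there. I do not expect any conceptual difficulty; what does fail is the method itself in between: when $\tau\asymp x$ the absolute-value bound of the shifted integral only yields $\ll\tau^{-1/2}$, which is exactly why the intermediate range $x^{2/3-\varepsilon}\ll\tau\ll x^{3/2}$ requires the oscillatory (stationary-phase) analysis developed in the remainder of this section.
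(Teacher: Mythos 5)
Your argument is correct and follows essentially the same route as the paper's: both shift the Mellin--Barnes contour past the two simple poles at $s=\tfrac12+i\tau$ and $s=\tfrac12+ix$, estimate the residues by Stirling (the pole at $\tfrac12+i\tau$ giving $(x\tau)^{-1/2}$ in the small-$\tau$ range, the pole at $\tfrac12+ix$ giving $\tau^{-1}$ in the large-$\tau$ range, the other residue being exponentially small), and bound the shifted integral trivially, with the algebraic identity $\tfrac32+3\eta=\tfrac32(1+2\eta)$ giving the sharp cutoff $\tau\gg x^{3/2}$. The only cosmetic differences are that the paper places the contour at $1+\delta$ in both regimes while you use $1-\delta$ for small $\tau$, and a couple of inconsequential bookkeeping slips (the residue signs, and the exponent in the exponentially small residue should be $e^{-\pi|x-\tau|}$ rather than $e^{-\pi|x-\tau|/2}$), none of which affects the absolute-value estimates.
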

\begin{proof}
Suppose first that
$\tau\ll x^{2/3-\varepsilon}.$ Moving the contour of integration in  \eqref{K Mellin} to the line $\Re{s}=\sigma=1+\delta$, we cross two poles getting
\begin{multline}\label{K-1+1Mellin2}
K(-x,\tau)=
\frac{\Gamma^2(1/2+ix)\Gamma(i\tau-ix)}{\Gamma(1+i\tau+ix)}+
\frac{\Gamma^3(1/2+i\tau)\Gamma(ix-i\tau)}
{\Gamma(1+2i\tau)\Gamma(1/2+ix)}+\\
\frac{1}{2\pi i}\int_{(\sigma)}
\frac{\Gamma^3(s)\Gamma(1/2+ix-s)\Gamma(1/2+i\tau-s)}{\Gamma(1/2+i\tau+s)\Gamma(1/2+ix)}\ud s.
\end{multline}
Using the Stirling formula to estimate the Gamma functions  in \eqref{K-1+1Mellin2} and taking into account that
$\tau\ll x^{2/3-\varepsilon}$, we infer
\begin{multline*}
K(-x,\tau)\ll\frac{1}{\sqrt{x\tau}}
+\int_{0}^{\tau}\frac{(1+y)^{3\sigma-3/2}}{(x\tau)^{\sigma}(1+\tau-y)^{\sigma}}\ud y\ll\\
\frac{1}{\sqrt{x\tau}}+\int_{0}^{\tau/2}\frac{y^{3\sigma-3/2}}{(x\tau^2)^{\sigma}}\ud y
+\int_{\tau/2}^{\tau}\frac{\tau^{3\sigma-3/2}}{(x\tau)^{\sigma}(1+\tau-y)^{\sigma}}\ud y\ll\\
\frac{1}{\sqrt{x\tau}}+\frac{\tau^{\sigma-1/2}}{x^{\sigma}}+\frac{\tau^{2\sigma-3/2}}{x^{\sigma}}\ll
\frac{1}{\sqrt{x\tau}}+\frac{\tau^{1/2+2\delta}}{x^{1+\delta}}\ll\frac{1}{\tau}.
\end{multline*}

\noindent Suppose then that $\tau\gg x^{3/2}$. Using the Stirling formula similarly, we obtain
\begin{multline*}
K(-x,\tau)\ll\frac{1}{\tau}
+\int_{0}^{x}\frac{(1+y)^{3\sigma-3/2}}{\tau^{2\sigma}(1+x-y)^{\sigma}}\ud y\ll\\
\frac{1}{\tau}+\int_{0}^{x/2}\frac{y^{3\sigma-3/2}}{(x\tau^2)^{\sigma}}\ud y
+\int_{x/2}^{x}\frac{x^{3\sigma-3/2}}{\tau^{2\sigma}(1+x-y)^{\sigma}}\ud y\ll\\
\frac{1}{\tau}+\frac{x^{2\sigma-1/2}}{\tau^{2\sigma}}+\frac{x^{3\sigma-3/2}}{\tau^{2\sigma}}
\ll
\frac{1}{\tau}+\frac{x^{3/2+3\delta}}{\tau^{2+2\delta}}
\ll\frac{1}{\tau}.
\end{multline*}
\end{proof}

	\subsection{Difficult Case: Approximation}
	
	We are left to consider, for $K(-x,\tau)$, the interval
\begin{equation}\label{x23<tau<x32}
x^{2/3-\varepsilon}\ll\tau\ll x^{3/2}.
\end{equation}
In this range, we apply the following integral representation
\begin{equation}\label{K to 2F1}
K(-x,\tau)=
\int_{0}^{1}y^{-1/2+i\tau}(1-y)^{-1/2+ix}
\HyGI \left(\frac{1}{2}+i\tau,\frac{1}{2}+i\tau;1+2i\tau; y\right)\ud y.
\end{equation}

\begin{remark}
Using \cite[(15.6.1)]{OLBC10} we obtain
\begin{align} 
\left| \HyGI \left(\frac{1}{2}+i\tau,\frac{1}{2}+i\tau;1+2i\tau; y\right)\right| &= \left|
\int_{0}^{1}\frac{t^{-1/2+i\tau}(1-t)^{-1/2-i\tau}}{(1-yt)^{1/2+i\tau}}\ud t
\right| \label{2F1 abs estimate} \\
&\le \int_{0}^{1}\frac{t^{-1/2}(1-t)^{-1/2}}{(1-yt)^{1/2}}\ud t=
\HyGI \left(\frac{1}{2},\frac{1}{2};1; y\right). \nonumber
\end{align}
Estimating \eqref{K to 2F1} by absolute value using \eqref{2F1 abs estimate}, we infer that
\begin{equation*}
\left|K(-x,\tau)\right|\le
\int_{0}^{1}y^{-1/2}(1-y)^{-1/2}
\HyGI \left(\frac{1}{2},\frac{1}{2};1; y\right)\ud y=\\
\Gamma(1/2){}_3\mathrm{I}_{2}\left(\frac{1}{2},\frac{1}{2},\frac{1}{2};1, 1; 1\right)\ll1.
\end{equation*}
\end{remark}

\noindent We will apply the following lemma to perform integration by parts in \eqref{K to 2F1}.
\begin{lemma}
The function
\begin{equation}\label{T def}
T_{\tau}(y)=
y^{1/2+i\tau}(1-y)^{1/2}
\HyGI \left(\frac{1}{2}+i\tau,\frac{1}{2}+i\tau;1+2i\tau; y\right)
\end{equation}
satisfies the differential equation
\begin{equation}\label{T difeq}
T''_{\tau}(y)-\left(\tau^2a(y)+b(y)\right)T_{\tau}(y)=0, \quad \text{where}
\end{equation}
\begin{equation}\label{a,b def}
a(y)=\frac{-1}{y^2(1-y)},\quad b(y)=-\frac{1}{4y^2(1-y)^2}+\frac{1}{4y(1-y)}.
\end{equation}
\end{lemma}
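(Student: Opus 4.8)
The plan is to derive (\ref{T difeq}) by putting into Liouville normal form the hypergeometric equation satisfied by the ${}_2\mathrm{I}_1$--factor occurring in (\ref{T def}), and then to carry out the (elementary) identification of the resulting potential with $\tau^2 a(y)+b(y)$.

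Since $\HyGI(\alpha,\beta;\gamma;y)$ differs from $\HyG(\alpha,\beta;\gamma;y)$ only by the constant $\Gamma(\alpha)\Gamma(\beta)/\Gamma(\gamma)$, the function $F(y):=\HyGI\!\left(\tfrac12+i\tau,\tfrac12+i\tau;1+2i\tau;y\right)$ solves the hypergeometric equation
\[ y(1-y)F''+\bigl(\gamma-(\alpha+\beta+1)y\bigr)F'-\alpha\beta F=0 \]
with $\alpha=\beta=\tfrac12+i\tau$, $\gamma=1+2i\tau$. For these parameters $\alpha+\beta+1=\gamma+1$ and $\alpha\beta=(\tfrac12+i\tau)^2=\tfrac14(1+2i\tau)^2$; writing the equation as $F''+pF'+qF=0$ and splitting into partial fractions gives
\[ p(y)=\frac{1+2i\tau}{y}-\frac{1}{1-y},\qquad q(y)=-\frac{(\tfrac12+i\tau)^2}{y(1-y)}. \]

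Next I would perform the substitution $F=\mu(y)\,T$ with $\mu(y)=\exp\!\bigl(-\tfrac12\int p\,dy\bigr)$, which removes the first--order term and makes $T$ satisfy $T''+\bigl(q-\tfrac14 p^2-\tfrac12 p'\bigr)T=0$. Integrating, $-\tfrac12\int p\,dy=-(\tfrac12+i\tau)\log y-\tfrac12\log(1-y)$, so $\mu(y)=y^{-(1/2+i\tau)}(1-y)^{-1/2}$, hence $T=y^{1/2+i\tau}(1-y)^{1/2}F=T_\tau$ is exactly the function in (\ref{T def}). It then remains to compute the potential: using $p'(y)=-\tfrac{1+2i\tau}{y^2}-\tfrac{1}{(1-y)^2}$ and collecting terms according to the poles $y^{-2}$, $(y(1-y))^{-1}$, $(1-y)^{-2}$, the identity $(1+2i\tau)(1-2i\tau)=1+4\tau^2$ makes the $\tau$--dependence collapse, and one obtains
\[ q-\tfrac14 p^2-\tfrac12 p'=\frac{1+4\tau^2}{4y^2(1-y)}+\frac{1}{4(1-y)^2}=\frac{\tau^2}{y^2(1-y)}+\frac{1-y+y^2}{4y^2(1-y)^2}. \]
The right--hand side equals $-\tau^2 a(y)-b(y)$ with $a,b$ as in (\ref{a,b def}): indeed $-a(y)=\tfrac{1}{y^2(1-y)}$ and $-b(y)=\tfrac{1-y(1-y)}{4y^2(1-y)^2}=\tfrac{1-y+y^2}{4y^2(1-y)^2}$. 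This is precisely (\ref{T difeq}).

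No step presents a genuine obstacle; the only place requiring care is the bookkeeping of the purely numerical ($\tau$--free) part of the potential, namely checking that its numerator $1-y+y^2$ coincides with $-4y^2(1-y)^2 b(y)$.
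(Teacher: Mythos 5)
Your derivation is correct: the Liouville normal-form substitution $\mu(y)=\exp(-\tfrac12\int p)=y^{-1/2-i\tau}(1-y)^{-1/2}$ does yield exactly $T_\tau$ (up to the irrelevant constant $\Gamma(\alpha)\Gamma(\beta)/\Gamma(\gamma)$), and the potential computation
\[
q-\tfrac14 p^2-\tfrac12 p' \;=\; \frac{1+4\tau^2}{4y^2(1-y)}+\frac{1}{4(1-y)^2}
\;=\;\frac{\tau^2}{y^2(1-y)}+\frac{1-y+y^2}{4y^2(1-y)^2}
\;=\;-\tau^2 a(y)-b(y)
\]
is right, with the coefficient of $y^{-2}$ and of $(y(1-y))^{-1}$ both collapsing to $(1+2i\tau)(1-2i\tau)/4=(1+4\tau^2)/4$, and the coefficient of $(1-y)^{-2}$ to $-\tfrac14+\tfrac12=\tfrac14$.

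The paper handles this lemma by a one-line citation to Erd\'elyi's \emph{Higher Transcendental Functions} (Eqs.\ (8)--(9), p.\ 96), which records precisely this Schwarzian/Liouville transform of the hypergeometric equation. Your argument reproduces that derivation from scratch, which is more self-contained and makes the special structure $\alpha=\beta$, $\gamma=\alpha+\beta$ (hence $\alpha+\beta+1=\gamma+1$) visible as the reason the first-order coefficient and the potential simplify so cleanly; the citation approach buys brevity at the cost of hiding the (elementary) computation. Both are valid; yours is the natural way to verify the cited formula.
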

\begin{proof}
This follows from \cite[Eqs. (8)-(9),  p. 96]{Er53}.
\end{proof}

	Using \eqref{T def} one can rewrite \eqref{K to 2F1} as
\begin{equation}\label{K to T}
K(-x,\tau)=
\int_{0}^{1}T_{\tau}(y)(1-y)^{-1+ix}\frac{\ud y}{y}.
\end{equation}
The main idea of estimating the integral in \eqref{K to 2F1}  is to approximate  the hypergeometric function under the integral by some elementary function. Unfortunately, we do not know a uniform asymptotic formula for \eqref{T def} valid in the range $0<y<1.$
To overcome this difficulty, we split the integral \eqref{K to T} into two parts  using the following smooth partition of unity:
\begin{equation*}
\chf_0(y)+\chf_1(y)=1, \quad\hbox{for}\quad 0<y<1,
\end{equation*}
\begin{equation*}
\chf_0(y)=1, \quad\hbox{for}\quad 0<y<1-2\delta,\quad
\chf_0(y)=0, \quad\hbox{for}\quad 1-\delta<y<1,
\end{equation*}
\begin{equation*}
\chf_1(y)=0, \quad\hbox{for}\quad 0<y<1-2\delta,\quad
\chf_1(y)=1, \quad\hbox{for}\quad 1-\delta<y<1,
\end{equation*}
\begin{equation*}
\frac{\partial^n}{\partial y^n}\chf_1(y),
\frac{\partial^n}{\partial y^n}\chf_0(y)\ll\frac{1}{\delta^n}, \quad\hbox{for}\quad 1-2\delta<y<1-\delta,
\end{equation*}
where  $\delta=\delta(\tau)\gg\tau^{-2+\epsilon}$ will be chosen later.
For $j=0,1$ let
\begin{equation}\label{Kj def}
K_{j}(-x,\tau)=
\int_{0}^{1}\frac{\chf_j(y)}{y}T_{\tau}(y)(1-y)^{-1+ix}\ud y.
\end{equation}
Therefore, we get a decomposition
\begin{equation}\label{K=K0+K1}
K(-x,\tau)=K_0(-x,\tau)+K_1(-x,\tau).
\end{equation}

	First, we estimate $K_{1}(-x,\tau)$.
\begin{lemma}
The following estimate holds
\begin{equation*}
K_{1}(-x,\tau)\ll\frac{(1+\tau^2\delta)\sqrt{\delta}}{x^2}.
\end{equation*}
\end{lemma}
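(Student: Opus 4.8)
The plan is to estimate $K_1(-x,\tau)$, the piece of the integral $(\ref{K to T})$ localized near the endpoint $y=1$, by repeated integration by parts using the differential equation $(\ref{T difeq})$ to trade powers of $(1-y)^{-1+ix}$ (whose only oscillation/decay comes from the factor $x$) against the smooth function $T_\tau(y)$. The basic mechanism: write $(1-y)^{-1+ix} = \frac{1}{ix}\frac{d}{dy}(1-y)^{ix}$ (up to sign), integrate by parts, and each such step gains a factor $\asymp 1/x$ at the cost of differentiating $\chf_1(y) y^{-1} T_\tau(y)$. Since we want a bound with $x^{-2}$, I expect to integrate by parts twice. The boundary terms at $y=1$ vanish because $T_\tau(y)$ carries a factor $(1-y)^{1/2}$ (see $(\ref{T def})$), which kills $(1-y)^{ix}$ and even $(1-y)^{-1+ix}$ once differentiated appropriately; the boundary contribution at $y=1-2\delta$ is controlled by $\chf_1$ vanishing there.

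First I would record the size of $T_\tau$ and its derivatives on the support of $\chf_1$, i.e.\ on $1-2\delta < y < 1$. From $(\ref{2F1 abs estimate})$ and $(\ref{T def})$ one gets $T_\tau(y) \ll y^{1/2}(1-y)^{1/2}\, {}_2\mathrm{I}_1(1/2,1/2;1;y) \ll (1-y)^{1/2}\log\frac{1}{1-y}$, which on the relevant range is $\ll \sqrt{\delta}\,\log(1/\delta) \ll_\epsilon \delta^{1/2-\epsilon}$; absorbing the log into $\delta^{-\epsilon}$ as usual. For the second derivative I would use the ODE $(\ref{T difeq})$: $T_\tau'' = (\tau^2 a(y) + b(y))T_\tau$ with $a(y) \asymp -(1-y)^{-1}$ and $b(y) \asymp (1-y)^{-2}$ near $y=1$, so $T_\tau''(y) \ll (\tau^2 (1-y)^{-1} + (1-y)^{-2}) T_\tau(y) \ll (\tau^2 + \delta^{-1})\delta^{-1/2}$ roughly; the first derivative $T_\tau'$ is handled by interpolation or directly from the integral representation, giving something like $T_\tau'(y) \ll (1-y)^{-1/2}$, hence $\ll \delta^{-1/2}$. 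The derivatives of $\chf_1$ contribute at most $\delta^{-n}$ but are supported only on the transition region of length $\delta$, so they do not worsen the final bound.

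The second step is to carry out the two integrations by parts in $(\ref{Kj def})$ for $j=1$. After integrating by parts twice, $K_1(-x,\tau)$ is bounded by $x^{-2}$ times an integral over $1-2\delta<y<1$ of the second derivative of $\chf_1(y)y^{-1}(1-y)^{?}T_\tau(y)$ against $(1-y)^{1+ix}$ or similar — the precise bookkeeping of which power of $(1-y)$ sits where is routine but must be done carefully so that every term is integrable at $y=1$ (the factor $(1-y)^{1/2}$ in $T_\tau$ is exactly what makes this work). Collecting the worst term: the integral of $|T_\tau''(y)|$ over the support contributes $\ll (\tau^2 + \delta^{-1})\delta^{-1/2}\cdot \delta = (\tau^2\delta + 1)\delta^{1/2}$, and dividing by $x^2$ from the two integrations by parts yields exactly $K_1(-x,\tau) \ll \frac{(1+\tau^2\delta)\sqrt\delta}{x^2}$. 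Terms involving one derivative of $T_\tau$ and one of $\chf_1$, or two of $\chf_1$, give $\ll \delta^{-1/2}\cdot 1 \cdot x^{-2}$ or $\delta^{1/2}x^{-2}$, which are dominated.

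The main obstacle, as I see it, is purely technical rather than conceptual: keeping track, through the integration by parts, of the exact exponents of $(1-y)$ and verifying that no boundary term at $y=1$ is lost and that every resulting integrand is absolutely integrable there. The logarithmic factor from ${}_2\mathrm{I}_1(1/2,1/2;1;y)\asymp\log\frac{1}{1-y}$ near $y=1$ is a minor nuisance absorbed into an $\epsilon$ in $\delta$. One should also double-check the bound on $T_\tau'$ on the support of $\chf_1$, since this does not follow from $(\ref{2F1 abs estimate})$ alone — differentiating the hypergeometric integral representation and estimating, or using Cauchy's formula on a disc of radius $\asymp(1-y)$, will supply $T_\tau'(y)\ll(1-y)^{-1/2}$ (up to logs), which is all that is needed. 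Once these bounds are in hand the estimate follows by straightforward combination.
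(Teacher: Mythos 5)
Your overall plan — integrate by parts against $(1-y)^{-1+ix}$, use the ODE $T_\tau''=(\tau^2 a+b)T_\tau$, and control everything on the support of $\chf_1$ — is the right one and is essentially the paper's. But there is a genuine gap in how you handle $T_\tau'$, and it matters.

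After two integrations by parts you land on
$$\frac{1}{x^2}\int_0^1\left|h''T_\tau+2h'T_\tau'+h(\tau^2 a+b)T_\tau\right|(1-y)\,dy, \quad h(y)=\chf_1(y)/y,$$
so you genuinely need a pointwise bound for $T_\tau'$ on $1-2\delta<y<1$. You assert $T_\tau'(y)\ll(1-y)^{-1/2}$ up to logs, with no $\tau$-dependence, and dismiss the cross term. This is not correct. Differentiating the Euler integral representation in $(\ref{2F1 abs estimate})$ brings down a factor $\bigl(\tfrac12+i\tau\bigr)$, and estimating in absolute value gives only $\bigl|\tfrac{d}{dy}\HyGI\bigr|\ll\tau(1-y)^{-1}$, hence $T_\tau'(y)\ll\tau(1-y)^{-1/2}$ up to logs; the same $\tau$ also appears via $\tfrac{d}{dy}y^{1/2+i\tau}$. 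Cauchy's formula on a disc of radius $\asymp(1-y)$ does not help either, since the relevant sup of $|T_\tau|$ on a complex neighborhood also carries $\tau$-dependence. With the honest bound $T_\tau'\ll\tau(1-y)^{-1/2}$, the $2h'T_\tau'(1-y)$ term gives
$$\int_{1-2\delta}^{1}|h'|\,|T_\tau'|\,(1-y)\,dy\ll\tau\sqrt{\delta},$$
i.e.\ a contribution $\tau\sqrt{\delta}/x^2$. For $\delta$ in the lower part of the admissible range ($\tau^{-2+\epsilon}\ll\delta\ll\tau^{-1}$) this is strictly bigger than the target $(1+\tau^2\delta)\sqrt{\delta}/x^2$, so your argument does not close. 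A sharper bound of the shape $T_\tau'\ll\sqrt{\tau}\,(1-y)^{-1/4}$ would save the day, but that requires the uniform asymptotic $(\ref{2F1asymptotic})$, which is only established later in the section; invoking it here is both premature and far from the "straightforward" estimate you describe.

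The paper sidesteps this entirely, and this is the one idea you are missing: it integrates by parts \emph{three} times, getting a prefactor $x^{-3}$ and the kernel $(hT_\tau)'''(1-y)^{2+ix}$; after applying the ODE this splits into a $T_\tau$-part and a $T_\tau'$-part; and then on the $T_\tau'$-part it integrates by parts \emph{once more in the opposite direction}, moving the derivative off $T_\tau'$. The $x$ produced by differentiating $(1-y)^{2+ix}$ cancels one power of the $x^{-3}$, leaving $x^{-2}$, and what remains is an integral involving only $T_\tau$ — never $T_\tau'$. The only pointwise input is then $T_\tau(y)\ll\sqrt{y(1-y)}\,\log\tfrac{1}{1-y}$, together with $h^{(k)}\ll\delta^{-k}$ and the explicit $a,b$. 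Your two-IBP variant is not merely a cosmetic simplification of this: it forces you into estimating $T_\tau'$, which is where the loss occurs. If you want to pursue your route, you must first establish a $\tau$-uniform bound for $T_\tau'$ near $y=1$ that is strictly better than $\tau(1-y)^{-1/2}$, which is a nontrivial separate task.
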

\begin{proof}
Integrating by parts three times in  \eqref{Kj def} we obtain
\begin{equation*}
K_{1}(-x,\tau)\ll\frac{1}{x^3}
\left|\int_{0}^{1}\left(\frac{\chf_1(y)}{y}T_{\tau}(y)\right)'''(1-y)^{2+ix}\ud y\right|.
\end{equation*}
For the sake of simplicity, we denote $\frac{\chf_1(y)}{y}=:h(y)$. Applying \eqref{T difeq} we find that
\begin{multline*}
\left(h(y)T_{\tau}(y)\right)'''=
\Bigl(3h''(y)+h(y)(\tau^2a(y)+b(y))\Bigr)T'_{\tau}(y)+\\
\Bigl(h'''(y)+3h'(y)(\tau^2a(y)+b(y))+h(y)(\tau^2a'(y)+b'(y))\Bigr)T_{\tau}(y).
\end{multline*}
Therefore,
\begin{align}
K_{1}(-x,\tau) &\ll \frac{1}{x^3}
\left|\int_{0}^{1}\left(3h''(y)+h(y)(\tau^2a(y)+b(y))\right)T'_{\tau}(y)(1-y)^{2+ix}\ud y\right|+ \label{K1 est2} \\
&\frac{1}{x^3}\int_{0}^{1}|T_{\tau}(y)|(1-y)^{2} \times \left|h'''(y)+h'(y)(\tau^2a(y)+b(y))+h(y)(\tau^2a'(y)+b'(y))\right|\ud y. \nonumber
\end{align}
In the first integral on the right-hand side of \eqref{K1 est2} we integrate by parts once again showing that
\begin{align*}
K_{1}(-x,\tau) &\ll \frac{1}{x^2}
\int_{0}^{1}\Bigl|h''(y)+h(y)(\tau^2a(y)+b(y))\Bigr|(1-y)\Bigl|T_{\tau}(y)\Bigr|\ud y+ \\
&\quad \frac{1}{x^3}\int_{0}^{1}|T_{\tau}(y)|(1-y)^{2} \times \left|h'''(y)+h'(y)(\tau^2a(y)+b(y))+h(y)(\tau^2a'(y)+b'(y))\right|\ud y. \nonumber
\end{align*}
It follows from \eqref{T def}, \eqref{2F1 abs estimate} and \cite[15.4.21]{OLBC10} that
$$T_{\tau}(y)\ll\sqrt{y(1-y)}\log(1-y).$$
Applying this estimate and using \eqref{a,b def} and $h^{(k)}(y)\ll\delta^{-k}$, we obtain
\begin{equation*}
K_{1}(-x,\tau)\ll\frac{(1+\tau^2\delta)\sqrt{\delta}}{x^2}+\frac{(1+\tau^2\delta)\sqrt{\delta}}{x^3},
\end{equation*}
thus proving the lemma.
\end{proof}

\begin{corollary}
For $x^{2/3-\varepsilon}\ll\tau\ll x^{3/2}$ and $\delta>\tau^{-2}$ the following estimate holds
\begin{equation}\label{K1 estimate1}
K_{1}(-x,\tau)\ll\frac{(\tau^{10/9}\delta)^{3/2}}{\tau}.
\end{equation}
\end{corollary}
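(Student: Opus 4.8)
The plan is to combine the previous lemma's bound
$$ K_{1}(-x,\tau)\ll\frac{(1+\tau^2\delta)\sqrt{\delta}}{x^2} $$
with the constraint \eqref{x23<tau<x32} on the ratio of $x$ and $\tau$, and to observe that the assumption $\delta>\tau^{-2}$ forces the factor $1+\tau^2\delta$ to be comparable to $\tau^2\delta$. First I would record that under $\delta>\tau^{-2}$ we have $1+\tau^2\delta\ll\tau^2\delta$, so that
$$ K_{1}(-x,\tau)\ll\frac{\tau^2\delta\sqrt{\delta}}{x^2}=\frac{\tau^2\delta^{3/2}}{x^2}. $$
Next I would feed in the lower bound on $x$ coming from \eqref{x23<tau<x32}: since $\tau\ll x^{3/2}$, i.e. $x\gg\tau^{2/3}$, we get $x^{-2}\ll\tau^{-4/3}$. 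Substituting,
$$ K_{1}(-x,\tau)\ll\tau^2\delta^{3/2}\cdot\tau^{-4/3}=\tau^{2/3}\delta^{3/2}. $$
Finally I would rewrite $\tau^{2/3}\delta^{3/2}=\tau^{-1}\cdot\tau^{5/3}\delta^{3/2}=\tau^{-1}(\tau^{10/9}\delta)^{3/2}$, which is exactly the asserted inequality \eqref{K1 estimate1}.

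In carrying this out I would be careful about which half of the range \eqref{x23<tau<x32} is actually used: only the upper constraint $\tau\ll x^{3/2}$ enters (equivalently $x\gg\tau^{2/3}$), so the corollary is in fact valid whenever $x\gg\tau^{2/3}$ and $\delta>\tau^{-2}$; stating it under the full hypothesis \eqref{x23<tau<x32} is harmless. I would also double-check the arithmetic of the exponents: $2-4/3=2/3$, and $2/3=-1+5/3$ with $5/3=\tfrac{3}{2}\cdot\tfrac{10}{9}$, so the packaging $\tau^{2/3}\delta^{3/2}=\tau^{-1}(\tau^{10/9}\delta)^{3/2}$ is correct. The point of writing the bound in the form $\tau^{-1}(\tau^{10/9}\delta)^{3/2}$ is presumably so that, once $\delta$ is eventually chosen slightly smaller than $\tau^{-10/9}$ (the natural threshold suggested by the power $\tau^{10/9}$), the factor $(\tau^{10/9}\delta)^{3/2}$ becomes a negative power of $\tau$ and $K_1$ is then bounded by $\tau^{-1}$ with room to spare, matching \eqref{K main estimate} on this piece.

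There is essentially no obstacle here: the corollary is a one-line consequence of the preceding lemma together with elementary manipulation of the constraints, and no new analytic input is needed. The only mild subtlety is bookkeeping — making sure the hypothesis $\delta>\tau^{-2}$ (rather than the weaker $\delta\gg\tau^{-2+\epsilon}$ imposed globally on the partition of unity) is what licenses replacing $1+\tau^2\delta$ by $\tau^2\delta$, and keeping the exponent arithmetic straight. So the write-up will simply be: substitute the range hypothesis, absorb the $1+\tau^2\delta$ factor, and regroup the powers of $\tau$ and $\delta$.
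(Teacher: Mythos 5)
Your proposal is correct and is exactly the one-line argument the paper intends: substitute $\delta>\tau^{-2}$ to absorb the $1$ in $1+\tau^2\delta$, use $x\gg\tau^{2/3}$ (from $\tau\ll x^{3/2}$) to replace $x^{-2}$ by $\tau^{-4/3}$, and regroup the exponents. The only tiny slip is in your commentary, not the proof: $\delta\gg\tau^{-2+\epsilon}$ is the \emph{stronger} constraint (it forbids $\delta$ as small as $\tau^{-2}$), while the corollary's hypothesis $\delta>\tau^{-2}$ is the weaker one, so the corollary holds under a hypothesis weaker than the global one on the partition of unity; this does not affect the argument, since either hypothesis gives $\tau^2\delta\gg1$.
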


	To estimate $K_{0}(-x,\tau)$ we will first replace the hypergeometric function in \eqref{K to 2F1} by an asymptotic formula. To derive it we will use the following result of Farid Khwaja and  Olde Daalhuis \cite{KD13}.
	
\begin{lemma}\label{lem:dalh}
For $z>(e^{1/4}-1)^{-1}$ and $\tau\rightarrow \infty$ we have
\begin{multline}\label{hy:f1/2}
\frac{\Gamma(1/2+2i\tau)}{\Gamma(1+2i\tau)} \HyG \left( \frac{1}{2},\frac{1}{2},1+2i\tau; -z\right)=
\xi^{1/2}U(1/2,1,\lambda \xi)\sum_{j=0}^{n-1}\frac{P_{j}}{\lambda^j}\\+\xi^{3/2}U(1/2,2,\lambda \xi)\sum_{j=0}^{n-1}\frac{Q_{j}}{\lambda^j}+
O\left(\frac{\xi^{1/2}U(1/2,1,\lambda \xi)+\xi^{3/2}U(1/2,2,\lambda \xi)}{\lambda^n}\right),
\end{multline}
where $U(a,b,z)$ is a confluent hypergeometric function (see \cite[Section 13]{OLBC10}) and
\begin{equation*}
\lambda=1/2+2i\tau, \quad \xi=\log(1+1/z), \quad P_0=e^{-\xi/2}, \quad Q_0=\frac{1-e^{-\xi/2}}{\xi}.
\end{equation*}
\end{lemma}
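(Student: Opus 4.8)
The lemma is a specialization of the uniform asymptotic expansion of Farid Khwaja and Olde Daalhuis \cite{KD13} for $\HyG(a,b;c;z)$ as the bottom parameter $c\to\infty$; the plan is to put our function into the exact form treated there and to read off the first two coefficients $P_0,Q_0$. First I would express the left-hand side through Euler's integral, legitimate since $\Re(1+2i\tau)=1>\tfrac12$, and then substitute $t=1-e^{-u}$, which gives
$$ \frac{\Gamma(\tfrac12+2i\tau)}{\Gamma(1+2i\tau)}\,\HyG\left(\tfrac12,\tfrac12;1+2i\tau;-z\right)=\frac{(1+z)^{-1/2}}{\Gamma(1/2)}\int_0^\infty e^{-\lambda u}\,(1-e^{-u})^{-1/2}\Bigl(1-\tfrac{z}{1+z}e^{-u}\Bigr)^{-1/2}du, $$
with $\lambda=\tfrac12+2i\tau$. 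The integrand carries two algebraic branch points: one at $u=0$ of type $u^{-1/2}$, and one at $u=-\xi$ with $\xi=\log(1+1/z)$, coming from the last factor since $1-\tfrac{z}{1+z}e^{-u}$ vanishes when $e^{-u}=1+1/z$. Their mutual distance is exactly $\xi$, and the hypothesis $z>(e^{1/4}-1)^{-1}$ is precisely $\xi<\tfrac14$, i.e. the near-coalescence regime in which the correct uniform comparison function is the confluent hypergeometric (Kummer) function $U$.

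Second, I would run the Olde Daalhuis scheme on this integral. One makes a change of variable $u\mapsto v$, analytic near the endpoint, that sends the two branch points to $v=0$ and $v=-1$ up to the global scaling $u\sim\xi v$, so that the kernel becomes $v^{-1/2}(1+v)^{-1/2}$ and the exponent becomes $e^{-\lambda\xi v}$; one then expands the transformed amplitude and integrates by parts repeatedly against $e^{-\lambda\xi v}$. Using the representation $\Gamma(a)U(a,b,x)=\int_0^\infty e^{-xt}t^{a-1}(1+t)^{b-a-1}dt$ with $x=\lambda\xi$, $a=\tfrac12$, $b\in\{1,2\}$, each step produces one term proportional to $U(\tfrac12,1,\lambda\xi)$ and one proportional to $U(\tfrac12,2,\lambda\xi)$, and the remainder after $n$ steps is $O(\lambda^{-n})$ times the same pair of $U$-values. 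The uniformity is valid for $\lambda$ with $\Re(\lambda\xi)>0$, which covers our case $\lambda=\tfrac12+2i\tau$, $\tau\to\infty$, since then $\Re(\lambda\xi)=\xi/2>0$. The coefficients $P_j,Q_j$ are the Taylor data of the amplitude in the new variable.

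The one genuine computation is extracting the closed forms $P_0=e^{-\xi/2}$ and $Q_0=(1-e^{-\xi/2})/\xi$: this amounts to carrying the leading order of the change of variable explicitly and matching it against the prefactor $(1+z)^{-1/2}=(1-e^{-\xi})^{1/2}$, while recalling that $Q_0$ also absorbs the contribution forced by the second branch point sitting at $v=-1$. The higher coefficients $P_j,Q_j$ for $j\geq1$ are never used below, since we apply the expansion only with the remainder after the first correction term, so I would not compute them.

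The main obstacle is bookkeeping rather than idea: reconciling the normalization conventions of \cite{KD13} with ours, confirming that their uniformity hypotheses genuinely cover the purely growing imaginary parameter $2i\tau$ together with the stated range of $z$, and pinning down $P_0$ and $Q_0$ in closed form from the leading order of the variable change. None of these steps requires any new analytic input beyond what is already in \cite{KD13}.
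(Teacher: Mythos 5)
The paper's ``proof'' of this lemma is a single citation: ``See [Eqs.\ 4.3, 4.5, 5.8, 5.9][KD13].'' No derivation is given; the lemma is quoted as a black-box specialization of the uniform asymptotic expansion of Farid Khwaja and Olde Daalhuis. What you propose is, in effect, to reconstruct the argument inside that reference rather than cite it. Your reconstruction follows the correct route: the Euler integral with $t=1-e^{-u}$ does produce
$$\frac{(1+z)^{-1/2}}{\Gamma(1/2)}\int_0^\infty e^{-\lambda u}\,(1-e^{-u})^{-1/2}\bigl(1-\tfrac{z}{1+z}e^{-u}\bigr)^{-1/2}\,du,\qquad \lambda=\tfrac12+2i\tau,$$
the two $-\tfrac12$-type branch points really sit at $u=0$ and $u=-\xi$, the hypothesis $z>(e^{1/4}-1)^{-1}$ is exactly $\xi<\tfrac14$, and the Kummer $U(\tfrac12,1,\cdot)$, $U(\tfrac12,2,\cdot)$ pair are the correct comparison integrals for two coalescing square-root singularities. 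The identity $(1+z)^{-1/2}=(1-e^{-\xi})^{1/2}$ is also right. So your plan is sound as a blueprint and matches the structure of the paper's source.

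That said, the one place where a written-out proof would actually be long and delicate is precisely where you stop: extracting $P_0=e^{-\xi/2}$ and $Q_0=(1-e^{-\xi/2})/\xi$, and confirming that the uniformity in [KD13] applies to a large purely imaginary parameter $\lambda=\tfrac12+2i\tau$ together with $\xi$ ranging down to zero. You flag both as ``bookkeeping,'' but they are the entire content of the lemma once the framework is set up; the $P_0,Q_0$ values in particular are not obvious from the leading order of the variable change alone and require carrying the conformal map's Jacobian and the split of the amplitude into an even/odd-in-$(v+\tfrac12)$ pair to produce the two $U$-terms. Given that the paper simply defers to [KD13] for exactly these formulas, reproducing them carefully would be the one genuine deliverable of a self-contained proof, and your proposal does not yet supply it. As a comparison: the paper's citation buys brevity and transfers the burden of uniformity to [KD13]; your reconstruction buys transparency about where the hypothesis $\xi<1/4$ enters and why the $U$-functions appear, but to be a proof it would need the coefficient computation completed rather than asserted.
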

\begin{proof}
See \cite[Eqs. 4.3, 4.5, 5.8, 5.9]{KD13}.
\end{proof}

\begin{corollary}\label{cor:confl}
Assume that $z>(e^{1/4}-1)^{-1}$. For $\tau\rightarrow +\infty$ uniformly for $z$ such that $\tau\xi \gg \tau^{\epsilon}$ the following asymptotic formula holds
\begin{equation}\label{KD result}
\frac{\Gamma(1/2+2i\tau)}{\Gamma(1+2i\tau)}{}_2F_{1}\left( \frac{1}{2},\frac{1}{2},1+2i\tau; -z\right)=
\frac{1}{\sqrt{\lambda}}+
O\left(\frac{1}{\lambda^{3/2}\xi}\right).
\end{equation}
\end{corollary}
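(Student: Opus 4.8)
The statement to prove is Corollary \ref{cor:confl}: starting from the asymptotic expansion in Lemma \ref{lem:dalh}, reduce it to the simple main term $\lambda^{-1/2}$ with error $O(\lambda^{-3/2}\xi^{-1})$, uniformly in the regime $\tau\xi\gg\tau^\epsilon$. The whole argument is a matter of estimating the confluent hypergeometric functions $U(1/2,1,\lambda\xi)$ and $U(1/2,2,\lambda\xi)$ and the coefficient sums that multiply them, so I would organize it around three elementary analytic facts about $U(a,b,z)$ for large complex $z$.

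\emph{First step.} I would record the large-argument behaviour of $U(a,b,z)$. Since $\lambda = 1/2+2i\tau$ and $\xi = \log(1+1/z) > 0$, the product $w := \lambda\xi$ satisfies $|w| = \xi\sqrt{1/4+4\tau^2} \asymp \tau\xi$, which by hypothesis tends to infinity (indeed $\gg \tau^\epsilon$), while $\arg w$ stays bounded away from $\pm\pi$ because $\Re w = \xi/2 > 0$. Hence one may use the standard asymptotic $U(a,b,z) = z^{-a}\bigl(1 + O(|z|^{-1})\bigr)$ valid as $|z|\to\infty$ with $|\arg z| \le \pi/2$ (see \cite[13.7.3]{OLBC10}). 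This gives
$$ \xi^{1/2} U(1/2,1,\lambda\xi) = \xi^{1/2}(\lambda\xi)^{-1/2}\bigl(1+O((\tau\xi)^{-1})\bigr) = \lambda^{-1/2} + O\!\left(\frac{1}{\lambda^{1/2}\tau\xi}\right), $$
and similarly $\xi^{3/2} U(1/2,2,\lambda\xi) = \xi^{3/2}(\lambda\xi)^{-1/2}\bigl(1+O((\tau\xi)^{-1})\bigr) = \xi\,\lambda^{-1/2}\bigl(1+O((\tau\xi)^{-1})\bigr)$. The point is that both functions are $O(\lambda^{-1/2})$ times a power of $\xi$, and crucially the second one carries an extra factor $\xi$.

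\emph{Second step.} I would control the coefficient sums $\sum_{j<n} P_j\lambda^{-j}$ and $\sum_{j<n} Q_j\lambda^{-j}$. Taking $n=2$ (or even $n=1$) suffices: the $j=0$ terms are $P_0 = e^{-\xi/2}$ and $Q_0 = (1-e^{-\xi/2})/\xi$, and the higher $P_j, Q_j$ are bounded functions of $\xi$ on the relevant range (this is part of the content of \cite{KD13}, and for the corollary one only needs $P_0, Q_0$ plus a remainder). So the expansion in Lemma \ref{lem:dalh} reads
$$ \frac{\Gamma(1/2+2i\tau)}{\Gamma(1+2i\tau)}\,\HyG\!\left(\tfrac12,\tfrac12,1+2i\tau;-z\right) = e^{-\xi/2}\,\xi^{1/2}U(1/2,1,\lambda\xi) + \frac{1-e^{-\xi/2}}{\xi}\,\xi^{3/2}U(1/2,2,\lambda\xi) + O\!\left(\frac{\lambda^{-1/2}}{\lambda\xi}\right), $$
where the error absorbs both the $j\ge 1$ terms and the explicit remainder of \eqref{hy:f1/2}, using $\xi^{1/2}U(1/2,1,\lambda\xi)+\xi^{3/2}U(1/2,2,\lambda\xi) \ll \lambda^{-1/2}$ from the first step (here one uses $\xi \ll 1$, which holds since $z > (e^{1/4}-1)^{-1}$ forces $1/z$ bounded, hence $\xi = \log(1+1/z)$ bounded).

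\emph{Third step: assembling.} Substituting the first-step estimates,
$$ e^{-\xi/2}\lambda^{-1/2} + e^{-\xi/2}O\!\left(\tfrac{\lambda^{-1/2}}{\tau\xi}\right) + (1-e^{-\xi/2})\lambda^{-1/2} + (1-e^{-\xi/2})O\!\left(\tfrac{\lambda^{-1/2}}{\tau\xi}\right) + O\!\left(\tfrac{\lambda^{-1/2}}{\lambda\xi}\right), $$
and the two main terms combine to $\bigl(e^{-\xi/2} + 1 - e^{-\xi/2}\bigr)\lambda^{-1/2} = \lambda^{-1/2}$. All remaining terms are $O(\lambda^{-1/2}(\tau\xi)^{-1})$; since $|\lambda| \asymp \tau$, this equals $O(\lambda^{-3/2}\xi^{-1})$, which is exactly the claimed error. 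I would finish by noting the hypothesis $\tau\xi \gg \tau^\epsilon$ is used only to guarantee $|\lambda\xi|\to\infty$ so that the $U$-asymptotics apply with a power-saving error.

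\emph{Main obstacle.} The routine part is the $U$-function asymptotics; the only genuinely delicate point is bookkeeping the uniformity — making sure the implied constants in \eqref{hy:f1/2} and in $U(a,b,z) = z^{-a}(1+O(z^{-1}))$ do not degrade as $\arg(\lambda\xi)$ approaches (but never reaches) $\pi/2$, i.e. as $\tau/\xi \to \infty$. This is handled because $\Re(\lambda\xi) = \xi/2$ is positive and the asymptotic for $U(a,b,z)$ is in fact uniform on $|\arg z| \le \pi - \delta$ for fixed $\delta$; one just has to cite the right uniform version (e.g. \cite[13.7]{OLBC10}) rather than the pointwise one. A secondary, purely cosmetic nuisance is confirming that $Q_0 = (1-e^{-\xi/2})/\xi$ stays bounded as $\xi \to 0$, which is immediate from the Taylor expansion $Q_0 = 1/2 + O(\xi)$.
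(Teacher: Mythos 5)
Your proof is correct and follows the paper's argument exactly: apply Lemma \ref{lem:dalh} with $n=1$, use the large-argument asymptotic $U(a,b,z) = z^{-a}(1+O(|z|^{-1}))$ from \cite[13.7.3]{OLBC10} (valid uniformly since $\Re(\lambda\xi)=\xi/2>0$ keeps $\arg(\lambda\xi)$ away from $\pm\pi$), and observe that $P_0+\xi Q_0 = 1$ so the main terms telescope to $\lambda^{-1/2}$ while the errors are all $O(\lambda^{-3/2}\xi^{-1})$ using $\xi\ll 1$. The paper states this in a single sentence and you have supplied the bookkeeping, but there is no difference in method.
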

\begin{proof}
Applying \eqref{hy:f1/2} with $n=1$ and using the asymptotic formula for the confluent hypergeometric function of large argument (see \cite[Eq. 13.7.3]{OLBC10}) we prove the lemma.
\end{proof}

\noindent We will also use the following result first obtained by Zavorotny \cite{Zav89}.

\begin{lemma} 
There is an $y_1>0$ such that for $\tau \rightarrow \infty$ uniformly for  all $y>y_1$ we have
\begin{multline}\label{Zav lemma}
\HyG \left( \frac{1}{2}+i\tau,\frac{1}{2}+i\tau,1+2i\tau; \frac{-1}{y^2}\right)=(2y)^{2i\tau}e^{-2i\tau\log(y+\sqrt{1+y^2})}\\ \times \left( \frac{y^2}{1+y^2}\right)^{1/4}
\left(1-\frac{1}{8i\tau }\left( 1-\frac{1+2y^2}{2y\sqrt{1+y^2}}\right) \right)+O\left(\frac{1}{y^4\tau^2} \right).
\end{multline}
\end{lemma}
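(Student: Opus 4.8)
The asserted formula is the asymptotics of $\HyG(\tfrac12+i\tau,\tfrac12+i\tau;1+2i\tau;-y^{-2})$ as $\tau\to+\infty$, uniform for $y$ bounded away from $0$; it is due to Zavorotny \cite{Zav89}, so one legitimate option is simply to invoke that reference. I sketch a self-contained route that reuses the differential equation already at hand. By \eqref{T difeq}--\eqref{a,b def}, $T_\tau(w)=w^{1/2+i\tau}(1-w)^{1/2}\HyG(\tfrac12+i\tau,\tfrac12+i\tau;1+2i\tau;w)$ solves $T_\tau''=(\tau^2a(w)+b(w))T_\tau$ with $a(w)=-1/(w^2(1-w))<0$ on $(0,1)$. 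Under the substitution $w=(1+y^2)^{-1}$, which maps the region $y>y_1$ to a compact subinterval of $(0,1)$ and sends $w\to 0$ to $y\to\infty$, the Pfaff/Euler relation $\HyG(\tfrac12+i\tau,\tfrac12+i\tau;1+2i\tau;w)=(1-w)^{-1/2-i\tau}\HyG(\tfrac12+i\tau,\tfrac12+i\tau;1+2i\tau;w/(w-1))$ together with $w/(w-1)=-y^{-2}$ identifies $\HyG(\cdots;-y^{-2})$ exactly with an elementary prefactor times $T_\tau((1+y^2)^{-1})$.

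The plan is then: (i) record that exact prefactor, and transform $T_\tau''=(\tau^2a+b)T_\tau$ into the variable $y$ and into Liouville--Green normal form $\Psi''=(\tau^2\widetilde a(y)+\widetilde b(y))\Psi$; the WKB phase $\tau\int\sqrt{\widetilde a}$ reduces to $\int dw/(w\sqrt{1-w})=-2\operatorname{arcsinh}(y)+C$, which after absorbing the factor $2^{2i\tau}$ coming from the duplication formula $\Gamma(1+2i\tau)/\Gamma(\tfrac12+i\tau)^2=\pi^{-1/2}2^{2i\tau}\Gamma(1+i\tau)/\Gamma(\tfrac12+i\tau)$ produces exactly $(2y)^{2i\tau}e^{-2i\tau\operatorname{arcsinh}(y)}$; (ii) apply the rigorous two-term Liouville--Green expansion with Olver-style error bounds, $\Psi=(\widetilde a)^{-1/4}e^{\pm\tau\int\sqrt{\widetilde a}}(1+\tfrac1\tau(\cdots)+O(\tau^{-2}))$, where the amplitude $(\widetilde a)^{-1/4}$ together with the Jacobian collapses to $(y^2/(1+y^2))^{1/4}$, and the first correction (an integral of the usual Schwarzian-type density, plus the $\tfrac1{8i\tau}$ coming from the Stirling expansion $\Gamma(1+i\tau)/\Gamma(\tfrac12+i\tau)=(i\tau)^{1/2}(1+\tfrac1{8i\tau}+O(\tau^{-2}))$) evaluates to $-\tfrac1{8i\tau}(1-\tfrac{1+2y^2}{2y\sqrt{1+y^2}})$; (iii) fix the connection constant and rule out a second WKB solution by matching at $y\to\infty$, where $\HyG(\cdots;-y^{-2})\to1$ and both sides of \eqref{Zav lemma} tend to $1$, comparing the $O(y^{-2})$ Taylor coefficients (both equal to $-\tfrac14y^{-2}-\tfrac{i\tau}{2}y^{-2}$); (iv) track the $y$-dependence of every coefficient function, each a rational function of $y,\sqrt{1+y^2}$ vanishing to order $y^{-4}$ as $y\to\infty$, to upgrade the error from $O(\tau^{-2})$ to $O(\tau^{-2}y^{-4})$.

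An alternative, perhaps cleaner for the explicit constants, is steepest descent on Euler's integral $\HyG(a,a;2a;z)=\tfrac{\Gamma(2a)}{\Gamma(a)^2}\int_0^1 t^{-1}(1-t)^{-1}e^{a\Phi_z(t)}dt$ with $\Phi_z(t)=\log\frac{t(1-t)}{1-zt}$, $a=\tfrac12+i\tau$, $z=-y^{-2}$: the phase has a unique saddle $t_0=y/(y+\sqrt{1+y^2})\in(0,\tfrac12)$, bounded away from the endpoints for $y\ge y_1$, with $\Phi_z(t_0)=2\log y-2\operatorname{arcsinh}(y)$ and $1-zt_0=\sqrt{1+y^2}/y$; the leading Gaussian term (after the $\sqrt{i\tau}$ from the duplication formula cancels the $1/\sqrt{i\tau}$ from the saddle) reproduces $(y^2/(1+y^2))^{1/4}$, and the next term of the steepest-descent expansion — combining the cubic/quartic phase corrections, the Taylor correction of the non-exponential factor at $t_0$, and the $\tfrac1{8i\tau}$ from $\Gamma(1+i\tau)/\Gamma(\tfrac12+i\tau)$ — gives the stated correction. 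The main obstacle in either route is the same: propagating two-term precision while keeping all estimates uniform in $y$ over the unbounded interval $y>y_1$ (in particular securing the $y^{-4}$ gain, which forces one to exhibit the cancellation of the $\tau^2y^{-4}$, $\tau y^{-4}$ and $y^{-4}$ contributions via the Taylor expansion of the leading oscillatory factor), and the bookkeeping needed to verify that the second-order correction is exactly $-\tfrac1{8i\tau}(1-\tfrac{1+2y^2}{2y\sqrt{1+y^2}})$ rather than merely $O(\tau^{-1})$.
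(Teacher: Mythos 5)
The paper gives no proof of this lemma: the "proof" in the text is a single sentence, "This formula is proved in \cite[Lemma 2.4]{Zav89}," i.e.\ the authors simply cite Zavorotny. Your opening remark — that invoking the reference is the legitimate option — therefore coincides exactly with what the paper does, and in that sense you have matched the paper's approach. The self-contained sketch you add goes strictly beyond what the paper contains, so there is nothing in the paper against which to check the details; but for what it is worth, your plan is the standard one for such uniform hypergeometric asymptotics, and the specific computations you record are correct: the Euler-integral phase $\Phi_z(t)=\log\frac{t(1-t)}{1-zt}$ with $z=-y^{-2}$ does have its unique interior saddle at $t_0=y/(y+\sqrt{1+y^2})\in(0,\tfrac12)$ with $\Phi_z(t_0)=2\log y-2\operatorname{arcsinh}(y)$ and $1-zt_0=\sqrt{1+y^2}/y$, the duplication-formula factor $2^{2i\tau}$ does convert $e^{i\tau\Phi_z(t_0)}$ into the stated $(2y)^{2i\tau}e^{-2i\tau\operatorname{arcsinh} y}$, and the first two Taylor coefficients of both sides in $y^{-2}$ do match at leading order in $\tau$. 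Either of your two routes (Liouville--Green on the hypergeometric ODE already written down as \eqref{T difeq}, or two-term steepest descent on Euler's integral) is a genuine path to the result, provided one does the bookkeeping you flag — exhibiting the cancellation in the $y^{-4}$ coefficient and carrying a rigorous error bound uniform on $(y_1,\infty)$ — so the sketch is a reasonable outline rather than a finished proof, and it is not something the paper attempts.

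One caution if you actually try to carry out the sketch: when I push the Taylor-in-$y^{-2}$ check one more order I find that with the sign as printed the $\tfrac1{8i\tau}$-correction contributes $-\tfrac{i}{64\tau y^4}$ to the coefficient of $y^{-4}$, whereas the second hypergeometric Taylor coefficient $\tfrac{(\tfrac12+i\tau)(\tfrac32+i\tau)^2}{8(1+i\tau)}$ contributes $+\tfrac{i}{64\tau}$; these differ by $O(\tau^{-1}y^{-4})$ rather than the stated $O(\tau^{-2}y^{-4})$. This may be an arithmetic slip on my side, or may reflect a sign or error-term convention in the Zavorotny reference that is worth verifying before you rely on the self-contained route to reproduce the formula exactly as printed. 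The citation route is immune to this issue; the self-contained route is not.
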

\begin{proof}
This formula is proved in \cite[Lemma 2.4]{Zav89}.
\end{proof}

\noindent Combining \eqref{KD result} and \eqref{Zav lemma} we obtain the following asymptotic formula.

\begin{lemma} 
For $\tau \rightarrow \infty$ uniformly for  all $y$ such that $0<y<1-\tau^{-2+\epsilon}$ we have
\begin{equation}\label{2F1asymptotic}
\HyG \left( \frac{1}{2}+i\tau,\frac{1}{2}+i\tau,1+2i\tau; y\right) = \frac{2^{2i\tau}(1-y)^{-1/4}}{(1+\sqrt{1-y})^{2i\tau}}+O\left(\frac{1}{\tau(1-y)^{3/4}}\right).
\end{equation}
\end{lemma}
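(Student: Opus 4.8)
The plan is to derive the asymptotic formula \eqref{2F1asymptotic} by connecting the hypergeometric function $\HyG(1/2+i\tau, 1/2+i\tau; 1+2i\tau; y)$ with the two asymptotic inputs already available: the Farid Khwaja--Olde Daalhuis result (Corollary \ref{cor:confl}) and Zavorotny's result (the lemma containing \eqref{Zav lemma}). First I would use a quadratic/Pfaff-type transformation of the hypergeometric function to rewrite the argument $y \in (0,1)$ in terms of an argument of the form $-z$ or $-1/w^2$ with $z, w$ large when $y$ is close to $1$; the natural substitution is $1-y = 1/(1+z)$ (equivalently $z = y/(1-y)$), or the companion substitution producing $-1/(\text{something})^2$, so that the regime $0 < y < 1 - \tau^{-2+\epsilon}$ corresponds to $z \gg \tau^{\text{small}}$ and $\xi = \log(1+1/z) = -\log y$ satisfying $\tau\xi \gg \tau^{\epsilon}$, exactly the hypothesis of Corollary \ref{cor:confl}. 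Applying the standard quadratic transformation \cite[15.8.13 or 15.8.15]{OLBC10} that sends $\HyG(a,a;2a;y)$-type expressions to $\HyG(a, 1/2; \dots)$ with squared argument, together with the Euler/Pfaff transformations, should convert our function into the ones appearing in \eqref{KD result} and \eqref{Zav lemma}.

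The key steps, in order, would be: (1) Record the precise quadratic transformation expressing $\HyG(1/2+i\tau, 1/2+i\tau; 1+2i\tau; y)$ in terms of $\HyG(1/2+i\tau, 1/2+i\tau; 1+2i\tau; -1/w^2)$ or $\HyG(1/2, 1/2; 1+2i\tau; -z)$, keeping track of the algebraic prefactors $(1-y)^{\pm 1/4}$, $(1\pm\sqrt{1-y})^{\pm 2i\tau}$ and powers of $2$. (2) Substitute the asymptotic expansion from Corollary \ref{cor:confl} (for the $z$-large range, i.e. $y$ not too close to $1$) and from Zavorotny's lemma (for the complementary sub-range, expressed via $-1/y_1^2$ with $y_1 = \sqrt{1-y}/\dots$ large), checking that the two ranges overlap so the formula is uniform on all of $0 < y < 1-\tau^{-2+\epsilon}$. (3) Simplify $\lambda = 1/2 + 2i\tau$, $\xi = \log(1+1/z) = -\tfrac12\log(1-y) + \dots$ and the square roots $\sqrt{\lambda} \sim \sqrt{2i\tau}$, using the reflection/duplication identities for the $\Gamma$-factors $\Gamma(1/2+2i\tau)/\Gamma(1+2i\tau)$ that appear on the left of \eqref{KD result}, to match the claimed main term $2^{2i\tau}(1-y)^{-1/4}(1+\sqrt{1-y})^{-2i\tau}$. (4) Track the error terms: the $O(\lambda^{-3/2}\xi^{-1})$ of Corollary \ref{cor:confl} becomes $O(\tau^{-3/2}(-\log(1-y))^{-1})$, which must be shown to be dominated by $O(\tau^{-1}(1-y)^{-3/4})$ throughout the range, and similarly the $O(1/(y^4\tau^2))$ in Zavorotny's formula must be re-expressed in the $y$-variable and absorbed.

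I expect the main obstacle to be the bookkeeping in step (3)--(4): ensuring that after the quadratic transformation the leading $\Gamma$-ratio and the various powers of $2$, $(1\pm\sqrt{1-y})$, and $(1-y)^{1/4}$ combine exactly into the stated clean main term, and — more delicately — that the error term is genuinely uniform right up to the boundary $y = 1 - \tau^{-2+\epsilon}$. Near that boundary $1-y$ is as small as $\tau^{-2+\epsilon}$, so $(1-y)^{-3/4} \sim \tau^{3/2 - 3\epsilon/4}$ and the claimed error $O(\tau^{-1}(1-y)^{-3/4})$ is of size $\tau^{1/2}$, comparable to the main term times $\tau^{-1}(1-y)^{-1/2}$; one has to verify that the true error from the inputs really is this small and not larger, which is exactly the point where the hypothesis $\tau\xi \gg \tau^{\epsilon}$ (hence the cutoff at $1-\tau^{-2+\epsilon}$ rather than closer to $1$) is forced. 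A secondary nuisance is checking that the ranges of validity of Corollary \ref{cor:confl} (needing $z > (e^{1/4}-1)^{-1}$, i.e. $y$ bounded away from $1$ by a fixed amount is \emph{not} what we want — rather $z$ large) and of Zavorotny's lemma ($y > y_1$ for some fixed $y_1$) together cover $(0, 1-\tau^{-2+\epsilon})$; I would handle the bounded-away-from-the-endpoints middle range either by a crude direct estimate using \eqref{2F1 abs estimate} or by noting that there both inputs apply and agree.
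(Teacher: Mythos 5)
Your plan correctly identifies the two inputs (Zavorotny and Khwaja--Olde~Daalhuis) and the general idea of a quadratic transformation, but there are two places where the proposal as written would not go through, and both are concrete rather than merely bookkeeping.

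\emph{The substitution must be the quadratic one, not $z=y/(1-y)$.} With your default substitution $1-y=1/(1+z)$ one has $\xi=\log(1+1/z)=-\log y\sim 1-y$ near $y=1$, so the hypothesis $\tau\xi\gg\tau^{\epsilon}$ of Corollary~\ref{cor:confl} only forces $1-y\gg\tau^{-1+\epsilon}$, which does \emph{not} reach the claimed cutoff $1-y\gg\tau^{-2+\epsilon}$. The lemma is proved in the paper via the Erd\'elyi quadratic relation \cite[(25), p.112]{Er53}, which after simplification reads \eqref{2F1transform2}: the new argument is $-z$ with $z=\tfrac{(1-\sqrt{1-y})^2}{4\sqrt{1-y}}$, so that $1+1/z=\tfrac{(1+\sqrt{1-y})^2}{(1-\sqrt{1-y})^2}$ and $\xi=2\log\tfrac{1+\sqrt{1-y}}{1-\sqrt{1-y}}\sim 4\sqrt{1-y}$ as $y\to1$. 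It is precisely the square root, produced by the quadratic nature of the transformation, that converts $\tau\xi\gg\tau^{\epsilon}$ into $1-y\gg\tau^{-2+\epsilon}$. In your write-up this step is left open as ``or the companion substitution,'' but without pinning down that it is quadratic the stated range cannot be obtained; your claim that $\xi=-\log y$ already satisfies $\tau\xi\gg\tau^{\epsilon}$ on all of $0<y<1-\tau^{-2+\epsilon}$ is false for $1-\tau^{-1+\epsilon}<y<1-\tau^{-2+\epsilon}$.

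\emph{The ranges for the two inputs are reversed.} Under the Erd\'elyi transformation, $z\to\infty$ exactly as $y\to1$; hence Corollary~\ref{cor:confl} covers the regime $y$ close to~$1$ (subject to the $\tau\xi\gg\tau^{\epsilon}$ cutoff), not, as you write, ``$y$ not too close to~$1$.'' Symmetrically, Zavorotny's formula is stated for $\HyG(\tfrac12+i\tau,\tfrac12+i\tau;1+2i\tau;-1/w^{2})$ with $w>y_1$ bounded \emph{below}; transferring it to argument $y$ via Pfaff \cite[15.8.1]{OLBC10} gives $y=1/(1+w^2)$, which covers the interval $0<y<(1+y_1^2)^{-1}$, i.e.\ the regime of $y$ bounded \emph{away} from $1$. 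Trying to use Zavorotny near $y=1$, as your plan suggests, would require $w\to0$ and so fall outside the hypotheses. The paper's split is therefore: Zavorotny (plus Pfaff) on $0<y<(1+y_1^2)^{-1}$, and the Erd\'elyi transformation together with Corollary~\ref{cor:confl} on $(1+y_1^2)^{-1}<y<1-\tau^{-2+\epsilon}$; the two ranges overlap, and there is no separate ``middle range'' to handle. Once these two points are fixed your plan coincides with the paper's proof.
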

\begin{proof}
The case $0<y<(1+y_1^{2})^{-1}$ follows from \eqref{Zav lemma} together with \cite[15.8.1]{OLBC10}.   Consider the case $(1+y_1^{2})^{-1}<y<1-\tau^{-2+\epsilon}$.
According to \cite[(25), p.112]{Er53}
\begin{equation*}
{}_2F_{1}\left(\frac{1}{2},\frac{1}{2};1+i\tau; -z\right)=
\frac{(1+z)^{i\tau}}{(\sqrt{1+z}+\sqrt{z})^{1+2i\tau}}\times\\
{}_2F_{1}\left(\frac{1}{2}+i\tau,\frac{1}{2}+i\tau;1+2i\tau; \frac{4\sqrt{z}\sqrt{1+z}}{(\sqrt{1+z}+\sqrt{z})^2}\right).
\end{equation*}
This can be rewritten as
\begin{equation}\label{2F1transform2}
\GenHyG{2}{1}{\frac{1}{2}+i\tau,\frac{1}{2}+i\tau}{1+2i\tau}{y}=
\frac{2^{2i\tau}}{(1+\sqrt{1-y})^{2i\tau}(1-y)^{1/4}}\times \GenHyG{2}{1}{\frac{1}{2},\frac{1}{2}}{1+i\tau}{ -\frac{(1-\sqrt{1-y})^2}{4\sqrt{1-y}}}.
\end{equation}
For the hypergeometric function on the right-hand side of \eqref{2F1transform2} we apply \eqref{KD result} to prove the lemma.
\end{proof}

	Recall that we are left to estimate \eqref{Kj def} with $j=0$. To do this we substitute \eqref{T def} and \eqref{2F1asymptotic}
to \eqref{Kj def} showing that

\begin{equation}\label{K0 estimate0}
K_{0}(-x,\tau)=\frac{\Gamma^2(1/2+i\tau)2^{2i\tau}}{\Gamma(1+i\tau)}
\int_{0}^{1}\frac{\chf_0(y)(1-y)^{-3/4+ix}y^{-1/2+i\tau}}{(1+\sqrt{1-y})^{2i\tau}}\ud y
+O\left(\frac{1}{\tau^{3/2}\delta^{1/4}}\right).
\end{equation}
Let
\begin{equation}\label{alpha q def}
\alpha=\frac{x}{\tau},\quad q_0(y)=\frac{\chf_0(y)}{y^{1/2}(1-y)^{3/4}},
\end{equation}
\begin{equation}\label{alpha p def}
p_0(\alpha,y)=\alpha\log(1-y)-2\log(1+\sqrt{1-y})+\log y.
\end{equation}
Then \eqref{K0 estimate0} can be rewritten as
\begin{equation}\label{K0 estimate1}
K_{0}(-x,\tau)=\frac{\Gamma^2(1/2+i\tau)2^{2i\tau}}{\Gamma(1+i\tau)}
\int_{0}^{1}q_0(y)e^{i\tau p_0(\alpha,y)}\ud y
+O\left(\frac{1}{\tau^{3/2}\delta^{1/4}}\right).
\end{equation}

\begin{lemma}
For $\tau\sim x$ and $\tau^{-2+\epsilon}\ll\delta\ll\tau^{-2/3-\epsilon}$
\begin{equation}\label{K0 estimate2}
K_{0}(-x,\tau)=\frac{-2\pi i}{\tau(1+4\alpha^2)^{1/4}}e^{i\tau p_0(\alpha,y_{\alpha})}
+O\left(\frac{1}{\tau^{3/2}\delta^{1/4}}\right),
\end{equation}
where
\begin{equation}\label{p0(alpha,yalpha) def}
p_0(\alpha,y_{\alpha})=2\alpha\log\frac{2\alpha}{1+\sqrt{1+4\alpha^2}}-\log\left(2\alpha+\sqrt{1+4\alpha^2}\right).
\end{equation}
\end{lemma}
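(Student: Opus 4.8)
The plan is to feed the stationary phase method into \eqref{K0 estimate1}, which already reduces $K_0(-x,\tau)$ to the $\Gamma$-prefactor times the oscillatory integral $\int_0^1 q_0(y)e^{i\tau p_0(\alpha,y)}\,dy$ up to the admissible error $O(\tau^{-3/2}\delta^{-1/4})$. Throughout, the hypothesis $\tau\sim x$ forces $\alpha=x/\tau\asymp1$, so the stationary point and all $\alpha$-dependent constants below stay in a fixed compact range.

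First I would analyze the phase. The substitution $u=\sqrt{1-y}$ turns $p_0(\alpha,y)$ from \eqref{alpha p def} into $P(u)=2\alpha\log u-\log(1+u)+\log(1-u)$ on $(0,1)$, with $P'(u)=\frac{2\alpha}{u}-\frac{2}{1-u^2}$ and $P''(u)=-\frac{2\alpha}{u^2}-\frac{4u}{(1-u^2)^2}<0$. Hence $P'$ is strictly decreasing with a unique zero $u_\alpha$, the positive root of $\alpha u^2+u-\alpha=0$, i.e.\ $u_\alpha=\frac{\sqrt{1+4\alpha^2}-1}{2\alpha}$, so $y_\alpha=1-u_\alpha^2$ is the unique stationary point of $p_0(\alpha,\cdot)$, bounded away from $0$ and $1$. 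Using the elementary identities $u_\alpha^2=\frac{\sqrt{1+4\alpha^2}-1}{\sqrt{1+4\alpha^2}+1}$, $\frac{2\alpha}{1+\sqrt{1+4\alpha^2}}=u_\alpha$ and $\frac{1+u_\alpha^2}{1-u_\alpha^2}=\sqrt{1+4\alpha^2}$, one checks that $p_0(\alpha,y_\alpha)=2\alpha\log u_\alpha+\log\frac{1-u_\alpha}{1+u_\alpha}$ coincides with \eqref{p0(alpha,yalpha) def}, and (chain rule, using $P'(u_\alpha)=0$) that $p_0''(\alpha,y_\alpha)=\frac{1}{4u_\alpha^2}P''(u_\alpha)=-\frac{1+u_\alpha^2}{2u_\alpha^3(1-u_\alpha^2)^2}<0$, so the stationary point is nondegenerate.

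Next comes the stationary phase itself. Since $\delta\ll\tau^{-2/3-\epsilon}\to0$, the cutoff $\chf_0$ is identically $1$ on a fixed neighbourhood of $y_\alpha$ for large $\tau$, where $q_0(y)=y^{-1/2}(1-y)^{-3/4}$ is smooth, and $y_\alpha$ is the only critical point in $(0,1)$; the standard expansion then gives
\begin{equation*}
\int_0^1 q_0(y)e^{i\tau p_0(\alpha,y)}\,dy=q_0(y_\alpha)\,e^{-i\pi/4}\sqrt{\frac{2\pi}{\tau\,|p_0''(\alpha,y_\alpha)|}}\;e^{i\tau p_0(\alpha,y_\alpha)}+O(\tau^{-3/2}),
\end{equation*}
the contributions outside a fixed neighbourhood of $y_\alpha$ being removed by one integration by parts: on $(0,\eta)$ one has $|p_0'|\asymp y^{-1}$ and $q_0/p_0'\asymp y^{1/2}$, on $(1-\eta,1)$ one has $|p_0'|\asymp(1-y)^{-1}$ and $q_0/p_0'\asymp\chf_0(1-y)^{1/4}$, and in both cases the derivative of this quantity — including the $\chf_0'\ll\delta^{-1}$ term, which lives on a window of width $\delta$ — has $L^1(dy)$-norm $O(1)$, while the artificial boundary terms at $\eta$ and $1-\eta$ cancel against those of the central piece and the term at $y=1$ vanishes by flatness of $\chf_0$. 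Finally, evaluating the $\Gamma$-prefactor of \eqref{K0 estimate1} by the Legendre duplication formula and Stirling (it contributes a factor of modulus $\asymp\tau^{-1/2}$ and argument $-\pi/4+O(\tau^{-1})$), substituting $q_0(y_\alpha)=u_\alpha^{-3/2}(1-u_\alpha^2)^{-1/2}$ together with the value of $|p_0''(\alpha,y_\alpha)|$ above, and collapsing the algebra via $\frac{1+u_\alpha^2}{1-u_\alpha^2}=\sqrt{1+4\alpha^2}$ (the two $e^{-i\pi/4}$ factors multiplying to $-i$), produces precisely $\frac{-2\pi i}{\tau(1+4\alpha^2)^{1/4}}e^{i\tau p_0(\alpha,y_\alpha)}$; the $O(\tau^{-3/2})$ from the stationary phase and the $O(\tau^{-2})$ from the prefactor correction are absorbed into the $O(\tau^{-3/2}\delta^{-1/4})$ already carried by \eqref{K0 estimate1}.

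I expect the main obstacle to be the bookkeeping of the endpoint contributions: near $y=1$ the amplitude $q_0$ has the singularity $(1-y)^{-3/4}$ and $\chf_0$ produces derivatives as large as $\delta^{-n}$, and one must check that a \emph{single} integration by parts already gains enough decay. This works because $1/p_0'$ vanishes to matching order at both endpoints ($|p_0'|\asymp y^{-1}$ near $0$, $|p_0'|\asymp(1-y)^{-1}$ near $1$), so that no negative power of $\tau\delta$ — which is not bounded below in the admitted range $\tau^{-2+\epsilon}\ll\delta\ll\tau^{-2/3-\epsilon}$ — is ever produced. The algebraic identities $u_\alpha^2=\frac{\sqrt{1+4\alpha^2}-1}{\sqrt{1+4\alpha^2}+1}$ and $\frac{1+u_\alpha^2}{1-u_\alpha^2}=\sqrt{1+4\alpha^2}$ are routine, but are exactly what forces the unwieldy stationary-phase constant to collapse to the clean factor $(1+4\alpha^2)^{-1/4}$.
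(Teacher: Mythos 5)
Your proof follows the same route as the paper: both start from \eqref{K0 estimate1}, identify the nondegenerate interior saddle point $y_\alpha$ (bounded away from the endpoints because $\alpha\asymp1$), apply the stationary-phase method, evaluate the prefactor $\Gamma^2(1/2+i\tau)2^{2i\tau}/\Gamma(1+i\tau)$ via Stirling, and collapse the resulting constants using the identity $\alpha y_\alpha=\sqrt{1-y_\alpha}$. The paper simply invokes a ``standard version of the saddle point method'' and records the second saddle point $y_\alpha^{(-)}$ as being outside the contour, whereas you spell out more of the endpoint bookkeeping and phrase the saddle-point computation through the substitution $u=\sqrt{1-y}$; these are cosmetic differences.

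The one place your argument is not quite tight is the claim that \emph{a single} integration by parts on the tails $(0,\eta)\cup(1-\eta,1)$ already gains enough decay. After one IBP (and after cancelling the artificial boundary terms at $\eta,1-\eta$ against those of the central piece), the residual tail integral has the form $\tau^{-1}\int\bigl(q_0\chf_0/p_0'\bigr)'e^{i\tau p_0}\,dy$, whose $L^1$-amplitude is $O(1)$ as you observe; this gives $O(\tau^{-1})$. But the target error is $O(\tau^{-3/2}\delta^{-1/4})$, and throughout the admitted range $\tau^{-2+\epsilon}\ll\delta\ll\tau^{-2/3-\epsilon}$ one has $\tau^{-3/2}\delta^{-1/4}\le\tau^{-1-\epsilon/4}$, which is strictly smaller than $\tau^{-1}$. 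So the $O(\tau^{-1})$ tail is not absorbed. The remedy is a second integration by parts: since $q_0/p_0'\asymp y^{1/2}$ near $0$ and $\asymp\chf_0(1-y)^{1/4}$ near $1$, each further division by $p_0'$ restores vanishing at the endpoints, the $\chf_0$-window terms contribute only $O(\delta^{1/4})$ to the $L^1$-norm, and the resulting $O(\tau^{-2})$ is then safely below the target. (The paper glosses over exactly this point by citing ``standard'' stationary phase with error $O(\tau^{-3/2})$; the two-IBP check is what justifies that claim for the present amplitude with its endpoint singularities and $\chf_0$-cutoff.)
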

\begin{proof}
We have
\begin{equation*}
\frac{\partial}{\partial y}p_0(\alpha,y)=\frac{1}{y\sqrt{1-y}}-\frac{\alpha}{1-y}.
\end{equation*}
Therefore, the saddle point (which is a solution of $\frac{\partial}{\partial y}p_0(\alpha,y)=0$) is equal to
\begin{equation}\label{sadpoint}
y_{\alpha}=\frac{-1+\sqrt{1+4\alpha^2}}{2\alpha^2}=\frac{2}{1+\sqrt{1+4\alpha^2}}.
\end{equation}
Note that $\alpha y_{\alpha}=\sqrt{1-y_{\alpha}}$ and that there exists the second saddle point
\begin{equation*}
y_{\alpha}^{(-)}=\frac{-1-\sqrt{1+4\alpha^2}}{2\alpha^2}.
\end{equation*}
Since  $\alpha\sim1$ the saddle point $y_{\alpha}^{(-)}$ is bounded away from the interval of integration in \eqref{K0 estimate1} and the saddle point $y_{\alpha}$ is bounded  away from zero and one. Therefore, we can apply a standard version of the saddle point method getting
\begin{equation}\label{sadpoint approx}
\int_{0}^{1}q_0(y)e^{i\tau p_0(\alpha,y)}\ud y=
\frac{\sqrt{2\pi}e^{\pi i/4}q_0(y_{\alpha})}{\sqrt{\tau p_0''(\alpha,y_{\alpha})}}e^{i\tau p_0(\alpha,y_{\alpha})}
+O\left(\frac{1}{\tau^{3/2}}\right),
\end{equation}
where
\begin{equation}\label{alpha p derivative2}
p_0''(\alpha,y_{\alpha})=\frac{\partial^2}{\partial y^2}p_0(\alpha,y)\Biggl|_{y=y_{\alpha}}=\\
\frac{1}{2y_{\alpha}(1-y_{\alpha})^{3/2}}-\frac{1}{y^2_{\alpha}\sqrt{1-y_{\alpha}}}-\frac{\alpha}{(1-y_{\alpha})^2}=
-\frac{\sqrt{1+4\alpha^2}}{2\alpha^3y_{\alpha}^4}.
\end{equation}
For $\delta\ll \tau^{-2/3-\epsilon}$ in the range \eqref{x23<tau<x32} we have $\chf_0(y_{\alpha})=1$.  Substituting \eqref{alpha q def}, \eqref{alpha p derivative2} to \eqref{sadpoint approx} we have
\begin{equation}\label{sadpoint approx2}
\int_{0}^{1}q_0(y)e^{i\tau p_0(\alpha,y)}\ud y=
\frac{2\sqrt{\pi}e^{-\pi i/4}}{(1+4\alpha^2)^{1/4}\sqrt{\tau}}e^{i\tau p_0(\alpha,y_{\alpha})}
+O\left(\frac{1}{\tau^{3/2}}\right).
\end{equation}
It follows from  the Stirling formula that
\begin{equation}\label{gamma2/gamma}
\frac{\Gamma^2(1/2+i\tau)2^{2i\tau}}{\Gamma(1+i\tau)}=\frac{\sqrt{\pi}}{\sqrt{\tau}}e^{-\pi i/4}
+O\left(\frac{1}{\tau^{3}}\right).
\end{equation}
Substituting \eqref{gamma2/gamma} to  \eqref{K0 estimate1} and applying \eqref{sadpoint approx2} we prove
\eqref{K0 estimate2}.

\noindent Using \eqref{sadpoint}, \eqref{alpha p def}  and the relation $\alpha y_{\alpha}=\sqrt{1-y_{\alpha}}$ we obtain
\begin{equation*}
p_0(\alpha,y_{\alpha})=
2\alpha\log(\alpha y_{\alpha})-\log\frac{(1+\alpha y_{\alpha})^2}{y_{\alpha}}=\\
2\alpha\log\frac{2\alpha}{1+\sqrt{1+4\alpha^2}}-\log\left(2\alpha+\sqrt{1+4\alpha^2}\right),
\end{equation*}
thus proving \eqref{p0(alpha,yalpha) def}.
\end{proof}

\begin{corollary}
For $\tau\sim x$ we have
\begin{equation}\label{K-1+1estimate main1}
K(-x,\tau)=\frac{-2\pi i}{\tau(1+4\alpha^2)^{1/4}}e^{i\tau p_0(\alpha,y_{\alpha})}
+O\left(\frac{1}{\tau^{25/21}}\right).
\end{equation}
\end{corollary}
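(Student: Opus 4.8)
The plan is to assemble the corollary from the decomposition $K(-x,\tau)=K_0(-x,\tau)+K_1(-x,\tau)$ of \eqref{K=K0+K1} together with the two estimates just obtained, and then to make the optimal choice of the cutoff parameter $\delta$. First I would note that under the hypothesis $\tau\sim x$ one automatically has $x^{2/3-\varepsilon}\ll\tau\ll x^{3/2}$ for $x$ large, so the bound \eqref{K1 estimate1} for $K_1(-x,\tau)$ is available, and the stationary-phase evaluation \eqref{K0 estimate2} of $K_0(-x,\tau)$ is valid as soon as $\delta$ is taken in the admissible window $\tau^{-2+\varepsilon}\ll\delta\ll\tau^{-2/3-\varepsilon}$. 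Summing the two contributions then gives
$$K(-x,\tau)=\frac{-2\pi i}{\tau(1+4\alpha^2)^{1/4}}e^{i\tau p_0(\alpha,y_{\alpha})}+O\!\left(\frac{1}{\tau^{3/2}\delta^{1/4}}+\frac{(\tau^{10/9}\delta)^{3/2}}{\tau}\right).$$

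Next I would optimize over $\delta$. Writing the two error terms as $\tau^{-3/2}\delta^{-1/4}$ and $\tau^{2/3}\delta^{3/2}$, equating them yields $\delta^{7/4}=\tau^{-13/6}$, i.e.\ $\delta=\tau^{-26/21}$. Since $-2<-26/21<-2/3$, this value lies strictly inside the window required above; in particular $\delta\gg\tau^{-2}$, as needed for the $K_1$ bound, and $\delta\ll\tau^{-2/3-\varepsilon}$, which is precisely what guarantees that the saddle point $y_\alpha$ appearing through \eqref{2F1asymptotic} and \eqref{K0 estimate2} still satisfies $\chf_0(y_\alpha)=1$. With this choice both error terms equal $\tau^{-25/21}$, and substituting back produces exactly the stated asymptotic $K(-x,\tau)=\frac{-2\pi i}{\tau(1+4\alpha^2)^{1/4}}e^{i\tau p_0(\alpha,y_{\alpha})}+O(\tau^{-25/21})$.

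The genuinely hard part has already been dealt with in the preceding lemmas: the Khwaja--Olde Daalhuis and Zavorotny uniform asymptotics feeding into \eqref{2F1asymptotic}, the threefold integration by parts that tames $K_1$, and the saddle-point analysis producing \eqref{K0 estimate2}. What remains for this corollary is only the exponent bookkeeping in the optimization, and the single delicate point is to verify that the optimal exponent $-26/21$ simultaneously respects every constraint on $\delta$ imposed by those lemmas; once this is checked, no further obstacle arises.
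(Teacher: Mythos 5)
Your proposal is correct and follows essentially the same route as the paper: combine the $K_0$ asymptotic \eqref{K0 estimate2} with the $K_1$ bound \eqref{K1 estimate1} via the decomposition \eqref{K=K0+K1}, then choose $\delta=\tau^{-26/21}$ to balance $\tau^{-3/2}\delta^{-1/4}$ against $\tau^{2/3}\delta^{3/2}$, checking that this lies inside the admissible window $\tau^{-2}\ll\delta\ll\tau^{-2/3-\epsilon}$. The arithmetic in your optimization and the constraint verification are both correct.
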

\begin{proof}
Substituting \eqref{K0 estimate2} and \eqref{K1 estimate1} to \eqref{K=K0+K1} we prove that  for
$\tau\sim x$ and $\tau^{-2}\ll\delta\ll \tau^{-2/3-\epsilon}$ one has
\begin{equation}\label{K-1+1estimate main2}
K(-x,\tau)=\frac{-2\pi i}{\tau(1+4\alpha^2)^{1/4}}e^{i\tau p_0(\alpha,y_{\alpha})}
+O\left(\frac{1}{\tau^{3/2}\delta^{1/4}}+\frac{\tau^{5/3}\delta^{3/2}}{\tau}\right).
\end{equation}
The optimal choice for $\delta$ is $\delta=\tau^{-26/21}$.
\end{proof}

	Numerical computations show that \eqref{K-1+1estimate main1} is valid in the whole range
$x^{2/3-\varepsilon}\ll\tau\ll x^{3/2}$. In order to prove this fact one should use more involved arguments to confirm
\eqref{sadpoint approx}.

\noindent First, we split the integral in  \eqref{K0 estimate0} into two parts.  Let
\begin{equation*}
\chf_{0,0}(y)+\chf_{0,1}(y)=\chf_0(y),
 \quad\hbox{for}\quad 0<y<1,
\end{equation*}
\begin{equation*}
\chf_{0,0}(y)=1 \quad\hbox{for}\quad 0<y<\delta_0,\quad
\chf_{0,0}(y)=0 \quad\hbox{for}\quad 2\delta_0<y<1,
\end{equation*}
\begin{equation}\label{chf01 def1}
\chf_{0,1}(y)=0 \quad\hbox{for}\quad 0<y<\delta_0,\quad 1-\delta<y<1,
\end{equation}
\begin{equation}\label{chf01 def2}
\chf_{0,1}(y)=1 \quad\hbox{for}\quad 2\delta_0<y<1-2\delta.
\end{equation}
Therefore, we can rewrite \eqref{K0 estimate0}  as
\begin{equation}\label{K0 to K01}
K_{0}(-x,\tau)=\frac{\Gamma^2(1/2+i\tau)2^{2i\tau}}{\Gamma(1+i\tau)}\left(K_{0,0}(x,\tau)+K_{0,1}(x,\tau)
\right)
+O\left(\frac{1}{\tau^{3/2}\delta^{1/4}}\right),
\end{equation}
where for $j=0,1$
\begin{equation}\label{K01 def}
K_{0,j}(x,\tau)=
\int_{0}^{1}\frac{\chf_{0,j}(y)(1-y)^{-3/4+ix}y^{-1/2+i\tau}}{(1+\sqrt{1-y})^{2i\tau}}\ud y.
\end{equation}

\begin{lemma}
For
\begin{equation}\label{delta0 estimate}
\delta_0\ll\min\left(\frac{\tau}{x\tau^{\epsilon}},\frac{1}{\tau^{\epsilon}}\right)
\end{equation}
we have
\begin{equation}\label{K00 estimate}
K_{0,0}(x,\tau)\ll\frac{1}{\tau^A}.
\end{equation}
\end{lemma}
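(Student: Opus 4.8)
The plan is to treat $K_{0,0}(x,\tau)$ as a non-stationary oscillatory integral: on the support of $\chf_{0,0}$, i.e. for $0<y<2\delta_0$, the phase of the integrand has no critical point and is very far from constant, so repeated integration by parts gives decay faster than any power of $\tau$. (We may assume $\tau$ large; for bounded $\tau$ the bound $|K_{0,0}|\le\int_{0}^{2\delta_0}y^{-1/2}\,dy\ll\sqrt{\delta_0}\ll1$ already suffices.) Factoring out $(1-y)^{ix}y^{i\tau}(1+\sqrt{1-y})^{-2i\tau}=e^{i\tau p_0(\alpha,y)}$ with $\alpha=x/\tau$ and $p_0$ as in \eqref{alpha p def}, one rewrites \eqref{K01 def} for $j=0$ as $K_{0,0}(x,\tau)=\int_{0}^{2\delta_0}\chf_{0,0}(y)(1-y)^{-3/4}y^{-1/2}e^{i\tau p_0(\alpha,y)}\,dy$. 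Since $\partial_y p_0(\alpha,y)=\frac{1}{y\sqrt{1-y}}-\frac{\alpha}{1-y}$ (as computed in the proof of \eqref{K0 estimate2}), and since \eqref{delta0 estimate} forces both $\delta_0\ll\tau^{-\epsilon}$ and $\alpha\delta_0\ll\tau^{-\epsilon}$, for $0<y<2\delta_0$ one gets $\partial_y p_0(\alpha,y)\ge\frac1y-2\alpha\ge\frac{1}{2y}>0$ once $\tau$ is large enough; in particular the (unique) saddle point $y_\alpha$ of \eqref{sadpoint} lies outside $(0,2\delta_0)$, which is the very purpose of the cut-off $\chf_{0,0}$ and of the condition \eqref{delta0 estimate}.

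I would then change variables to $t=\log y$ (equivalently, iterate the operator transpose to $g\mapsto g'/(iP')$, the one fixing $e^{iP}$), writing $K_{0,0}(x,\tau)=\int_{-\infty}^{\log 2\delta_0}A(t)e^{iP(t)}\,dt$ with $A(t)=e^{t/2}\chf_{0,0}(e^t)(1-e^t)^{-3/4}$ and $P(t)=\tau p_0(\alpha,e^t)$. Two uniform estimates drive the argument. First, $P'(t)=\tau\,y\,\partial_y p_0=\tau\bigl(\frac{1}{\sqrt{1-y}}-\frac{\alpha y}{1-y}\bigr)$ satisfies $\frac{\tau}{2}\le|P'(t)|\le2\tau$, and $P^{(k)}(t)\ll_k\tau$ for all $k\ge1$: each further application of $y\partial_y$ either leaves the leading constant $1$ alone or produces a factor $y\ll\delta_0$ which tames both the singular factor $y^{-1}$ from $\partial_y\log y$ and the size-$\alpha$ terms from $\partial_y(\alpha\log(1-y))$, using $\alpha\delta_0\ll1$. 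Second, $A^{(k)}(t)\ll_k e^{t/2}$ uniformly: $y\partial_y$ merely rescales $y^{1/2}$, and on the support of $\chf_{0,0}'$, where $y\asymp\delta_0$, one has $y^j\chf_{0,0}^{(j)}(y)\ll_j(y/\delta_0)^j\ll_j1$ and $y^j(1-y)^{-3/4-j}\ll_j1$ since $y<2\delta_0<\tfrac12$. Granting these, $N$ integrations by parts in $t$ — all boundary terms vanishing, since $A$ and its iterated images decay like $e^{t/2}$ as $t\to-\infty$ and $\chf_{0,0}$ has compact support in $t<\log 2\delta_0$ — give $|K_{0,0}(x,\tau)|\ll_N\tau^{-N}\int_{-\infty}^{\log 2\delta_0}e^{t/2}\,dt\ll_N\tau^{-N}\sqrt{\delta_0}\ll_N\tau^{-N}$, and taking $N=A$ proves \eqref{K00 estimate}.

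The one genuinely delicate point is the uniformity in these two estimates: one must be sure that neither dividing by $P'$ (whose size is $\asymp\tau$, not merely $\gg\tau/\delta_0$) nor differentiating the amplitude ever reintroduces a dependence on $x$ or on $\delta_0$ beyond what \eqref{x23<tau<x32} and \eqref{delta0 estimate} already absorb. This is exactly the bookkeeping achieved by the logarithmic substitution, which replaces the singular differentiation $\partial_y\sim y^{-1}$ by the benign $\partial_t=y\partial_y$; everything else is the standard non-stationary-phase estimate, here applied on the non-compact but exponentially weighted half-line $(-\infty,\log 2\delta_0]$.
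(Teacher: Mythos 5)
Your argument is correct, but it is organized rather differently from the paper's. The paper integrates by parts directly in $y$ against the single factor $y^{-1/2+i\tau}$: each of $a$ integrations by parts gains a factor $(j+\tfrac12+i\tau)^{-1}\asymp\tau^{-1}$ from the antiderivative, raises the power of $y$ by one, and places one $\partial_y$ on the remaining amplitude $\chf_{0,0}(y)(1-y)^{-3/4+ix}(1+\sqrt{1-y})^{-2i\tau}$; the Leibniz rule together with $y\ll\delta_0$ on the support then gives $K_{0,0}(x,\tau)\ll\tau^{-a}\sqrt{\delta_0}\,\bigl(1+(x\delta_0)^a+(\tau\delta_0)^a\bigr)$, and \eqref{delta0 estimate} reduces the bracket to $O(1+\tau^{-a\epsilon})$, yielding the stated decay after taking $a$ large. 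Your version instead collapses all three $y$-dependent oscillatory factors into one phase $e^{i\tau p_0(\alpha,y)}$, substitutes $t=\log y$, and applies the standard non-stationary-phase operator $g\mapsto-(g/iP')'$. That route is more conceptual — it makes explicit that the cut-off $\chf_{0,0}$ and the condition \eqref{delta0 estimate} serve precisely to push the saddle point $y_\alpha$ outside the support, and it presents this lemma as the complement of the stationary-phase analysis that follows for $K_{0,1}$ — but it requires exactly the uniformity bookkeeping you flag (verifying $P^{(k)}\ll_k\tau$ and $A^{(k)}\ll_k e^{t/2}$ uniformly), whereas the paper's treatment is a short Leibniz computation that never needs to name the phase, its derivatives, or its critical points.
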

\begin{proof}
Integrating by parts we estimate the integral \eqref{K01 def} with $\chf_{0,0}(y)$ in the following way
\begin{multline}\label{K00 estimate0}
K_{0,0}(x,\tau)\ll
\frac{1}{(1+\tau)^a}
\int_{0}^{1}\left|\frac{\partial^a}{\partial y^a}\left(\frac{\chf_{0,0}(y)(1-y)^{-3/4+ix}}{(1+\sqrt{1-y})^{2i\tau}}\right)\right|y^{a-1/2}\ud y\\
\ll\frac{\sqrt{\delta_0}}{(1+\tau)^a}\left(1+(x\delta_0)^a+(\tau\delta_0)^a\right).
\end{multline}
The right-hand side of \eqref{K00 estimate0} is negligible if \eqref{delta0 estimate} is satisfied.
\end{proof}

	\subsection{Difficult Case: Variant of Temme's Method}
	
	Analysis of $K_{0,1}(x,\tau)$ in the ranges $\tau\ll x$ and $\tau\gg x$ are slightly different. If $\tau\gg x$ we make the change of variables $y=(\cosh v)^{-2}$ so that with $\alpha=x/\tau$
\begin{multline}\label{K01 tau>x1}
K_{0,1}(x,\tau)=
2^{3/2}\int_{0}^{\infty}\frac{\chf_{0,1}((\cosh v)^{-2})}{\sqrt{\sinh(2v)}}\frac{(\tanh v)^{2ix}}{e^{2i\tau v}}\ud v=\\
2^{3/2}\int_{0}^{\infty}\frac{\chf_{0,1}((\cosh v)^{-2})}{\sqrt{\sinh(2v)}}
e^{-2i\tau(v-\alpha\log(\tanh v))}\ud v.
\end{multline}

\noindent If $\tau\ll x$ we make the change of variables $y=1-e^{-2v}$ so that with $\beta=\frac{\tau}{4x}$
\begin{multline}\label{K01 tau<x1}
K_{0,1}(x,\tau)=
2\int_{0}^{\infty}\frac{\chf_{0,1}(1-e^{-2v})e^{-v/2}}{\sqrt{1-e^{-2v}}}
\frac{e^{-2ixv}(1-e^{-v})^{i\tau}}{(1+e^{-v})^{i\tau}}\ud v=\\
2^{1/2}\int_{0}^{\infty}\frac{\chf_{0,1}(1-e^{-2v})}{\sqrt{\sinh(v)}}
e^{ix(-2v+x^{-1}\tau\log(\tanh v/2))}\ud v=\\
2^{3/2}\int_{0}^{\infty}\frac{\chf_{0,1}(1-e^{-4v})}{\sqrt{\sinh(2v)}}
e^{-4ix(v-\beta\log(\tanh v))}\ud v.
\end{multline}

\noindent We remark that both integrals in \eqref{K01 tau>x1} and \eqref{K01 tau<x1} have the following shape
\begin{equation}\label{Temme1}
\int_{0}^{a}q(v)^{\lambda-1}e^{-zp(v)}h(v)\ud v.
\end{equation}

\noindent The integrals of such type were intensively investigated by  Temme, see \cite{Te83}, \cite{Te85}, \cite[Ch. 25]{Te15}, \cite{Te21}.  Assuming that $p(v)=v$ and functions $q(v)$ and $h(v)$ satisfy some reasonable assumptions,  Temme
\cite[Theorem 3.2]{Te85} obtained an asymptotic expansion of \eqref{Temme1}. Unfortunately, our choice of $h(v)$ does not allow us to apply the result of Temme directly (one of the reasons for this is that our $h(v)$ is not independent of $\lambda,z$ due to the presence of $\chf_{0,1}(\cdot)$). Therefore, we reproduce the arguments in \cite{Te85}. Also since we are not searching for a full  asymptotic expansion of $K_{0,1}(x,\tau)$ we will estimate the remainder in a slightly different way than Temme. Consider the integral
\begin{equation}\label{I def}
I(q,h,\mu;z):=\int_{0}^{\infty}q(v)^{\lambda-1}e^{-zv}h(v)\ud v=
\int_{0}^{\infty}e^{-z(v-\mu\log q(v))}\frac{h(v)}{q(v)}\ud v,
\end{equation}
where $\mu=\lambda/z.$
To investigate \eqref{I def}, Temme suggested (see \cite[(2.4)]{Te85}) to make the following change of variables
\begin{equation}\label{change var}
v-\mu\log q(v)=t-\mu\log t+A(\mu).
\end{equation}
Properties  of this transformation are discussed in \cite[Sec. 2.3]{Te85}.
Accordingly,
\begin{equation}\label{Ieq2}
I(q,h,\mu;z)=e^{-zA(\mu)}\int_{0}^{\infty}e^{-z(t-\mu\log t)}f(t)\frac{\ud t}{t},\quad
f(t):=t\frac{h(v)}{q(v)}\frac{\ud v}{\ud t}.
\end{equation}
The point $t_0=\mu$ is the saddle-point of the integral \eqref{Ieq2}. If $\mu\gg1$ then one can apply the Laplace method and  when $\mu\rightarrow0$ one can use the Wattson lemma (see discussions in \cite[Sec.1]{Te83}).  But in our case we need a uniform  asymptotic formula valid for all $0<\mu<1$. Such  formula can be obtained using different methods, see \cite[Sec. 3,4,5]{Te83}. We choose the method of \cite[Sec. 5]{Te83}, \cite[Sec. 3.6]{Te85}.  The idea is to express $f(t)$ in \eqref{Ieq2} as
$f(t)=f(\mu)+(f(t)-f(\mu))$ and  then integrate  the second summand by parts.

\begin{lemma}
For $\Re{z}>0$ and $\mu>0$
\begin{equation}\label{Ieq0}
I(q,h,\mu;z)=
e^{-zA(\mu)}\frac{f(\mu)\Gamma(\lambda)}{z^{\lambda}}+\\
\frac{e^{-zA(\mu)}}{z}\int_{0}^{\infty}e^{-z(t-\mu\log t)}\left(\frac{f(t)-f(\mu)}{t-\mu}\right)'\ud t.
\end{equation}
\end{lemma}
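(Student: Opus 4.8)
The plan is to carry out exactly the substitution and integration-by-parts that Temme's method prescribes, then verify the limits of integration and the convergence. First I would make the change of variables \eqref{change var}, namely $v - \mu \log q(v) = t - \mu \log t + A(\mu)$, where $A(\mu)$ is chosen precisely so that the saddle points match up (i.e. the zero of $1 - \mu/q(v)\cdot q'(v)$ in $v$ corresponds to $t=\mu$); the function $A(\mu)$ and the smoothness of the map $v \mapsto t$ are recorded in \cite[Sec. 2.3]{Te85}, which I would cite rather than reprove. Under this substitution the integral \eqref{I def} becomes \eqref{Ieq2}, i.e.
$$ I(q,h,\mu;z) = e^{-zA(\mu)} \int_0^{\infty} e^{-z(t-\mu\log t)} f(t) \frac{dt}{t}, \qquad f(t) := t \frac{h(v)}{q(v)} \frac{dv}{dt}, $$
and the point $t_0 = \mu$ is the saddle point of the phase $t - \mu\log t$.

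Next I would write $f(t) = f(\mu) + \bigl(f(t) - f(\mu)\bigr)$ inside \eqref{Ieq2}. The contribution of the constant term $f(\mu)$ is an exact Gamma integral: since $\lambda = \mu z$,
$$ e^{-zA(\mu)} f(\mu) \int_0^{\infty} e^{-z(t-\mu\log t)} \frac{dt}{t} = e^{-zA(\mu)} f(\mu) \int_0^{\infty} e^{-zt} t^{\mu z - 1} dt = e^{-zA(\mu)} \frac{f(\mu)\Gamma(\lambda)}{z^{\lambda}}, $$
which is the first term on the right of \eqref{Ieq0}. For the remaining piece, I would write $f(t) - f(\mu) = (t-\mu)\, g(t)$ with $g(t) := \bigl(f(t) - f(\mu)\bigr)/(t-\mu)$ — this $g$ is smooth at $t=\mu$ because $f$ is $C^1$ there — and then integrate by parts using the identity $\frac{d}{dt}\bigl(e^{-z(t-\mu\log t)}\bigr) = -z\bigl(1 - \mu/t\bigr) e^{-z(t-\mu\log t)} = -\frac{z}{t}(t-\mu) e^{-z(t-\mu\log t)}$. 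Thus
$$ e^{-zA(\mu)} \int_0^{\infty} e^{-z(t-\mu\log t)} \frac{(t-\mu)g(t)}{t} dt = \frac{e^{-zA(\mu)}}{z}\left[ -e^{-z(t-\mu\log t)} g(t) \right]_0^{\infty} + \frac{e^{-zA(\mu)}}{z} \int_0^{\infty} e^{-z(t-\mu\log t)} g'(t)\, dt, $$
and the boundary terms vanish — at $t=\infty$ because of the exponential decay $e^{-zt}$ (using $\Re z > 0$), and at $t=0$ because $e^{-z(t-\mu\log t)} = e^{-zt} t^{\mu z}$ with $\Re(\mu z) = \Re\lambda > 0$ kills any polynomial growth of $g$ near $0$. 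Since $g'(t) = \bigl((f(t)-f(\mu))/(t-\mu)\bigr)'$, this is exactly the second term in \eqref{Ieq0}, completing the proof.

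The main obstacle is the bookkeeping around the endpoint $t=0$ and the claim that the integration by parts is legitimate: one must check that $g(t) = (f(t)-f(\mu))/(t-\mu)$ extends to a continuous (indeed $C^1$) function on $[0,\infty)$ including across $t=\mu$, that $g$ and $g'$ have at most polynomial growth at $0$ and $\infty$ so that the weighted integrals converge absolutely, and that the boundary contributions genuinely vanish under the stated hypothesis $\Re z > 0$, $\mu > 0$ (equivalently $\Re\lambda > 0$). The regularity of $g$ at $t=\mu$ is automatic from $f \in C^1$ (Hadamard's lemma / Taylor with remainder), and the behaviour at $0$ and $\infty$ follows from the corresponding behaviour of $q$, $h$ and of the substitution $v \mapsto t$ recorded in \cite[Sec. 2.3]{Te85}; with these in hand the displayed manipulation above is rigorous and yields \eqref{Ieq0} for all $\Re z > 0$ and $\mu > 0$.
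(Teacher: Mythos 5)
Your proposal is correct and follows essentially the same route as the paper: split $f(t) = f(\mu) + (f(t)-f(\mu))$, recognize the $f(\mu)$ piece as the Gamma integral $\Gamma(\lambda)/z^\lambda$ after noting $e^{-z(t-\mu\log t)}/t = e^{-zt}t^{\lambda-1}$, and integrate the remainder by parts using $de^{-z(t-\mu\log t)} = -\frac{z}{t}(t-\mu)e^{-z(t-\mu\log t)}\,dt$. The paper presents the same computation in two lines without explicitly addressing the vanishing of the boundary terms; your discussion of the endpoints $t=0,\infty$ and the $C^1$-regularity of $(f(t)-f(\mu))/(t-\mu)$ near $t=\mu$ is a correct and useful amplification of those implicit checks.
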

\begin{proof}
This follows from the elementary computation
\begin{align*}
I(q,h,\mu;z) &= e^{-zA(\mu)}f(\mu)\int_{0}^{\infty}t^{\lambda-1}e^{-zt}\ud t+
e^{-zA(\mu)}\int_{0}^{\infty}(f(t)-f(\mu))\frac{de^{-z(t-\mu\log t)}}{-z(t-\mu)} \\
&= e^{-zA(\mu)}\frac{f(\mu)\Gamma(\lambda)}{z^{\lambda}}+
\frac{e^{-zA(\mu)}}{z}\int_{0}^{\infty}e^{-z(t-\mu\log t)}\left(\frac{f(t)-f(\mu)}{t-\mu}\right)'\ud t. \nonumber
\end{align*}
\end{proof}

	We turn to the concrete case $q(v)=\tanh v$. The transformation \eqref{change var} becomes
\begin{equation}\label{change var2}
v-\mu\log \tanh v=t-\mu\log t+A(\mu).
\end{equation}
The saddle-point $v_{\mu}$ of the left-hand side of \eqref{change var2} is defined by
\begin{equation}
\sinh(2v_{\mu})=2\mu.
\end{equation}
The transformation \eqref{change var2} maps  (see  \cite[(2.6)]{Te85})
\begin{equation}\label{change var3}
v=0\leftrightarrow t=0,\, v=v_{\mu}\leftrightarrow t=\mu,\, v=+\infty\leftrightarrow t=+\infty,
\end{equation}
and according to \cite[(2.7)]{Te85}
\begin{multline}\label{A(mu) def}
A(\mu)=v_{\mu}-\mu\log \tanh v_{\mu}-\mu+\mu\log\mu=\\
\frac{1}{2}\log(2\mu+\sqrt{1+4\mu^2})-\mu\log2+\mu\log(1+\sqrt{1+4\mu^2})-\mu=\frac{\mu^3}{3}+O(\mu^5).
\end{multline}
Note that the Taylor expansion in \eqref{A(mu) def} coincides with  \cite[(2.12)]{Te85}.
According to  \cite[(2.15)]{Te85}
\begin{equation}\label{dt/dv}
\frac{\ud t}{\ud v}\Biggl|_{v=v_{\mu}}=\sqrt{1-\frac{\mu^2q''(v_{\mu})}{q(v_{\mu})}}=(1+4\mu^2)^{1/4}.
\end{equation}

\noindent Let us choose
\begin{equation*}
q(v)=\tanh v, \,
h_1(v)=\frac{\chf_{0,1}((\cosh v)^{-2})\tanh v}{\sqrt{\sinh(2v)}},
\end{equation*}
\begin{equation*}
h_2(v)=\frac{\chf_{0,1}(1-e^{-4v})\tanh v}{\sqrt{\sinh(2v)}}.
\end{equation*}
Then \eqref{K01 tau>x1} can be rewritten as
\begin{equation}\label{K01 tau>x to I}
K_{0,1}(x,\tau)=2^{3/2}I(q,h_1,\alpha;2i\tau):=2^{3/2}I_1(\alpha;2i\tau)
\end{equation}
and \eqref{K01 tau<x1} can be rewritten as
\begin{equation}\label{K01 tau<x to I}
K_{0,1}(x,\tau)=2^{3/2}I(q,h_2,\beta;4ix):=2^{3/2}I_2(\beta;4ix).
\end{equation}
Applying \eqref{Ieq0} we obtain the following results.

\begin{lemma}
Consider the transformation \eqref{change var2} with $\mu=\alpha$. Let
\begin{equation}\label{f1 def}
f_{1}(t_{\alpha}):=\frac{\chf_{0,1}((\cosh v)^{-2})}{\sqrt{\sinh(2v)}}t_{\alpha}\frac{\ud v}{\ud t_{\alpha}}.
\end{equation}
Then for $\tau\gg x$
\begin{equation}\label{K01 tau>x Temme}
K_{0,1}(x,\tau)=
e^{-2i\tau A(\alpha)}\frac{2^{3/2}f_1(\alpha)\Gamma(2ix)}{(2i\tau)^{2ix}}+\\
O\left(\frac{1}{\tau}\int_{0}^{\infty}\left|\left(\frac{f_1(t_{\alpha})-f_1(\alpha)}{t_{\alpha}-\alpha}\right)'\right|\ud t_{\alpha}\right),
\end{equation}
where for $\delta=o(\alpha^{2})$
\begin{equation}\label{f1 alpha}
f_{1}(\alpha)=\frac{\sqrt{\alpha}}{(1+4\alpha^2)^{1/4}\sqrt{2}}.
\end{equation}
\end{lemma}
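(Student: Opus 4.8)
The plan is to read off \eqref{K01 tau>x Temme} as a direct instance of the abstract identity \eqref{Ieq0}, and then to compute the constant $f_1(\alpha)$ by specializing the change of variables \eqref{change var2} to the saddle point $t_\alpha=\alpha$.

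First I would recall from \eqref{K01 tau>x to I} that $K_{0,1}(x,\tau)=2^{3/2}I(q,h_1,\alpha;2i\tau)$ with $q(v)=\tanh v$, and that the function attached to this integral in \eqref{Ieq2} is
$$ f(t_\alpha)=t_\alpha\,\frac{h_1(v)}{q(v)}\,\frac{dv}{dt_\alpha}=t_\alpha\,\frac{\chf_{0,1}((\cosh v)^{-2})}{\sqrt{\sinh(2v)}}\,\frac{dv}{dt_\alpha}, $$
which is exactly $f_1$ of \eqref{f1 def} (here $t_\alpha$ and $v$ are linked by \eqref{change var2} with $\mu=\alpha$). Since $\chf_{0,1}$ is supported in $[\delta_0,1-\delta]$, i.e. bounded away from $y=0$ and $y=1$, the integrand $h_1(v)$, hence $f_1(t_\alpha)$, vanishes both for $v$ (resp.\ $t_\alpha$) near $0$ and for $v$ (resp.\ $t_\alpha$) large, so the integral defining $I(q,h_1,\alpha;2i\tau)$ runs over a compact subset of $(0,\infty)$. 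This is what lets me invoke \eqref{Ieq0} with $z=2i\tau$ and $\lambda=\mu z=2ix$ even though $\Re z=\Re\lambda=0$: run the integration by parts of \eqref{Ieq0} for $\Re z,\Re\lambda>0$ and let them tend to $0$, all three terms converging because the $v$-integrals never meet the non-integrable factor $q(v)^{\lambda-1}\sim v^{2ix-1}$ at $v=0$. The outcome is
$$ K_{0,1}(x,\tau)=e^{-2i\tau A(\alpha)}\frac{2^{3/2}f_1(\alpha)\Gamma(2ix)}{(2i\tau)^{2ix}}+\frac{2^{3/2}e^{-2i\tau A(\alpha)}}{2i\tau}\int_{0}^{\infty}e^{-2i\tau(t_\alpha-\alpha\log t_\alpha)}\left(\frac{f_1(t_\alpha)-f_1(\alpha)}{t_\alpha-\alpha}\right)'dt_\alpha, $$
and bounding the last integral trivially, using $|e^{-2i\tau A(\alpha)}|=1$ (as $A(\alpha)$ is real by \eqref{A(mu) def}) and $|e^{-2i\tau(t_\alpha-\alpha\log t_\alpha)}|=1$, gives the stated error term $O\big(\tfrac1\tau\int_0^\infty|(\ldots)'|\,dt_\alpha\big)$.

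Next I would evaluate $f_1$ at the saddle point. By definition $v=v_\alpha$ corresponds to $t_\alpha=\alpha$ and satisfies $\sinh(2v_\alpha)=2\alpha$, so $\sqrt{\sinh(2v_\alpha)}=\sqrt{2\alpha}$; this also gives $\cosh^2 v_\alpha=\tfrac12(1+\sqrt{1+4\alpha^2})$, hence $(\cosh v_\alpha)^{-2}=y_\alpha=\tfrac{2}{1+\sqrt{1+4\alpha^2}}$ in the notation of \eqref{sadpoint}. Formula \eqref{dt/dv} gives $\frac{dv}{dt_\alpha}\big|_{v_\alpha}=(1+4\alpha^2)^{-1/4}$. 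Finally I must check $\chf_{0,1}(y_\alpha)=1$: in the regime $\tau\gg x$ one has $0<\alpha\ll1$, whence $1-y_\alpha\asymp\alpha^2$ and $y_\alpha$ is bounded away from $0$, so $2\delta_0<y_\alpha<1-2\delta$ as soon as $\delta=o(\alpha^2)$ (and $\delta_0$ has the admissible size of \eqref{delta0 estimate}); thus $\chf_{0,1}(y_\alpha)=1$ by \eqref{chf01 def2}. Combining,
$$ f_1(\alpha)=\frac{\chf_{0,1}(y_\alpha)}{\sqrt{\sinh(2v_\alpha)}}\cdot\alpha\cdot\frac{dv}{dt_\alpha}\bigg|_{v_\alpha}=\frac{1}{\sqrt{2\alpha}}\cdot\alpha\cdot(1+4\alpha^2)^{-1/4}=\frac{\sqrt{\alpha}}{\sqrt{2}\,(1+4\alpha^2)^{1/4}}, $$
which is \eqref{f1 alpha}.

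The only point that needs genuine care is the limiting passage to purely imaginary $z$ and $\lambda$ in \eqref{Ieq0}; I expect it to cause no real trouble precisely because $\chf_{0,1}$ was constructed to vanish near $y=1$, so every integral in sight is compactly supported away from $v=0$. The substantive analytic work, namely estimating $\int_0^\infty|(\tfrac{f_1(t_\alpha)-f_1(\alpha)}{t_\alpha-\alpha})'|\,dt_\alpha$ and thereby controlling the error in \eqref{K01 tau>x Temme}, belongs to the subsequent lemmas and is not part of this statement.
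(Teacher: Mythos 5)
Your proposal is correct and follows the paper's own route: apply \eqref{Ieq0} with $z=2i\tau$, $\lambda=\mu z=2ix$, multiply by $2^{3/2}$, and read off $f_1(\alpha)$ at the saddle point via $\sinh(2v_\alpha)=2\alpha$, $(\cosh v_\alpha)^{-2}=y_\alpha$, the formula \eqref{dt/dv}, and $\chf_{0,1}(y_\alpha)=1$ under $\delta=o(\alpha^2)$. You are in fact more careful than the published text, which applies the $\Re z>0$ statement \eqref{Ieq0} directly at $z=2i\tau$ without comment; your limiting argument $\Re z\to 0^+$ — legitimate because $h_1$ is compactly supported in $(0,\infty)$ so the original integral lives on a compact set, and $t-\alpha\log t>0$ for $0<\alpha<1$ gives the uniform bound $\lvert e^{-z(t-\alpha\log t)}\rvert\le 1$ needed for dominated convergence — closes that small gap cleanly.
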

\begin{proof}
Formula \eqref{K01 tau>x Temme} follows from \eqref{K01 tau>x to I}, \eqref{Ieq0}.
Since $\sinh(2v_{\alpha})=2\alpha$ one has
\begin{equation*}
\chf_{0,1}\left(\frac{1}{\cosh^{2} v_{\alpha}}\right)=\chf_{0,1}\left(\frac{2}{1+\sqrt{1+4\alpha^2}}\right).
\end{equation*}
Therefore, using \eqref{chf01 def2} and the condition on $\delta$ we find that $\chf_{0,1}((\cosh v_{\alpha})^{-2})=1$.
Now \eqref{f1 alpha} follows from \eqref{f1 def} and \eqref{dt/dv} with $\mu=\alpha$.
\end{proof}

\begin{lemma}\label{lemma K01 tau<x}
Consider the transformation \eqref{change var2} with $\mu=\beta$. Let
\begin{equation}\label{f2 def}
f_{2}(t_{\beta}):=\frac{\chf_{0,1}(1-e^{-4v})}{\sqrt{\sinh(2v)}}t_{\beta}\frac{\ud v}{\ud t_{\beta}}.
\end{equation}
Then for $\tau\ll x$
\begin{equation}\label{K01 tau<x Temme}
K_{0,1}(x,\tau)=
e^{-4ix A(\beta)}\frac{2^{3/2}f_2(\beta)\Gamma(i\tau)}{(4ix)^{i\tau}}+\\
O\left(\frac{1}{x}\int_{0}^{\infty}\left|\left(\frac{f_2(t_{\beta})-f_2(\beta)}{t_{\beta}-\beta}\right)'\right|\ud t_{\beta}\right),
\end{equation}
where for $\delta_0=o(\beta)$
\begin{equation}\label{f2 beta}
f_{2}(\beta)=\frac{\sqrt{\beta}}{(1+4\beta^2)^{1/4}\sqrt{2}}.
\end{equation}
\end{lemma}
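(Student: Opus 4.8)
The plan is to mirror the proof of the preceding lemma (the case $\tau \gg x$), exchanging the roles of $x$ and $\tau$. The starting point is \eqref{K01 tau<x to I}, which writes $K_{0,1}(x,\tau) = 2^{3/2} I(q,h_2,\beta;4ix)$ with $q(v)=\tanh v$, $\mu=\beta=\tau/(4x)$ and $z=4ix$, so that $\lambda = \mu z = i\tau$. Applying the integration-by-parts identity \eqref{Ieq0} with these parameters produces the main term $e^{-4ixA(\beta)} \cdot 2^{3/2} f_2(\beta)\Gamma(i\tau)(4ix)^{-i\tau}$ together with the remainder $\frac{2^{3/2}e^{-4ixA(\beta)}}{4ix}\int_0^{\infty} e^{-4ix(t-\beta\log t)}\big(\tfrac{f_2(t_\beta)-f_2(\beta)}{t_\beta-\beta}\big)'\,dt_\beta$, which is $O\big(x^{-1}\int_0^{\infty}|(\cdots)'|\,dt_\beta\big)$ after bounding the unimodular oscillatory factor trivially. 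Here $f_2$ is precisely the function $f$ of \eqref{Ieq2} for the present $h_2$ and $q$: since $h_2(v)/q(v) = \chf_{0,1}(1-e^{-4v})/\sqrt{\sinh(2v)}$, one gets $f_2(t_\beta) = t_\beta\,\chf_{0,1}(1-e^{-4v})\sinh(2v)^{-1/2}\,dv/dt_\beta$, which is \eqref{f2 def}.

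It then remains to evaluate $f_2$ at the saddle point $t_\beta=\beta$, which by \eqref{change var3} corresponds to $v=v_\beta$ with $\sinh(2v_\beta)=2\beta$. Hence $\sinh(2v_\beta)^{-1/2}=(2\beta)^{-1/2}$, and \eqref{dt/dv} with $\mu=\beta$ gives $dt/dv\big|_{v=v_\beta}=(1+4\beta^2)^{1/4}$, so $dv/dt_\beta\big|_{t_\beta=\beta}=(1+4\beta^2)^{-1/4}$. The one point needing a small argument is that the cutoff is unity at the saddle point: from $e^{-2v_\beta}=\sqrt{1+4\beta^2}-2\beta$ we obtain $1-e^{-4v_\beta}=4\beta(\sqrt{1+4\beta^2}-2\beta)$, which for $\tau\ll x$ (so $\beta\ll1$) is $\asymp\beta$ and, once $\delta_0=o(\beta)$, lies in $(2\delta_0,1-2\delta)$; by \eqref{chf01 def2} this forces $\chf_{0,1}(1-e^{-4v_\beta})=1$. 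Multiplying the three factors yields $f_2(\beta)=\beta\,(2\beta)^{-1/2}(1+4\beta^2)^{-1/4}=\sqrt{\beta}/(\sqrt{2}(1+4\beta^2)^{1/4})$, which is \eqref{f2 beta}.

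The only genuinely non-routine ingredient — and it is identical to the one already used in the companion lemma — is that the Temme substitution \eqref{change var2} with $\mu=\beta$ is a smooth increasing bijection of $[0,\infty)$ onto itself with the normalization \eqref{dt/dv}, uniformly in the relevant range of $\beta$; this is the content of \cite[Sec. 2.3]{Te85}, and it is what licenses viewing $f_2$ as a function of $t_\beta$ and differentiating inside the integral. I expect this, together with the bookkeeping of parameters into \eqref{Ieq0} and the elementary saddle-point evaluations above, to be all that the lemma requires. The actual estimation of $\int_0^{\infty}|(\cdots)'|\,dt_\beta$, which is what turns \eqref{K01 tau<x Temme} into a usable bound on $K(-x,\tau)$, is performed in the subsequent analysis and is not part of this statement.
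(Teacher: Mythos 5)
Your proof is correct and follows essentially the same route as the paper: apply \eqref{Ieq0} to \eqref{K01 tau<x to I} with $q(v)=\tanh v$, $\mu=\beta$, $z=4ix$, $\lambda=i\tau$, bound the unimodular exponential trivially, and evaluate $f_2(\beta)$ via $\sinh(2v_\beta)=2\beta$, the cutoff argument ($\delta_0=o(\beta)$ forces $\chf_{0,1}(1-e^{-4v_\beta})=1$), and \eqref{dt/dv}. Your form $1-e^{-4v_\beta}=4\beta(\sqrt{1+4\beta^2}-2\beta)$ is an algebraic rearrangement of the paper's $4\beta/(2\beta+\sqrt{1+4\beta^2})$.
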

\begin{proof}
Formula \eqref{K01 tau<x Temme} follows from \eqref{K01 tau<x to I}, \eqref{Ieq0}.
Since $\sinh(2v_{\beta})=2\beta$ one has
\begin{equation*}
\chf_{0,1}\left(1-e^{-4v_{\beta}}\right)=\chf_{0,1}\left(\frac{4\beta}{2\beta+\sqrt{1+4\beta^2}}\right).
\end{equation*}
Therefore, using \eqref{chf01 def2} and the condition on $\delta_0$ we have $\chf_{0,1}\left(1-e^{-4v_{\beta}}\right)=1$.
Now \eqref{f2 beta} follows from \eqref{f2 def} and \eqref{dt/dv} with $\mu=\beta$.
\end{proof}

	Substituting \eqref{K01 tau>x Temme}  and \eqref{K00 estimate} into \eqref{K0 to K01}, evaluating the main term and estimating the error term in  \eqref{K01 tau>x Temme}, we arrive at the following fine asymptotic formula.

\begin{lemma}\label{lemma K0 tau>x}
For $x\ll\tau\ll x^{3/2}$  and $\delta\ll\tau^{-1}$ we have
\begin{equation}\label{K0 asympt tau>x 0}
K_{0}(-x,\tau)=\frac{-2\pi i}{\tau(1+4\alpha^2)^{1/4}}e^{i\tau p_0(\alpha,y_{\alpha})}+
O\left(\frac{1}{\tau^{3/2}\delta^{1/4}}+\frac{|\log\alpha|+1}{\tau x^{1/2}}\right).
\end{equation}
\end{lemma}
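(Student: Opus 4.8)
\textbf{Proof plan for Lemma \ref{lemma K0 tau>x}.}
The plan is to assemble the asymptotic formula from the three ingredients already prepared: the decomposition \eqref{K0 to K01}, the negligibility of $K_{0,0}$ in \eqref{K00 estimate}, and Temme's-method formula \eqref{K01 tau>x Temme} for $K_{0,1}$ in the range $\tau\gg x$ (which applies since $x\ll\tau$). First I would fix an admissible cutoff parameter, taking $\delta_0\ll\min(\tau/(x\tau^\epsilon),\tau^{-\epsilon})$ so that \eqref{delta0 estimate} holds (this is possible because $x\ll\tau^{3/2}$, so $\tau/x\gg\tau^{-1/2}$, leaving room), and also $\delta_0=o(\beta)$ is vacuous here since we are in the $\tau\gg x$ branch; what matters is $\delta=o(\alpha^2)$, i.e. $\delta=o((x/\tau)^2)$, which is compatible with $\delta\ll\tau^{-1}$ precisely when $x^2/\tau^3\gg$ the chosen $\delta$ — again guaranteed by $\tau\ll x^{3/2}$. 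With these choices $\chf_{0,0}$ contributes $O(\tau^{-A})$ and $\chf_{0,1}$ contributes the main term.

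Next I would evaluate the main term. Substituting \eqref{f1 alpha} and the Stirling-type relation \eqref{gamma2/gamma} into \eqref{K0 to K01} via \eqref{K01 tau>x Temme}, the main term becomes
$$
\frac{\Gamma^2(1/2+i\tau)2^{2i\tau}}{\Gamma(1+i\tau)}\cdot\frac{2^{3/2}f_1(\alpha)\Gamma(2ix)}{(2i\tau)^{2ix}}e^{-2i\tau A(\alpha)}
=\frac{\sqrt{\pi}}{\sqrt{\tau}}e^{-\pi i/4}\cdot\frac{2^{3/2}\sqrt{\alpha}}{\sqrt 2(1+4\alpha^2)^{1/4}}\cdot\Gamma(2ix)(2i\tau)^{-2ix}e^{-2i\tau A(\alpha)}+\cdots.
$$
Using $|\Gamma(2ix)|\asymp x^{-1/2}e^{-\pi x}$ from Stirling and $\alpha=x/\tau$, the modulus of this expression is $\asymp \tau^{-1}(1+4\alpha^2)^{-1/4}$, matching the claimed leading coefficient; the phase is then identified with $\tau p_0(\alpha,y_\alpha)$ by comparing $A(\alpha)$ in \eqref{A(mu) def} with $p_0(\alpha,y_\alpha)$ in \eqref{p0(alpha,yalpha) def} (both equal, up to the explicit elementary terms coming from $\Gamma(2ix)(2i\tau)^{-2ix}$, the same combination $2\alpha\log\tfrac{2\alpha}{1+\sqrt{1+4\alpha^2}}-\log(2\alpha+\sqrt{1+4\alpha^2})$). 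This is a bookkeeping computation: the three $\mu$-dependent exponentials $e^{-2i\tau A(\alpha)}$, $(2i\tau)^{-2ix}=e^{-2ix\log(2\tau)-\pi x/2\cdot i\cdots}$, and the argument of $\Gamma(2ix)$ combine to $e^{i\tau p_0(\alpha,y_\alpha)}$ up to a bounded constant phase, which one checks by differentiating both sides in $\alpha$ and matching at $\alpha\to 0$ using the Taylor expansions already recorded.

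Finally I would bound the error term, which is the main obstacle. The remainder in \eqref{K01 tau>x Temme} is $O\big(\tau^{-1}\int_0^\infty|((f_1(t_\alpha)-f_1(\alpha))/(t_\alpha-\alpha))'|\,dt_\alpha\big)$, and I need to show this integral is $O((|\log\alpha|+1)\tau^{-1/2}\cdot\tau^{1/2})=O(|\log\alpha|+1)$ — more precisely that $\tau^{-1}$ times it is $O((|\log\alpha|+1)/(\tau x^{1/2}))$, so the integral itself must be $O((|\log\alpha|+1)/x^{1/2})$; here one must track that $\Gamma(2ix)$-type factors already extracted carry an $x^{-1/2}$, so effectively the bound needed on $\int_0^\infty|(\cdot)'|\,dt_\alpha$ is $O(|\log\alpha|+1)$. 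To get this I would split the $t_\alpha$-integral near the saddle $t_\alpha=\alpha$ (where $f_1$ is smooth with bounded difference quotient, using \eqref{dt/dv} and the smoothness of the change of variables \eqref{change var2} established in \cite[Sec. 2.3]{Te85}), near $t_\alpha=0$ (where $h_1(v)\sim \sqrt{v}\asymp\sqrt{t_\alpha}$ behaves well and contributes the $|\log\alpha|$ from $\int_{\sim\delta_0}^{\sim\alpha}dt_\alpha/t_\alpha$-type terms once $\chf_{0,1}$ switches on at scale $\delta_0$), and on the bulk $t_\alpha\gg\alpha$ (where $h_1(v)=\tanh v/\sqrt{\sinh 2v}$ decays exponentially and the derivative of the difference quotient is integrable). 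The delicate point is that the cutoff $\chf_{0,1}$ introduces derivatives of size $\delta_0^{-1}$ and $\delta^{-1}$ on short intervals; choosing $\delta_0,\delta$ as above makes $\delta_0\cdot\delta_0^{-1}=O(1)$ and $\delta\cdot\delta^{-1}=O(1)$ so these boundary contributions stay $O(1)$, and the $|\log\alpha|$ is exactly the logarithmic length of the transition region near $t_\alpha=0$. Combining the $O(\tau^{-3/2}\delta^{-1/4})$ from \eqref{K0 to K01} with this $O((|\log\alpha|+1)/(\tau x^{1/2}))$ gives \eqref{K0 asympt tau>x 0}.
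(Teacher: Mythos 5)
Your overall strategy — assemble the decomposition \eqref{K0 to K01}, discard $K_{0,0}$ via \eqref{K00 estimate}, apply Temme's formula \eqref{K01 tau>x Temme} to $K_{0,1}$, then evaluate the main term with \eqref{f1 alpha}, \eqref{gamma2/gamma} and identify the phase with $\tau p_0(\alpha,y_\alpha)$ — is exactly the paper's route, and your parameter discussion for $\delta,\delta_0$ is sound.

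The error-term bookkeeping, however, contains two slips that together set an unachievable target. First, you write that the remainder from \eqref{K01 tau>x Temme} is $\tau^{-1}\int$ and must be $\ll(|\log\alpha|+1)/(\tau\sqrt{x})$, concluding the integral itself must be $\ll(|\log\alpha|+1)/\sqrt{x}$. But you have dropped the Stirling prefactor $\frac{\Gamma^2(1/2+i\tau)2^{2i\tau}}{\Gamma(1+i\tau)}\sim\tau^{-1/2}$ from \eqref{K0 to K01}, so the error contribution to $K_0$ is $\tau^{-3/2}\int$ and the requirement on the integral is the \emph{weaker} $\int\ll(|\log\alpha|+1)/\sqrt{\alpha}=(|\log\alpha|+1)\sqrt{\tau}/\sqrt{x}$. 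Second, you then invoke an $x^{-1/2}$ from ``$\Gamma(2ix)$-type factors already extracted,'' but $\Gamma(2ix)$ sits only in the \emph{main} term of \eqref{K01 tau>x Temme}, not in the remainder, so it buys you nothing. Net: your stated target $\int\ll|\log\alpha|+1$ is strictly stronger than what is needed and stronger than what the method can deliver — the paper's bound \eqref{f1-f1 int6} genuinely has a $1/\sqrt{\alpha}$, coming from $\sqrt{\sinh 2v}\asymp\sqrt{v_\alpha}\asymp\sqrt{\alpha}$ near the saddle; if you chased $O(|\log\alpha|+1)$ you would get stuck.

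A secondary mismatch: you attribute the $|\log\alpha|$ to ``the logarithmic length of the transition region near $t_\alpha=0$.'' In the paper that boundary contribution, \eqref{f1-f1 int1}, is controlled to $\delta^{1/4}/\alpha+\delta_0^{1/2}\ll1/\sqrt{\alpha}$ using \eqref{delta<alpha} — it produces no log. The logarithm actually arises from the near-saddle singularity of the difference quotient: after excising $|t_\alpha-\alpha|<\delta_2$ and passing to the $v$-variable, one meets $\int dv/|v-v_\alpha|$ over an annulus of inner radius $\sim\delta_2$, giving $|\log\delta_2|$ (see \eqref{f1-f1 int2.2}, \eqref{f1-f1 int2.3}), and the choice $\delta_2\asymp\alpha$ — which balances the Taylor remainder \eqref{f1-f1 int5} on the near piece against the $\sqrt{\alpha}/\delta_2$ term \eqref{f1-f1 int3} on the far piece — turns this into $|\log\alpha|$. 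Your plan never introduces this inner cutoff $\delta_2$, which is the key auxiliary parameter of the remainder estimate. Once you correct the target to $(|\log\alpha|+1)/\sqrt{\alpha}$ and add the $\delta_2$-splitting, the remaining parts of your decomposition (near-origin transition, bulk, near-saddle) line up with the paper's argument.
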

\begin{proof}
	\emph{Main Term}: From Stirling's formula for $x>0$ we obtain
\begin{equation}\label{Stirling2}
\Gamma(2iy)=\frac{\sqrt{\pi}}{\sqrt{y}}e^{-\pi y}e^{i(2y\log(2y)-2y-\pi/4)}+O(y^{-3/2}).
\end{equation}
Therefore, using  \eqref{dt/dv}, \eqref{f1 alpha} (note that the condition $\delta=o(\alpha^{2})$ is satisfied) and  \eqref{Stirling2} with $y=x$ we show that
\begin{equation}\label{I1MT1}
e^{-2i\tau A(\alpha)}\frac{f_1(\alpha)\Gamma(2ix)}{(2i\tau)^{2ix}}=
\frac{\sqrt{\pi}}{\sqrt{2\tau}(1+4\alpha^2)^{1/4}}
e^{-2i\tau A(\alpha)}\\\times
e^{i(2x\log(2x)-2x-\pi/4)}e^{-2ix\log(2\tau)}.
\end{equation}
It follows from  \eqref{I1MT1}, \eqref{K01 tau>x Temme}, \eqref{K0 to K01}, \eqref{gamma2/gamma} and \eqref{K00 estimate} that
\begin{multline*}
K_{0}(-x,\tau)=
\frac{-2\pi i}{\tau(1+4\alpha^2)^{1/4}}
e^{i(-2\tau A(\alpha)+2x\log(2x)-2x-2x\log(2\tau))}+\\+
O\left(\frac{1}{\tau^{3/2}}\int_{0}^{\infty}\left|\left(\frac{f_1(t_{\alpha})-f_1(\alpha)}{t_{\alpha}-\alpha}\right)'\right|\ud t_{\alpha}\right)
+O\left(\frac{1}{\tau^{3/2}\delta^{1/4}}\right).
\end{multline*}
After some transformations we can prove that
\begin{equation*}
-2\tau A(\alpha)+2x\log(2x)-2x-2x\log(2\tau)=\tau p_0(\alpha,y_{\alpha}),
\end{equation*}
where $p_0(\alpha,y)$ is defined by \eqref{p0(alpha,yalpha) def}. Therefore, we obtain the same main term as in \eqref{K0 estimate2}. As a result,
\begin{multline}\label{K0 to K01 3}
K_{0}(-x,\tau)=\frac{-2\pi i}{\tau(1+4\alpha^2)^{1/4}}e^{i\tau p_0(\alpha,y_{\alpha})}+
O\left(\frac{1}{\tau^{3/2}\delta^{1/4}}\right)+\\+
O\left(\frac{1}{\tau^{3/2}}\int_{0}^{\infty}\left|\left(\frac{f_1(t_{\alpha})-f_1(\alpha)}{t_{\alpha}-\alpha}\right)'\right|\ud t_{\alpha}\right).
\end{multline}
We are left to estimate the integral in \eqref{K0 to K01 3}. To this end, we split it into two parts: $|t_{\alpha}-\alpha|< \delta_2$ and $|t_{\alpha}-\alpha|\geq \delta_2$, where the parameter $0<\delta_2\ll\alpha$ will be chosen later.

\noindent \emph{Remainder 1:} Consider first the integral over $|t_{\alpha}-\alpha|\geq \delta_2$. By \eqref{f1 def}, we obviously have
$$ \int_{0}^{\infty}\left|\left(\frac{f_1(t_{\alpha})-f_1(\alpha)}{t_{\alpha}-\alpha}\right)'\right|\ud t_{\alpha} \ll \int_{0}^{\infty}\left|\left(\frac{f_1(t_{\alpha})}{t_{\alpha}-\alpha}\right)'\right|\ud t_{\alpha} + \frac{\sqrt{\alpha}}{\delta_2}. $$
Applying \eqref{change var2} (see \cite[(2.5)]{Te85}) we show that
\begin{equation}\label{dv/dt}
\frac{\ud v}{\ud t_{\alpha}}=\frac{(t_{\alpha}-\alpha)\sinh(2v)}{t_{\alpha}(\sinh(2v)-2\alpha)},
\end{equation}
which is uniformly bounded by \eqref{change var2}. Substituting \eqref{dv/dt} to \eqref{f1 def} we have
\begin{equation*}
\frac{f_{1}(t_{\alpha})}{t_{\alpha}-\alpha}=\frac{\chf_{0,1}((\cosh v)^{-2})\sqrt{\sinh(2v)}}{\sinh(2v)-2\alpha},
\end{equation*}
which is a function of $v$. Changing the variable of integration to $v$ and applying \eqref{dv/dt}, we obtain
\begin{multline}\label{f1-f1 int}
\int_{|t_{\alpha}-\alpha|\geq \delta_2}\left|\left(\frac{f_1(t_{\alpha})}{t_{\alpha}-\alpha}\right)'\right|
\ud t_{\alpha} \ll
\int_{|v-v_{\alpha}|\gg \delta_2}
\frac{|\chf'_{0,1}((\cosh v)^{-2})|\sinh^{3/2} v}{\cosh^{5/2} v|\sinh(2v)-2\alpha|}\ud v+\\
\int_{|v-v_{\alpha}|\gg \delta_2}
\frac{\chf_{0,1}((\cosh v)^{-2})\cosh(2v)}{|\sinh(2v)-2\alpha|\sqrt{\sinh(2v)}}\ud v + \int_{|v-v_{\alpha}|\gg \delta_2}
\frac{\chf_{0,1}((\cosh v)^{-2})\cosh(2v)\sinh^{1/2}(2v)}{(\sinh(2v)-2\alpha)^2}\ud v,
\end{multline}
where $v_{\alpha}$ is defined as $\sinh(2v_{\alpha})=2\alpha$.  Note that  due to the property  \eqref{change var3}  the change of variable moves the point $t_{\alpha}=\alpha$  to the point $\sinh(2v_{\alpha})=2\alpha$. Furthermore, due to the regularity of the transformation the interval  $|t_{\alpha}-\alpha|\geq \delta_2$ becomes $|v-v_{\alpha}|\gg \delta_2$.  It follows from \eqref{chf01 def1}, \eqref{chf01 def2} that
$|\chf'_{0,1}((\cosh v)^{-2})|=0$
unless
\begin{equation*}
v_1:=\arccosh\frac{1}{\sqrt{1-\delta}}<v<\arccosh\frac{1}{\sqrt{1-2\delta}}:=v_2, \text{ or } v_3:=\arccosh\frac{1}{\sqrt{2\delta_0}}<v<\arccosh\frac{1}{\sqrt{\delta_0}}:=v_4.
\end{equation*}
On these intervals $\chf'_{0,1}((\cosh v)^{-2})\ll\delta^{-1}$ or $\chf'_{0,1}((\cosh v)^{-2})\ll\delta_0^{-1}$, respectively. Note that the size of $v_1$  is approximately $\sqrt{\delta}$ and the size of $v_4$ is close to $-\log\sqrt{\delta_0}$. Since
\begin{equation}\label{delta<alpha}
\sqrt{\delta}\ll\frac{1}{\sqrt{\tau}}<\frac{1}{\tau^{1/3}}\ll\frac{x}{\tau}=\alpha \quad \Rightarrow \quad \sinh(2v) \ll \alpha, \forall v_1 < v < v_2,
\end{equation}
$$ \text{and} \quad \sinh(2v) \gg 1 > \alpha, \forall v_3 < v < v_4, $$
we obtain \textcolor{blue}{}
\begin{equation}\label{f1-f1 int1}
\int_{|v-v_{\alpha}|\gg \delta_2}
\frac{|\chf'_{0,1}((\cosh v)^{-2})|\sinh^{3/2} v}{\cosh^{5/2} v|\sinh(2v)-2\alpha|}\ud v\\\ll
\int_{v_1}^{v_2}\frac{\delta^{3/4}\ud v}{\delta\alpha}+
\int_{v_3}^{v_4}\frac{\sinh^{1/2} v}{\cosh^{7/2}v}\frac{\ud v}{\delta_0}
\ll
\frac{\delta^{1/4}}{\alpha}+\delta_0^{1/2}.
\end{equation}
It follows from \eqref{chf01 def1}, \eqref{chf01 def2} that the second integral in \eqref{f1-f1 int} can be estimated as
\begin{multline}\label{f1-f1 int2.1}
\int_{|v-v_{\alpha}|\gg \delta_2}
\frac{\chf_{0,1}((\cosh v)^{-2})\cosh(2v)}{|\sinh(2v)-2\alpha|\sqrt{\sinh(2v)}}\ud v\\\ll
\left(\int_{v_{\alpha}+c_1\delta_2}^{v_4}+\int_{v_1}^{v_{\alpha}-c_2\delta_2}
\right)
\frac{\cosh(2v)\ud v}{|\sinh(2v)-\sinh(2v_{\alpha})|\sqrt{\sinh(2v)}},
\end{multline}
where $c_j$ are some absolute constants.  Splitting the first integral on the right-hand side of \eqref{f1-f1 int2.1} at some fixed point $c_3$, we obtain
\begin{equation}\label{f1-f1 int2.2}
\int_{v_{\alpha}+c_1\delta_2}^{v_4}
\frac{\cosh(2v)\ud v}{|\sinh(2v)-\sinh(2v_{\alpha})|\sqrt{\sinh(2v)}}\\\ll
\int_{v_{\alpha}+c_1\delta_2}^{c_3}\frac{\ud v}{(v-v_{\alpha})\sqrt{v}}+
\int_{c_3}^{v_4}e^{-v}\ud v\ll
\frac{|\log\delta_2|}{\sqrt{\alpha}}+1.
\end{equation}
Splitting the second integral on the right-hand side of \eqref{f1-f1 int2.1} at some point $c_4\alpha$ ($c_4$ is some small fixed absolute constant), we infer
\begin{multline}\label{f1-f1 int2.3}
\int_{v_1}^{v_{\alpha}-c_2\delta_2}\frac{\cosh(2v)\ud v}{|\sinh(2v)-\sinh(2v_{\alpha})|\sqrt{\sinh(2v)}} \ll
\int_{v_1}^{c_4\alpha}\frac{\ud v}{\sinh(2v_{\alpha})\sqrt{v}}+
\int_{c_4\alpha}^{v_{\alpha}-c_2\delta_2}\frac{\ud v}{|v-v_{\alpha}|\sqrt{\alpha}}\\\ll
\frac{1}{\sqrt{\alpha}}+\frac{|\log\delta_2|}{\sqrt{\alpha}}.
\end{multline}
Substituting  \eqref{f1-f1 int2.2} and \eqref{f1-f1 int2.3} to \eqref{f1-f1 int2.1} we obtain
\begin{equation}\label{f1-f1 int2}
\int_{|v-v_{\alpha}|\gg \delta_2}
\frac{\chf_{0,1}((\cosh v)^{-2})\cosh(2v)}{|\sinh(2v)-2\alpha|\sqrt{\sinh(2v)}}\ud v\ll
\frac{|\log\delta_2|}{\sqrt{\alpha}}.
\end{equation}
We argue in the same way to estimate the third integral in \eqref{f1-f1 int}
\begin{multline}\label{f1-f1 int3.1}
\int_{|v-v_{\alpha}|\gg \delta_2}
\frac{\chf_{0,1}((\cosh v)^{-2})\cosh(2v)\sinh^{1/2}(2v)}{(\sinh(2v)-2\alpha)^2}\ud v
\\\ll
\left(\int_{v_{\alpha}+c_1\delta_2}^{v_4}+\int_{v_1}^{v_{\alpha}-c_2\delta_2}
\right)
\frac{\cosh(2v)\sinh^{1/2}(2v)}{(\sinh(2v)-2\alpha)^2}\ud v.
\end{multline}
Splitting the first integral on the right-hand side of \eqref{f1-f1 int3.1} at some fixed point $c_5$, we have, on noting $v_{\alpha} = \log(2\alpha+\sqrt{1+4\alpha^2})/2 \sim \alpha$ for $0 < \alpha < 1$ 
\begin{multline}\label{f1-f1 int3.2}
\int_{v_{\alpha}+c_1\delta_2}^{v_4}
\frac{\cosh(2v)\sinh^{1/2}(2v)}{(\sinh(2v)-2\alpha)^2}\ud v \ll
\int_{v_{\alpha}+c_1\delta_2}^{c_5}\frac{\sqrt{v}\ud v}{(v-v_{\alpha})^2}+
\int_{c_5}^{v_4}e^{-v}\ud v \ll \\
1+\int_{v_{\alpha}+c_1\delta_2}^{c_6v_{\alpha}}\frac{\sqrt{v_{\alpha}}\ud v}{(v-v_{\alpha})^2}+
\int_{c_6v_{\alpha}}^{c_5}\frac{\sqrt{v}\ud v}{v^2}\ll
\frac{\sqrt{\alpha}}{\delta_2}+\frac{1}{\sqrt{\alpha}},
\end{multline}
where $c_6>10$ is some fixed  constant.  Splitting the second integral on the right-hand side of \eqref{f1-f1 int3.1} at some point $c_7\alpha$ ($c_7$ is some small fixed absolute constant), we show that
\begin{equation}\label{f1-f1 int3.3}
\int_{v_1}^{v_{\alpha}-c_2\delta_2}\frac{\cosh(2v)\sinh^{1/2}(2v)}{(\sinh(2v)-2\alpha)^2}\ud v\ll
\int_{v_1}^{c_7\alpha}\frac{\sqrt{v}\ud v}{\alpha^2}+\\
\int_{c_7\alpha}^{v_{\alpha}-c_2\delta_2}\frac{\sqrt{\alpha}\ud v}{|v-v_{\alpha}|^2}\ll
\frac{1}{\sqrt{\alpha}}+\frac{\sqrt{\alpha}}{\delta_2}.
\end{equation}
Substituting  \eqref{f1-f1 int3.2} and \eqref{f1-f1 int3.3} to \eqref{f1-f1 int3.1} yields
\begin{equation}\label{f1-f1 int3}
\int_{|v-v_{\alpha}|\gg \delta_2}
\frac{\chf_{0,1}((\cosh v)^{-2})\cosh(2v)\sinh^{1/2}(2v)}{(\sinh(2v)-2\alpha)^2}\ud v\\\ll
\frac{1}{\sqrt{\alpha}}+\frac{\sqrt{\alpha}}{\delta_2}.
\end{equation}
Substituting  \eqref{f1-f1 int1}, \eqref{f1-f1 int2} and \eqref{f1-f1 int3} to \eqref{f1-f1 int} and using \eqref{delta<alpha}, we infer
\begin{equation}\label{f1-f1 int4}
\int_{|t_{\alpha}-\alpha|\geq \delta_2}\left|\left(\frac{f_1(t_{\alpha})-f_1(\alpha)}{t-\alpha}\right)'\right|\ud t_{\alpha}\ll
\frac{|\log\delta_2|}{\sqrt{\alpha}}+\frac{\sqrt{\alpha}}{\delta_2}.
\end{equation}

\noindent \emph{Remainder 2:} Let us consider the integral over $|t_{\alpha}-\alpha|< \delta_2$. In this case, we represent $f(t)$ using the Taylor expansion at the point $t=\alpha$ showing that
\begin{equation}\label{f1-f1 int5}
\int_{|t_{\alpha}-\alpha|< \delta_2}\left|\left(\frac{f_1(t_{\alpha})-f_1(\alpha)}{t-\alpha}\right)'\right|\ud t_{\alpha}\ll\\
f_1''(\alpha)\delta_2+\delta^2_2\max_{|t_{\alpha}-\alpha|< \delta_2}f'''_{1}(t_{\alpha})\ll
\frac{\delta_2}{\alpha^{3/2}}+\frac{\delta^2_2}{\alpha^{5/2}}.
\end{equation}
To prove the last estimate we evaluate the derivatives of $f_{1}(t_{\alpha})$ using \eqref{f1 def}. Note that in the neighborhood of the point $t_{\alpha}=\alpha$  the derivatives of $\chf_{0,1}((\cosh v)^{-2})$ are equal to zero.

\noindent \emph{Remainder Term:} Combining \eqref{f1-f1 int4}, \eqref{f1-f1 int5} and choosing $\delta_2=\alpha/100$ we obtain
\begin{equation}\label{f1-f1 int6}
\int_{0}^{\infty}\left|\left(\frac{f_1(t_{\alpha})-f_1(\alpha)}{t_{\alpha}-\alpha}\right)'\right|\ud t_{\alpha}\ll
\frac{|\log\alpha|+1}{\sqrt{\alpha}}.
\end{equation}
Substituting \eqref{f1-f1 int6} to \eqref{K0 to K01 3} we prove \eqref{K0 asympt tau>x 0}.
\end{proof}

\noindent Similarly  using Lemma \ref{lemma K01 tau<x} we obtain the next asymptotic formula.
\begin{lemma}\label{lemma K0 tau<x}
For $x^{2/3-\epsilon}\ll\tau\ll x$ and $\delta\ll\tau^{-1}$ we have
\begin{equation}\label{K0 asympt tau<x 0}
K_{0}(-x,\tau)=\frac{-2\pi i}{\tau(1+4\alpha^2)^{1/4}}e^{i\tau p_0(\alpha,y_{\alpha})}+
O\left(\frac{1}{\tau^{3/2}\delta^{1/4}}+\frac{|\log\beta|+1}{\tau x^{1/2}}\right).
\end{equation}
\end{lemma}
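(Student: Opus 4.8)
The plan is to mirror the proof of Lemma \ref{lemma K0 tau>x} almost verbatim, substituting the roles of $x$ and $\tau$ according to whether $\tau \ll x$ or $\tau \gg x$. The starting point is the representation \eqref{K0 to K01} together with the bound \eqref{K00 estimate} for $K_{0,0}(x,\tau)$ (valid once $\delta_0$ satisfies \eqref{delta0 estimate}), and the key input here is Lemma \ref{lemma K01 tau<x}, which expresses $K_{0,1}(x,\tau) = 2^{3/2} I_2(\beta;4ix)$ via \eqref{K01 tau<x Temme} with $\mu = \beta = \tau/(4x)$. Since we are in the regime $x^{2/3-\epsilon} \ll \tau \ll x$, we have $0 < \beta \ll 1$, so the condition $\delta_0 = o(\beta)$ needed for \eqref{f2 beta} can be arranged by a suitable choice of $\delta_0$ compatible with \eqref{delta0 estimate} (note $\tau/(x\tau^\epsilon) \asymp \beta \tau^{-\epsilon}$, so $\delta_0 \asymp \beta \tau^{-\epsilon}$ works).

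First I would evaluate the main term of \eqref{K01 tau<x Temme}. Apply Stirling's formula \eqref{Stirling2} with $y = \tau/2$ to $\Gamma(i\tau)$, combine with \eqref{dt/dv} and \eqref{f2 beta} (so that $f_2(\beta) = \sqrt{\beta}/((1+4\beta^2)^{1/4}\sqrt{2})$), insert the prefactor $\Gamma^2(1/2+i\tau)2^{2i\tau}/\Gamma(1+i\tau)$ from \eqref{K0 to K01} estimated by \eqref{gamma2/gamma}, and collect the resulting phase. Exactly as in the proof of Lemma \ref{lemma K0 tau>x}, "after some transformations" the accumulated phase $-4x A(\beta) + (\text{Stirling phases})$ must be shown to equal $\tau p_0(\alpha,y_\alpha)$ with $p_0$ given by \eqref{p0(alpha,yalpha) def}; this is a purely algebraic identity relating $A(\beta)$ from \eqref{A(mu) def} (with $\mu = \beta$, and using $\alpha = x/\tau = 1/(4\beta)$) to $p_0(\alpha,y_\alpha)$, and I expect it to follow by the same manipulations that produce the identity $-2\tau A(\alpha) + 2x\log(2x) - 2x - 2x\log(2\tau) = \tau p_0(\alpha,y_\alpha)$ in the $\tau \gg x$ case. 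The amplitude comes out to $-2\pi i/(\tau(1+4\alpha^2)^{1/4})$ after simplifying $\sqrt{\pi}\,f_2(\beta)/((4ix)^{i\tau}\cdots)$ with $\beta = 1/(4\alpha)$.

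Next I would estimate the error term $\frac{1}{x}\int_0^\infty \big|\big((f_2(t_\beta)-f_2(\beta))/(t_\beta-\beta)\big)'\big|\,dt_\beta$. Splitting at $|t_\beta - \beta| \lessgtr \delta_2$ with $\delta_2 = \beta/100$, the near-diagonal part is handled by a Taylor expansion of $f_2$ at $t_\beta = \beta$ exactly as in \eqref{f1-f1 int5}, giving $O(\delta_2/\beta^{3/2} + \delta_2^2/\beta^{5/2}) = O(\beta^{-1/2})$; and the far part is estimated by changing variables back to $v$ via \eqref{change var2}--\eqref{dv/dt} (now with $q(v) = \tanh v$, $\mu = \beta$, $\chf_{0,1}(1-e^{-4v})$), splitting the support of $\chf'_{0,1}$ into its two intervals, and running the same three sub-integrals as in \eqref{f1-f1 int1}, \eqref{f1-f1 int2}, \eqref{f1-f1 int3}. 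The net outcome is $\ll (|\log\beta|+1)/\sqrt{\beta}$, so the error term is $\ll (|\log\beta|+1)/(x\sqrt{\beta}) = (|\log\beta|+1)/(x \cdot \sqrt{\tau/(4x)}) \asymp (|\log\beta|+1)/(\sqrt{\tau}\sqrt{x})$; combined with the $K_{0,0}$ error $O(\tau^{-3/2}\delta^{-1/4})$ carried through \eqref{K0 to K01} this yields precisely the claimed $O(\tau^{-3/2}\delta^{-1/4} + (|\log\beta|+1)/(\tau x^{1/2}))$ after noting $\sqrt{\tau}\sqrt{x} = \tau \sqrt{x/\tau} = \tau\sqrt{\alpha}^{-1}\cdot\alpha \gtrsim \tau x^{1/2}$ in this range — more carefully, $1/(\sqrt{\tau}\sqrt{x})$ versus $1/(\tau x^{1/2})$: since $\tau \ll x$ one has $\sqrt{\tau} \ll \tau$ is false, so I would double-check that the honest bound $(|\log\beta|+1)/(\sqrt{x}\sqrt{\tau})$ is indeed $\le (|\log\beta|+1)/(\tau x^{1/2})$, i.e. $\tau \le \sqrt{\tau}$, which fails; thus the correct reading is that the error term in the lemma statement absorbs $1/(\sqrt{x\tau})$ inside $(|\log\beta|+1)/(\tau x^{1/2})$ only when $\tau \le \tau^{1/2}$ — so I anticipate the main subtlety will be bookkeeping the precise powers, and I would resolve it by tracking the $\tau \ll x$ analogue of \eqref{I1MT1} with all constants explicit rather than trusting a crude comparison.

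The main obstacle, then, is not conceptual but the careful transcription of the entire \textbf{Remainder 1} estimate of Lemma \ref{lemma K0 tau>x} to the new variable change $y = 1-e^{-4v}$: one must verify the analogue of \eqref{delta<alpha}, namely that $\sqrt{\delta} \ll \beta$ (equivalently $\delta \ll \tau^2/x^2$, which holds since $\delta \ll \tau^{-1}$ and $\tau \gg x^{2/3-\epsilon}$ gives $\tau^{-1} \ll \tau^2/x^2$ iff $x^2 \ll \tau^3$, true in the range), so that on the support of $\chf'_{0,1}$ near $y = 1$ one has $\sinh(2v)$ either $\ll \beta$ or $\gg 1 > \beta$, keeping the pole at $\sinh(2v) = 2\beta$ away from that support; and then re-deriving \eqref{f1-f1 int2.1}--\eqref{f1-f1 int3.3} with $v_\beta$ (defined by $\sinh(2v_\beta) = 2\beta$, so $v_\beta \sim \beta$) in place of $v_\alpha$. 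Once these are in place, substituting into the analogue of \eqref{K0 to K01 3} gives \eqref{K0 asympt tau<x 0}, completing the proof.
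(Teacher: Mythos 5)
Your strategy is the right one and matches the paper's: start from the decomposition \eqref{K0 to K01}, use Lemma \ref{lemma K01 tau<x} for $K_{0,1}$, evaluate the main term via Stirling, verify the phase identity $-4xA(\beta)+\tau\log\tau-\tau-\tau\log(4x)=\tau p_0(\alpha,y_\alpha)$, and transcribe the Remainder~1/Remainder~2 analysis from Lemma \ref{lemma K0 tau>x} to the variable change $y=1-e^{-4v}$. But your error-term bookkeeping has a genuine gap which you half-noticed and then misdiagnosed. You compute that the remainder from \eqref{K01 tau<x Temme} is $\ll (|\log\beta|+1)/(x\sqrt{\beta})\asymp(|\log\beta|+1)/\sqrt{\tau x}$ and then worry this is larger than the claimed $(|\log\beta|+1)/(\tau x^{1/2})$ because $\tau\nleq\sqrt{\tau}$; you propose to resolve this by redoing the Stirling phases with explicit constants. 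That is not where the missing factor lives. You carefully inserted the prefactor $\Gamma^2(1/2+i\tau)2^{2i\tau}/\Gamma(1+i\tau)\asymp\tau^{-1/2}$ (from \eqref{K0 to K01} and \eqref{gamma2/gamma}) when extracting the \emph{main} term, but then forgot to multiply the \emph{error} $O(\frac{1}{x}\int|\cdots|\,dt_\beta)$ of \eqref{K01 tau<x Temme} by that same prefactor: this error is the remainder in $K_{0,1}$ itself, and $K_0$ is the prefactor times $(K_{0,0}+K_{0,1})$ plus an $O(\tau^{-3/2}\delta^{-1/4})$. Once the $\tau^{-1/2}$ is restored, your $(|\log\beta|+1)/\sqrt{\tau x}$ becomes exactly the claimed $(|\log\beta|+1)/(\tau x^{1/2})$. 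The paper's equation \eqref{K0 to K01 4} records precisely this: the remainder there carries prefactor $\frac{1}{\tau^{1/2}x}$, not $\frac{1}{x}$, before the integral is bounded by $(|\log\beta|+1)/\sqrt{\beta}$.

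A smaller correction: the analogue of \eqref{delta<alpha} you say you would verify, $\sqrt{\delta}\ll\beta$, is not the operative condition here, because after the substitution $y=1-e^{-4v}$ the near-$y=0$ support of $\chf'_{0,1}$ sits at $v\approx\delta_0/4$ (not at $v\approx\sqrt{\delta}$ as in the $\tau\gg x$ case), while the near-$y=1$ support sits at $v\approx-\log(\delta)/4\gg1$. What separates the first interval from the pole at $v_\beta\sim\beta$ is $\delta_0\ll\beta$, which your choice $\delta_0\asymp\beta\tau^{-\epsilon}$ already gives; the second interval is automatically separated since $\beta\ll1$. So your $\delta_0$ is fine, but the condition you flagged as needing verification is the wrong one.
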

\begin{proof}
Let $\delta_0=\frac{\beta}{\tau^{\epsilon}}$. Therefore, \eqref{delta0 estimate} and the condition $\delta_0=o(\beta)$ in Lemma \ref{lemma K01 tau<x} are satisfied. Using Stirling's formula \eqref{Stirling2} with $y=\tau/2$, \eqref{dt/dv} with $\mu=\beta$, \eqref{f2 beta} we show that
\begin{equation}\label{I2MT1}
e^{-4ix A(\beta)}\frac{2^{3/2}f_2(\beta)\Gamma(i\tau)}{(4ix)^{i\tau}}=
\frac{2^{3/2}\sqrt{\pi\beta}}{\sqrt{\tau}(1+4\beta^2)^{1/4}}\\\times
e^{i(-4xA(\beta)+\tau\log(\tau)-\tau-\tau\log(4x)-\pi/4)}.
\end{equation}
Recall that $A(\beta)$ is defined by \eqref{A(mu) def}.  One can prove that 
\begin{equation}\label{Abeta to pyalpha}
-4x A(\beta)+\tau\log(\tau)-\tau-\tau\log(4x)=\tau p_0(\alpha,y_{\alpha}),
\end{equation}
where $p_0(\alpha,y)$ is defined by \eqref{p0(alpha,yalpha) def}. Note that $\beta=\frac{1}{4\alpha}.$
It follows from  \eqref{I2MT1}, \eqref{Abeta to pyalpha}, \eqref{K01 tau<x Temme}, \eqref{K0 to K01}, \eqref{gamma2/gamma} and \eqref{K00 estimate} that
\begin{multline}\label{K0 to K01 4}
K_{0}(-x,\tau)=\frac{-2\pi i}{\tau(1+4\alpha^2)^{1/4}}e^{i\tau p_0(\alpha,y_{\alpha})}+
O\left(\frac{1}{\tau^{3/2}\delta^{1/4}}\right)+\\+
O\left(\frac{1}{\tau^{1/2}x}\int_{0}^{\infty}\left|\left(\frac{f_2(t_{\beta})-f_2(\beta)}{t_{\beta}-\beta}\right)'\right|\ud t_{\beta}\right).
\end{multline}
The integral in \eqref{K0 to K01 4} can be estimated in the same way as the integral in \eqref{K0 to K01 3}. The only difference between them is the argument of the function $\chf_{0,1}(\cdot)$ (compare \eqref{f1 def}, \eqref{f2 def}).  We again split the integral into two parts: $|t_{\beta}-\beta|< \delta_2$ and $|t_{\beta}-\beta|\geq \delta_2$, where $0<\delta_2\ll\beta$ will be chosen later. The analogue of \eqref{f1-f1 int} is
\begin{multline}\label{f2-f2 int}
\int_{|t_{\beta}-\beta|\geq \delta_2}\left|\left(\frac{f_2(t_{\beta})}{t_{\beta}-\beta}\right)'\right|
\ud t_{\beta} \ll
\int_{|v-v_{\beta}|\gg \delta_2}
\frac{|\chf'_{0,1}(1-e^{-4v})|e^{-4v}\sqrt{\sinh 2v}}{|\sinh(2v)-2\beta|}\ud v+ \\
\int_{|v-v_{\beta}|\gg \delta_2}
\frac{\chf_{0,1}(1-e^{-4v})\cosh(2v)}{|\sinh(2v)-2\beta|\sqrt{\sinh(2v)}}\ud v+
\int_{|v-v_{\beta}|\gg \delta_2}
\frac{\chf_{0,1}(1-e^{-4v})\cosh(2v)\sinh^{1/2}(2v)}{(\sinh(2v)-2\beta)^2}\ud v,
\end{multline}
where $v_{\beta}$ is defined as $\sinh(2v_{\beta})=2\beta$. It follows from \eqref{chf01 def1}, \eqref{chf01 def2} that
$|\chf'_{0,1}(1-e^{-4v})|=0$
unless
\begin{equation*}
v_5:=\frac{-\log(1-\delta_0)}{4}<v<\frac{-\log(1-2\delta_0)}{4}:=v_6, \quad \text{or} \quad v_7:=\frac{-\log(2\delta)}{4}<v<\frac{-\log\delta}{4}:=v_8.
\end{equation*}
On these intervals  $\chf'_{0,1}(1-e^{-4v})\ll\delta_0^{-1}$ or $\chf'_{0,1}(1-e^{-4v})\ll\delta^{-1}$, respectively. Note that the size of $v_5$  is approximately $\delta_0/4$.   Since $\delta_0=\frac{\beta}{\tau^{\epsilon}}$ we have
\begin{equation*}
\int_{|v-v_{\beta}|\gg \delta_2}
\frac{|\chf'_{0,1}(1-e^{-4v})|e^{-4v}\sqrt{\sinh 2v}}{|\sinh(2v)-2\beta|}\ud v\ll
\int_{v_5}^{v_6}\frac{\sqrt{\delta_0}}{\delta_0\beta}\ud v+
\int_{v_7}^{v_8}\frac{e^{-v}}{\delta}\ud v\\\ll\frac{\sqrt{\delta_0}}{\beta}+\delta^{1/4}\ll\frac{1}{\tau^{\epsilon}\sqrt{\beta}}
\end{equation*}
The second and the third integrals in \eqref{f2-f2 int} are integrals over the interval $(v_5,v_8)$ and can be estimated in the same way as the second the third integrals in \eqref{f1-f1 int}. Therefore, we obtain
\begin{equation}\label{f2-f2 int4}
\int_{|t_{\beta}-\beta|\geq \delta_2}\left|\left(\frac{f_2(t_{\beta})-f_2(\beta)}{t_{\beta}-\beta}\right)'\right|
\ud t_{\beta}\ll\frac{|\log\delta_2|}{\sqrt{\beta}}+\frac{\sqrt{\beta}}{\delta_2}.
\end{equation}
Let us consider the integral over $|t_{\beta}-\beta|< \delta_2$. Arguing in the same way as in the proof of the estimate 
\eqref{f1-f1 int5} (again in the neighborhood of the point $t_{\beta}=\beta$  the derivatives of $\chf_{0,1}(1-e^{-4v})$ are equal to zero due to the choice of $\delta_0$), we conclude that
\begin{equation}\label{f2-f2 int5}
\int_{|t_{\beta}-\beta|< \delta_2}\left|\left(\frac{f_1(t_{\beta})-f_1(\beta)}{t-\beta}\right)'\right|\ud t_{\beta}\ll
\frac{\delta_2}{\beta^{3/2}}+\frac{\delta^2_2}{\beta^{5/2}}.
\end{equation}
Combining \eqref{f2-f2 int4}, \eqref{f2-f2 int5} and choosing $\delta_2=\beta/100$ we obtain
\begin{equation}\label{f2-f2 int6}
\int_{0}^{\infty}\left|\left(\frac{f_1(t_{\beta})-f_1(\beta)}{t_{\beta}-\beta}\right)'\right|\ud t_{\beta}\ll
\frac{|\log\beta|+1}{\sqrt{\beta}}.
\end{equation}
Substituting \eqref{f2-f2 int6} to \eqref{K0 to K01 4} we prove \eqref{K0 asympt tau<x 0}.
\end{proof}

	Finally, we prove the following result.
	
\begin{corollary}\label{main corollary}
For $x^{2/3-\epsilon} \ll\tau\ll x^{3/2}$ the following asymptotic formula holds
\begin{equation*}
K(-x,\tau)=\frac{-2\pi i}{\tau(1+4\alpha^2)^{1/4}}e^{i\tau p_0(\alpha,y_{\alpha})}
+O\left(\frac{1}{\tau^{25/21}}\right).
\end{equation*}
\end{corollary}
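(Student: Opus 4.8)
\textbf{Proof plan for Corollary \ref{main corollary}.}
The statement is the amalgamation of Lemma \ref{lemma K0 tau>x} (the range $x \ll \tau \ll x^{3/2}$), Lemma \ref{lemma K0 tau<x} (the range $x^{2/3-\epsilon} \ll \tau \ll x$), and the bound for $K_1(-x,\tau)$ in the Corollary following the estimate of $K_1$, via the decomposition \eqref{K=K0+K1}. The plan is therefore to substitute \eqref{K0 asympt tau>x 0} or \eqref{K0 asympt tau<x 0} — whichever range we are in — together with \eqref{K1 estimate1} into \eqref{K=K0+K1}, and then to optimize over the free parameter $\delta$. Both Lemmas \ref{lemma K0 tau>x} and \ref{lemma K0 tau<x} produce the same main term $\frac{-2\pi i}{\tau(1+4\alpha^2)^{1/4}} e^{i\tau p_0(\alpha,y_\alpha)}$, so the main term of the corollary is immediate; the content is purely in the error term.

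First I would collect the error contributions. From $K_0$ we get $O\!\left(\tau^{-3/2}\delta^{-1/4} + (\,|\log\alpha|+1)\tau^{-1}x^{-1/2}\right)$ in the range $\tau \gg x$, and the analogous bound with $\beta$ in place of $\alpha$ when $\tau \ll x$; in the range $x^{2/3-\epsilon} \ll \tau \ll x^{3/2}$ the quantities $\alpha = x/\tau$ and $\beta = \tau/(4x)$ both lie in $[\tau^{-1/3},\tau^{1/2}]$ up to $\tau^\epsilon$-factors, so $|\log\alpha|+1$ and $|\log\beta|+1$ are both $\ll \log\tau \ll \tau^\epsilon$, and $x \gg \tau^{2/3}$ forces $x^{-1/2} \ll \tau^{-1/3}$; hence that part of the error is $\ll \tau^{-1}\cdot\tau^{-1/3+\epsilon} = \tau^{-4/3+\epsilon}$, which is comfortably $\ll \tau^{-25/21}$. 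From $K_1$, \eqref{K1 estimate1} gives $\ll \tau^{-1}(\tau^{10/9}\delta)^{3/2}$, valid for $\delta > \tau^{-2}$. So the non-negligible error after \eqref{K=K0+K1} is
\[
O\!\left( \frac{1}{\tau^{3/2}\delta^{1/4}} + \frac{(\tau^{10/9}\delta)^{3/2}}{\tau} \right),
\]
subject to the constraint $\delta \ll \tau^{-1}$ (imposed in both Lemmas \ref{lemma K0 tau>x} and \ref{lemma K0 tau<x}) and $\delta > \tau^{-2+\epsilon}$.

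Next I would balance the two terms. Writing $\delta = \tau^{-\theta}$, the first term is $\tau^{-3/2 + \theta/4}$ and the second is $\tau^{-1 + 3/2(10/9 - \theta)} = \tau^{2/3 - 3\theta/2}$. Equating the exponents, $-3/2 + \theta/4 = 2/3 - 3\theta/2$, i.e. $7\theta/4 = 2/3 + 3/2 = 13/6$, so $\theta = 26/21$; this is the choice $\delta = \tau^{-26/21}$ flagged in the remark after \eqref{K-1+1estimate main2}. Since $1 < 26/21 < 2$, the constraints $\tau^{-2+\epsilon} < \delta \ll \tau^{-1}$ are met for $\tau$ large. With this $\delta$, each term equals $\tau^{-3/2 + (26/21)/4} = \tau^{-3/2 + 13/42} = \tau^{-(63-13)/42} = \tau^{-50/42} = \tau^{-25/21}$, which is exactly the claimed error. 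The one verification worth spelling out is that the hypothesis $\delta \gg \tau^{-2+\epsilon}$ used to quote \eqref{K1 estimate1}, and $\delta \ll \tau^{-2/3-\epsilon}$ would only matter if we were invoking the crude Lemma producing \eqref{K0 estimate2}, which we are not — here we use the sharper Lemmas \ref{lemma K0 tau>x} and \ref{lemma K0 tau<x}, whose only constraint is $\delta \ll \tau^{-1}$, and $26/21 > 1$ so $\tau^{-26/21} \ll \tau^{-1}$. Finally I would note that for bounded $\tau$ (equivalently bounded $x$, since $\tau \asymp x^{\Theta(1)}$ in this range) the statement is trivial from the absolute bound $|K(-x,\tau)| \ll 1$ recorded in the remark after \eqref{K to 2F1}, and that the case-split boundary $\tau \sim x$ is harmless since both lemmas overlap there with the identical main term.

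The routine-but-nontrivial part is simply checking that all the auxiliary parameters ($\delta_0$, $\delta_2$, and the various $\epsilon$'s) introduced inside the proofs of Lemmas \ref{lemma K0 tau>x} and \ref{lemma K0 tau<x} can be chosen consistently with the final choice $\delta = \tau^{-26/21}$; this is where I expect the only real bookkeeping. In particular one must confirm that $\delta = o(\alpha^2)$ and $\delta = o(\beta)$ (needed for \eqref{f1 alpha} and \eqref{f2 beta}) hold: in the range under consideration $\alpha,\beta \gg \tau^{-1/3-\epsilon}$, so $\alpha^2,\beta \gg \tau^{-2/3-\epsilon}$ which dominates $\tau^{-26/21}$ since $26/21 > 2/3$, and similarly $\delta \ll \tau^{-1}$ so that \eqref{delta<alpha} is satisfied. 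No further obstacle is anticipated — the corollary is an exercise in assembling the three preceding estimates and optimizing a single exponent.
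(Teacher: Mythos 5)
Your proposal is correct and follows the same approach as the paper's own (very terse) proof: combine \eqref{K=K0+K1}, \eqref{K1 estimate1}, \eqref{K0 asympt tau>x 0}, \eqref{K0 asympt tau<x 0} to get the error $O\!\left(\tau^{-3/2}\delta^{-1/4}+\log\tau/(\tau x^{1/2})+\tau^{2/3}\delta^{3/2}\right)$, note the middle term is dominated, and optimize $\delta=\tau^{-26/21}$. Your additional checks on the admissibility of the auxiliary parameters and the smallness of the logarithmic term are correct and make explicit what the paper leaves implicit.
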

\begin{proof}
Using \eqref{K=K0+K1}, \eqref{K1 estimate1}, \eqref{K0 asympt tau>x 0}, \eqref{K0 asympt tau<x 0} and \eqref{K-1+1estimate main2},   we prove that  for
$x^{2/3-\epsilon} \ll\tau\ll x^{3/2}$ and $\tau^{-2}\ll\delta\ll \tau^{-1}$
\begin{equation*}
K(-x,\tau)=\frac{-2\pi i}{\tau(1+4\alpha^2)^{1/4}}e^{i\tau p_0(\alpha,y_{\alpha})}
+O\left(\frac{1}{\tau^{3/2}\delta^{1/4}}+\frac{\log\tau}{\tau x^{1/2}}
+\tau^{2/3}\delta^{3/2}\right).
\end{equation*}
The optimal choice for $\delta$ is $\delta=\tau^{-26/21}$.
\end{proof}

\noindent Now Theorem \ref{Thm:K main estimate} follows from Lemmas \ref{lemma K++K--} and \ref{lemma big and small tau} and Corollary \ref{main corollary}.

\section{Fourth Moment Bound}

	\subsection{Period Version of Motohashi's Formula}
	
	We need a period version of our Motohashi formula \eqref{MTF}, which corresponds to considering test functions $\Psi \in \Sch(\Mat_2(\A))$ of the special form
	$$ \Psi \begin{pmatrix} x_1 & x_2 \\ x_3 & x_4 \end{pmatrix} = \Phi_1(x_1,x_2) \Phi_2(x_3,x_4), \quad \Phi_j \in \Sch(\A^2). $$
	To $\Phi_j$, we associate the Godement section and the Eisenstein series
	$$ f_j(s,g) := \norm[\det g]_{\A}^{s+\frac{1}{2}} \int_{\A^{\times}} \Phi_j((0,t)g) \norm[t]_{\A}^{2s+1} \ud^{\times}t, \quad \eis_j(s,g) := \sum_{\gamma \in \gp{B}(\F) \backslash \GL_2(\F)} f_j(s,\gamma g). $$
	We also write $\eis_j(g) = \eis_j(0,g)$. We easily identify the zeta functional as
	$$ \Zeta(s,\chi,\eis_j) = \int_{\A^{\times}} \OFour_2 \Phi_j(x_1,x_2) \chi(x_1x_2) \norm[x_1x_2]_{\A}^s \ud^{\times} x_1 \ud^{\times} x_2. $$
	We introduce the \emph{global} weight functions as
\begin{equation} \label{eq: GlobWt}
	h(\pi)(\Psi) := \frac{L(1/2, \pi)^3}{2 \Lambda_{\F}(2) L(1,\pi,\mathrm{Ad})} \cdot \sideset{}{_{v \mid \infty}} \prod h_v(\pi_v) \cdot \sideset{}{_{\vp \in S}} \prod H_{\vp}(\pi_{\vp}),
\end{equation}
\begin{equation} \label{eq: GlobWtBis}
	h(\chi,i\tau)(\Psi) := \frac{\extnorm{L(1/2+i\tau, \chi)}^6}{2\Lambda_{\F}(2) \extnorm{L(1+2i\tau,\chi^2)}^2} \cdot \sideset{}{_{v \mid \infty}} \prod h_v(\pi(\chi_v,i\tau)) \cdot \sideset{}{_{\vp \in S}} \prod H_{\vp}(\pi(\chi_{\vp},i\tau)),
\end{equation}
\begin{equation} \label{eq: GlobWtMain}
	h(\Psi) := \sideset{}{_{\pi}} \sum h(\pi)(\Psi) + \sideset{}{_{\chi \in \widehat{\R_+ \F^{\times} \backslash \A^{\times}}}} \sum \int_{-\infty}^{\infty} h(\chi,i\tau)(\Psi) \frac{\ud \tau}{2\pi};
\end{equation}
\begin{equation} \label{eq: GlobDWt}
	\widetilde{h}(\chi \norm_{\A}^{i\tau})(\Psi) := \extnorm{L(1/2+i\tau, \chi)}^4 \cdot \sideset{}{_{v \mid \infty}} \prod \widetilde{h}_v(\chi_v \norm_v^{i\tau}) \cdot \sideset{}{_{\vp \in S}} \prod \widetilde{H}_{\vp}(\chi_{\vp} \norm_{\vp}^{i\tau}),
\end{equation}
\begin{equation} \label{eq: GlobDWtMain}
	\widetilde{h}(\Psi) := \frac{1}{\zeta_{\F}^*} \sideset{}{_{\chi \in \widehat{\R_+ \F^{\times} \backslash \A^{\times}}}} \sum \int_{-\infty}^{\infty} \widetilde{h}(\chi \norm_{\A}^{i\tau})(\Psi) \frac{\ud \tau}{2\pi}.
\end{equation}
	
\noindent We get by the global functional equation for Tate's integrals
\begin{align*}
	\widetilde{h}(\chi \norm_{\A}^s)(\Psi) &= \int_{(\A^{\times})^4} \Psi \begin{pmatrix} x_1 & x_2 \\ x_3 & x_4 \end{pmatrix} \chi \left( \frac{x_1x_4}{x_2x_3} \right) \norm[x_1x_4]_{\A}^s \norm[x_2x_3]_{\A}^{1-s} \prod_{i=1}^4 \ud^{\times} x_i \\
	&= \Zeta(s,\chi,\eis_1) \Zeta(1-s,\chi^{-1},\eis_2).
\end{align*}

\begin{lemma}
	Let $\Reis$ be the (linear combination of non-unitary) Eisenstein series such that $\eis_1 \eis_2 - \Reis \in \intL^2([\PGL_2])$. Then we rewrite \eqref{eq: GlobWt} \& \eqref{eq: GlobWtBis} as
	$$ h(\pi)(\Psi) = \sum_{\phi \in \Bas(\pi)} \int_{[\PGL_2]} \left( \eis_1 \eis_2 - \Reis \right)(g) \phi(g) \ud g \cdot \Zeta \left( \frac{1}{2},\phi^{\vee} \right), $$
	$$ h(\chi,i\tau)(\Psi) = \sum_{e \in \Bas(\chi)} \int_{[\PGL_2]} \left( \eis_1 \eis_2 - \Reis \right)(g) \eis(i\tau,e)(g) \ud g \cdot \Zeta \left( \frac{1}{2}, \eis \left( -i\tau, e^{\vee} \right) \right). $$
\end{lemma}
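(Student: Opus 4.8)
The plan is to reduce both identities to a single relation for an individual automorphic form, then sum. Concretely, I claim that for every smooth vector $\varphi \in V_{\pi}^{\infty}$ one has
$$ \int_{[\PGL_2]} \left( \eis_1 \eis_2 - \Reis \right)(g)\, \varphi(g)\, dg \;=\; \sum_{e \in \Bas(\pi)} \Zeta\left( \tfrac{1}{2}, \Psi, \beta(\varphi, e^{\vee}) \right) \Zeta\left( \tfrac{1}{2}, e \right), $$
and likewise, for $e' \in \Bas(\chi)$,
$$ \int_{[\PGL_2]} \left( \eis_1 \eis_2 - \Reis \right)(g)\, \eis(i\tau, e')(g)\, dg \;=\; \sum_{e \in \Bas(\chi)} \Zeta\left( \tfrac{1}{2}, \Psi, \beta_{i\tau}(e', e^{\vee}) \right) \Zeta\left( \tfrac{1}{2}, \eis(i\tau, e) \right). $$
Granting these, the Lemma follows by choosing $\varphi = \phi$ (resp.\ $\varphi = \eis(i\tau, e')$), multiplying by $\Zeta(\tfrac{1}{2}, \phi^{\vee})$ (resp.\ $\Zeta(\tfrac{1}{2}, \eis(-i\tau, e'^{\vee}))$), and summing over $\Bas(\pi)$ (resp.\ $\Bas(\chi)$): the right-hand side then becomes exactly the double sum defining $M_3(\Psi \mid \pi)$ in (\ref{SpecMDCusp}) (resp.\ $M_3(\Psi \mid \chi, i\tau)$ in (\ref{SpecMDCont})) after relabelling the summation index.

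To establish the key relation I would first collapse its right-hand side to one integral over $\GL_2(\A)$. By definition $\Zeta(\tfrac{1}{2}, \Psi, \beta(\varphi, e^{\vee})) = \int_{\GL_2(\A)} \Psi(g)\, \Pairing{\pi(g).\varphi}{e^{\vee}}\, \norm[\det g]_{\A}\, dg$, and since $\{e^{\vee}\}$ is the basis dual to $\Bas(\pi)$ we have $\sum_{e} \Pairing{v}{e^{\vee}}\, e = v$ for all $v \in V_{\pi}^{\infty}$ (convergent in the smooth topology once $\Bas(\pi)$ is organised by $\gp{K}$-type), so applying the continuous Hecke functional $v \mapsto \Zeta(\tfrac{1}{2}, v)$ term by term and interchanging with $\int_{\GL_2(\A)}$ --- legitimate by the decay of $\Psi$ --- gives $\int_{\GL_2(\A)} \Psi(g)\, \Zeta(\tfrac{1}{2}, \pi(g).\varphi)\, \norm[\det g]_{\A}\, dg$. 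Now inserting $\Zeta(\tfrac{1}{2}, \pi(g).\varphi) = \int_{\F^{\times} \backslash \A^{\times}} (\varphi - \varphi_{\gp{N}})(a(y)g) \norm[y]_{\A}^{1/2}\, d^{\times}y$, substituting $g \mapsto a(y)^{-1}g$, and unfolding the $\GL_2(\F)$-invariance of $\varphi$ against $\Psi = \Phi_1 \otimes \Phi_2$: the resulting sum over $\F^2 \times \F^2$ --- the two rows of the matrix argument --- together with the torus integral reassembles, via the Godement-section formula for $f_j$ and Tate's functional equation at the centre, into the product $\eis_1(g)\eis_2(g)$, while the locus of linearly dependent rows, where the lattice sum diverges, furnishes exactly the non-unitary Eisenstein combination subtracted as $\Reis$; this yields the left-hand side. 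Equivalently one may go the other way --- unfold one Eisenstein series in the period, Fourier-expand $\eis_2 \varphi$ along $\gp{N}(\F) \backslash \gp{N}(\A)$, and recognise the ensuing Rankin--Selberg integral $\int_{\gp{Z}(\A)\gp{N}(\A) \backslash \GL_2(\A)} f_1(0,g) W_{\eis_2}(g) W_{\varphi}(g)\, dg$, after local factorisation, as the Godement--Jacquet datum of $\Phi_1 \otimes \Phi_2$ paired with the Hecke--Jacquet--Langlands integral of $\varphi$; this is the ``two paths'' picture of Michel--Venkatesh.

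The main obstacle is the regularisation entwined with the $\Reis$-subtraction. For cuspidal $\varphi$ there is nothing to regularise: $\eis_1 \eis_2$ grows at most like $y(\log y)^2$ in the Iwasawa height while $\varphi$ decays rapidly, so the period converges absolutely and, $\Reis$ being a linear combination of Eisenstein series, it is orthogonal to $\varphi$ and simply disappears. For $\varphi = \eis(i\tau, e')$, by contrast, the integrand decays only like $y^{\sigma - 3/2}$ with $\sigma < \tfrac{1}{2}$, the naive unfolding of $\eis_1 \eis_2$ (or of the lattice sum attached to $\Psi$) produces genuinely divergent boundary pieces, and one must check --- in the style of the Maass--Selberg relations --- that subtracting $\Reis$, which is designed to cancel the growing part of the constant term of $\eis_1 \eis_2$, removes precisely these pieces and leaves no leftover contribution from the constant term of $\eis(i\tau, e')$. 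A secondary but unavoidable point is the careful justification of the two interchanges of summation and integration above, which I would handle by working first with $\gp{K}$-finite vectors and Schwartz data of compact support and then passing to the limit by continuity.
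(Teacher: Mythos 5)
Your reduction to the single-vector relation
$$ \int_{[\PGL_2]} \left( \eis_1 \eis_2 - \Reis \right)(g)\, \varphi(g)\, dg \;=\; \sum_{e \in \Bas(\pi)} \Zeta\left( \tfrac{1}{2}, \Psi, \beta(\varphi, e^{\vee}) \right) \Zeta\left( \tfrac{1}{2}, e \right) $$
is exactly the structure the paper establishes, and your treatment of the cuspidal case is correct and matches the paper: the period converges absolutely, $\Reis$ is a finite combination of Eisenstein series hence orthogonal to $\phi$, and one can work with $\eis_1\eis_2$ directly. Regarding your ``forward'' proof of the key relation (from the Godement--Jacquet side, folding against $\GL_2(\F)$ to reconstruct $\eis_1\eis_2 - \Reis$): this is a valid route in principle, but the paper actually runs the argument in the opposite direction you label as ``equivalent'': it starts with the period, applies the standard Rankin--Selberg unfolding to reach $\int_{\gp{Z}(\A)\gp{N}(\A)\backslash\GL_2(\A)} \overline{W_\phi}\, W_1(s_1,\cdot)\, f_2(s_2,\cdot)\, dg$, reduces this (following the $\GL_n\times\GL_n \to \GL_n\times\GL_{n-1}$ reduction of the Rankin--Selberg integral) to $\int_{\A^\times}\!\bigl(\int_{\GL_2(\A)}\Psi(g)\overline{W_\phi(a(t)g)}\norm[\det g]_{\A}^{s_1+s_2+1}dg\bigr)\norm[t]_{\A}^{s_2-s_1}d^\times t$, and only then expands $g.\overline{W_\phi}$ along $\Bas(\pi)$ to read off the product of zeta integrals, specialising $s_1,s_2\to 0$ successively. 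This two-parameter deformation is an important analytic device you omit; it is what controls the convergence while the unfolding is happening.

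The genuine gap is the Eisenstein case. You correctly diagnose the problem (the period integral of $\eis_1\eis_2-\Reis$ against $\eis(i\tau,e')$ is not absolutely convergent and the naive unfolding meets divergent boundary pieces), but your proposed remedy --- verify a Maass--Selberg-style cancellation between $\Reis$ and the divergent constant-term contributions --- is not what the paper does, and it is a priori unclear it would close cleanly. The paper's mechanism is more structural: using the regularized-integral theory of \cite{Wu9}, it symmetrizes the roles of $\eis_2$ and $\eis(i\tau,e)$ by writing
$$ \int_{[\PGL_2]}\!\!\left( \eis_1 \eis_2 - \Reis \right)\overline{\eis(i\tau,e)}\,dg = \int_{[\PGL_2]}\!\!\left( \overline{\eis(i\tau,e)}\eis_1 - \Reis' \right)\eis_2\,dg - \int_{[\PGL_2]}^{\reg}\!\Reis\,\overline{\eis(i\tau,e)}\,dg + \int_{[\PGL_2]}^{\reg}\!\Reis'\,\eis_2\,dg, $$
and then shows the two regularized integrals vanish because they are $\GL_2(\A)$-invariant (by \cite[Prop.\ 2.27(2)]{Wu9}) while the two factors in each have distinct Hecke eigenvalues at some unramified finite place. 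This replaces an analytic cancellation you would ``have to check'' by an algebraic vanishing statement, and it is the step that makes the lemma actually provable; without it your sketch does not close.
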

\begin{remark}
	For the existence and uniqueness of $\Reis$, see \cite[Proposition 2.25]{Wu19_TF}.
\end{remark}
\begin{proof}
	Since $\phi \in \Bas(\pi)$ is cuspidal, hence orthogonal to any Eisenstein series and of rapid decay, we have
\begin{align}
	\int_{[\PGL_2]} \left( \eis_1 \eis_2 - \Reis \right)(g) \overline{\phi(g)} \ud g &= \int_{[\PGL_2]} \eis_1(g) \eis_2(g) \overline{\phi(g)} \ud g \nonumber \\
	&= \left. \int_{\gp{Z}(\A)\gp{N}(\A) \backslash \GL_2(\A)} \overline{W_{\phi}(g)} W_1(g) f_2(s,g) \ud g \right|_{s=0}, \label{RSUnfold}
\end{align}
	where $W_j$ resp. $W_{\phi}$ is the Whittaker function of $\eis_j$ resp. $\phi$ with respect to the additive character $\psi$, and we have applied the standard Rankin-Selberg unfolding. The subsequent transformation of integrals is a special case of the reduction of the Rankin-Selberg integrals from the pair $\GL_n \times \GL_n$ to $\GL_n \times \GL_{n-1}$ in \cite[\S 8.2]{J09}. Namely, we have for $\Re s_1, \Re s_2 \gg 1$ and $\Re s_2 - \Re s_1 \gg 1$
\begin{align*}
	&\quad \int_{\gp{Z}(\A)\gp{N}(\A) \backslash \GL_2(\A)} \overline{W_{\phi}(g)} W_1(s_1,g) f_2(s_2,g) \ud g = \int_{\gp{Z}(\A) \backslash \GL_2(\A)} \overline{W_{\phi}(g)} f_1(s_1,wg) f_2(s_2,g) \ud g \\
	&= \int_{\gp{Z}(\A) \backslash \GL_2(\A)} \overline{W_{\phi}(g)} \left( \int_{(\A^{\times})^2} \Psi \left( \begin{pmatrix} t_1 & \\ & t_2 \end{pmatrix} g \right) \norm[t_1]_{\A}^{2s_1+1} \norm[t_2]_{\A}^{2s_2+1} \ud^{\times}t_1 \ud^{\times}t_2 \right) \norm[\det g]_{\A}^{s_1+s_2+1} \ud g \\
	&= \int_{\GL_2(\A)} \overline{W_{\phi}(g)} \left( \int_{\A^{\times}} \Psi \left( \begin{pmatrix} t & \\ & 1 \end{pmatrix} g \right) \norm[t]_{\A}^{2s_1+1} \ud^{\times}t \right) \norm[\det g]_{\A}^{s_1+s_2+1} \ud g \\
	&= \int_{\A^{\times}} \left( \int_{\GL_2(\A)} \Psi(g) \overline{W_{\phi} \left( \begin{pmatrix} t & \\ & 1 \end{pmatrix} g \right)} \norm[\det g]_{\A}^{s_1+s_2+1} \ud g \right) \norm[t]_{\A}^{s_2-s_1} \ud^{\times}t.
\end{align*}
	But we have
	$$ g.\overline{W_{\phi}} = \sum_{e \in \Bas(\pi)} \Pairing{\pi^{\vee}(g).\overline{\phi}}{e} W_{e^{\vee}}. $$
	Hence we identify the last integral as
	$$ \sum_{e \in \Bas(\pi)} \Zeta \left( s_1+s_2+\frac{1}{2}, \Psi, \beta(\overline{\phi},e) \right) \Zeta \left( s_2-s_1+\frac{1}{2}, e^{\vee} \right). $$
	Successively taking $s_1=0$ and $s_2=0$, we obtain
	$$ \int_{[\PGL_2]} \left( \eis_1 \eis_2 - \Reis \right)(g) \overline{\phi(g)} \ud g = \sum_{e \in \Bas(\pi)} \Zeta \left( \frac{1}{2}, \Psi, \beta(\overline{\phi},e) \right) \Zeta \left( \frac{1}{2}, e^{\vee} \right). $$
	The first equality, with $\pi$ replaced by $\pi^{\vee}$, follows readily. To prove the second equality, we apply the theory of regularized integrals \cite[\S 2]{Wu19_TF} to get
\begin{align*}
	\int_{[\PGL_2]} \left( \eis_1 \eis_2 - \Reis \right)(g) \overline{\eis \left( i\tau, e \right)(g)} \ud g &= \int_{[\PGL_2]} \left( \overline{\eis \left( i\tau, e \right)} \eis_1 - \Reis' \right)(g) \eis_2(g) \ud g \\
	&\quad - \int_{[\PGL_2]}^{\reg} \Reis(g) \overline{\eis \left( i\tau, e \right)(g)} \ud g + \int_{[\PGL_2]}^{\reg} \Reis'(g) \eis_2(g) \ud g,
\end{align*} 
	where $\Reis'$ is the linear combination of Eisenstein series such that $\overline{\eis \left( i\tau, e \right)} \eis_1 - \Reis' \in \intL^2([\PGL_2])$. The two regularized integrals in the last line are zero. In fact, we are in a situation where the regularized integrals are $\GL_2(\A)$-invariant (see \cite[Proposition 2.27 (2)]{Wu19_TF}). Since $\Reis$ and $\overline{\eis(i\tau,e)}$ resp. $\Reis'$ and $\eis_2$ have different eigenvalues with respect to a Hecke operator at a finite place $\vp$ unramified for both functions, the regularized integrals are vanishing. We then unfold the first term on the right hand side by \cite[Proposition 2.5]{Wu19_S}, arriving at (\ref{RSUnfold}) with $\phi$ replaced by $\eis(i\tau,e)$. Arguing as in the cuspidal case, we obtain the second equality.
\end{proof}

	\subsection{Reduction to Local Estimation}

	We turn to the proof of Theorem \ref{4thMBd}. Let $\Phi_0 \in \Sch(\A^2)$ be the standard spherical function given by
	$$ \Phi_{0,v}(x,y) = \left\{ \begin{matrix} e^{-2\pi(\norm[x]^2 + \norm[y]^2)} & \text{if } \F_v = \C \\ e^{-\pi(x^2+y^2)} & \text{if } \F_v = \R \\ \id_{\vo_{\vp} \times \vo_{\vp}}(x,y) & \text{if } v=\vp < \infty \end{matrix} \right.  \quad \Rightarrow \Psi_0 \begin{pmatrix} x_1 & x_2 \\ x_3 & x_4 \end{pmatrix} = \Phi_0(x_1,x_2) \Phi_0(x_3,x_4). $$
	Write $\eis_0(s,g)$ for the spherical Eisenstein series associated with $\Phi_0$. Note that $\eis_0(g) := \eis_0(0,g)$ is real valued. The Whittaker function $W_0(s,g)$ of $\eis_0(s,g)$ is given by
\begin{equation} \label{WhiE0}
	W_0(s,g) = \int_{\A} f_0(s,wn(x)g) \psi(-x) \ud x = \int_{\A^{\times}} \OFour_2 \rpR(g) \Phi_0 (t, t^{-1}) \norm[t]_{\A}^{2s} \ud^{\times}t,
\end{equation}	
	which is regular at $s=0$ and has obvious decomposition into product of local components. Let $r = [\F : \Q]$ be as in the totally real case. We are going to choose $B^{(j)} = (B_v^{(j)})_v \in \A, 1 \leq j \leq 2^{r}$ according to $\vec{T}, \idl{N}$, and consider the choice of test functions
	$$ \Phi_1^{(j)} = \Phi_2^{(j)} = n(B^{(j)}).\Phi_0 \quad \Leftrightarrow \quad \eis_1^{(j)} = \eis_2^{(j)} = n(B^{(j)}).\eis_0 \quad \Leftrightarrow \quad \Psi^{(j)} = \rpR_{n(B^{(j)})}.\Psi_0 . $$

\begin{lemma} \label{4thMGLowerBd}
	For any $\chi \norm_{\A}^{i\tau} \in B(\vec{T},\idl{N})$, we have
	$$ \sum_{j=1}^{2^{r}} \extnorm{\Zeta \left( \frac{1}{2}+i\tau, \chi, \eis_1^{(j)} \right)}^2 \gg \extnorm{L \left( \frac{1}{2}+i\tau,\chi \right)}^4 \left( \prod_{v \mid \infty} T_v \cdot \Nr(\idl{N}) \right)^{-1}. $$
	Consequently, we get
	$$ \sum_{j=1}^{2^{r}} \widetilde{h}(\Psi^{(j)}) \gg \left( \prod_{v \mid \infty} T_v \cdot \Nr(\idl{N}) \right)^{-1} \sum_{\chi \in \widehat{\F^{\times} \R_{>0} \backslash \A^{\times}}} \int_{\R} \extnorm{L \left( \frac{1}{2}+i\tau,\chi \right)}^4 \id_{B(\vec{T},\idl{N})}(\chi \norm_{\A}^{i\tau}) \ud \tau . $$
\end{lemma}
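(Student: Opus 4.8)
The strategy is to factor the global zeta integral $\Zeta(1/2+i\tau,\chi,\eis_1^{(j)})$ into local components and to choose the parameters $B^{(j)}_v$ so that at every place the local factor retains a positive proportion of the size of the corresponding local $L$-factor. First I would recall that $\eis_1^{(j)} = n(B^{(j)}).\eis_0$, so by the identity $\Zeta(s,\chi,\eis_j) = \int_{(\A^\times)^2} \OFour_2\Phi_j(x_1,x_2)\chi(x_1x_2)\norm[x_1x_2]_{\A}^s d^\times x_1 d^\times x_2$ together with the product structure of $\Phi_0$ and the compatibility of Fourier transform with the $n(B)$-twist, the integral decomposes as $\Zeta(1/2+i\tau,\chi,\eis_1^{(j)}) = \prod_v Z_v(B^{(j)}_v)$ where each $Z_v$ is a local Tate-type integral; at the unramified finite places $\vp \nmid \idl{N}$ (where $B^{(j)}_\vp = 0$) this local factor is exactly $\zeta_\vp(1)^{-2} L(1/2+i\tau,\chi_\vp)^2 / (\text{something giving the completed } \Lambda)$, i.e. up to harmless constants it reproduces the Euler factor of $L(1/2+i\tau,\chi)^2$. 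So the global product is, up to a bounded constant and the finitely many ramified/archimedean local factors, equal to $L(1/2+i\tau,\chi)^2$ times local correction terms.

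Next I would analyze the finitely many ``bad'' local factors. At a finite place $\vp \mid \idl{N}$, the condition $\chi \norm_{\A}^{i\tau} \in B(\vec{T},\idl{N})$ forces $\cond(\chi_\vp) \le \mathrm{ord}_\vp(\idl{N})$, and one chooses $B^{(j)}_\vp = \varpi_\vp^{-\mathrm{ord}_\vp(\idl{N})}$ (the translate that makes the local new-vector integral nonzero); the resulting local zeta integral has absolute value $\gg \Nr(\vp)^{-\mathrm{ord}_\vp(\idl{N})}$ times the local $L$-factor value, by the local new-vector theory (this is entirely analogous to the computation in \S 4, e.g. Lemma \ref{CubeWtLocBdNA} and \cite[Proposition 4.6]{Wu14}). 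Taking the product over $\vp \mid \idl{N}$ yields the factor $\Nr(\idl{N})^{-1}$ in the statement. At an archimedean place $v$, the Eisenstein spectral parameter of $\chi_v\norm_v^{i\tau}$ satisfies $\Cond(\chi_v) \le T_v$; here one chooses $B^{(j)}_v$ ranging over a set of $2$ real translates (so that the total number of tuples is $2^r = \prod_{v\mid\infty} 2$), and one checks that for \emph{at least one} of these two choices the archimedean local zeta integral — a Mellin transform of $W_0$ against $\norm_v^{\cdots}$ — has absolute value $\gg T_v^{-1/2}$ times the archimedean $L$-factor; squaring and multiplying over the $r$ archimedean places and over the choices absorbed into the sum $\sum_{j=1}^{2^r}$ gives $(\prod_{v\mid\infty} T_v)^{-1}$. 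Combining the three contributions proves the first displayed bound.

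For the ``consequently'' clause, I would square, recall $M_4(\Psi^{(j)}) = \frac{1}{\zeta_{\F}^*}\sum_\chi \int_{\Re s = 1/2} M_4(\Psi^{(j)}\mid\chi,s)\frac{ds}{2\pi i}$ and $M_4(\Psi^{(j)}\mid\chi,1/2+i\tau) = \Zeta(1/2+i\tau,\chi,\eis_1^{(j)})\Zeta(1/2-i\tau,\chi^{-1},\eis_2^{(j)}) = \extnorm{\Zeta(1/2+i\tau,\chi,\eis_1^{(j)})}^2$ since $\eis_1^{(j)}=\eis_2^{(j)}$ is real and $\chi$ unitary; then sum the first bound over $\chi \norm_{\A}^{i\tau} \in B(\vec{T},\idl{N})$ and integrate in $\tau$, using the positivity of every term $\extnorm{\Zeta(1/2+i\tau,\chi,\eis_1^{(j)})}^2 \ge 0$ to discard the complementary part of the full spectral sum. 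The main obstacle, and the only place requiring genuine care, is the archimedean lower bound: one must verify that the two-element family of translates $B^{(j)}_v$ really does guarantee that one of the Mellin transforms of $W_{0,v}$ avoids cancellation for \emph{every} spectral parameter with $\Cond(\chi_v) \le T_v$, including the possibly complementary-series range; this is a stationary-phase / non-vanishing statement about the explicit archimedean Whittaker function, and it is the technical heart of the lemma. Everything else is bookkeeping with Euler factors and the local new-vector computations already carried out in \S 4.
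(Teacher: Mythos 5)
Your plan matches the paper's proof exactly: factor $\Zeta(1/2+i\tau,\chi,\eis_1^{(j)})$ into local Tate integrals via $\OFour_2\Phi_1^{(j)}(x_1,x_2)=\psi(B^{(j)}x_1x_2)\,\OFour_2\Phi_0(x_1,x_2)$, take $B_\vp=\varpi_\vp^{-\mathrm{ord}_\vp(\idl{N})}$ at finite places and two translates $B_v^\pm=C_\pm T_v$ at each real place so that the $2^r$ tuples $(B_v^{\epsilon_v})_v(B_\vp)_\vp$ exhaust the sign choices, then invoke the local lower bounds (the paper's Lemmas \ref{4thMLLowerBdNA}(1) and \ref{4thMLLowerBdA}(1)), and for the second display use $M_4(\Psi^{(j)}\mid\chi,1/2+i\tau)=|\Zeta(1/2+i\tau,\chi,\eis_1^{(j)})|^2\ge 0$ to drop the spectrum outside $B(\vec T,\idl N)$. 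One small arithmetic slip: at a ramified finite place the un-squared local zeta integral is $\gg\Nr(\vp)^{-n_\vp/2}|L_\vp|^2$ (as in Lemma \ref{CubeWtLocBdNA}, which you cite), not $\gg\Nr(\vp)^{-n_\vp}|L_\vp|^2$ as you wrote; with your exponent, squaring would give $\Nr(\idl N)^{-2}$ rather than the needed $\Nr(\idl N)^{-1}$.
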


\noindent If we write $\Reis_0$ for the linear combination of Eisenstein series such that $\eis_0^2 - \Reis_0 \in \intL^2([\PGL_2])$, then we have $\eis_1^{(j)} \eis_2^{(j)} - n(B^{(j)}).\Reis_0 = n(B^{(j)}).(\eis_0^2 - \Reis_0) \in \intL^2([\PGL_2])$. Hence $\Reis_j := n(B^{(j)}).\Reis_0$ is the $\Reis$ in the above general discussion for the pair of Eisenstein series $\eis_1^{(j)}$ and $\eis_2^{(j)}$. Now that $\eis_0^2 - \Reis_0$ is $\gp{K}$-invariant (and $\mathrm{U}_2(\C)$ \& $\mathrm{O}_2(\R)$-invariant), we see that $h(\pi)(\Psi^{(j)})$ resp. $h(\chi,s)(\Psi^{(j)})$ is non-vanishing only if $\pi$ resp. $\chi$ is spherical resp. unramified at every place. Under this condition, we have
	$$ h(\pi)(\Psi^{(j)}) = \Pairing{\eis_0^2-\Reis_0}{\overline{e_{\pi}}} \cdot \Zeta \left( \frac{1}{2}, \pi^{\vee}(n(B^{(j)})).e_{\pi}^{\vee} \right), $$
	$$ h(\chi,i\tau)(\Psi^{(j)}) = \Pairing{\eis_0^2-\Reis_0}{\overline{\eis(i\tau, e_{\chi})}} \cdot \Zeta \left( \frac{1}{2}, n(B^{(j)}).\eis(-i\tau, e_{\chi}^{\vee}) \right), $$
	where $e_{\pi}$ resp. $e_{\chi}$ is the unique spherical vector of $\pi$ resp. $\pi(\chi,\chi^{-1})$.

\begin{lemma} \label{4thMGUpperBd}
	For any $\epsilon > 0$, any constant $\theta$ towards the Ramanujan-Petersson conjecture ($\theta=7/64$ is currently the best known \cite{KS02, BB11}) and each $j$, we have
	$$ \Norm[e_{\pi}^{\vee}]^{-1} \cdot \Zeta \left( \frac{1}{2}, \pi^{\vee}(n(B^{(j)})).e_{\pi}^{\vee} \right) \ll_{\epsilon} \Cond(\pi)^{\frac{1}{4}+\epsilon} \left( \prod_{v \mid \infty} T_v \cdot \Nr(\idl{N}) \right)^{-\frac{1}{2}+\theta+\epsilon}, $$
	$$ \Norm[e_{\chi}^{\vee}]^{-1} \cdot \Zeta \left( \frac{1}{2}, n(B^{(j)}).\eis(-i\tau, e_{\chi}^{\vee}) \right) \ll_{\epsilon} \Cond(\chi \norm_{\A}^{i\tau})^{\frac{1}{2}+\epsilon} \left( \prod_{v \mid \infty} T_v \cdot \Nr(\idl{N}) \right)^{-\frac{1}{2}+\epsilon}, $$
	where the norm of $e_{\pi}$ resp. $e_{\chi}$ is the Petersson norm resp. norm on the induced model of $\pi(\chi,\chi^{-1})$. Consequently, we get for any $j$ and any $\epsilon > 0$
	$$ h(\Psi^{(j)}) \ll_{\epsilon} \left( \prod_{v \mid \infty} T_v \cdot \Nr(\idl{N}) \right)^{-\frac{1}{2}+\theta+\epsilon}. $$
\end{lemma}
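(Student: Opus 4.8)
The plan is to prove the two displayed bounds place by place through the Euler factorisation of the Hecke--Jacquet--Langlands zeta integrals, and then to perform the spectral sum using the smoothness of $\eis_0^2-\Reis_0$. Write $e_{\pi}=\otimes_v W_v^{\mathrm{std}}$, where $W_v^{\mathrm{std}}$ is the spherical Whittaker function normalised so that its local zeta integral at $s=\tfrac12$ equals the local factor $L_v(\tfrac12,\pi_v)$; then $e_{\pi}^{\vee}$ is fixed by $\Pairing{e_{\pi}}{e_{\pi}^{\vee}}=1$, so $\Norm[e_{\pi}^{\vee}]^{-1}=\Norm[e_{\pi}]$ with the Petersson norm computed by (\ref{NormIdCusp}) from the $W_v^{\mathrm{std}}$. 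Since the Godement section of $\Psi^{(j)}=\rpR_{n(B^{(j)})}.\Psi_0$ is spherical and untranslated at every place outside the support of $B^{(j)}$, the zeta integral factors as
\[
\Zeta\!\left(\tfrac12,\pi^{\vee}(n(B^{(j)})).e_{\pi}^{\vee}\right)=L_{\mathrm{fin}}\!\left(\tfrac12,\pi^{\vee}\right)\cdot\prod_{v\mid\infty}Z_v^{(j)}\cdot\prod_{\vp\mid\idl{N}}Z_{\vp}^{(j)},
\]
where $Z_v^{(j)}=\int_{\F_v^{\times}}\psi_v(yB_v^{(j)})\,W_v^{\mathrm{std}}(a(y))\,d^{\times}y$ is the Fourier transform of $W_v^{\mathrm{std}}$ and $Z_{\vp}^{(j)}$ the corresponding translated integral, the choice of $B^{(j)}$ being $\norm[B_v^{(j)}]\asymp T_v$ at $v\mid\infty$ (the $2^{r}$ values of $j$ carrying the sign patterns) and $B_{\vp}^{(j)}=\varpi_{\vp}^{-\mathrm{ord}_{\vp}(\idl{N})}$ at $\vp\mid\idl{N}$. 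Dividing by $\Norm[e_{\pi}]$, the exponential $\Gamma$-factor contributions cancel between the $Z_v^{(j)}$, the archimedean part of the central $L$-value, and the adjoint factor in (\ref{NormIdCusp}), and the unramified local norms cancel the unramified Rankin--Selberg factors; one is reduced to (i) the convexity bound $L_{\mathrm{fin}}(\tfrac12,\pi^{\vee})\ll_{\epsilon}\Cond(\pi)^{1/4+\epsilon}$ (resp.\ $\ll_{\epsilon}\Cond(\chi\norm_{\A}^{i\tau})^{1/2+\epsilon}$ for the product of two $\GL_1$ central values, using (\ref{NormIdEis}) in place of (\ref{NormIdCusp})), and (ii) bounding the normalised translated local integrals.

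For (ii), at $v\mid\infty$ we have by (\ref{KirToKBessel}), up to absolute constants, $W_v^{\mathrm{std}}(a(y))=\norm[y]_v^{1/2}\BesselK_{i\mu_v}(2\pi\norm[y]_v)$ with $\pi_v\simeq\pi(\norm_v^{i\mu_v},\norm_v^{-i\mu_v})$, so the normalised $Z_v^{(j)}$ is essentially the $\psi_v(B_v^{(j)}\,\cdot\,)$-twist of the Fourier transform computed in Lemma \ref{FourTransToLegendre}; choosing $\norm[B_v^{(j)}]\asymp T_v$ with the sign aligning the phase of $\BesselK_{i\mu_v}$ with the relevant stationary point (the archimedean analogue of Sarnak's period device), the uniform asymptotics of $\BesselK_{i\mu_v}$ and stationary phase give, uniformly in $\mu_v$, a bound $\ll_{\epsilon}T_v^{-1/2+\theta+\epsilon}$, where $\theta$ is needed only to control the near-origin singularity $\BesselK_{i\mu_v}(u)\ll u^{-|\Im\mu_v|}$, $|\Im\mu_v|\le\theta$, of a possible complementary-series component (absent in the Eisenstein case, whence no $\theta$ there). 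At a finite $\vp\mid\idl{N}$, in the Kirillov model $n(\varpi_{\vp}^{-k}).W_{\vp}^{\mathrm{std}}$ with $k=\mathrm{ord}_{\vp}(\idl{N})$ is $\psi_{\vp}(\varpi_{\vp}^{-k}y)$ times the explicit spherical function $W_{\vp}^{\mathrm{std}}(a(y))=\norm[y]_{\vp}^{1/2}\sum_{m\geq 0}\lambda_{\pi_{\vp}}(\varpi_{\vp}^m)q_{\vp}^{-m/2}\id_{\varpi_{\vp}^m\vo_{\vp}^{\times}}(y)$, so $Z_{\vp}^{(j)}$ is a short sum of additive-character integrals that vanishes unless $m\geq k-1$; with $\norm[\lambda_{\pi_{\vp}}(\varpi_{\vp}^m)]\ll_{\epsilon}q_{\vp}^{(\theta+\epsilon)m}$ and the Rankin--Selberg normalisation this yields $\ll_{\epsilon}q_{\vp}^{-k(1/2-\theta)+\epsilon}$ (again no $\theta$ in the Eisenstein case). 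Taking the product over all places and combining with (i) gives the two displayed inequalities; in fact the archimedean $\Gamma$-factors contribute an extra $\prod_{v\mid\infty}(1+\norm[\mu_v])^{-1/2}$ which more than absorbs the corresponding factor of $\Cond(\pi)^{1/4}$, so the stated bound is not tight, but this is immaterial.

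Finally, the period form of $M_3(\Psi^{(j)})$ recorded above reads
\[
M_3(\Psi^{(j)})=\sum_{\pi}\frac{\Pairing{\eis_0^2-\Reis_0}{\overline{e_{\pi}}}}{\Norm[e_{\pi}]}\cdot\frac{\Zeta(\tfrac12,\pi^{\vee}(n(B^{(j)})).e_{\pi}^{\vee})}{\Norm[e_{\pi}^{\vee}]}+(\text{continuous-spectrum integral}),
\]
and the numbers $c_{\pi}=\extnorm{\Pairing{\eis_0^2-\Reis_0}{\overline{e_{\pi}}}}/\Norm[e_{\pi}]$ (together with the continuous-spectrum density) are the coefficients of the spectral expansion of $\eis_0^2-\Reis_0\in\intL^2([\PGL_2])$; since this function is smooth, $\gp{K}$-finite and of moderate growth, they decay faster than any power of the analytic conductor, so $\sum_{\pi}c_{\pi}\,\Cond(\pi)^{1/4+\epsilon}$ and $\sum_{\chi}\int_{\R}c_{\chi,\tau}\,\Cond(\chi\norm_{\A}^{i\tau})^{1/2+\epsilon}\,d\tau$ converge. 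Multiplying by the place-by-place bound of (ii) yields $M_3(\Psi^{(j)})\ll_{\epsilon}(\prod_{v\mid\infty}T_v\cdot\Nr(\idl{N}))^{-1/2+\theta+\epsilon}$.

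The delicate point is step (ii) at the archimedean places: obtaining a clean $T_v^{-1/2}$ saving \emph{uniformly} in $\mu_v$ requires a careful stationary-phase treatment of the Fourier transform of $\BesselK_{i\mu_v}$ against $\psi_v(B_v^{(j)}\,\cdot\,)$, matching the $2^{r}$ choices of $B^{(j)}$ to the stationary points attached to the $\pm\mu_v$ behaviour and handling uniformly the near-origin regime where a complementary-series exponent costs the factor $\theta$.
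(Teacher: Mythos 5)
Your proposal is correct in substance and follows the same architecture as the paper's proof: factor $\Zeta/\Norm$ into local pieces via \eqref{NormIdCusp} (resp.\ \eqref{NormIdEis}), bound the global $L$-values by convexity, bound the local translated zeta integrals place by place, and then sum the spectral coefficients using the smoothness of $\eis_0^2-\Reis_0$ to absorb the $\Cond(\pi)^{1/4+\epsilon}$ factor. The one genuine difference is at the archimedean places: the paper outsources the uniform estimate $\Norm[W_v]^{-1}\Zeta_v(\tfrac12,n(B_v^{\pm}).W_v)\ll_\epsilon T_v^{-1/2+\theta+\epsilon}$ to \cite[Lemma 6.8]{Wu14} (which proceeds via explicit hypergeometric evaluations, as the proof of Lemma \ref{4thMLUpperBdA}(2) in this paper also does), whereas you propose re-deriving it by stationary phase on the Fourier transform of $\BesselK_{i\mu_v}$. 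That route is feasible and is the morally the same computation, but you only sketch it and flag it yourself as the delicate point; the uniformity in $\mu_v$ when $|\mu_v|\asymp |B_v^{(j)}|\asymp T_v$ (where the stationary point migrates to $y\asymp 1$ and the small-argument asymptotics of $\BesselK$ break down) needs the transitional asymptotics, not just the oscillatory regime.

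Two small omissions worth flagging: (a) passing from $\Norm[e_\pi^\vee]^{-1}$ to the displayed bound requires a \emph{lower} bound for $L(1,\pi,\mathrm{Ad})$ (Hoffstein--Lockhart, cited in the paper as \cite{HL94} and \cite[Lemma 3]{BH10}), and in the Eisenstein case a lower bound for $L(1+2i\tau,\chi)$ (Siegel); you speak of ``cancellation'' of the adjoint factor against $\Gamma$-factors, but that only accounts for the archimedean part, and the finite part requires these nontrivial inputs. (b) Your displayed formula for $W_\vp^{\mathrm{std}}(a(y))$ has an extraneous $\norm[y]^{1/2}$ in front of the Macdonald sum (the factor $q_\vp^{-m/2}$ already is $\norm[y]^{1/2}$ on $\varpi_\vp^m\vo_\vp^\times$); this is a typo and does not affect the estimate. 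Also, the remark about choosing the sign of $B_v^{(j)}$ to align with a stationary point is relevant to the \emph{lower} bound (Lemma \ref{4thMGLowerBd} via Lemma \ref{4thMLLowerBdA}(1)), not to the present upper bound, where the estimate must hold for every $j$ without choosing.
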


\begin{lemma} \label{4thMGDegBd}
	For any $j$ and any $\epsilon > 0$, we have
	$$ \norm[DS(\Psi^{(j)})] + \norm[DG(\Psi^{(j)})] \ll_{\epsilon} \left( \prod_{v \mid \infty} T_v \cdot \Nr(\idl{N}) \right)^{\epsilon}. $$
\end{lemma}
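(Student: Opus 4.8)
\textbf{Plan of proof for Lemma \ref{4thMGDegBd}.}

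The plan is to treat the two degenerate terms separately, using the explicit formulae established in \S 5 together with the fact that our test function $\Psi^{(j)} = \rpR_{n(B^{(j)})} \Psi_0$ is a twist by a unipotent of the standard spherical $\Psi_0$, so every local factor is either the unramified one or a harmless unipotent translate. First I would record that $\Psi^{(j)}$ is $\gp{K}_{\vp}$-invariant at every finite place $\vp \nmid \idl{N}$ and equals $\rpR_{n(B^{(j)}_{\vp})}\id_{\Mat_2(\vo_{\vp})}$ at places dividing $\idl{N}$; at the archimedean places it is $\rpR_{n(B^{(j)}_v)}$ applied to the Gaussian $\Psi_{0,v}$. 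Consequently the local weights $M_{3,v}(\Psi^{(j)}_v \mid \id,s)$ and $M_{4,v}(\Psi^{(j)}_v \mid \id,s)$ differ from those for $\Psi_0$ only by the unipotent translation, whose effect is multiplication by a unitary character in the relevant Mellin/zeta variables and therefore does not affect absolute values; the finite places dividing $\idl{N}$ contribute the factors computed in \S \ref{DGAuxNA}, which are $O(\Nr(\vp)^{\mathrm{ord}_{\vp}(\idl{N})})$ in the sense of (\ref{DGBdLocNA}).

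For $DS(\Psi^{(j)})$, via (\ref{DSF}) I would compute $M_3(\Psi^{(j)} \mid \id,s)$ from (\ref{CubWtC}): it is a product of the completed Dedekind-zeta ratio $\zeta_{\F}(1/2+s)^3 \zeta_{\F}(1/2-s)^3 / (\zeta_{\F}(1+2s)\zeta_{\F}(1-2s))$ times the archimedean factors $M_{3,v}(\Psi^{(j)}_v \mid \id,s)$ and the finite factors at $\vp \mid \idl{N}$. The point is that $M_{3,v}(\Psi^{(j)}_v \mid \id,s)$, coming from the Gaussian test vector, vanishes at $s=1/2$ (this is the same phenomenon as in Lemma \ref{DSBd}: the strongly spherical archimedean local weight attached to our test function has a zero at the relevant point, or more directly the residue of the continuous-spectrum term is controlled by the Rankin–Selberg integral against $\Reis_0$, which is regular). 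Hence the order of vanishing argument of Lemma \ref{DSBd} applies and gives $DS(\Psi^{(j)}) = 0$ unless $\F = \Q$ and everything is unramified, in which case a direct estimate of the derivatives of $M_{3,v}(\Psi^{(j)}_v \mid \id, \tfrac12)$ — bounded by a Schwartz seminorm of the Gaussian, hence $O(1)$ — gives $|DS(\Psi^{(j)})| \ll 1$, well within the claimed bound. Here one uses that the only place dividing $\idl{N}$ contributes $O(\Nr(\idl{N})^{\epsilon})$ by (\ref{DGBdLocNA}) adapted to $\chi = \id$.

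For $DG(\Psi^{(j)})$, via (\ref{DGF}) I would use the factorisation of $M_4(\Psi^{(j)} \mid \id,s)$ recalled at the start of \S 5: it is $\zeta_{\F}(1/2+s)^2 \zeta_{\F}(1/2-s)^2$ times $\prod_{v\mid\infty} M_{4,v}(1/2+s,0)$ times the finite factors $\widetilde{M}_{4,\vp}(1/2+s)$ at $\vp \mid \idl{N}$. The archimedean factors are linear combinations of $\widetilde{h}_{1,2}(\pm s)$ and $\cos(\pi s)^{-1}\widetilde h_3(\pm s)$ for the specific $h_j$ attached to our Gaussian weight, whose Laurent coefficients at $s = \pm 1/2$ are bounded by Corollary \ref{DGBdLocA} (with $T_v$ in place of $T$, since the archimedean conductor is $\asymp T_v$ and $\Delta_v \asymp T_v^{\epsilon}$, giving the $T_v^{1+\epsilon}$ bound there — though in the current lemma the test function is the \emph{plain} Gaussian with $T_v = 1$, so these coefficients are in fact $O(1)$). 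The finite factors contribute $O(\Nr(\idl{N})^{\epsilon})$ by (\ref{DGBdLocNA}). Assembling the residue in (\ref{DGF}) by the Leibniz rule over all local Laurent expansions, and using that $\zeta_{\F}(s)$ near $s=1$ and $s=0$ contributes only $O(1)$ constants (discriminant powers $D_{\F}^{-2}$ and residues of $\zeta_{\F}$ being absolute), one obtains $|DG(\Psi^{(j)})| \ll_{\epsilon} (\prod_{v\mid\infty} T_v \cdot \Nr(\idl{N}))^{\epsilon}$, which is exactly the asserted bound (indeed stronger than the $\Cond(\chi_0)^{1+\epsilon}$ of Lemma \ref{DGBd}, because here there is no large archimedean conductor). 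The main obstacle is purely bookkeeping: one must carefully track which of the poles of the zeta-factors at $s=\pm1/2$ (inside $M_4(\Psi^{(j)}\mid\id,1/2+s)$ these sit at $s=0$ after the shift, i.e. at the points $1/2$ and $0$ in the original variable) combine with the at-most-double poles of the $\widetilde h_j$ and the simple poles of the $\widetilde M_{4,\vp}$, verify the total pole order is finite (it is $4$, as in Lemma \ref{DGBd}(1)), and check that the residue extraction only ever multiplies together quantities already bounded above — there is no cancellation needed, only an upper bound, so the estimate is robust once the finitely many Laurent coefficients are in hand.
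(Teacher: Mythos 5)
Your outline—factorize the degenerate terms into local pieces, expand everything in Laurent series at the relevant points, and bound the finitely many coefficients—is the right general strategy, and it is roughly what the paper does. However there are two substantive errors that would invalidate the argument.

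First, your claim that $DS(\Psi^{(j)}) = 0$ unless $\F = \Q$ with everything unramified is incorrect, and the reason you give exposes a conflation of the two different test functions used in the paper. In Lemma~\ref{DSBd}, the archimedean weight $M_{3,v}(s,0)$ vanishes at $s = \tfrac12$ precisely because the Weyl-bound test function was chosen \emph{admissible} in the sense of Definition~\ref{AdmWt}, i.e.\ carries an explicit factor $\cos(\pi s)$ which has a simple zero there. The fourth-moment test function $\Psi^{(j)}_v = \rpR_{n(B_v^{(j)})}\Psi_{0,v}$ is the Gaussian twisted by a unipotent; it is not admissible in that sense and its local weight $M_{3,v}(\Psi^{(j)}_v \mid \id,s)$ has no reason to vanish at $s = \tfrac12$. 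Indeed the formula displayed at the end of \S 7.2 writes $M_3(\Psi^{(j)} \mid \id,s)$ as the completed Dedekind-zeta ratio $\Lambda_{\F}(\tfrac12+s)^3\Lambda_{\F}(\tfrac12-s)^3/(\Lambda_{\F}(1+2s)\Lambda_{\F}(1-2s))$ (which has a pole of high order at $s=\tfrac12$) times the product over all $v$ of ratios $\Zeta_v(\tfrac12, n(B_v^{(j)}).W_{\id,v}^\vee(-s))/\Zeta_v(\tfrac12, W_{\id,v}^\vee(-s))$; these ratios are generically nonzero at $s=\tfrac12$. So $DS(\Psi^{(j)})$ is a nonzero quantity that must be \emph{bounded}, not shown to vanish. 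The actual bound comes from the derivative estimates in Lemma~\ref{4thMLUpperBdNA}~(2) (giving $\ll_n (n_{\vp}\log\Nr(\vp))^n$ at $\vp \mid \idl{N}$) and Lemma~\ref{4thMLUpperBdA}~(2) (giving $\ll_n (\log T_v)^n$ at $v\mid\infty$), which after multiplying over the finitely many relevant places and absorbing the $O(1)$ Laurent data of the zeta ratio land inside $(\prod_v T_v \cdot \Nr(\idl{N}))^{\epsilon}$.

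Second, your treatment of $DG(\Psi^{(j)})$ imports the archimedean apparatus from the Weyl-bound sections (\S3.3, \S5), which does not apply here. The expressions $\widetilde h_{1,2}(\pm s)$, $\cos(\pi s)^{-1}\widetilde h_3(\pm s)$ and Corollary~\ref{DGBdLocA} were derived for the Gaussian-in-$\tau$ weight $g(\tau;T,\Delta)$ of (\ref{ConcWtR}); they are not the Laurent data of $M_{4,v}(\Psi^{(j)}_v \mid \id, \cdot)$ when $\Psi^{(j)}_v$ is the spatial Gaussian $\rpR_{n(B_v^{\pm})}\Psi_{0,v}$. The relevant archimedean bound here is Lemma~\ref{4thMLLowerBdA}~(2), which bounds the Laurent coefficients of $\Zeta_v(s, n(B_v^{\pm}).W_{0,v})$ at $s=1$ and $s=0$ by $T_v^{-1}(\log T_v)^n$ and $(\log T_v)^{n+2}$ respectively; and your remark that ``$T_v = 1$ so these coefficients are in fact $O(1)$'' is wrong, since $B_v^{\pm} = C_{\pm}T_v$ scales with the (potentially large) analytic conductor parameter $T_v$ of Theorem~\ref{4thMBd}. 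The $\Nr(\idl{N})^{\epsilon}$ factor at finite places comes from Lemma~\ref{4thMLLowerBdNA}~(2), not from a rederivation via (\ref{DGBdLocNA}). With these substitutions your Leibniz/bookkeeping scheme for extracting the residue in (\ref{DGF}) does go through, but as written the proposal rests on archimedean inputs that belong to a different test function.
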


\noindent Theorem \ref{4thMBd} obviously follows from the above Lemma \ref{4thMGLowerBd}, \ref{4thMGUpperBd} and \ref{4thMGDegBd}, each of which is of local nature.

	\subsection{Local Estimation}
	
	We first prove Lemma \ref{4thMGLowerBd}, along the way we make the choice of $B^{(j)}$ explicit. For every finite place $\vp$, we choose
	$$ B_{\vp} = \varpi_{\vp}^{-n_{\vp}}, \quad n_{\vp} := \mathrm{ord}_{\vp}(\idl{N}). $$
\begin{lemma} \label{4thMLLowerBdNA}
	(1) Let $W_{0,\vp}$ be the local component at $\vp$ of the Whittaker function $W_0$ of $\eis_0$ (see (\ref{WhiE0})). For any $\chi_{\vp}$ unitary character of $\F_{\vp}^{\times}$ with conductor exponent $\cond(\chi_{\vp}) \leq n_{\vp}$, we have
	$$ \extnorm{L_{\vp} \left( \frac{1}{2}+i\tau, \chi_{\vp} \right)}^{-2} \extnorm{ \Zeta_{\vp} \left( \frac{1}{2}+i\tau, \chi_{\vp}, n(\varpi_{\vp}^{-n_{\vp}}).W_{0,\vp} \right) } \left\{ \begin{matrix} \gg \Nr(\vp)^{-\frac{n_{\vp}}{2}} & \text{if } n_{\vp} > 0 \\ = 1 & \text{if } n_{\vp}=0 \end{matrix} \right. . $$
	
\noindent (2) In the case $n_{\vp} > 0$ and $\chi_{\vp} = \id$ is the trivial character, we have for any integer $n \geq 0$
	$$ \left. \frac{\partial^n}{\partial s^n} \right|_{s=\frac{1}{2}} \zeta_{\vp} \left( \frac{1}{2}+s \right)^{-2} \Zeta_{\vp} \left( \frac{1}{2}+s, n(\varpi_{\vp}^{n_{\vp}}).W_{0,\vp} \right) \ll_n n_{\vp}^{n+1} \Nr(\vp)^{-n_{\vp}} \left( \log \Nr(\vp) \right)^n, $$
	$$ \left. \frac{\partial^n}{\partial s^n} \right|_{s=-\frac{1}{2}} \zeta_{\vp} \left( \frac{1}{2}+s \right)^{-2}  \Zeta_{\vp} \left( \frac{1}{2}+s, n(\varpi_{\vp}^{n_{\vp}}).W_{0,\vp} \right) \ll_n n_{\vp}^{n+1} \left( \log \Nr(\vp) \right)^n. $$
\end{lemma}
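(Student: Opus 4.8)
The plan is to work entirely at the local place $\vp$ and to express the relevant quantity as an explicit Mellin-type integral of the local Whittaker function $W_{0,\vp}$ translated by $n(\varpi_{\vp}^{-n_{\vp}})$, then to extract its size. First I would recall from \eqref{WhiE0} that
$$ W_{0,\vp}(s,g) = \int_{\F_{\vp}^{\times}} \OFour_2 \rpR(g) \Phi_{0,\vp}(t,t^{-1}) \norm[t]_{\vp}^{2s} d^{\times}t, $$
and that for $\vp < \infty$ one has $\Phi_{0,\vp} = \id_{\vo_{\vp}^2}$, so $\OFour_2 \Phi_{0,\vp}(x,y) = \Nr(\vp)^{\ast} \id_{\vo_{\vp}}(x) \id_{\vo_{\vp}}(y)$ up to the fixed measure normalization of $dx_{\vp}$ (a power of $D_{\vp}$). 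The action $\rpR(n(\varpi_{\vp}^{-n_{\vp}}))$ replaces the second argument $t^{-1}$ by $t^{-1} + \varpi_{\vp}^{-n_{\vp}} t = t^{-1}(1 + \varpi_{\vp}^{-n_{\vp}} t^2)$; unwinding the definition of $\Zeta_{\vp}(\tfrac12+s,\chi_{\vp},\cdot)$ as a one-dimensional Tate integral over the Kirillov argument $y$, one gets a double integral over $(t,y) \in (\F_{\vp}^{\times})^2$ with integrand a product of $\id_{\vo_{\vp}}$-indicators in $ty$ and $t^{-1}y(1+\varpi_{\vp}^{-n_{\vp}}t^2)$, times $\chi_{\vp}(y)\norm[y]_{\vp}^{\tfrac12+i\tau}\norm[t]_{\vp}^{2s}$. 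This is a completely explicit lattice-sum over valuations of $t$ and $y$.

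For part (1), after dividing by $L_{\vp}(\tfrac12+i\tau,\chi_{\vp})^2$ — which matches precisely the geometric tail of the sum over $\mathrm{ord}(t) \geq 0$ when $\chi_{\vp}$ is unramified, and which is $1$ when $\cond(\chi_{\vp}) = n_{\vp} > 0$ since then there is no pole to cancel — the surviving contribution comes from a bounded range of valuations. One checks directly that when $n_{\vp} > 0$ the ``diagonal'' term where $\mathrm{ord}(t) \approx -n_{\vp}/2$ (or more precisely the term forced by the support conditions on $1 + \varpi_{\vp}^{-n_{\vp}}t^2 \in \vo_{\vp}$) contributes a single nonzero term of absolute size $\Nr(\vp)^{-n_{\vp}/2}$, with no cancellation because the remaining $\chi_{\vp}$-sum over units is trivial or degenerates to a short Gauss-type sum that is still $\gg 1$; this is entirely parallel to the new-vector computation quoted as \cite[Proposition 4.6]{Wu14} and already used in the proof of Lemma \ref{CubeWtLocBdNA}. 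When $n_{\vp} = 0$ the integral is the unramified local zeta value divided by $L_{\vp}^2$, which is $1$ by the unramified computation (cf. \cite[Theorem 4.6.5]{Bu98}).

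For part (2) I would specialize to $\chi_{\vp} = \id$, restore the full $s$-dependence, and observe that after the division by $\zeta_{\vp}(\tfrac12+s)^2$ the quantity is a finite Dirichlet polynomial in $\Nr(\vp)^{-s}$ and $\Nr(\vp)^{s}$ with at most $O(n_{\vp})$ terms, each of modulus $\ll \Nr(\vp)^{-n_{\vp}}$ near $s = \tfrac12$ and $\ll 1$ near $s = -\tfrac12$ (the asymmetry being exactly the one visible in the explicit $\widetilde{M}_{4,\vp}$-formula computed in \S\ref{DGAuxNA}, since $W_{0,\vp}$ translated by $n(\varpi_{\vp}^{-n_{\vp}})$ is, up to the trivial-character specialization, the archimedean-free analogue of $\Psi_{\vp}$ there with $\chi \mapsto \norm_{\vp}^s$). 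Differentiating $n$ times brings down at most $n$ factors of $\log\Nr(\vp)$ from each exponential and at most a factor $n_{\vp}^n$ from summing the monomials' exponents, giving the stated bounds $\ll_n n_{\vp}^{n+1}\Nr(\vp)^{-n_{\vp}}(\log\Nr(\vp))^n$ and $\ll_n n_{\vp}^{n+1}(\log\Nr(\vp))^n$; in fact the bound \eqref{DGBdLocNA} already records exactly this phenomenon and I would cite that computation, replacing $q_{\vp}^{n_{\vp}}$ there by the normalized $1$ coming from the extra $q_{\vp}^{-n_{\vp}}$ present in $W_{0,\vp}$ versus $\Psi_{\vp}$.

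The main obstacle I anticipate is bookkeeping the support conditions coming from $1 + \varpi_{\vp}^{-n_{\vp}} t^2 \in \vo_{\vp}$ together with $ty \in \vo_{\vp}$: this forces $\mathrm{ord}(t) \geq -n_{\vp}/2$ with a subtle parity/residue constraint (whether $-\varpi_{\vp}^{-n_{\vp}}$ is a square, and the $\vp \mid 2$ complications) exactly as in Lemma \ref{rSize} and Lemma \ref{ExpBd2}, and one must verify that after dividing by $L_{\vp}^2$ the lower bound in (1) survives with the clean power $\Nr(\vp)^{-n_{\vp}/2}$ rather than something smaller. I would handle this by reducing, as in \S\ref{DGAuxNA}, to the change of variables that makes $\phi_{0,\vp}$ a product of one-variable functions, so that the $t$- and $y$-integrals decouple and the nonvanishing of the relevant short character sum is immediate; the Ramanujan-type uniformity in $n_{\vp}$ is then just counting the number of surviving monomials. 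No genuinely new estimate beyond those of \S4 and \S5 is needed.
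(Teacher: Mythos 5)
Your proposal contains a genuine conceptual gap in part (1). You write that evaluating $\rpR(n(\varpi_{\vp}^{-n_{\vp}}))\Phi_{0,\vp}$ "replaces the second argument $t^{-1}$ by $t^{-1}+\varpi_{\vp}^{-n_{\vp}}t$", and you then build your argument on support constraints of the form $1+\varpi_{\vp}^{-n_{\vp}}t^2\in\vo_{\vp}$, a "diagonal" near $\mathrm{ord}(t)\approx n_{\vp}/2$, and the associated square/parity and $\vp\mid2$ complications. This is not the right computation: formula \eqref{WhiE0} involves $\OFour_2\rpR(g)\Phi_0$, \emph{not} $\rpR(g)\Phi_0$, and the partial Fourier transform must be applied before specializing at $(t,t^{-1})$. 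Once you do that, the translate by $n(B)$ does \emph{not} shift the support of the $t$-integral at all: writing $a(y)n(B)=n(yB)a(y)$ and using the Whittaker transformation property, one has $W_{0,\vp}(a(y)n(B))=\psi_{\vp}(yB)W_{0,\vp}(a(y))$, with $W_{0,\vp}(a(y))=(1+v_{\vp}(y))\norm[y]_{\vp}^{1/2}\id_{\vo}(y)$ exactly as in the untranslated case. The whole effect of the translate is an oscillatory phase $\psi_{\vp}(\varpi_{\vp}^{-n_{\vp}}y)$ in the one-dimensional Tate integral over $y$, and the lower bound comes from a Gauss-sum evaluation (matching the conductor of $\chi_{\vp}$ to the order of the phase), not from a quadratic support condition. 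The paper evaluates this cleanly by invoking \cite[Lemma 4.7]{Wu14}, obtaining explicit formulas: $\zeta_{\vp}(1)(1+n_{\vp}-m_{\vp})\Nr(\vp)^{-n_{\vp}/2}$ when $m_{\vp}=\cond(\chi_{\vp})>0$, and a short Dirichlet polynomial in $X=\chi_{\vp}(\varpi_{\vp})\Nr(\vp)^{-1/2-i\tau}$ (equation \eqref{DGLUpperBdNA}) when $m_{\vp}=0<n_{\vp}$. No residue/parity case analysis is needed, so the worry you raise about Lemma \ref{rSize}--type complications is spurious here; those concerns arise in the dual-weight exponential sums of \S4, not in this local zeta computation.

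For part (2), citing \eqref{DGBdLocNA} is not a safe shortcut: that bound refers to $\widetilde{M}_{4,\vp}$ built from the $\Mat_2$-Schwartz test function $\Psi_{\vp}$ of \eqref{TestPhiNA} via a four-variable Tate integral, a structurally different local distribution from the one-dimensional Whittaker--Tate integral $\Zeta_{\vp}$ here. The paper's proof of (2) is much more direct: it specializes the already-derived explicit formula \eqref{DGLUpperBdNA} to $\chi_{\vp}=\id$, observes the result is a three-term Dirichlet polynomial in $\Nr(\vp)^{\pm s}$ with coefficients of size $O(n_{\vp})$ and exponents of size $O(n_{\vp})$, and differentiates term by term. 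Your heuristic for the final shape of the bound (finite Dirichlet polynomial, factors of $\log\Nr(\vp)$ and $n_{\vp}$ from differentiation) is correct, but you should land on the explicit formula in (1) first rather than trying to transfer a bound proved for a different quantity.
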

\begin{proof}
	(1) Let $v_{\vp}(\cdot)$ be the normalized additive valuation in $\F_{\vp}$. By (\ref{WhiE0}), we easily deduce
	$$ W_{0,\vp}(a(y)) = \int_{\F_{\vp}^{\times}} \id_{\vo_{\vp}^2}(ty,t^{-1}) \ud^{\times}t = (1+v_{\vp}(y)) \norm[y]_{\vp}^{\frac{1}{2}} \id_{\vo}(y). $$
	Write $m_{\vp}=\cond(\chi_{\vp}) \leq n_{\vp}$. We apply \cite[Lemma 4.7]{Wu14}: In the case $m_{\vp} > 0$, we get
	$$ \extnorm{ \Zeta_{\vp} \left( \frac{1}{2}+i\tau, \chi_{\vp}, n(\varpi_{\vp}^{-n_{\vp}}).W_{0,\vp} \right) } = \zeta_{\vp}(1) (1+n_{\vp}-m_{\vp}) \Nr(\vp)^{-\frac{n_{\vp}}{2}}; $$
	in the case $m_{\vp}=0 < n_{\vp}$, writing $X:=\chi_{\vp}(\varpi_{\vp}) \Nr(\vp)^{-\frac{1}{2}-i\tau}$, we get
\begin{align}
	&\quad L_{\vp} \left( \frac{1}{2}+i\tau, \chi_{\vp} \right)^{-2} \Zeta_{\vp} \left( \frac{1}{2}+i\tau, \chi_{\vp}, n(\varpi_{\vp}^{-n_{\vp}}).W_{0,\vp} \right) \nonumber \\
	&= (n_{\vp}+1) X^{n_{\vp}} - n_{\vp} X^{n_{\vp}+1} - \zeta_{\vp}(1) n_{\vp} \Nr(\vp)^{-1} X^{n_{\vp}-1}(1-X)^2; \label{DGLUpperBdNA}
\end{align} 
	while in the case $n_{\vp}=m_{\vp}=0$, we get
	$$ L_{\vp} \left( \frac{1}{2}+i\tau, \chi_{\vp} \right)^{-2} \Zeta_{\vp} \left( \frac{1}{2}+i\tau, \chi_{\vp}, n(\varpi_{\vp}^{-n_{\vp}}).W_{0,\vp} \right) = 1. $$
	The desired bound follows readily from the above explicit formulas.
	
\noindent (2) Putting $\chi_{\vp} = \id$ in the above equation (\ref{DGLUpperBdNA}), we get
\begin{align*}
	\zeta_{\vp} \left( \frac{1}{2}+s \right)^{-2} \Zeta_{\vp} \left( \frac{1}{2}+s, n(\varpi_{\vp}^{n_{\vp}}).W_{0,\vp} \right) &= (n_{\vp}+1) \Nr(\vp)^{- \left( \frac{1}{2}+s \right) n_{\vp}} - n_{\vp} \Nr(\vp)^{- \left( \frac{1}{2}+s \right) (n_{\vp}+1)} \\
	&\quad - \zeta_{\vp}(1) n_{\vp} \Nr(\vp)^{-1-\left( \frac{1}{2}+s \right)(n_{\vp}-1)} (1-\Nr(\vp)^{-\frac{1}{2}-s})^2.
\end{align*} 
	The desired bounds follow readily.
\end{proof}

\begin{lemma} \label{4thMLLowerBdA}
	(1) Let $W_{0,v}$ be the local component at a place $v \mid \infty$ of the Whittaker function $W_0$ of $\eis_0$. There exist absolute constants $C_+, C_- > 0$ such that for $B_v^+ = C_+ T_v$ and $B_v^- = C_- T_v$, we have uniformly for unitary characters $\chi_v$ of $\F_v^{\times}$ with $\Cond(\chi_v) \leq T_v$
	$$ \extnorm{ \Zeta_v \left( \frac{1}{2}, \chi_v, n(B_v^+) W_{0,v} \right) }^2 + \extnorm{ \Zeta_v \left( \frac{1}{2}, \chi_v, n(B_v^-) W_{0,v} \right) }^2 \gg T_v^{-1}. $$
	
\noindent (2) The function $s \mapsto \Zeta_v(s, n(B_v^{\pm}).W_{0,v})$ is regular at $s=1$, and admits a pole at $s=0$ of order $2$. If we denote by $\Zeta_v^{(n)}(s_0, n(B_v^{\pm}).W_{0,v})$ the $n$-th coefficient in the Laurent expansion at $s=s_0$, then we have for any integer $n \geq 0$ resp. $\geq -2$
	$$ \Zeta_v^{(n)}(1, n(B_v^{\pm}).W_{0,v}) \ll_n T_v^{-1} (\log T_v)^n, \quad \Zeta_v^{(n)}(0, n(B_v^{\pm}).W_{0,v}) \ll_n (\log T_v)^{n+2}. $$
\end{lemma}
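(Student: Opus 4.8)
\textbf{Proof plan for Lemma \ref{4thMLLowerBdA}.}

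The plan is to compute the archimedean local zeta integral $\Zeta_v(s, n(B_v).W_{0,v})$ more or less explicitly using the formula (\ref{WhiE0}) for the local Whittaker function $W_{0,v}$ of the spherical Eisenstein series, and then optimize over the translation parameter $B_v$. For a real place, $W_{0,v}(a(y))$ is, up to normalization, the standard spherical Whittaker function $2\norm[y]^{1/2}K_0(2\pi\norm[y])$ (more precisely the $s$-derivative appears because of the factor $\norm[t]_v^{2s}$ in (\ref{WhiE0}) at $s=0$, which produces the $\BesselK$ together with its index-derivative); in any case it has rapid decay away from $\norm[y]\asymp 1$ and polynomial behaviour near $0$. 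The zeta integral
$$ \Zeta_v(s,\chi_v, n(B_v).W_{0,v}) = \int_{\F_v^\times} W_{0,v}(a(y)) \psi_v(B_v y) \chi_v(y) \norm[y]_v^{s-1/2} d^\times y $$
is then an oscillatory integral whose phase is $B_v y$ together with the phase $\mu\log\norm[y]$ coming from $\chi_v(y)=\norm[y]^{i\mu}\sgn(y)^{\varepsilon}$ with $\norm[\mu]\le\Cond(\chi_v)\le T_v$. First I would substitute $y\mapsto y/B_v$ to see that this integral equals $B_v^{-s+1/2}$ times an integral in which the Bessel function is evaluated at argument $\asymp \norm[y]/B_v$; since $B_v\asymp T_v$ is large, the Bessel factor is essentially constant $\asymp 1$ on the bulk of the $y$-support, and we are reduced to a one-dimensional stationary-phase integral with phase $y + (\mu/B_v)\log\norm[y]$ on (a window in) $y>0$. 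The stationary point is at $y \asymp \norm[\mu]/B_v \asymp \norm[\mu]/T_v \le 1$, which lies inside the support, and the second derivative of the phase at the stationary point is $\asymp B_v/\norm[\mu]^2$ when $\norm[\mu]\asymp T_v$ and $\asymp 1$ when $\norm[\mu]$ is small; either way stationary phase gives a main term of size $\asymp T_v^{-1/2}$ in absolute value, uniformly in $\mu$. This is exactly the quantitative gain we need.

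For part (1), the subtlety is that for a \emph{single} choice of $B_v$ the stationary-phase main term could in principle be killed by the sign $\sgn(y)^{\varepsilon}$, or — in the regime where there is no stationary point in the support (when $\norm[\mu]$ is comparable to but slightly larger than $T_v$, or of opposite sign) — the integral could be smaller than $T_v^{-1/2}$. The device to circumvent this is precisely the use of \emph{two} translation amounts $B_v^+ = C_+T_v$ and $B_v^- = C_-T_v$ with two different absolute constants $C_\pm$: varying the constant shifts the location and curvature of the stationary point, and one checks (this is the analogue over $\F_v$ of the lower-bound argument in Michel–Venkatesh / Sarnak's period method, cf.\ the remarks after Lemma \ref{4thMGLowerBd}) that the two main terms cannot simultaneously vanish, so the sum of squares is $\gg T_v^{-1}$. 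Concretely I would write the zeta integral as an incomplete Mellin–Barnes / hypergeometric expression (analogous to $K(x,\tau)$ and the integrals of Lemma \ref{TechInts}), isolate the leading asymptotic in $T_v$ by the saddle-point analysis already carried out for $K_0(-x,\tau)$ in \S\ref{TechKerBd} (Lemmas \ref{lemma K0 tau>x}, \ref{lemma K0 tau<x}), and read off that the leading term is a nonzero constant times $T_v^{-1/2} e^{i\,(\text{phase})}$ whose phase depends on $C_\pm$; choosing $C_+\ne C_-$ appropriately (e.g.\ so that the two phases differ by roughly $\pi/2$ in the worst case) secures the claim. The complex-place case is identical in structure with the real Bessel $K_0$ replaced by its $\SU_2$ counterpart, only notationally heavier, and I would treat it in parallel.

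For part (2) I would specialize $\chi_v = \id$ and analyze the meromorphic continuation of $s\mapsto \Zeta_v(s, n(B_v^\pm).W_{0,v})$ directly. Because $W_{0,v}(a(y))$ vanishes to second order at $y=0$ in the sense that its Mellin transform $\int_0^\infty W_{0,v}(a(y)) y^{s} d^\times y$ equals $\Gamma_\R(s+\tfrac12+\cdots)^2$-type products (the square reflecting the factor $(1+v_{\vp}(y))$ at finite places and, at infinity, the $s$-derivative in (\ref{WhiE0}) producing a $\Gamma'/\Gamma$ weight, hence a double pole), the translated integral $\Zeta_v(s, n(B_v^\pm).W_{0,v})$, obtained by inserting $\psi_v(B_v^\pm y)$, has its poles governed by those of $\Gamma_\R$: it is regular at $s=1$ and has a double pole at $s=0$. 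The bounds on the Laurent coefficients then follow by the same contour-shift-and-Stirling estimates used in Corollary \ref{DGBdLocA} and Lemma \ref{RefAuxTrans}(2): shifting the $s$-contour past $s=0$ or $s=1$, the residues and the remaining integral are each controlled by $\hat g$-type Gaussian-times-polynomial bounds, giving the stated $(\log T_v)^n$ resp.\ $(\log T_v)^{n+2}$ growth with the displayed powers $T_v^{-1}$ resp.\ $T_v^0$ in front. The main obstacle, and the one point where genuine care is needed rather than bookkeeping, is part (1): making the two-parameter non-vanishing argument fully uniform across the whole range $0\le\norm[\mu]\le T_v$ (including the transitional regime $\norm[\mu]\asymp T_v$ where the stationary point approaches the edge of the effective support of $W_{0,v}$ and the saddle-point error terms degrade). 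I expect to handle the bulk regime $\norm[\mu]\le (1-\eta)T_v$ by clean stationary phase with the $C_\pm$-trick, the regime $\norm[\mu]$ small by a trivial lower bound (the integral is essentially $\widehat{W_{0,v}}(B_v^\pm)$ which is $\asymp T_v^{-1/2}$), and the narrow transitional window by an Airy-type uniform estimate or, more cheaply, by noting it contributes a shrinking proportion of characters that can be absorbed into the $\epsilon$ in Theorem \ref{4thMBd} — but keeping the statement of Lemma \ref{4thMLLowerBdA} as a \emph{pointwise} lower bound will require the uniform version.
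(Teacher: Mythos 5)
Your approach differs in technique from the paper's. You propose a stationary-phase analysis of the oscillatory integral, whereas the paper evaluates $\Zeta_v(\tfrac12,\chi_v,n(B).W_{0,v})$ in closed form as a Gauss hypergeometric expression (via \cite[6.6213]{GR07} and the quadratic transformations \cite[15.8.1, 15.8.25]{OLBC10}), then extracts the asymptotics in $B$ by a Mellin--Barnes contour shift and computing the residue at $z=-\tfrac14-\tfrac{i\mu}{2}$. This yields the main term
\begin{equation*}
\frac{\pi^{-i\mu}}{2B^{1/2+i\mu}}\frac{\Gamma(1/4+i\mu/2)}{\Gamma(1/4-i\mu/2)}\Bigl(2\log B+2\psi(1)-\psi(1/4+i\mu/2)-\psi(1/4-i\mu/2)\Bigr)+O(B^{-1/2})
\end{equation*}
exactly, uniformly in $\mu$. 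The Gamma quotient has modulus one, and the bracketed factor is $\geq 2\log(B/T_v)+O(1)$ for $\norm[\mu]\leq T_v$ by Stirling for $\psi$; so taking the constants $C_\pm$ absolutely large makes the main term dominate the error with no case analysis at all. This is exactly the point where your proposal has a genuine gap: you flag the transitional regime $\norm[\mu]\asymp T_v$ (where the stationary point approaches the edge of the effective support of the Bessel factor) as requiring ``an Airy-type uniform estimate or, more cheaply, by noting it contributes a shrinking proportion of characters that can be absorbed into the $\epsilon$'', but the second option would demote the lemma to a non-pointwise statement, as you yourself acknowledge, and you do not carry out the Airy-regime analysis. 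The exact Mellin--Barnes formula sidesteps this issue entirely; a stationary-phase route could certainly be completed, but as written it is not.

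Two smaller corrections. First, after rescaling $y\mapsto y/B_v$, the Bessel factor $\BesselK_0(2\pi y/B_v)$ is not ``essentially constant $\asymp 1$'' on the bulk of the support --- it is $\asymp \log B_v \asymp \log T_v$, because of the logarithmic singularity of $\BesselK_0$ at the origin; this is precisely where the $\log B$ factor in the paper's main term originates. Second, your parenthetical reading of (\ref{WhiE0}) (``the $s$-derivative appears because of the factor $\norm[t]_v^{2s}$ at $s=0$, which produces the $\BesselK$ together with its index-derivative'') misinterprets the formula: $W_{0,v}(a(y))$ at $s=0$ is simply $\norm[y]^{1/2}\BesselK_0(2\pi\norm[y])$, with no $s$-differentiation. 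The double pole at $s=0$ in part (2) comes from the $\Gamma(s/2)^2$ factor in the closed form $\Zeta_v(s,n(B).W_{0,v})=\frac{\Gamma(s/2)^2}{2\pi^s}\,\HyG\bigl(\tfrac{s}{2},\tfrac{s}{2};\tfrac12;-B^2\bigr)$ (for the real place), not from an index-derivative of the Bessel function; the paper then reads off the Laurent coefficients from that explicit formula after the same Mellin--Barnes shift, rather than from a separate contour-shift-and-Stirling estimate as you suggest.
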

\begin{proof}
	(1) The complex case is the content of \cite[Lemma 1]{Sar85}. We only need to consider the real case. We omit the subscript $v$ for simplicity of notation. Assume $\chi(t) = t^{i\mu}$ for $t>0$ and some $\mu \in \R$ with $1+\norm[\mu] \leq T$. We have
	$$ W_0(a(y)) = \norm[y]^{\frac{1}{2}} \int_{\R^{\times}} e^{-\pi (t^2 y^2 + t^{-2})} \ud^{\times}t = \norm[y]^{\frac{1}{2}} \BesselK_0(2\pi \norm[y]). $$
	Consequently, we get
\begin{align*}
	\Zeta \left( \frac{1}{2},\chi,n(B).W_0 \right) &= \int_{\R^{\times}} W_0(a(y)) e^{2\pi i By} \chi(y) \ud^{\times}y \\
	&= (2\pi)^{-\frac{1}{2}-i\mu} \left\{ \int_0^{\infty} \BesselK_0(y) e^{iBy} y^{i\mu-\frac{1}{2}} \ud y + \chi(-1) \int_0^{\infty} \BesselK_0(y) e^{-iBy} y^{i\mu-\frac{1}{2}} \ud y \right\}.
\end{align*}
	By the formulas \cite[6.6213]{GR07} and \cite[15.8.1 \& 15.8.25]{OLBC10}, we have
\begin{align*}
	\int_0^{\infty} &\BesselK_0(y) e^{iBy} y^{i\mu-\frac{1}{2}} \ud y = \frac{\sqrt{\pi}}{(1-iB)^{\frac{1}{2}+i\mu}} \frac{\Gamma \left( \frac{1}{2}+i\mu \right)^2}{\Gamma(1+i\mu)} \GenHyG{2}{1}{\frac{1}{2}+i\mu, \frac{1}{2}}{1+i\mu}{- \frac{1+iB}{1-iB}} \\
	&= \frac{\sqrt{\pi}}{2^{\frac{1}{2}+i\mu}} \frac{\Gamma \left( \frac{1}{2}+i\mu \right)^2}{\Gamma(1+i\mu)} \GenHyG{2}{1}{\frac{1}{2}+i\mu, \frac{1}{2}+i\mu}{1+i\mu}{ \frac{1+iB}{2}} \\
	&= \frac{\sqrt{\pi}}{2^{\frac{1}{2}+i\mu}} \left\{ \frac{\Gamma \left( \frac{1}{2} \right) \Gamma \left( \frac{1}{2}+i\mu \right)^2}{\Gamma \left( \frac{3}{4}+\frac{i\mu}{2} \right)^2} \GenHyG{2}{1}{\frac{1}{4}+\frac{i\mu}{2}, \frac{1}{4}+\frac{i\mu}{2}}{\frac{1}{2}}{-B^2} \right. \\
	&\quad \left. -iB \frac{\Gamma \left( -\frac{1}{2} \right) \Gamma \left( \frac{1}{2}+i\mu \right)^2}{\Gamma \left( \frac{1}{4}+\frac{i\mu}{2} \right)^2} \GenHyG{2}{1}{\frac{3}{4}+\frac{i\mu}{2}, \frac{3}{4}+\frac{i\mu}{2}}{\frac{3}{2}}{-B^2} \right\}.
\end{align*}
Hence we obtain 
	$$ \frac{\Zeta \left( \frac{1}{2},\chi,n(B).W_0 \right)}{\pi^{\frac{1}{2}-i\mu} 2^{-2 i\mu}} = \left\{ \begin{matrix} \frac{\Gamma \left( \frac{1}{2}+i\mu \right)^2}{\Gamma \left( \frac{3}{4}+\frac{i\mu}{2} \right)^2}  \GenHyG{2}{1}{\frac{1}{4}+\frac{i\mu}{2}, \frac{1}{4}+\frac{i\mu}{2}}{\frac{1}{2}}{-B^2} & \text{if } \chi(-1)=1 \\ \frac{2iB\Gamma \left( \frac{1}{2}+i\mu \right)^2}{\Gamma \left( \frac{1}{4}+\frac{i\mu}{2} \right)^2}  \GenHyG{2}{1}{\frac{3}{4}+\frac{i\mu}{2}, \frac{3}{4}+\frac{i\mu}{2}}{\frac{3}{2}}{-B^2} & \text{if } \chi(-1)=-1 \end{matrix} \right. . $$
	We treat the first case $\chi(-1)=1$ in detail, the other one being similar. By the formulas \cite[15.6.6]{OLBC10}, we have
	$$ \frac{\Gamma \left( \frac{1}{2}+i\mu \right)^2}{\Gamma \left( \frac{3}{4}+\frac{i\mu}{2} \right)^2}   \GenHyG{2}{1}{\frac{1}{4}+\frac{i\mu}{2}, \frac{1}{4}+\frac{i\mu}{2}}{\frac{1}{2}}{-B^2}=\frac{2^{2i\mu-1}\pi^{-1/2}}{2\pi i}\int_{(c)}\frac{\Gamma(1/4+i\mu/2+z)^2\Gamma(-z)}{\Gamma(1/2+z)}B^{2z}\ud z, $$
	where $-1/4<c<0$.
	Moving the contour of integration to $\Re{z}=-1$, estimating the resulting integral  by
	$$B^{-2}\int_{-\infty}^{\infty}e^{-\pi |y+\mu/2|}\frac{(1+|y|)^{3/2}}{(1+|y+\mu/2|)^{5/2}}\ud y\ll \frac{(1+|\mu|)^{3/2}}{B^2}\ll B^{-1/2},$$
	and evaluating the residue at the point $z=-1/4-i\mu/2$, we get
	\begin{align*} \Zeta \left( \frac{1}{2},\chi,n(B).W_0 \right) &=\frac{\pi^{-i\mu}}{2B^{1/2+i\mu}}\frac{\Gamma(1/4+i\mu/2)}{\Gamma(1/4-i\mu/2)}\\&\times \left(2\log{B}+2\psi(1)-\psi(1/4+i\mu/2)-\psi(1/4-i\mu/2) \right)  +O(B^{-1/2}) \gg B^{-1/2}. 
	\end{align*}
	We conclude the proof.
	
\noindent (2) In the real case, putting $\chi=\norm^{s-1/2}$ in the above computation with simplification, we get
	$$ \Zeta(s, n(B).W_0)=\frac{\pi^{1-s}2^{1-2s}\Gamma^2(s)}{\Gamma^2((1+s)/2)} \GenHyG{2}{1}{\frac{s}{2}, \frac{s}{2}}{\frac{1}{2}}{-B^2}=\frac{\Gamma^2(s/2)}{2\pi^s}\GenHyG{2}{1}{\frac{s}{2}, \frac{s}{2}}{\frac{1}{2}}{-B^2}. $$
As in the previous case, using the Mellin integral for the hypergeometric function (see \cite[15.6.6]{OLBC10}), moving the contour of integration to the left and evaluating the residue at the point $-s/2$, we obtain
	$$ \Zeta(s, n(B).W_0)=\frac{\pi^{1/2-s}\Gamma(s/2)}{2B^s\Gamma(1/2-s/2)}\left( 2\log{B}+2\psi(1)-\psi(s/2)-\psi(1/2-s/2)\right)+O(B^{-3/2}). $$
The desired assertions follow readily from the above formula. In the complex case, we have
	$$ W_0(a(y)) = \norm[y] \int_{\C^{\times}} e^{-2\pi(\norm[ty]^2 + \norm[t]^{-2})} \ud^{\times}t = 2\pi \norm[y] \BesselK_0(4\pi \norm[y]). $$
	Applying the formula \cite[6.5763]{GR07}, we get
\begin{align*}
	\Zeta(s, n(B).W_0) &= 2^{-1} (4\pi)^{2-2s} \int_0^{\infty} \BesselK_0(\rho) J_0(B \rho) \rho^{2s-1} d\rho \\
	&= 2^{-1} (2\pi)^{2-2s} \Gamma \left( s \right)^2 \GenHyG{2}{1}{s, s}{1}{-B^2},
\end{align*}
	where $J_0$ is the usual Bessel-J function of order $0$. 
	We conclude as in the real case.
\end{proof}

\noindent It is clear that Lemma \ref{4thMGLowerBd} follows from Lemma \ref{4thMLLowerBdNA} (1) and Lemma \ref{4thMLLowerBdA} (1). We have also specified $B^{(j)}$ by renumbering $(B_v^{\epsilon_v})_{v \mid \infty} (B_{\vp})_{\vp < \infty}$ with $\epsilon_v \in \{ \pm \}$. Moreover, we have also proved the part of Lemma \ref{4thMGDegBd} for $DG(\Psi)$ by Lemma \ref{4thMLLowerBdNA} (2) and Lemma \ref{4thMLLowerBdA} (2) via (\ref{DGF}).

	We turn to the local upper bounds in Lemma \ref{4thMGUpperBd}.
	
\begin{lemma} \label{4thMLUpperBdNA}
	(1) Let $W_{\vp}$ be the Whittaker function of a spherical element in $\pi_{\vp} \simeq \pi(\norm_{\vp}^{\nu}, \norm_{\vp}^{-\nu})$ of $\PGL_2(\F_{\vp})$ with parameter $\nu \in i\R \cup (-\theta,\theta)$. Then we have for any $\epsilon > 0$
	$$ L_{\vp} \left( \frac{1}{2},\pi_{\vp} \right)^{-1} \sqrt{\frac{L_{\vp}(1,\pi_{\vp} \times \pi_{\vp}^{\vee})}{\zeta_{\vp}(2)}} \Norm[W_{\vp}]^{-1} \Zeta_{\vp} \left( \frac{1}{2}, n(\varpi_{\vp}^{-n_{\vp}}).W_{\vp} \right) \left\{ \begin{matrix} \ll_{\epsilon} \Nr(\vp)^{(-\frac{1}{2}+\theta+\epsilon) n_{\vp}} & \text{if } n_{\vp} > 0 \\ = 1 & \text{if } n_{\vp}=0 \end{matrix} \right. . $$
	
\noindent (2) Assume $n_{\vp} > 0$. Then we have for any integer $n \geq 0$
	$$ \left. \frac{\partial^n}{\partial \nu^n} \right|_{\nu=\frac{1}{2}} \Zeta_{\vp} \left( \frac{1}{2}, W_{\vp} \right)^{-1} \Zeta_{\vp} \left( \frac{1}{2}, n(\varpi_{\vp}^{-n_{\vp}}).W_{\vp} \right) \ll_n \left( n_{\vp} \log \Nr(\vp) \right)^n. $$
\end{lemma}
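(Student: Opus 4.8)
This is a local computation at a non-archimedean place involving the Whittaker function of a spherical vector in an unramified principal series of $\PGL_2(\F_\vp)$, translated by $n(\varpi_\vp^{-n_\vp})$, and the effect of this translation on the Hecke–Jacquet–Langlands zeta integral at $s=1/2$.

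\medskip

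\textbf{Proof proposal.} The plan is to make everything explicit using the well-known formula for the spherical Whittaker function. Write $q = \Nr(\vp)$, $\alpha = q^{-\nu}$, so that the Satake parameters of $\pi_\vp = \pi(\norm_\vp^\nu, \norm_\vp^{-\nu})$ are $\alpha, \alpha^{-1}$, and
$$ W_\vp(a(y)) = \norm[y]_\vp^{1/2} \, \frac{\alpha^{v_\vp(y)+1} - \alpha^{-v_\vp(y)-1}}{\alpha - \alpha^{-1}} \, \id_{\vo_\vp}(y), $$
with the usual limiting interpretation when $\alpha = \pm 1$. First I would record the effect of the left translation by $n(\varpi_\vp^{-n_\vp})$: since $n(b).W_\vp(a(y)) = \psi_\vp(by) W_\vp(a(y))$, and since $\psi_\vp$ has conductor $\vo_\vp$, the additive character $\psi_\vp(\varpi_\vp^{-n_\vp} y)$ is trivial exactly on $\varpi_\vp^{n_\vp}\vo_\vp$. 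Thus
$$ \Zeta_\vp\!\left(\tfrac12, n(\varpi_\vp^{-n_\vp}).W_\vp\right) = \int_{\F_\vp^\times} W_\vp(a(y)) \,\psi_\vp(\varpi_\vp^{-n_\vp} y)\, d^\times y = \int_{\vo_\vp\setminus\{0\}} \norm[y]_\vp^{1/2}\,\frac{\alpha^{v_\vp(y)+1}-\alpha^{-v_\vp(y)-1}}{\alpha-\alpha^{-1}}\,\psi_\vp(\varpi_\vp^{-n_\vp}y)\,d^\times y. $$
Splitting the integral over shells $v_\vp(y) = k$ and using that $\int_{\varpi_\vp^k\vo_\vp^\times}\psi_\vp(\varpi_\vp^{-n_\vp}y)\,d^\times y$ equals $\zeta_\vp(1)^{-1}$ for $k \ge n_\vp$, equals $-\zeta_\vp(1)^{-1}q^{-1}$ for $k = n_\vp - 1$, and vanishes for $0\le k \le n_\vp - 2$, reduces the sum to a geometric-type tail starting at $k = n_\vp - 1$.

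\medskip

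With this reduction, $\Zeta_\vp(\tfrac12, n(\varpi_\vp^{-n_\vp}).W_\vp)$ becomes an explicit rational function of $\alpha$ and $q$; a short manipulation will show it factors as $q^{-n_\vp/2}$ times a polynomial in $\alpha^{\pm 1}$ of bounded degree in the translation parameter, divided by $(\alpha-\alpha^{-1})$, whose coefficients are $O(1)$. Meanwhile $\Zeta_\vp(\tfrac12, W_\vp) = L_\vp(\tfrac12, \pi_\vp) = (1 - \alpha q^{-1/2})^{-1}(1-\alpha^{-1}q^{-1/2})^{-1}$, and $\sqrt{L_\vp(1,\pi_\vp\times\pi_\vp^\vee)/\zeta_\vp(2)}\cdot\Norm[W_\vp]^{-1}$ is a bounded quantity (the local norm of the spherical vector in the Kirillov model is computed via the standard local Rankin–Selberg integral; one can invoke \cite[Lemma 2.8]{Wu14} or the analogous statement, or compute directly). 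For part (1): after clearing the factor $\Zeta_\vp(\tfrac12,W_\vp)^{-1} = L_\vp(\tfrac12,\pi_\vp)^{-1}$, I bound the resulting expression by $q^{-n_\vp/2}$ times a polynomial in $\alpha^{\pm 1}$ of degree $O(n_\vp)$ evaluated at $\norm[\alpha] = q^{|\Re\nu|} \le q^\theta$; since there are $O(n_\vp)$ terms each of size $O(q^{\theta n_\vp})$ and $n_\vp = O(q^{\epsilon n_\vp})$, the bound $\Nr(\vp)^{(-\frac12+\theta+\epsilon)n_\vp}$ follows. The limiting case $\alpha = \pm 1$ requires a small separate argument: there $(\alpha^{k+1}-\alpha^{-k-1})/(\alpha-\alpha^{-1}) = (k+1)\alpha^k$, the sum is a finite combination of $\sum (k+1) q^{-k/2}$ type series, and the same bound holds (this is essentially the content of Lemma \ref{4thMLLowerBdNA}(1) with $\chi$ unramified). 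For part (2): here $\pi_\vp$ is allowed to vary with $\nu$ near $\nu = 1/2$, i.e. $\alpha = q^{-1/2}$; the quotient $\Zeta_\vp(\tfrac12, W_\vp)^{-1}\Zeta_\vp(\tfrac12, n(\varpi_\vp^{-n_\vp}).W_\vp)$ is, by the above, a polynomial in $\alpha$ of degree $O(n_\vp)$ (the pole of $L_\vp(\tfrac12,\pi_\vp)^{-1}$ cancels, or rather $L_\vp^{-1}$ is itself polynomial in $\alpha$), so its $n$-th $\nu$-derivative is a sum of $O(n_\vp^n)$ terms, each involving up to $n$ factors of $\log q$ from differentiating $\alpha = q^{-\nu}$, giving $(n_\vp \log\Nr(\vp))^n$; holomorphy near $\nu = 1/2$ is clear since nothing blows up.

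\medskip

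The main obstacle — though it is really bookkeeping rather than a genuine difficulty — is tracking the normalization factor $\sqrt{L_\vp(1,\pi_\vp\times\pi_\vp^\vee)/\zeta_\vp(2)}\,\Norm[W_\vp]^{-1}$ uniformly in $\nu$ as $\nu$ ranges over $i\R \cup (-\theta,\theta)$, including the complementary-series and the $\alpha = \pm 1$ degenerate points. One checks that $\Norm[W_\vp]^2 = \zeta_\vp(2)L_\vp(1,\pi_\vp\times\pi_\vp^\vee)/\zeta_\vp(1)^2$ up to a harmless constant (so the combination is exactly $\zeta_\vp(1)/\sqrt{\zeta_\vp(2)}$, bounded and bounded away from $0$), which dissolves this issue; alternatively I would simply note that $L_\vp(1,\pi_\vp\times\pi_\vp^\vee) = \zeta_\vp(1)^2/((1-\alpha^2 q^{-1})(1-\alpha^{-2}q^{-1}))$ has absolute value $\asymp 1$ for $|\Re\nu|\le\theta < 1/4$, and that $\Norm[W_\vp]$ is likewise $\asymp 1$ in the same range, so the prefactor contributes only an absolute constant and does not interfere with any of the stated estimates.
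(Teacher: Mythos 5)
Your approach is essentially the same as the paper's: make the spherical Whittaker function explicit, expand the zeta integral over shells, and bound/differentiate the resulting rational function of $\alpha = q^{-\nu}$. For comparison, the paper simply cites \cite[Lemma 6.8]{Wu14} for part (1) and then for part (2) invokes \cite[Lemma 4.7]{Wu14} to write down the closed-form quotient $\Zeta_\vp(\tfrac12, n(\varpi_\vp^{-n_\vp}).W_\vp)/\Zeta_\vp(\tfrac12, W_\vp)$ given in (\ref{DSLUpperBdNA}), from which both the $q^{(-1/2+\theta+\epsilon)n_\vp}$ bound and the $(n_\vp\log q)^n$ derivative bound read off immediately; you reconstruct those lemmas directly. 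This is a legitimate substitute.

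Two normalization slips should be fixed, though neither affects the $\ll$ estimates. First, with the paper's Tamagawa measure ($d^\times u = \zeta_\vp(1)\,du/|u|$), $\Vol(\vo_\vp^\times, d^\times u) = D_\vp^{-1/2}$, not $\zeta_\vp(1)^{-1}$; correspondingly the shell integral at $k=n_\vp-1$ is $-D_\vp^{-1/2}/(q-1)$, not $-\zeta_\vp(1)^{-1}q^{-1}$. Second, the Whittaker norm identity you quote is off: with $W_\vp(1)=1$ the correct formula is $\Norm[W_\vp]^2 = L_\vp(1,\pi_\vp\times\pi_\vp^\vee)/\zeta_\vp(2)$ (check by summing $|W_\vp(\varpi^m)|^2 q^{-m}$ geometrically), not $\zeta_\vp(2)L_\vp(1,\pi_\vp\times\pi_\vp^\vee)/\zeta_\vp(1)^2$. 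With the correct identity the prefactor $\sqrt{L_\vp(1,\pi_\vp\times\pi_\vp^\vee)/\zeta_\vp(2)}\,\Norm[W_\vp]^{-1}$ is identically $1$ — not $\zeta_\vp(1)/\sqrt{\zeta_\vp(2)}$ — which is precisely what is needed to obtain the stated equality $=1$ in the $n_\vp=0$ case. Your weaker $\asymp 1$ fallback would not recover that exact equality, so this correction is needed for the lemma as stated.
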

\begin{proof}
	(1) The bound for the term on the left hand side \emph{without} $L$-factor is treated in \cite[Lemma 6.8]{Wu14}. Adding the $L$-factor does not alter the size, but making the unramified case equal to $1$.
	
\noindent (2) We can take $W_{\vp}(a(y))$ invariant by $y \mapsto y\delta$ for $\delta \in \vo_{\vp}^{\times}$ with (see \cite[Theorem 4.6.5]{Bu98})
	$$ W_{\vp}(\varpi_{\vp}^m) = q^{-\frac{m}{2}} \frac{q^{\nu (m+1)} - q^{-\nu(m+1)}}{q^{\nu}-q^{-\nu}} \id_{m \geq 0}, \quad q := \Nr(\vp). $$
	By \cite[Lemma 4.7]{Wu14}, we get
\begin{align}
	\frac{\Zeta_{\vp} \left( \frac{1}{2}, n(\varpi_{\vp}^{-n_{\vp}}).W_{\vp} \right)}{\Zeta_{\vp} \left( \frac{1}{2}, W_{\vp} \right)} &= q^{- \left( \frac{1}{2}-\nu \right) n_{\vp}} \frac{1-q^{-2(n_{\vp}+1)\nu}}{1-q^{-2\nu}} - q^{-\frac{1}{2}-\nu- \left( \frac{1}{2}-\nu \right) n_{\vp}} \frac{1-q^{-2n_{\vp}\nu}}{1-q^{-2\nu}} \nonumber \\
	&\quad - \frac{q^{-\left( \frac{1}{2}-\nu \right)(n_{\vp}-1)}}{q-1} \frac{1-q^{-2n_{\vp}\nu}}{1-q^{-2\nu}} (1-q^{-\frac{1}{2}-\nu}) (1-q^{-\frac{1}{2}+\nu}). \label{DSLUpperBdNA}
\end{align}
	The desired bound follows (\ref{DSLUpperBdNA}) at once.
\end{proof}

\begin{lemma} \label{4thMLUpperBdA}
	(1) Let $W_v$ be the Whittaker function of a spherical element in $\pi_v \simeq \pi(\norm_v^{\nu}, \norm_v^{-\nu})$ of $\PGL_2(\F_v)$ with parameter $\nu \in i\R \cup (-\theta,\theta)$. Then we have for any $\epsilon > 0$
	$$ \Norm[W_v]^{-1} \cdot \Zeta_v \left( \frac{1}{2},n(B_v^{\pm}).W_v \right) \ll_{\epsilon} T_v^{-\frac{1}{2}+\theta+\epsilon}. $$
	
\noindent (2) We have for any integer $n \geq 0$
	$$ \left. \frac{\partial^n}{\partial \nu^n} \right|_{\nu = \frac{1}{2}} \Zeta_v \left( \frac{1}{2},W_v \right)^{-1} \Zeta_v \left( \frac{1}{2},n(B_v^{\pm}).W_v \right) \ll_n (\log T_v)^n. $$
\end{lemma}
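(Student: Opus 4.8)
Both assertions will be proved by the explicit computation already carried out in the proof of Lemma \ref{4thMLLowerBdA}, this time extracting upper bounds uniform in the spectral parameter $\nu$; the statement is the archimedean counterpart of Lemma \ref{4thMLUpperBdNA}. At a real place one normalizes the spherical Whittaker function as $W_v(a(y)) = c_{\nu}\,\norm[y]^{\frac12}\BesselK_{\nu}(2\pi\norm[y])$ for a nonzero constant $c_{\nu}$ (the same expression up to constant in the tempered range $\nu\in i\R$ and in the complementary range $\nu\in(-\theta,\theta)$); since $n(B).W_v(a(y)) = \psi_v(By)\,W_v(a(y))$, this gives
$$ \Zeta_v\left( \tfrac12, n(B).W_v \right) = c_{\nu} \int_0^{\infty} \BesselK_{\nu}(2\pi y)\left( e^{2\pi i B y} + e^{-2\pi i By} \right) \frac{dy}{\sqrt{y}}. $$
The Mellin transform $\int_0^{\infty}\BesselK_{\nu}(y)e^{iby}y^{s-1}dy$ is the integral \cite[6.621.3]{GR07} used in the proof of Lemma \ref{4thMLLowerBdA}, and the quadratic transformations \cite[15.8.1 \& 15.8.25]{OLBC10} express $\Zeta_v(\tfrac12, n(B).W_v)$, exactly as there, in terms of Gauss hypergeometric functions of argument $-B^2$ with parameters $\tfrac14\pm\tfrac{\nu}{2}$ and explicit $\Gamma$-factor prefactors; the complex case is identical with one $\HyG(s,s;1;-B^2)$ in place of the two real pieces. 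In parallel $\Norm[W_v]^2$ is evaluated in closed form (a Mellin convolution of two Macdonald functions), and the untwisted value $\Zeta_v(\tfrac12, W_v)$ is a $B$-independent multiple of the local $L$-factor $L_v(\tfrac12,\pi_v) = \Gamma_{\R}(\tfrac12+\nu)\Gamma_{\R}(\tfrac12-\nu)$, against which part (2) normalizes; since both parts are normalization-invariant ratios, the precise choice of $c_{\nu}$ is immaterial.

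For (1) I would then insert the Mellin--Barnes representation \cite[15.6.6]{OLBC10} of $\HyG(\cdot,\cdot;\cdot;-B^2)$ and shift the contour to the left past the poles at $z = -(\tfrac14\pm\tfrac{\nu}{2})$, just as in the proofs of Proposition \ref{BasisTransProp}(1) and Lemma \ref{AuxTrans}: the residue produces the leading term, of size $\ll B^{-\frac12+\norm[\Re\nu]}\log B$ times a bounded ratio of $\Gamma$-functions once its $\cosh$- and $\Gamma$-factors are combined with the closed form of $\Norm[W_v]$, while the shifted integral is $O(B^{-\frac32+\norm[\Re\nu]})$, the uniformity in $\Im\nu$ being handled by the same $\tau$-versus-$y$ bookkeeping (Stirling's bound split into three ranges) as in Lemma \ref{AuxTrans}. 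Since $\norm[\Re\nu]\le\theta$ and $B_v^{\pm}\asymp T_v$, this yields $\Norm[W_v]^{-1}\extnorm{\Zeta_v(\tfrac12, n(B_v^{\pm}).W_v)} \ll_{\epsilon} T_v^{-\frac12+\theta+\epsilon}$ throughout the range $1+\norm[\nu]\ll T_v$ relevant to the spectral sum in the proof of Lemma \ref{4thMGUpperBd}. For (2) I would keep the explicit formula near $\nu=\tfrac12$, where $\Zeta_v(\tfrac12,n(B).W_v)$ and $\Zeta_v(\tfrac12,W_v)$ both acquire a simple pole from $\Gamma_{\R}(\tfrac12-\nu)$ (the source of the continuation from $\Re\nu<\tfrac12$, and matching the asserted regularity of the ratio); forming the ratio cancels that pole together with the bulk of the $\Gamma$-factors, and the surviving $\nu$-dependence enters only through $B^{-\nu}$ and digamma terms, exactly as in the formula $\Zeta(s,n(B).W_0)=\tfrac{\pi^{1/2-s}\Gamma(s/2)}{2B^s\Gamma(1/2-s/2)}\bigl(2\log B + \cdots\bigr)+O(B^{-3/2})$ of Lemma \ref{4thMLLowerBdA}(2), so each of the $n$ derivatives in $\nu$ costs at most $\log B + O(1)\ll\log T_v$, giving $\ll_n(\log T_v)^n$.

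The one genuinely delicate point is the uniformity in $\Im\nu=\tau$ of the asymptotics of $\HyG(\tfrac14+\tfrac{i\tau}{2},\tfrac14+\tfrac{i\tau}{2};\tfrac12;-B^2)$ when $\norm[\tau]$ and $B$ are of comparable magnitude, where the plain contour shift above is lossy; this is the archimedean shadow of the difficulty treated for the kernel $K(x,\tau)$ in \S\ref{TechKerBd}. One may either import that transitional analysis, or, more economically, observe that such $\pi_v$ enter the sum in the proof of Lemma \ref{4thMGUpperBd} only through the triple-product period $\Pairing{\eis_0^2-\Reis_0}{\overline{e_{\pi}}}$, which is exponentially small in $\norm[\tau]$, so that restricting a priori to $\norm[\tau]\ll(\log T_v)^2$ reduces everything to the regime $\norm[\tau]=o(B)$ in which the contour shift already gives the bound with room to spare, the exceptional range contributing $O_A(T_v^{-A})$.
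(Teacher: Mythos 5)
Your part (2) is in the same spirit as the paper's proof: the paper applies the hypergeometric connection formula \cite[2.10 (3)]{Er53} to convert $\HyG(\tfrac14+\tfrac{\nu}{2},\tfrac14-\tfrac{\nu}{2};\tfrac12;-B^2)$ into two terms of small argument $(1+B^2)^{-1}$ with explicit prefactors $(1+B^2)^{-\frac14\mp\frac{\nu}{2}}$, Taylor-expands the resulting ${}_2F_1$'s, and then observes (as you do) that the pole from $\Gamma_{\R}(\tfrac12-\nu)$ is common to numerator and denominator so the ratio is regular at $\nu=\tfrac12$, and that each $\nu$-derivative is dominated by the $\log(1+B^2)\asymp\log T_v$ coming from the power of $(1+B^2)$. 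Your description is a little vaguer about which formula you actually differentiate, but the idea matches.

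For part (1) there is a genuine gap. The paper's proof of (1) is a one-line citation to \cite[Lemma 6.8]{Wu14}, which already contains the uniform-in-$\nu$ estimate; no archimedean computation is redone in this paper. You instead propose a fresh Mellin--Barnes derivation with a contour shift and Stirling estimates as in Lemma \ref{AuxTrans}. As you correctly note, this is straightforward only when $\norm[\Im\nu]=o(B_v^{\pm})$; in the transitional regime $\norm[\Im\nu]\asymp B_v^{\pm}$ the shifted-contour remainder (which carries a polynomial factor in $\norm[\Im\nu]$, as is visible even in the lower-bound computation of Lemma \ref{4thMLLowerBdA}(1), where the displayed remainder is $(1+\norm[\mu])^{3/2}/B^2$) is no longer dominated, and a more careful uniform asymptotic is required. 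Your first suggested remedy, importing the transitional machinery of \S\ref{TechKerBd}, is plausible but is a substantial additional argument that you do not carry out. Your second suggested remedy --- restricting a priori to $\norm[\Im\nu]\ll(\log T_v)^2$ by invoking the exponential decay of $\Pairing{\eis_0^2-\Reis_0}{\overline{e_{\pi}}}$ --- is not a proof of the lemma as stated: the statement has no hypothesis on $\nu$ and asserts a bound uniform in $\nu$, and the triple-product decay is a feature of the downstream application (the proof of Lemma \ref{4thMGUpperBd}), not of the local quantity $\Norm[W_v]^{-1}\Zeta_v(\tfrac12,n(B_v^{\pm}).W_v)$ itself. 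To make this remedy rigorous you would have to restate Lemma \ref{4thMLUpperBdA}(1) with a constraint on $\nu$ and then modify the proof of Lemma \ref{4thMGUpperBd} to split the spectral sum accordingly, which changes the structure of the argument. As written, then, your part (1) has a gap exactly where you flag the ``genuinely delicate point,'' and the cleanest fix is what the paper actually does: cite the uniform estimate already established in \cite[Lemma 6.8]{Wu14}.
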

\begin{proof}
	(1) This is contained in \cite[Lemma 6.8]{Wu14}.
	
\noindent (2) We treat the real case, leaving the similar complex case as an exercise. We omit the subscript $v$ for simplicity. We can take (see (\ref{KirToKBessel}))
	$$ W(a(y)) = \norm[y]^{\frac{1}{2}} \BesselK_{\nu}(2\pi \norm[y]). $$
	For $B = B_v^{\pm}$, we have by the formulas \cite[6.5763]{GR07} and \cite[15.8.1 \& 15.8.25]{OLBC10}
\begin{align*}
	\Zeta \left( \frac{1}{2}, n(B).W \right) &= \frac{1}{\sqrt{2\pi}} \int_0^{\infty} \BesselK_{\nu}(y) \left( e^{iBy}+e^{-iBy} \right) y^{-\frac{1}{2}} \ud y \\
	&= 2^{\nu-\frac{1}{2}} \Gamma \left( \frac{1}{2}+\nu \right) \Gamma \left( \frac{1}{2}-\nu \right) \sum_{\pm} \frac{1}{(1 \mp iB)^{\frac{1}{2}+\nu}} \GenHyG{2}{1}{\frac{1}{2}+\nu, \frac{1}{2}+\nu}{1}{-\frac{1 \pm iB}{1 \mp iB}} \\
	&= 2^{-1} \Gamma \left( \frac{1}{2}+\nu \right) \Gamma \left( \frac{1}{2}-\nu \right) \sum_{\pm} \GenHyG{2}{1}{\frac{1}{2}+\nu, \frac{1}{2}-\nu}{1}{\frac{1 \pm iB}{2}} \\
	&= \frac{\sqrt{\pi} \Gamma \left( \frac{1}{2}+\nu \right) \Gamma \left( \frac{1}{2}-\nu \right)}{\Gamma \left( \frac{3}{4}+\frac{\nu}{2} \right) \Gamma \left( \frac{3}{4}-\frac{\nu}{2} \right)} \GenHyG{2}{1}{\frac{1}{4}+\frac{\nu}{2}, \frac{1}{4}-\frac{\nu}{2}}{\frac{1}{2}}{-B^2}.
\end{align*} 
	By the connection formula \cite[2.10 (3)]{Er53}, we deduce
\begin{align*}
	\frac{\Zeta \left( \frac{1}{2}, n(B).W \right)}{\Zeta \left( \frac{1}{2}, W \right)} &= \frac{\sqrt{\pi} \Gamma(-\nu)}{\Gamma \left( \frac{1}{4}-\frac{\nu}{2} \right)^2} (1+B^2)^{-\frac{1}{4}-\frac{\nu}{2}} \GenHyG{2}{1}{\frac{1}{4}+\frac{\nu}{2}, \frac{1}{4}+\frac{\nu}{2}}{1+\nu}{\frac{1}{1+B^2}} + \\
	&\quad \frac{\sqrt{\pi} \Gamma(\nu)}{\Gamma \left( \frac{1}{4}+\frac{\nu}{2} \right)^2} (1+B^2)^{-\frac{1}{4}+\frac{\nu}{2}} \GenHyG{2}{1}{\frac{1}{4}-\frac{\nu}{2}, \frac{1}{4}-\frac{\nu}{2}}{1-\nu}{\frac{1}{1+B^2}}.
\end{align*}
	Replacing the above hypergeometric functions by their Taylor expansions at $0$, we easily deduce the desired bounds.
\end{proof}

\begin{proof}[Proof of Lemma \ref{4thMGUpperBd}]
	Let $W_{\pi}^{\vee}=\otimes_v W_{\pi,v}^{\vee}$ be the Whittaker function of $e_{\pi}^{\vee}$. By (\ref{NormIdCusp}), we have
\begin{align*}
	\frac{\Zeta \left( \frac{1}{2}, n(B^{(j)}.e_{\pi}^{\vee}) \right)}{\Norm[e_{\pi}^{\vee}]} &= \frac{L \left( \frac{1}{2}, \pi^{\vee} \right)}{\sqrt{2 L(1, \pi, \mathrm{Ad})}} \cdot \prod_{v \mid \infty} \frac{\Zeta_v \left( \frac{1}{2}, n(B_v^{(j)}).W_{\pi,v}^{\vee} \right)}{\sqrt{\zeta_v(2)} \Norm[W_{\pi,v}^{\vee}]} \cdot \\
	&\quad \prod_{\vp < \infty} L_{\vp} \left( \frac{1}{2},\pi_{\vp} \right)^{-1} \sqrt{\frac{L_{\vp}(1,\pi_{\vp} \times \pi_{\vp}^{\vee})}{\zeta_{\vp}(2)}} \Norm[W_{\vp}]^{-1} \Zeta_{\vp} \left( \frac{1}{2}, n(\varpi_{\vp}^{-n_{\vp}}).W_{\vp} \right),
\end{align*}
	where those terms over $\vp < \infty$ with $n_{\vp} > 0$ is not $1$. The desired bound follows from Lemma \ref{4thMLUpperBdNA} (1), Lemma \ref{4thMLUpperBdA} (1), the convex bound of $L(1/2,\pi^{\vee})$ and the lower bound of $L(1,\pi,\mathrm{Ad})$ (see \cite{HL94} and \cite[Lemma 3]{BH10}). Similarly, let $W_{\chi}^{\vee} = \otimes_v W_{\chi,v}^{\vee}$ be the Whittaker function of $e_{\chi}^{\vee}$, we have by (\ref{NormIdEis})
\begin{align*}
	\frac{\Zeta \left( \frac{1}{2}, n(B^{(j)}).\eis(-i\tau, e_{\chi}^{\vee}) \right)}{\Norm[e_{\chi}^{\vee}]} &= \frac{\extnorm{L\left( \frac{1}{2}+i\tau,\chi \right)}^2}{\extnorm{L(1+2i\tau,\chi)}} \cdot \prod_{v \mid \infty} \frac{\zeta_v(1)}{\sqrt{\zeta_v(2)}} \frac{\Zeta_v \left( \frac{1}{2}, n(B_v^{(j)}).W_{\chi,v}^{\vee}(-i\tau) \right)}{\Norm[W_{\chi,v}^{\vee}]} \\
	&\cdot \prod_{\vp < \infty} \frac{\extnorm{L_{\vp}(1+2i\tau,\chi_{\vp})}}{\extnorm{L_{\vp}\left( \frac{1}{2}+i\tau,\chi_{\vp} \right)}^2} \frac{\zeta_{\vp}(1)}{\sqrt{\zeta_{\vp}(2)}} \frac{\Zeta_{\vp} \left( \frac{1}{2}, n(\varpi_{\vp}^{-n_{\vp}}).W_{\chi,\vp}^{\vee}(-i\tau) \right)}{\Norm[W_{\chi,\vp}^{\vee}]},
\end{align*}
	The local terms for $v=\vp<\infty$ are special cases of those in the cuspidal case. We argue similarly, using the convex bound of $L(1/2+i\tau,\chi)$ and Siegel's lower bound of $L(1+2i\tau,\chi)$ instead. To derive the bound of $M_3(\Psi^{(j)})$, it suffices to notice that $\Cond(\pi)$ is bounded above by the Laplacian eigenvalue of $e_{\pi}$. Hence the sum resp. integral over $\pi$ resp. $\chi$ and $i\tau$ is bounded by some Sobolev norm of $\eis_0^2-\Reis_0$.
\end{proof}

	Finally, to treat the $DS(\Psi^{(j)})$ part of Lemma \ref{4thMGDegBd}, we note that
\begin{align*}
	h(\id,s)(\Psi^{(j)}) &= h(\id,s)(\Psi_0) \cdot \prod_v \frac{\Zeta_v \left( \frac{1}{2}, n(B_v^{(j)}).W_{\id,v}^{\vee}(-s) \right)}{\Zeta_v \left( \frac{1}{2}, W_{\id,v}^{\vee}(-s) \right)} \\
	&= \frac{\Lambda_{\F} \left( \frac{1}{2}+s \right)^3 \Lambda_{\F} \left( \frac{1}{2}-s \right)^3}{\Lambda_{\F}(1+2s) \Lambda_{\F}(1-2s)} \cdot \prod_v \frac{\Zeta_v \left( \frac{1}{2}, n(B_v^{(j)}).W_{\id,v}^{\vee}(-s) \right)}{\Zeta_v \left( \frac{1}{2}, W_{\id,v}^{\vee}(-s) \right)}.
\end{align*}
	The desired bound of $DS(\Psi^{(j)})$ then follows from Lemma \ref{4thMLUpperBdNA} (2) and Lemma \ref{4thMLUpperBdA} (2) via (\ref{DSF}).

\section{Proof of Central Result}

	\subsection{Hecke Characters of Given Module of Definition}

	We have already proved Theorem \ref{ExpInvMF} \& \ref{DualWtBd}. It remains only to prove Proposition \ref{CubicMBd}. By the bound of the value at $1$ of the adjoint $L$-functions (see \cite{HL94} and \cite[Lemma 3]{BH10}), Lemma \ref{CubeWtLocBdNA} and the obvious bounds $h_{v}(i\tau,\varepsilon) \gg \Delta_v^{-1}$ for $\norm[\tau \pm T_v] \leq \Delta_v$, we see that the left hand side of the desired inequality is bounded by $\Cond(\chi_0)^{\epsilon} \Cond_{\fin}(\chi_0)^2 h(\Psi)$. The bound of degenerate terms in Lemma \ref{DSBd} and \ref{DGBd} shows that $\norm[DS(\Psi)] + \norm[DG(\Psi)] \ll_{\epsilon} \Cond_{\infty}(\chi_0)^{1+\epsilon} \Cond_{\fin}(\chi_0)^{-1+\epsilon}$, which gives the first term on the right hand side of the desired inequality. We are left to bounding

\begin{align}
	\Cond_{\fin}(\chi_0)^2 \widetilde{h}(\Psi) &= \frac{\Cond_{\fin}(\chi_0)^2}{\zeta_{\F}^*} \sum_{\chi \in \widehat{\R_+ \F^{\times} \backslash \A^{\times}}} \int_{\R} \widetilde{h}(\chi \norm_{\A}^{i\tau})(\Psi) \frac{\ud \tau}{2\pi} \nonumber \\
	&\ll \sum_{\substack{\chi \in \widehat{\R_+ \F^{\times} \backslash \A^{\times}} \\ \cond(\chi_{\vp}) \leq \cond(\chi_{0,\vp}), \forall \vp < \infty}} \int_{\R} \extnorm{L \left( \frac{1}{2}+i\tau,\chi \right)}^4 \prod_{v \mid \infty} \extnorm{\widetilde{h}_{v}(s_v,\varepsilon_v)} \ud \tau =: S, \label{1stBdM4}
\end{align}
	where we have applied Corollary \ref{DWtBdNA}. 
	
	Let $\idl{m}$ be the usual conductor ideal of the Hecke character $\chi_0$, i.e.,
	$$ \idl{m} := \sideset{}{_{\vp < \infty}} \prod \vp^{n_{\vp}}, \quad n_{\vp} := \cond(\chi_{0,\vp}). $$
	For notational convenience, let us write $I=\A^{\times}$ for the group of ideles of $\F$. Introduce
	$$ I_{\fin}^{\idl{m}} := \sideset{}{_{\vp < \infty}} \prod U_{\vp}^{(n_{\vp})}, \quad \text{where } U_{\vp}^{(n)} := \left\{ \begin{matrix} \vo_{\vp}^{\times} & \text{if } n=0 \\ 1+\vp^n \vo_{\vp} & \text{if } n \geq 1 \end{matrix} \right. . $$ 

\begin{lemma} \label{IdeleES}
	Let $\gCl(\F)$ be the class group of $\F$. Let $\vo_+$ be the group of totally positive units. Introduce some subgroups of $\vo^{\times}$ of finite index
	$$ \vo^{\idl{m}} := \left\{ \varepsilon \in \vo^{\times} \ \middle| \ \varepsilon \equiv 1 \pmod{\idl{m}} \right\}, \quad \vo_+^{\idl{m}} := \left\{ \varepsilon \in \vo_+ \ \middle| \ \varepsilon \equiv 1 \pmod{\idl{m}} \right\}. $$
	Recall $r=[\F:\Q]$. We have the exact sequences
	$$ 1 \to \vo^{\times} \backslash \left( I_{\infty} \times (\vo/\idl{m})^{\times} \right) \to \F^{\times} \backslash I / I_{\fin}^{\idl{m}} \to \gCl(\F) \to 1, $$
	$$ 1 \to \vo^{\idl{m}} \backslash I_{\infty} \to \vo^{\times} \backslash \left( I_{\infty} \times (\vo/\idl{m})^{\times} \right) \to (\vo^{\times} / \vo^{\idl{m}} ) \backslash (\vo / \idl{m})^{\times} \to 1, $$
	$$ 1 \to \vo_+^{\idl{m}} \backslash I_{\infty}^+ \to \vo^{\idl{m}} \backslash I_{\infty} \to (\vo^{\idl{m}}/\vo_+^{\idl{m}}) \backslash (\Z/2\Z)^r \to 1, $$
	where $I_{\infty}^+ \simeq \R_{>0}^r$ is the connected component subgroup of $I_{\infty}$.
\end{lemma}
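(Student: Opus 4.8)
The plan is to prove Lemma \ref{IdeleES} by a sequence of elementary manipulations with strong approximation for the idele group, keeping careful track of where units and the class group enter. First I would address the top exact sequence. The idele group decomposes as $I = I_\infty \times I_\fin$, and $I_\fin / I_\fin^{\idl{m}} \simeq \prod_{\vp < \infty} \F_\vp^\times / U_\vp^{(n_\vp)}$. Using that each $\F_\vp^\times / \vo_\vp^\times \simeq \Z$ generated by the uniformizer, together with the isomorphism $\vo_\vp^\times / U_\vp^{(n_\vp)} \simeq (\vo_\vp / \vp^{n_\vp})^\times$, one sees that $I_\fin / I_\fin^{\idl{m}}$ fibers over the group of fractional ideals with fiber $(\vo/\idl{m})^\times = \prod_{\vp \mid \idl{m}} (\vo_\vp/\vp^{n_\vp})^\times$ (by the Chinese Remainder Theorem). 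Quotienting by $\F^\times$ — whose image in the fractional ideals is the principal ideals, with kernel $\vo^\times$ — yields an exact sequence with $\gCl(\F)$ on the right, $\vo^\times \backslash \bigl( I_\infty \times (\vo/\idl{m})^\times \bigr)$ on the left, where the diagonal action of $\vo^\times$ is via its archimedean embedding and its reduction mod $\idl{m}$. I would then verify that this left-hand group is exactly the image of $I_\infty \times I_\fin^{\prime}$ (the unit ideles at infinity) and that the map to $\F^\times \backslash I / I_\fin^{\idl{m}}$ is well defined and injective modulo $\vo^\times$.

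The second exact sequence is purely a statement about the finite group $(\vo/\idl{m})^\times$ and the continuous group $I_\infty$: inside $\vo^\times \backslash \bigl( I_\infty \times (\vo/\idl{m})^\times \bigr)$, the image of the subgroup $\{1\} \times (\vo/\idl{m})^\times$, modulo the image of $\vo^\times$ which is the diagonally embedded image of $\vo^\times$ in $(\vo/\idl{m})^\times$, gives the cokernel $(\vo^\times/\vo^{\idl{m}}) \backslash (\vo/\idl{m})^\times$; the kernel of the projection to this cokernel is the image of $I_\infty$, on which $\vo^\times$ acts through its quotient $\vo^\times/\vo^{\idl{m}}$ (since an element of $\vo^\times$ acts trivially on the finite coordinate iff it lies in $\vo^{\idl{m}}$), so the kernel is $\vo^{\idl{m}} \backslash I_\infty$. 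I would spell out that the projection $\vo^\times \backslash (I_\infty \times (\vo/\idl{m})^\times) \to (\vo^\times/\vo^{\idl{m}})\backslash(\vo/\idl{m})^\times$ is surjective because $(\vo/\idl{m})^\times$ surjects onto the cokernel by definition, and the map on the left is the obvious inclusion, with exactness in the middle being a diagram chase.

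The third exact sequence splits $I_\infty \simeq \R^{\times r}$ into $I_\infty^+ \simeq \R_{>0}^r$ times the sign group $(\Z/2\Z)^r$, with $\vo^{\idl{m}}$ acting through the signs of its archimedean embeddings; the image of $\vo^{\idl{m}}$ in $(\Z/2\Z)^r$ under the sign map has kernel exactly $\vo_+^{\idl{m}}$, so $\vo^{\idl{m}} \backslash I_\infty$ fibers over the cokernel $(\vo^{\idl{m}}/\vo_+^{\idl{m}}) \backslash (\Z/2\Z)^r$ with fiber $\vo_+^{\idl{m}} \backslash I_\infty^+$. This last step is the most mechanical; the first exact sequence is the one requiring the most care, since it is where strong approximation (equivalently, the identification of the idele class group modulo the given open subgroup with a ray-class-type group) must be invoked correctly, and where one must be precise about the diagonal action of $\vo^\times$ on the product $I_\infty \times (\vo/\idl{m})^\times$. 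I expect that to be the main obstacle — not because it is deep, but because getting the maps and their well-definedness exactly right (rather than merely up to isomorphism) takes some bookkeeping. Throughout, I would note that all the quotient groups on the left of each sequence are, up to finite index and a split $\R_{>0}^r$-factor, the same, which is precisely the structural fact needed for the geometry-of-numbers argument in \S 8 that dis-adelizes the sum over Hecke characters.
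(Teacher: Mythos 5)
Your proposal is correct and fills in exactly the kind of bookkeeping the paper's own proof dismisses with ``these are easy consequences of definitions.'' The argument — map $I$ to fractional ideals via the finite components, identify the kernel as $I_\infty \times \prod_\vp \vo_\vp^\times$ whose quotient by $I_\fin^{\idl{m}}$ is $I_\infty \times (\vo/\idl{m})^\times$ by CRT, recognize the stabilizer of this kernel in $\F^\times$ as $\vo^\times$, then successively peel off the finite quotient $(\vo/\idl{m})^\times$ and the sign group $(\Z/2\Z)^r$ with stabilizers $\vo^{\idl{m}}$ and $\vo_+^{\idl{m}}$ respectively — is the standard computation of the (narrow) ray class group structure, and all three kernel and cokernel identifications are as you describe. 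One small point of emphasis: the first exact sequence does not really invoke strong approximation; surjectivity onto $\gCl(\F)$ is immediate because every fractional ideal is hit by the valuation map on $I_\fin$, and exactness in the middle is just the stabilizer computation you sketch, so the ``main obstacle'' you anticipate is even milder than you fear.
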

\begin{proof}
	These are easy consequences of definitions. We leave the details to the reader.
\end{proof}

\begin{corollary} \label{HeckeCharViaExt}
	Recall that we have identified $\R_+ = \R_{>0}$ with a subgroup of $I_{\infty}^+$ via a section $s_{\F}$ of the adelic norm map. We have the exact sequences
	$$ 1 \to C_1(\idl{m}):=\widehat{(\vo^{\idl{m}}/\vo_+^{\idl{m}}) \backslash (\Z/2\Z)^r} \to \widehat{\R_+ \vo^{\idl{m}} \backslash I_{\infty}} \to \widehat{\R_+ \vo_+^{\idl{m}} \backslash I_{\infty}^+} \to 1, $$
	$$ 1 \to C_2(\idl{m}):=\widehat{(\vo^{\times} / \vo^{\idl{m}} ) \backslash (\vo / \idl{m})^{\times}} \to \widehat{\R_+ \vo^{\times} \backslash \left( I_{\infty} \times (\vo/\idl{m})^{\times} \right)} \to \widehat{\R_+ \vo^{\idl{m}} \backslash I_{\infty}} \to 1, $$
	$$ 1 \to \widehat{\gCl(\F)} \to \widehat{\R_+ \F^{\times} \backslash I / I_{\fin}^{\idl{m}}} \to \widehat{\R_+ \vo^{\times} \backslash \left( I_{\infty} \times (\vo/\idl{m})^{\times} \right)} \to 1. $$
\end{corollary}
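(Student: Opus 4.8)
The plan is to deduce each of the three displayed sequences by applying Pontryagin duality to the corresponding short exact sequence of Lemma~\ref{IdeleES}, after first passing to the quotient by the subgroup $\R_+ = s_{\F}(\R_{>0})$.

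First I would record two elementary observations. By the identification of $\R_+$ with a subgroup of $I_\infty^+$ recalled above, $\R_+$ lies in the left-hand and middle terms of all three sequences of Lemma~\ref{IdeleES} (for the first sequence it maps, via $I_\infty \hookrightarrow I_\infty \times (\vo/\idl{m})^\times \hookrightarrow I$, into the image of $\vo^\times \backslash (I_\infty \times (\vo/\idl{m})^\times)$). On the other hand the three right-hand terms $\gCl(\F)$, $(\vo^\times/\vo^{\idl{m}}) \backslash (\vo/\idl{m})^\times$ and $(\vo^{\idl{m}}/\vo_+^{\idl{m}}) \backslash (\Z/2\Z)^r$ are finite, so the continuous image of the connected group $\R_+$ in each of them is trivial. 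Hence, quotienting each sequence of Lemma~\ref{IdeleES} by $\R_+$ again yields a short exact sequence of Hausdorff topological abelian groups; for instance the first becomes
\begin{equation*}
1 \to \R_+ \vo^\times \backslash \left( I_\infty \times (\vo/\idl{m})^\times \right) \to \R_+ \F^\times \backslash I / I_{\fin}^{\idl{m}} \to \gCl(\F) \to 1,
\end{equation*}
and the other two transform analogously. Since $\F^\times \backslash \A^{(1)}$ is compact, every group occurring in these three sequences is either a compact abelian group or a finite group, so in particular locally compact abelian.

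Next I would apply the Pontryagin duality functor $\widehat{\,\cdot\,}$ of continuous characters valued in $\C^{(1)}$. On the category of locally compact abelian groups this is a contravariant \emph{exact} functor: a short exact sequence $1 \to A \to B \to C \to 1$ is sent to $1 \to \widehat{C} \to \widehat{B} \to \widehat{A} \to 1$, where $\widehat{C} \to \widehat{B}$ is pullback along $B \twoheadrightarrow C$, with image exactly the characters of $B$ trivial on the image of $A$, and $\widehat{B} \to \widehat{A}$ is restriction. Applying this to the three $\R_+$-quotient sequences above, and using the definitions $C_2(\idl{m}) = \widehat{(\vo^\times/\vo^{\idl{m}}) \backslash (\vo/\idl{m})^\times}$ and $C_1(\idl{m}) = \widehat{(\vo^{\idl{m}}/\vo_+^{\idl{m}}) \backslash (\Z/2\Z)^r}$, produces exactly the three displayed short exact sequences.

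The only point requiring attention --- a mild one --- is exactness in the middle of each dual sequence, i.e. that the kernel of $\widehat{B} \to \widehat{A}$ equals the image of $\widehat{C}$; this is precisely exactness of Pontryagin duality applied to the $\R_+$-quotient sequences, which are genuinely short exact by the first step. For orientation (though it is not needed here) I would also note that under the logarithm $I_\infty^+ \simeq \R_{>0}^r$ becomes $\R^r$, $\R_+$ the diagonal line, and $\vo_+^{\idl{m}}$ a rank-$(r-1)$ lattice in the trace-zero hyperplane by Dirichlet's unit theorem, so $\R_+ \vo_+^{\idl{m}} \backslash I_\infty^+$ is a compact $(r-1)$-torus and its dual $\widehat{\R_+ \vo_+^{\idl{m}} \backslash I_\infty^+}$ is free abelian of rank $r-1$, consistent with the rightmost term of the first displayed sequence. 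Beyond this, the corollary is a formal consequence of Lemma~\ref{IdeleES} and the exactness of Pontryagin duality, and no further computation is required.
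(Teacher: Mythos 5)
Your argument — quotient each sequence of Lemma \ref{IdeleES} by $\R_+$ (noting that $\R_+$ has trivial image in the finite right-hand terms) and then apply the exactness of Pontryagin duality — is exactly the route indicated in the paper, and it is correct. No issues.
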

\begin{proof}
	These are simple consequences of the Pontryagin duality theorem and the obvious variant of Lemma \ref{IdeleES} by quotients by $\R_+$.
\end{proof}

\noindent The Hecke characters $\chi$ appearing on the right hand side of (\ref{1stBdM4}) are those of a module of definition $\idl{m}$ (see \cite[Chapter \Rmnum{7} (6.11)]{Ne99}, i.e., characters of $\R_+ \F^{\times} \backslash I / I_{\fin}^{\idl{m}}$. For every character $\chi_{\infty}^+$ of $\R_+ \vo_+^{\idl{m}} \backslash I_{\infty}^+$, we fix an extension to $\R_+ \F^{\times} \backslash I / I_{\fin}^{\idl{m}}$, still denoted by $\chi_{\infty}^+$. By Corollary \ref{HeckeCharViaExt}, we can parametrize $\chi$ by
	$$ \chi = (\chi_{\infty}^+, \chi_1, \chi_2, \chi_0), \quad \chi_1 \in C_1(\idl{m}), \chi_2 \in C_2(\idl{m}), \chi_0 \in \widehat{\gCl(\F)}. $$
	Since $\R_{>0} \simeq \R$ by the log-map, we have the canonical isomorphisms $I_{\infty}^+ \simeq \R^r$ and $I_{\infty}^{+,1} \simeq \R_0^r$ where $I_{\infty}^{+,1}$ is the subgroup of norm one elements of $I_{\infty}^+$ and $\R_0^r$ is the hyperplane orthogonal to the image of $\R_+$, i.e. $\R e_0$ with $e_0:=(1,\cdots,1)$ given by (implicitly we number the archimedean places as $v_j$ with $1 \leq j \leq r$)
	$$ \R_0^r := \left\{ \vec{x}=(x_j)_{1 \leq j \leq r} \in \R^r \ \middle| \ \sideset{}{_{j=1}^r} \sum x_j = 0 \right\}. $$
	Under these isomorphisms, we have $\widehat{I_{\infty}^+} \simeq \widehat{\R^r} \simeq \R^r$ and $\widehat{\R_+ \backslash I_{\infty}^+} \simeq \widehat{I_{\infty}^{+,1}} \simeq \widehat{\R_0^r} \simeq \R_0^r$, where the pairing between $\R^r$ and $\widehat{\R^r} \simeq \R^r$ is given by
\begin{equation} \label{EucPairing}
	\R^r \times \widehat{\R^r} \to \C^1, \quad (\vec{x}, \vec{\mu}) := \exp(i \sideset{}{_{j=1}^r} \sum x_j \mu_j). 
\end{equation}
	By Dirichlet's unit theorem, the group $\vo_+$ of totally positive units is isomorphic to $\Z^{r-1}$, hence is a lattice of $I_{\infty}^{+,1} \simeq \R_0^r$. The same holds for its subgroup $\vo_+^{\idl{m}}$ of finite index. We denote the image lattice of $\vo_+$ resp. $\vo_+^{\idl{m}}$ in $\R_0^r$ by $\Gamma$ resp. $\Gamma_{\idl{m}}$. Their dual lattices with respect to (\ref{EucPairing}) are denoted by $\Gamma^{\vee}$ resp. $\Gamma_{\idl{m}}^{\vee}$ respectively. Then there is a bijection between $\Gamma_{\idl{m}}^{\vee}$ and $\widehat{\R_+ \vo_+^{\idl{m}} \backslash I_{\infty}^+}$ given by $\vec{\mu} \to \chi_{\infty}^+(\vec{\mu})$ with
	$$ \chi_{\infty}^+(\vec{\mu})((t_j)_{1 \leq j \leq r}) := \sideset{}{_{j=1}^r} \prod t_j^{i\mu_j}. $$
	Call $\vec{\mu}$ the parameter of $\chi_{\infty}^+(\vec{\mu})$. The box $B^{\vee}(\chi_0,\epsilon)$ becomes a box $B^{\vee}:=\sideset{}{_{j=1}^r} \prod [-B_j,B_j]$ in the Euclidean space $\R^r$ with $B_j = T_0 (1+T_j) \log^2(1+T_j)$ and $T_0 := \Cond(\chi_0)^{\frac{\epsilon}{r}}, T_j := T_{v_j}$. Denote the image under the orthogonal projection from $\R^r$ to $\R_0^r$ by 
	$$ B_0^{\vee} := \left\{ \vec{\mu} \in \R_0^r \ \middle| \ \exists \tau \in \R \text{ such that } \vec{\mu}+\tau e_0 \in B^{\vee} \right\}. $$
	We can thus decompose the right hand side of (\ref{1stBdM4}) as (abbreviating $(\chi_{\infty}^+(\vec{\mu}), \chi_1, \chi_2, \chi_0)$ as $\chi(\vec{\mu})$)
	
\begin{align*}
	S &= \sum_{\chi_0 \in \widehat{\gCl(\F)}} \sum_{\chi_1 \in C_1(\idl{m})} \sum_{\chi_2 \in C_2(\idl{m})} \sum_{\vec{\mu} \in \Gamma_{\idl{m}}^{\vee}} \int_{\R} \extnorm{L \left( \frac{1}{2}+i\tau,\chi(\vec{\mu}) \right)}^4 \prod_{j=1}^r \extnorm{\widetilde{h}_{j}(i(\mu_j+\tau),\varepsilon_j)} \ud \tau \\
	&= \sum_{\chi_0 \in \widehat{\gCl(\F)}} \sum_{\chi_1 \in C_1(\idl{m})} \sum_{\chi_2 \in C_2(\idl{m})} \sum_{\vec{\mu} \in \Gamma_{\idl{m}}^{\vee} \cap B_0^{\vee}} \int_{\vec{\mu}+\tau e_0 \in B^{\vee}} \extnorm{L \left( \frac{1}{2}+i\tau,\chi(\vec{\mu}) \right)}^4 \prod_{j=1}^r \extnorm{\widetilde{h}_j(i(\mu_j+\tau),\varepsilon_j)} \ud \tau \\
	&+ \sum_{\chi_0 \in \widehat{\gCl(\F)}} \sum_{\chi_1 \in C_1(\idl{m})} \sum_{\chi_2 \in C_2(\idl{m})} \sum_{\vec{\mu} \in \Gamma_{\idl{m}}^{\vee} \cap B_0^{\vee}} \int_{\vec{\mu}+\tau e_0 \notin B^{\vee}} \extnorm{L \left( \frac{1}{2}+i\tau,\chi(\vec{\mu}) \right)}^4 \prod_{j=1}^r \extnorm{\widetilde{h}_j(i(\mu_j+\tau),\varepsilon_j)} \ud \tau \\
	&+ \sum_{\chi_0 \in \widehat{\gCl(\F)}} \sum_{\chi_1 \in C_1(\idl{m})} \sum_{\chi_2 \in C_2(\idl{m})} \sum_{\vec{\mu} \in \Gamma_{\idl{m}}^{\vee}, \vec{\mu} \notin B_0^{\vee}} \int_{\R} \extnorm{L \left( \frac{1}{2}+i\tau,\chi(\vec{\mu}) \right)}^4 \prod_{j=1}^r \extnorm{\widetilde{h}_j(i(\mu_j+\tau),\varepsilon_j)} \ud \tau \\
	&=: S_0 + S_1 + S_2.
\end{align*}

	By Proposition \ref{BasisTransProp} (1), we have $\widetilde{h}_j(i(\mu_j+\tau),\varepsilon_j) \ll 1$ with absolute implied constants. Hence
	$$ S_0 \ll \sum_{\chi \in \widehat{\F^{\times} \R_{>0} \backslash \A^{\times}}} \int_{\R} \extnorm{L \left( \frac{1}{2}+i\tau,\chi \right)}^4 \id_{B^{\vee}(\chi_0,\epsilon)}(\chi \norm_{\A}^{i\tau}) \ud \tau. $$
	It remains only to bound $S_1$ and $S_2$ as remainder terms.

	\subsection{Bounds of Remainder Terms}
	
	Above all, we notice that the box $B^{\vee}$ is \emph{balanced}, in the sense that there is a constant $A>0$ depending only on $\epsilon$ such that $T_0 \leq B_i \leq T_0^{1+A}$ for all $i$ (say $A=2r\epsilon^{-1}$).
	
	Let $\vec{\mu} \in \Gamma_{\idl{m}}^{\vee} \cap B_0^{\vee}$. Since $B^{\vee}$ is convex and compact, there exist $\tau_{\min}(\vec{\mu}) \leq \tau_{\max}(\vec{\mu})$ such that
	$$ \left\{ \tau \in \R \ \middle| \ \vec{\mu}+\tau e_0 \in B^{\vee} \right\} = [\tau_{\min}(\vec{\mu}), \tau_{\max}(\vec{\mu})]. $$
	Hence the condition $\vec{\mu}+\tau e_0 \notin B^{\vee}$ is equivalent to $\tau > \tau_{\max}(\vec{\mu})$ or $\tau < \tau_{\min}(\vec{\mu})$. Moreover, there exist $1 \leq j_{\min}, j_{\max} \leq r$ such that
	$$ \tau > \tau_{\max}(\vec{\mu}) \Leftrightarrow \mu_{j_{\max}}+\tau > B_{j_{\max}}, \quad \tau < \tau_{\min}(\vec{\mu}) \Leftrightarrow \mu_{j_{\min}}+\tau < -B_{j_{\min}}. $$
	By Proposition \ref{DualWtRD} and the convex bound, we have for any constant $C \gg 1$

\begin{align*}
	&\quad \int_{\mu_{j_{\max}}+\tau > B_{j_{\max}}} \extnorm{L \left( \frac{1}{2}+i\tau,\chi(\vec{\mu}) \right)}^4 \prod_{j=1}^r \extnorm{\widetilde{h}_j(i(\mu_j+\tau),\varepsilon_j)} \ud \tau \\
	&\ll_{\epsilon,C} \Nr(\idl{m})^{1+\epsilon} \int_{\mu_{j_{\max}}+\tau > B_{j_{\max}}} \norm[\mu_{j_{\max}}+\tau]^{-C-1} \sideset{}{_{j \neq j_{\max}}} \prod (1+T_j)^{1+\epsilon} \ud \tau \\
	&\ll 
	\Cond(\chi_0)^{1+\epsilon} \cdot B_{j_{\max}}^{-C} \leq T_0^{\frac{A}{2}+r-C}.
\end{align*}
	We have similarly
	$$ \int_{\mu_{j_{\min}}+\tau < -B_{j_{\min}}} \extnorm{L \left( \frac{1}{2}+i\tau,\chi(\vec{\mu}) \right)}^4 \prod_{j=1}^r \extnorm{\widetilde{h}_j(i(\mu_j+\tau),\varepsilon_j)} \ud \tau \ll_{\epsilon,C} T_0^{\frac{A}{2}+r-C}. $$
	Hence for an individual $\vec{\mu} \in \Gamma_{\idl{m}}^{\vee} \cap B_0^{\vee}$, we have
\begin{align}
	&\quad \sum_{\chi_0 \in \widehat{\gCl(\F)}} \sum_{\chi_1 \in C_1(\idl{m})} \sum_{\chi_2 \in C_2(\idl{m})} \int_{\vec{\mu}+\tau e_0 \notin B^{\vee}} \extnorm{L \left( \frac{1}{2}+i\tau,\chi(\vec{\mu}) \right)}^4 \prod_{j=1}^r \extnorm{\widetilde{h}_j(i(\mu_j+\tau),\varepsilon_j)} \ud \tau \nonumber \\
	&\ll_{\epsilon,C} \cdot \norm[\gCl(\F)] \cdot \norm[C_1(\idl{m})] \cdot \norm[C_2(\idl{m})] \cdot T_0^{\frac{A}{2}+r-C}. \label{S1IndividualBd}
\end{align} 

\begin{lemma} \label{LatticePtCount}
	There is a constant $B>0$ depending only on $\F$ such that $\norm[\Gamma_{\idl{m}}^{\vee} \cap B_0^{\vee}] \ll \Cond(\chi_0)^B$.
\end{lemma}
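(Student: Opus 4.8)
The plan is to show that $\Gamma_{\idl{m}}^{\vee} \cap B_0^{\vee}$ is contained in a box whose side lengths are controlled by $\Cond(\chi_0)$, and then count lattice points in a box relative to the (full-rank, in $\R_0^r$) lattice $\Gamma_{\idl{m}}^{\vee}$ by a trivial volume argument. First I would recall that $B_0^{\vee}$ is the orthogonal projection to $\R_0^r$ of the box $B^{\vee} = \prod_{j=1}^r [-B_j, B_j]$, where $T_0 \leq B_j \leq T_0^{1+A}$ with $A = 2r\epsilon^{-1}$ and $T_0 = \Cond(\chi_0)^{\epsilon/r}$; in particular $B_0^{\vee}$ is contained in a Euclidean ball of radius $O_{\F,\epsilon}\bigl( \max_j B_j \bigr) = O(T_0^{1+A}) = O(\Cond(\chi_0)^{\epsilon(1+A)/r})$. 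Since the exponent here is a constant depending only on $\F$ (through $r$) and $\epsilon$, it suffices to bound the number of points of $\Gamma_{\idl{m}}^{\vee}$ in a ball of radius $R$ by $O_{\F}(R^{r-1} + 1)$, which will give the claimed $\ll \Cond(\chi_0)^B$ with $B$ depending only on $\F$ (after absorbing the $\epsilon$-dependence — or, if one wants $B$ genuinely independent of $\epsilon$, one notes that $\epsilon$ can be fixed at the outset, e.g. $\epsilon = 1$, since Lemma \ref{CubicMBd} is only needed for the Weyl bound where any fixed small $\epsilon$ works, and then re-run with general $\epsilon$ absorbing the loss).

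The lattice point count in a ball is standard: $\Gamma_{\idl{m}}^{\vee}$ is the dual lattice (with respect to the pairing \eqref{EucPairing}) of $\Gamma_{\idl{m}}$, the image of $\vo_+^{\idl{m}}$ under the logarithmic embedding into $\R_0^r$. By Dirichlet's unit theorem $\vo_+^{\idl{m}}$ has rank $r-1$, so $\Gamma_{\idl{m}}$ is a full-rank lattice in $\R_0^r$, hence so is $\Gamma_{\idl{m}}^{\vee}$; both have covolume and shortest-vector length depending only on $\F$ and $\idl{m}$. A standard packing argument then gives $\norm[\Gamma_{\idl{m}}^{\vee} \cap \{\norm[\vec{x}] \leq R\}] \ll_{\Gamma_{\idl{m}}^{\vee}} (1 + R / \lambda_1(\Gamma_{\idl{m}}^{\vee}))^{r-1} \ll_{\F,\idl{m}} R^{r-1} + 1$, where $\lambda_1$ is the length of the shortest nonzero vector. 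Combining with the radius bound from the previous paragraph yields $\norm[\Gamma_{\idl{m}}^{\vee} \cap B_0^{\vee}] \ll_{\F,\idl{m}} \Cond(\chi_0)^{\epsilon(1+A)(r-1)/r} + 1$.

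The one genuinely delicate point — and the main obstacle — is making the implied constant, and the exponent $B$, depend only on $\F$ and not on $\idl{m}$ (equivalently, not on $\chi_0$ beyond $\Cond(\chi_0)$). The covolume of $\Gamma_{\idl{m}}^{\vee}$ is $\Vol(\R_0^r / \Gamma_{\idl{m}})^{-1}$, and $[\vo_+ : \vo_+^{\idl{m}}] \leq \norm[(\vo/\idl{m})^{\times}] \ll \Nr(\idl{m})^{1+o(1)} \ll \Cond(\chi_0)^{1+o(1)}$, so $\Vol(\R_0^r/\Gamma_{\idl{m}}) \ll_{\F} \Cond(\chi_0)^{1+o(1)} \cdot \Vol(\R_0^r/\Gamma)$ and hence $\mathrm{covol}(\Gamma_{\idl{m}}^{\vee}) \gg_{\F} \Cond(\chi_0)^{-1-o(1)}$; more importantly, $\lambda_1(\Gamma_{\idl{m}}^{\vee})$ could in principle be small, but since $\Gamma_{\idl{m}}^{\vee} \supseteq \Gamma^{\vee}$ (a finer lattice contains a coarser dual... — rather: $\Gamma_{\idl{m}} \subseteq \Gamma$ implies $\Gamma_{\idl{m}}^{\vee} \supseteq \Gamma^{\vee}$), we do \emph{not} get a lower bound on $\lambda_1(\Gamma_{\idl{m}}^{\vee})$ this way. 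The clean fix is to count instead in the coarser lattice: every coset of $\Gamma^{\vee}$ in $\Gamma_{\idl{m}}^{\vee}$ meets the ball in $\ll_{\F} R^{r-1}+1$ points (now $\lambda_1(\Gamma^{\vee})$ depends only on $\F$), and there are $[\Gamma_{\idl{m}}^{\vee} : \Gamma^{\vee}] = [\Gamma : \Gamma_{\idl{m}}] \leq [\vo_+ : \vo_+^{\idl{m}}] \ll \Cond(\chi_0)^{1+o(1)}$ cosets, giving $\norm[\Gamma_{\idl{m}}^{\vee} \cap B_0^{\vee}] \ll_{\F} \Cond(\chi_0)^{1+o(1)} \cdot (T_0^{(1+A)(r-1)} + 1) \ll_{\F,\epsilon} \Cond(\chi_0)^B$ for a suitable $B = B(\F)$ once $\epsilon$ is fixed. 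I would write this up in the order: (i) radius bound for $B_0^{\vee}$; (ii) reduction to counting $\Gamma^{\vee}$-cosets in a ball; (iii) the index bound $[\Gamma:\Gamma_{\idl{m}}] \ll \Nr(\idl{m})^{1+o(1)}$; (iv) assemble. Finally, this Lemma combined with the per-$\vec{\mu}$ bound \eqref{S1IndividualBd} (and the analogous, easier estimate for $S_2$, where the decay of $\widetilde h_j$ from Proposition \ref{DualWtRD} handles the tail in $\tau$ for $\vec\mu$ outside $B_0^\vee$) finishes the proof of Lemma \ref{CubicMBd}, hence of Theorem \ref{WeylBd}.

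\begin{proof}[Proof of Lemma \ref{LatticePtCount}]
	Recall that $B_0^{\vee}$ is the image of the box $B^{\vee} = \prod_{j=1}^r [-B_j,B_j]$ under the orthogonal projection $\R^r \to \R_0^r$, where $T_0 \leq B_j \leq T_0^{1+A}$ with $T_0 = \Cond(\chi_0)^{\epsilon/r}$ and $A=2r\epsilon^{-1}$. Thus $B_0^{\vee}$ is contained in the Euclidean ball of radius $R := \sqrt{r} \max_j B_j \leq \sqrt{r} \, T_0^{1+A} \ll_{\F} \Cond(\chi_0)^{\epsilon(1+A)/r}$ centred at the origin.

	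It remains to count the points of $\Gamma_{\idl{m}}^{\vee}$ in this ball. Since $\vo_+^{\idl{m}} \subseteq \vo_+$ with finite index, we have $\Gamma_{\idl{m}} \subseteq \Gamma$ as full-rank lattices in $\R_0^r$, hence $\Gamma^{\vee} \subseteq \Gamma_{\idl{m}}^{\vee}$ with
	$$ [\Gamma_{\idl{m}}^{\vee} : \Gamma^{\vee}] = [\Gamma : \Gamma_{\idl{m}}] = [\vo_+ : \vo_+^{\idl{m}}] \leq \norm[(\vo/\idl{m})^{\times}] \ll_{\epsilon} \Nr(\idl{m})^{1+\epsilon} \ll_{\epsilon} \Cond(\chi_0)^{1+\epsilon}. $$
	Write $\lambda = \lambda_1(\Gamma^{\vee}) > 0$ for the length of a shortest nonzero vector of $\Gamma^{\vee}$, a quantity depending only on $\F$. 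For any coset $\vec{c} + \Gamma^{\vee}$, the open balls of radius $\lambda/2$ centred at the points of $(\vec{c}+\Gamma^{\vee}) \cap \{ \norm[\vec{x}] \leq R \}$ are pairwise disjoint and all contained in the ball of radius $R + \lambda/2$, so their number is $\ll_{\F} (R/\lambda + 1)^{r-1} \ll_{\F} R^{r-1} + 1$. Summing over the $[\Gamma_{\idl{m}}^{\vee} : \Gamma^{\vee}]$ cosets of $\Gamma^{\vee}$ in $\Gamma_{\idl{m}}^{\vee}$, we obtain
	$$ \norm[\Gamma_{\idl{m}}^{\vee} \cap B_0^{\vee}] \ll_{\F,\epsilon} \Cond(\chi_0)^{1+\epsilon} \left( \Cond(\chi_0)^{\epsilon(1+A)(r-1)/r} + 1 \right) \ll_{\F,\epsilon} \Cond(\chi_0)^B $$
	for a suitable constant $B$ depending only on $\F$ (once $\epsilon$ is fixed), as claimed.
\end{proof}
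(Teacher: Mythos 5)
Your proof is correct and reaches the same polynomial bound, but the way you pass from $\Gamma_{\idl{m}}^{\vee}$ to the $\idl{m}$-independent lattice $\Gamma^{\vee}$ differs from the paper's. The paper notes that if $d$ is the largest elementary divisor of $\Gamma/\Gamma_{\idl{m}}$, then $\Gamma_{\idl{m}}^{\vee} \subset d^{-1}\Gamma^{\vee}$, so after rescaling by $d$ one is counting points of the fixed lattice $\Gamma^{\vee}$ inside the dilated region $d\,D_0^{\vee}$ — a single lattice-point count with a dilation factor $d \leq [\Gamma:\Gamma_{\idl{m}}] \leq \Nr(\idl{m})$. You instead fiber $\Gamma_{\idl{m}}^{\vee}$ into $[\Gamma_{\idl{m}}^{\vee}:\Gamma^{\vee}] = [\Gamma:\Gamma_{\idl{m}}]$ cosets of $\Gamma^{\vee}$ and apply a packing argument per coset; since the minimum gap within each coset is $\lambda_1(\Gamma^{\vee})$ (depending only on $\F$), each coset contributes $\ll_{\F} R^{r-1}+1$ points. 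Your version actually yields the marginally sharper bound $[\Gamma:\Gamma_{\idl{m}}]\cdot R^{r-1}$ as against the paper's $([\Gamma:\Gamma_{\idl{m}}]\cdot R)^{r-1}$, though this is immaterial here as any polynomial bound suffices. You also correctly flag the subtlety that the exponent $B$ a priori involves $\epsilon$ through $A = 2r\epsilon^{-1}$; as you say, this is absorbed since $\epsilon(1+A)/r = 2 + \epsilon/r$ stays bounded for bounded $\epsilon$, so $B$ does depend only on $\F$ in the end, matching the paper's convention. One small simplification you could make: $\norm[(\vo/\idl{m})^{\times}] \leq \Nr(\idl{m})$ holds exactly, so the $\ll_{\epsilon}\Nr(\idl{m})^{1+\epsilon}$ and the attendant $\epsilon$-dependence in that step are unnecessary.
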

\begin{proof}
	Let $B_m:= \max_{j} B_j$. We enlarge $B^{\vee}$ to $D^{\vee} := [-B_m, B_m]^r$. If $d_1 \mid d_2 \mid \cdots d_r$ are the elementary divisors of the abelian group $\Gamma/\Gamma_{\idl{m}}$, we consider the lattice
	$$ d_r^{-1} \Gamma^{\vee} := \left\{ \vec{\mu} \in \R_0^r \ \middle| \ d_r \vec{\mu} \in \Gamma^{\vee} \right\}, $$
	which satisfies $\Gamma^{\vee} < \Gamma_{\idl{m}}^{\vee} < d_r^{-1} \Gamma^{\vee}$. Thus $\Gamma_{\idl{m}}^{\vee} \cap B_0^{\vee} \subset d_r^{-1} \Gamma^{\vee} \cap D_0^{\vee}$, where $D_0^{\vee}$ is the orthogonal projection of $D^{\vee}$ onto $\R_0^r$. Note that $D_0^{\vee}$ is a regular $2r$-polyhedron in the $r-1$ dimensional Euclidean space $\R_0^r$, whose circumcircle has radius $\leq \sqrt{r} B_m$. Note also that $\Gamma^{\vee}$ is a lattice in $\R_0^r$ depending only on $\F$. The trivial lattice point counting in $\R_0^r$ implies
	$$ \norm[d_r^{-1} \Gamma^{\vee} \cap D_0^{\vee}] = [\Gamma^{\vee} \cap d_rD_0^{\vee}] \ll_{\F} (d_r \sqrt{r} B_m)^{r-1} $$
	with implied constant depending only on $\F$ (the shape and covolume of $\Gamma^{\vee}$). We conclude by the trivial bounds $d_r \leq [\Gamma : \Gamma_{\idl{m}}] = [\vo_+ : \vo_+^{\idl{m}}] \leq \Nr(\idl{m})$ and $B_m \leq \Cond(\chi_0)^{1+\frac{\epsilon}{r}}$.
\end{proof}

\noindent Since $\norm[C_1(\idl{m})] \leq 2^r$ and $\norm[C_2(\idl{m})] \leq \Nr(\idl{m})$, we obtain by (\ref{S1IndividualBd}) and Lemma \ref{LatticePtCount}
	$$ S_1 \ll_{\F, \epsilon, C} T_0^{(1+B)r\epsilon^{-1}+A-C} \ll \Cond(\chi_0)^{-C'} $$
	for any constant $C' \gg 1$.
	
	Let $\vec{\mu} \in \Gamma_{\idl{m}}^{\vee}$ but $\vec{\mu} \notin B_0^{\vee}$. Consider the following function on $\vec{x} \in \R_0^r$
	$$ d(\vec{x}) := \min_{\tau \in \R} \Norm[\vec{x}+\tau e_0]_{\infty} = \min_{\tau \in \R} \max_{1 \leq j \leq r} \norm[x_j+\tau]. $$
	Since for any $\tau \in \R$ we have
	$$ \frac{1}{\sqrt{r}} \Norm[\vec{x}]_2 \leq \frac{1}{\sqrt{r}} \Norm[\vec{x}+\tau e_0]_2 \leq \Norm[\vec{x}+\tau e_0]_{\infty} \leq \Norm[\vec{x}+\tau e_0]_2, $$
	we deduce by taking the minimum over $\tau$ that
	$$ \frac{1}{\sqrt{r}} \Norm[\vec{x}]_2 \leq d(\vec{x}) \leq \Norm[\vec{x}]_2. $$
	Moreover, for $\vec{x} \notin B_0^{\vee}$ it is easy to see
	$$ d(\vec{x}) \geq \min_{1 \leq j \leq r} B_j \geq T_0. $$
	For any $\tau \in \R$, we claim that there is $1 \leq i_0 \leq r$ such that $\norm[\mu_{i_0}+\tau] \geq \max(B_{i_0}, d(\vec{\mu})^{\frac{1}{1+A}})$. In fact, since $\vec{\mu} \notin B_0^{\vee}$, there is $1 \leq i \leq r$ such that $\norm[\mu_i+\tau] > B_i$. There is $1 \leq j \leq r$ such that $\norm[\mu_j+\tau] = d(\vec{\mu})$. Either $\norm[\mu_j+\tau] > B_j$, then we choose $i_0=j$; or $\norm[\mu_j+\tau] \leq B_j \leq B_i^{1+A} < \norm[\mu_i+\tau]^{1+A}$, then we choose $i_0=i$. We have obtained
	$$ \{ \tau \in \R \} \subset \sideset{}{_{i=1}^r} \bigcup \left\{ \tau \in \R \ \middle| \ \norm[\mu_i+\tau] \geq \max(B_i, d(\vec{\mu})^{\frac{1}{1+A}}) \right\}. $$
	Hence for such an individual $\vec{\mu}$, we have for any $C \gg 1$

\begin{align*}
	&\quad \int_{\R} \extnorm{L \left( \frac{1}{2}+i\tau,\chi(\vec{\mu}) \right)}^4 \prod_{j=1}^r \extnorm{\widetilde{h}_j(i(\mu_j+\tau),\varepsilon_j)} \ud \tau \\
	&\ll_{\epsilon,C} \sum_{i=1}^r \int_{\norm[\mu_i+\tau] \geq \max(B_i, d(\vec{\mu})^{\frac{1}{1+A}})} \norm[\mu_i+\tau]^{-C-1} \prod_{j \neq i} T_j^{1+\epsilon} \ud \tau \\
	&\ll \Cond_{\infty}(\chi_0)^{1+\epsilon} B_i^{-\frac{C}{2}} d(\vec{\mu})^{-\frac{C}{2(1+A)}} \leq \Cond_{\infty}(\chi_0)^{1+\epsilon} \Cond(\chi_0)^{-\frac{C}{2}} \Norm[\vec{\mu}]_2^{-\frac{C}{2(1+A)}} r^{\frac{C}{4(1+A)}}.
\end{align*}
	
\noindent Taking $C$ sufficiently large and summing over $\vec{\mu}$, we deduce and conclude the proof of Proposition \ref{CubicMBd} by
	$$ S_2 \ll_{\F,\epsilon,C} \Cond(\chi_0)^{1+\epsilon-\frac{C}{2}}. $$

\section*{Acknowledgement}

	Han Wu would like to thank Dihua Jiang and Xincheng Miao for bring Sakellaridis's work into his attention, and Zhi Qi for valuable discussions. Han Wu was partly supported by the Leverhulme Trust Research Project Grant RPG-2018-401.

\bibliographystyle{acm}

\bibliography{mathbib}

\end{document}